\def\mapright#1{\smash{
\mathop{\rg}\limits^{#1}}}
\def\mapdown#1{\bigg\downarrow
\rlap{$\vcenter{\hbox{$\scriptstyle#1$}}$}}
\def\rg{\hbox to 30pt{\rightarrowfill}}
\def\lg{\hbox to 30pt{\leftarrowfill}}
          \newtheorem{theorem}{Theorem}[section]
      \newtheorem{proposition}[theorem]{Proposition}
      \newtheorem{corollary}[theorem]{Corollary}
      \newtheorem{lemma}[theorem]{Lemma}
      \newtheorem{example}[theorem]{Example}
      \newtheorem{remark}[theorem]{Remark}
      \newcommand{\BB}{{\mathbb B}}
      \newcommand{\CC}{{\mathbb C}}
      \newcommand{\NN}{{\mathbb N}}
      \newcommand{\HH}{{\mathbb H}}
      \newcommand{\DD}{{\mathbb D}}
      \newcommand{\FF}{{\mathbb F}}
      \newcommand{\cA}{{\mathcal A}}
      \newcommand{\cC}{{\mathcal C}}
      \newcommand{\cD}{{\mathcal D}}
      \newcommand{\cE}{{\mathcal E}}
      \newcommand{\cF}{{\mathcal F}}
      \newcommand{\cG}{{\mathcal G}}
      \newcommand{\cH}{{\mathcal H}}
      \newcommand{\cI}{{\mathcal I}}
      \newcommand{\cK}{{\mathcal K}}
      \newcommand{\cM}{{\mathcal M}}
      \newcommand{\cN}{{\mathcal N}}
      \newcommand{\cP}{{\mathcal P}}
      \newcommand{\cR}{{\mathcal R}}
      \newcommand{\cS}{{\mathcal S}}
      \newcommand{\cU}{{\mathcal U}}
      \newcommand{\cV}{{\mathcal V}}
      \newcommand{\rank}{\hbox{\rm{rank}}\,}
      \newdimen\expt
      \def\boxit#1{\setbox0\hbox{$\displaystyle{#1}$}
            \hbox{\lower.4\expt
       \hbox{\lower3\expt\hbox{\lower\dp0
            \hbox{\vbox{\hrule height.4\expt
       \hbox{\vrule width.4\expt\hskip3\expt
            \vbox{\vskip3\expt\box0\vskip2\expt}%
       \hskip3\expt\vrule width.4\expt}\hrule height.4\expt}}}}}}
\begin{document}
       \pagestyle{myheadings}
      \markboth{ Gelu Popescu}{    Free biholomorphic functions and operator model theory  }
      %\pagestyle{plain}
      %\begin{flushright}
       % \it Date of this draft: \today
      %\end{flushright}
      %\bigskip

      \title [ Free biholomorphic functions and operator model theory  ]
{ Free biholomorphic functions and operator model theory  }
        \author{Gelu Popescu}
    % \date{\today}
\date{April 27, 2011}
      \thanks{Research supported in part by an NSF grant}
      \subjclass[2000]{Primary: 46L52; 47A45   Secondary: 47A20; 46T25; 46L07}
      \keywords{Formal power series; Free holomorphic
      function;  Inverse mapping theorem;  Model theory;  Invariant subspaces;
       Noncommutative Hardy
      space; Poisson transform; Characteristic function; Hilbert module; Curvature invariant;
      Commutant lifting; Nevanlinna-Pick interpolation.}

      \address{Department of Mathematics, The University of Texas
      at San Antonio \\ San Antonio, TX 78249, USA}
      \email{\tt gelu.popescu@utsa.edu}

\bigskip

\begin{abstract}

Let $f=(f_1,\ldots, f_n)$ be  an $n$-tuple of formal power series in
noncommutative indeterminates $Z_1,\ldots, Z_n$ such that $f(0) =0$
and the Jacobian $\det J_f(0)\neq 0$, and let $g=(g_1,\ldots, g_n)$
be  its inverse with respect to the composition. We assume that  $f$
and $g$ have nonzero radius of convergence and   $g$ is a bounded
 free holomorphic
 function on the  open  unit ball
$$
[B(\cH)^n]_1:=\{X=(X_1,\ldots, X_n)\in B(\cH)^n: \ X_1X_1^*+\cdots
+ X_nX_n^*<  I\},
$$
where $B(\cH)$ is the algebra  of bounded linear operators  an a
Hilbert space $\cH$.

In this paper, several results concerning the noncommutative
multivariable operator theory on
 the unit ball  $[B(\cH)^n]_1^-$ (which corresponds to the particular case when $f=(Z_1,\ldots, Z_n)$) are extended to  the
noncommutative domain
$$\BB_f(\cH):=\{X \in B(\cH)^n: \  g(f(X))=X \text{ and }  \|f(X)\| \leq
  1\}
  $$
  for an appropriate evaluation $X\mapsto f(X)$.
We
develop an operator model theory  and dilation theory for $\BB_f(\cH)$,
  where the associated universal model  is  an $n$-tuple $(M_{Z_1},\ldots, M_{Z_n})$ of
   left  multiplication operators  acting on a Hilbert space $\HH^2(f)$
  of formal power series. We obtain  a Beurling type characterization for the joint invariant
   subspaces  under $M_{Z_1},\ldots, M_{Z_n}$ and study the representations of the algebras
   they generate: the domain algebra $\cA(\BB_f)$, the Hardy algebra $H^\infty(\BB_f)$, and
    the $C^*$-algebra $C^*(M_{Z_1},\ldots, M_{Z_n})$. We associate with each $n$-tuple
  $X\in \BB_f(\cH)$ a characteristic function $\Theta_{f,X}$ and  use it to provide a functional
   model and to show that it is a complete unitary invariant  for   the  class of  pure elements in
    $\BB_f(\cH)$. The commutant lifting theorem is extended to our
    setting and used to solve the   Nevanlinna-Pick  interpolation
    problem for the Hardy algebra $H^\infty(\BB_f)$.

   Using a noncommutative Poisson  transform  associated with the domain $\BB_f(\cH)$, we
     introduce a curvature invariant on $\BB_f(\cH)$, which turns out  to be a complete numerical
      invariant for the finite rank submodules of the free Hilbert module $\HH^2(f)\otimes \cK$,
      where $\cK$ is finite dimensional.

  When $f$ and $g$ are $n$-tuples of noncommutative polynomials (or certain free holomorphic functions),
   one can give up the condition $f(0)=0$. All the results of
   this paper have commutative versions.
\end{abstract}

      \maketitle

\bigskip

\section*{Contents}
{\it

\quad Introduction

\begin{enumerate}
   \item[1.]   Inverse mapping theorem for free holomorphic   functions
   \item[2.]  Polynomial automorphisms of $B(\cH)^n$
 \item[3.]  Hilbert spaces of noncommutative formal power series
\item[4.]  Noncommutative domains $\BB_f(\cH)$ and the universal model $(M_{Z_1},\ldots, M_{Z_n})$
\item[5.]  The invariant subspaces under $M_{Z_1},\ldots, M_{Z_n}$
\item[6.]
The Hardy algebra $H^\infty(\BB_f)$ and the eigenvectors of
$M_{Z_1}^*,\ldots, M_{Z_n}^*$
\item[7.]  Characteristic functions  and functional models
\item[8.]  Curvature invariant on  $\BB_f(\cH)$
\item[9.]  Commutant lifting and interpolation
   \end{enumerate}

\quad References

}

\bigskip

\bigskip
\bigskip

%\section*{Contents}
%{\it

%\quad Introduction

%\begin{enumerate}
 %  \item[1.]
  % \item[2.]
   %\item[3.]
% \item[4.]
% \end{enumerate}

%\quad References
%}

\bigskip

\section*{Introduction}

In the last sixty years, the study of the
unit ball of the algebra $B(\cH)$, of all bounded linear operators
on a Hilbert space, has generated the celebrated Sz.-Nagy--Foia\c s
  theory of contractions \cite{SzF-book} and has had profound implications  in
mathematics and applied mathematics. In the last three  decades, a
{\it free analogue} of Sz.-Nagy--Foia\c s theory on the unit ball of
$B(\cH)^n$ has been pursued by the author and others (see \cite{F},
\cite{B}, \cite{Po-isometric}, \cite{Po-charact}, \cite{Po-multi},
\cite{Po-von}, \cite{Po-funct}, \cite{Po-analytic}, \cite{Po-disc},
\cite{Po-interpo}, \cite{ArPo2},   \cite{DP}, \cite{Arv},
\cite{Po-entropy}, \cite{Po-varieties}, \cite{Po-varieties2},
\cite{Po-unitary},
   \cite{BT1}, \cite{BT2}, \cite{BES1}).
 This theory
has  already had remarkable
   applications in complex interpolation on the unit ball of $\CC^n$,  multivariable
   prediction  and entropy optimization, control theory,
 systems theory, scattering theory, and
wavelet theory.
 On the other hand, it has  been a source of inspiration
 for the development of
 several other areas of research  such as tensor algebras over
 $C^*$-correspondences and  free semigroup (resp.\,semigroupoid, graph)
 algebras (see \cite{MuSo1},  \cite{MuSo2}, \cite{MuSo3}).

The present paper is an attempt  to find large classes of noncommutative multivariable functions
 $g:\Omega\subset [B(\cH)^n]_1^-\to B(\cH)^n$ for which  a reasonable
  operator model theory  and dilation theory  can be developed  for
   the   noncommutative domain $g(\Omega)$. In other words, we want to {\it transfer}
   the free analogue of Nagy-Foia\c s theory from the unit ball $[B(\cH)^n]_1$ to other
    noncommutative domains in $B(\cH)^n$, using appropriate  maps.

In Section 1, we obtain inverse mapping theorems for formal power
series in noncommutative indeterminates $Z_1,\ldots, Z_n$, and also
for free holomorphic functions. More precisely, we show that an
$n$-tuple $f=(f_1,\ldots, f_n)$ of formal power series with $f(0)=0$
has an inverse $g=(g_1,\ldots, g_n)$ with respect to the composition
if and only if the Jacobian $\det J_f(0)\neq 0$. If, in addition,
$f$ and $g$ have nonzero radius of convergence, we prove that there
are open neighborhoods $D$ and $G$ of $0$ in $B(\cH)^n$ such that
$f|_D:D\to G$ and $g|_G :G\to D$ are free holomorphic functions
inverses of each other.

Let $f=(f_1,\ldots, f_n)$ be an $n$-tuple of  formal power series in
indeterminates $Z_1,\ldots, Z_n$  such that  $f(0)=0$ and $\det
J_f(0)\neq 0$, and assume that $f$ and its inverse $g=(g_1,\ldots,
g_n)$ have nonzero radius of convergence.  Due to  a re-scaling, we
can assume  without loss of generality that $g$ is a bounded
 free holomorphic
 function on the  open  unit ball
$$
[B(\cH)^n]_1:=\{X=(X_1,\ldots, X_n)\in B(\cH)^n: \ X_1X_1^*+\cdots +
X_nX_n^*<  I\}.
$$
We consider the noncommutative domain
$$\BB_f(\cH):=\{X \in B(\cH)^n: \  g(f(X))=X \text{ and }  \|f(X)\| \leq
  1\}
  $$
  for an appropriate evaluation $X\mapsto f(X)$ and using the
  functional calculus for row contractions to define $g(f(X))$.
  We remark that the domain above makes sense if  we give up the condition $f(0)=0$
  and ask instead that
   $f$ and $g$ be $n$-tuples of noncommutative polynomials
   or certain free holomorphic functions.
   In this paper, several results concerning the  noncommutative multivariable operator theory on
 the unit ball  $[B(\cH)^n]_1^-$   are extended to  the
noncommutative domain
   $\BB_f(\cH)$.

In Section 2 and Section 3, we introduce three classes of $n$-tuples
$f=(f_1,\ldots, f_n)$ for which an operator model theory and
dilation theory  for the domain $\BB_f(\cH)$ will be developed in
the coming sections. These classes consist of noncommutative
polynomials, formal power series with $f(0)=0$, and free holomorphic
functions, respectively. When $f$ belongs to any of these classes,
we say that it has the {\it model property}. In this case, each
domain $\BB_f$ has a universal model $(M_{Z_1},\ldots, M_{Z_n})$ of
multiplication operators acting on a Hilbert space $\HH^2(f)$ of
formal power series.

In Section 4, we show that $T=(T_1,\ldots, T_n)\in B(\cH)^n$ is a
pure $n$-tuple of operators in $\BB_f(\cH)$ if and only if there
exists a Hilbert space $\cD$ and  a co-invariant
 subspace $\cM\subseteq \HH^2(f)\otimes \cD$
 under $M_{Z_1}\otimes I_\cD,\ldots,M_{Z_n}\otimes I_\cD$ such that
 the $n$-tuple
$(T_1,\ldots, T_n)$ is unitarily equivalent  to
$$
(P_\cM(M_{Z_1}\otimes I_\cD)|_\cM,\ldots, P_\cM(M_{Z_n}\otimes
I_\cD)|_\cM).
$$
The $C^*$-algebra $C^*(M_{Z_1},\ldots, M_{Z_n})$ turns out to be
irreducible and
$$
M_{Z_i}^* M_{Z_j}=\left< Z_j,Z_i\right>_{\HH^2(f)}
I_{\HH^2(f)},\qquad i,j\in \{1,\ldots,n\}.
$$
If, in addition, $f$ has the radial approximation property, that is,
there is $\delta\in (0,1)$ such that $rf$ has the model property for
any $r\in (\delta, 1)$, we prove that, for any $T:=(T_1,\ldots,
T_n)\in  \BB_f(\cH)$,
 there is
    a unique unital completely contractive linear map
$$
\Psi_{f,T}: C^*(M_{Z_1},\ldots, M_{Z_n})\to B(\cH)
$$
such that
 $$
\Psi_{f,T}(M_{Z_\alpha} M_{Z_\beta}^*)=T_\alpha T_\beta^*, \qquad
\alpha,\beta\in \FF_n^+,
$$
where $T_\alpha:=T_{i_1}\cdots T_{i_k}$ if $\alpha=g_{i_1}\cdots g_{i_k}$ is a word
in the free semigroup $\FF_n^+$ with generators $g_1,\ldots, g_n$.
 As a consequence we obtain   a minimal dilation of $T$ which is unique up to an
isomorphism.

We introduce the domain algebra $\cA(\BB_f)$ as the norm-closure of
all polynomials in $M_{Z_1},\ldots, M_{Z_n}$ and the identity.  Under natural conditions on $f$, we use
Paulsen's similarity result \cite{Pa} to obtain a characterization
for the completely bounded representations of $\cA(\BB_f)$. We also
show that the set $M_{\cA(\BB_f)}$ of all characters of $\cA(\BB_f)$
is homeomorphic to $g(\overline{\BB}_n)$, where $\overline{\BB}_n$
is the closed unit ball of $\CC^n$.

In Section 5, we provide a Beurling \cite{Be} type characterization
of the invariant subspaces  under the    multiplication operators
$M_{Z_1},\ldots, M_{Z_n}$ associated with the noncommutative domain
$\BB_f$. More precisely, we show that if
 $f=(f_1,\ldots, f_n)$ is an  $n$-tuple of formal power series with
the model property, then   a  subspace $\cN\subseteq \HH^2(f)\otimes
\cH$ is invariant under each operator $M_{Z_1}\otimes I_\cH,\ldots,
M_{Z_n}\otimes I_\cH$  if and only if there exists an inner
multi-analytic operator $\Psi:\HH^2(f)\otimes \cE\to \HH^2(f)\otimes
\cH$ with respect to $M_{Z_1},\ldots, M_{Z_n}$, i.e., $\Psi$ is an
 isometry  and $\Psi(M_{Z_i}\otimes I_\cE)=(M_{Z_i}\otimes
I_\cH)\Psi$ \, for any $i=1,\ldots,n$, such that
$$
\cN=\Psi[\HH^2(f)\otimes \cE].
$$
 Using some of the results of this section and noncommutative Poisson transforms associated with the
 noncommutative domain $\BB_f$, we provide   a minimal dilation theorem for pure $n$-tuples of
operators in $\BB_f(\cH)$, which turns out to be unique up to an
isomorphism.

In Section 6, we show that  the eigenvectors for $M_{Z_1}^*,\ldots,
M_{Z_n}^*$ are precisely the   noncommutative Poisson kernels
associated with the noncommutative domain $\BB_f$  at the points in
the set $$ \BB_{f}^{<}(\CC):=\{\lambda\in \CC^n:\
g(f(\lambda))=\lambda \text{ and } \|f(\lambda)\|< 1 \}, $$
 that is,
the
 formal power series
$$
\Gamma_\lambda:= \left(1-\sum_{i=1}^n
|f_i(\lambda)|^2\right)^{1/2}\sum_{\alpha\in \FF_n^+}
[\overline{f(\lambda)}]_\alpha\
  f_\alpha,\qquad \lambda\in \BB_{f}^{<}(\CC).
$$
Moreover, they satisfy the equations $ M_{Z_i}^*
\Gamma_\lambda=\overline{\lambda}_i \Gamma_\lambda$,
 $i=1,\ldots,n$. We define the noncommutative Hardy algebra $H^\infty(\BB_f)$
 to be the WOT-closure of all
  noncommutative polynomials in $M_{Z_1},\ldots, M_{Z_n}$ and the
  identity, and show that it coincides with the algebra of bounded left multipliers of
  $\HH^2(f)$.
The symmetric Hardy space $\HH_s^2(f)$
  associated with the noncommutative
domain $\BB_f$  is defined as the subspace $\HH^2(f)\ominus
\overline{J_c(1)}$, where $J_c$ is the WOT-closed two-sided ideal of
the Hardy algebra $H^\infty(\BB_f)$ generated by the commutators
$$ M_{Z_i}M_{Z_j}-M_{Z_j}M_{Z_i},\qquad i,j=1,\ldots, n.
$$
We show that $\HH_s^2(f)=\overline{\text{\rm span}}\{\Gamma_\lambda:
\ \lambda\in
 \BB_{f}^<(\CC)\}$ and can be identified with  a Hilbert space $H^2(\BB_{f}^<(\CC))$ of
  holomorphic functions on
$\BB_{f}^<(\CC)$,
 namely, the
 reproducing kernel
 Hilbert space   with
 reproducing
 kernel
$\Lambda_f:\BB_{f}^<(\CC)\times \BB_{f}^<(\CC)\to \CC$ defined by
$$
\Lambda_f(\mu,\lambda):= \frac{1}{1-\sum_{ i=1}^n   f_i(\mu)
\overline{f_i(\lambda)}},\qquad \lambda,\mu\in \BB_{f}^<(\CC).
$$
The algebra $P_{H_s^2(f)} H^\infty(\BB_f)|_{\HH_s^2(f)}$
  coincides with the WOT-closed algebra  generated by
    the operators $L_i:=P_{\HH_s^2(f)} M_{Z_i}|_{\HH_s^2(f)}$, $i=1,\dots, n$, and
 can be identified
 with the algebra of  all multipliers of the  Hilbert space $H^2(\BB_{f}^<(\CC))$.
 Under this identification the  operators $L_1,\ldots, L_n$ become the
  multiplication operators $M_{z_1},\ldots, M_{z_n}$ by the coordinate
   functions.  The $n$-tuple $(L_1,\ldots, L_n)$ turns out to be the
   universal model for the commutative $n$-tuples from $\BB_f(\cH)$.

In Section 7, we introduce
  the characteristic  function  of  an  $n$-tuple $T=(T_1,\ldots, T_n)\in
  \BB_f(\cH)$ to be
  a certain multi-analytic operator
$\Theta_{f,T}:\HH^2(f)\otimes \cD_{f,T^*}\to \HH^2(f)\otimes
\cD_{f,T} $
 with respect to $M_{Z_1},\ldots, M_{Z_n}$, and point out a natural connection with the characteristic function of a row contraction \cite{Po-charact}. We present a model for  pure $n$-tuples of operators  in the
noncommutative domain $\BB_f(\cH)$  in terms of characteristic
functions, and show that the   characteristic function is a complete
unitary invariant for pure $n$-tuples of operators  in $\BB_f(\cH)$.

Using ideas from \cite{Po-curvature}, we introduce, in Section 8,
the  curvature invariant of  $T=(T_1,\ldots, T_n)\in \BB_f(\cH)$
 by setting
$$
\text{\rm curv}_f(T):=\lim_{m\to \infty} \frac{\text{\rm
trace}~[K_{f,T}^*(Q_{\leq m}\otimes I_{\cD_{f,T}})K_{f,T}]} {
\text{\rm trace}~[K_{f,M_Z}^*(Q_{\leq m})K_{f,M_Z}]},
$$
where $K_{f,T}$ is the noncommutative Poisson kernel associated with $T$,
and $Q_{\leq m}$, $m=0,1,\ldots,$ is  the orthogonal projection of
$\HH^2(f)$ on the linear span of the formal power series $f_\alpha$,
$\alpha\in \FF_n^+$ with $|\alpha|\leq m$.  We show that the limit
exists and  provide an index type formula for the curvature in terms
of the characteristic function. One of the main goals of this
section is  to show that  the curvature is  a complete numerical
      invariant for the finite rank submodules of the free Hilbert module $\HH^2(f)\otimes \cK$,
      where $\cK$ is finite dimensional. Here, the
        Hilbert module structure of $\HH^2(f)$ over $\CC[Z_1,\ldots,
Z_n]$ is  defined by the universal model $(M_{Z_1},\ldots, M_{Z_n})$
by setting
$$
p\cdot h:=p(M_{Z_1},\ldots, M_{Z_n})h,\qquad p\in \CC[Z_1,\ldots, Z_n] \
\text{ and } \  h\in \HH^2(f).
$$
In our setting, the Hilbert module  $\HH^2(f)$ occupies the position
of the rank-one free module in the algebraic theory \cite{K}.

In Section 9,  we use the commutant lifting theorem for row
contractions \cite{Po-isometric}, to deduce  an analogue for  the
pure $n$-tuples of operators in the noncommutative domain
$\BB_f(\cH)$. As a consequence,  and using the results from Section
6,
  we solve the  Nevanlinna Pick  interpolation problem  for the noncommutative
  Hardy algebra $H^\infty(\BB_f)$.
   We show that if    $\lambda_1,\ldots, \lambda_m$ are  $m$
distinct points in    $\BB_{f}^<(\CC)$ and   $A_1,\ldots, A_m\in
B(\cK)$, then there exists $\Phi \in H^\infty(\BB_f)\bar\otimes
B(\cK)$ such that
$$\|\Phi \|\leq 1\quad \text{and }\quad
\Phi(\lambda_j)=A_j,\quad j=1,\ldots, m,
$$
if and only if the operator matrix
\begin{equation*}
\left[  \frac{I_\cK-A_i A_j^*}{1-\sum_{ k=1}^n   f_k(\lambda_i)
\overline{f_k(\lambda_j)}}\right]_{m\times m}
\end{equation*}
is positive semidefinite.

We remark that, using the results from Section 6, we can provide
commutative versions for all the results of the present paper.
Moreover,   a model theory and   dilation theory  for not
necessarily pure $n$-tuples of operators in the noncommutative
domain $\BB_f(\cH)$  (resp.\,varieties in  $\BB_f(\cH)$) is
developed in a sequel    to the present paper.

\bigskip

\section{  Inverse mapping theorem for free holomorphic functions }

Initiated in \cite{Po-holomorphic}, the theory of free holomorphic
(resp.~pluriharmonic) functions on the unit ball of $B(\cH)^n$,
where $B(\cH)$ is the algebra of all bounded linear operators on  a
Hilbert space $\cH$,
   has been developed   very
recently (see   \cite{Po-holomorphic} \cite{Po-hyperbolic},
\cite{Po-hyperbolic3}, \cite{Po-unitary},  \cite{Po-pluriharmonic},
  \cite{Po-automorphism}, \cite{Po-holomorphic2}) in the attempt  to
provide a framework for the study of arbitrary
 $n$-tuples of operators on a Hilbert space.
  Several classical
 results from complex analysis and  hyperbolic geometry   have
 free analogues in
 this  noncommutative multivariable setting. Related to our work, we
 mention  the papers \cite{HKMS},   \cite{MuSo2},
 \cite{MuSo3}, and  \cite{V}, where several aspects of the theory
 of noncommutative analytic functions are considered in various
 settings.

In this section, we obtain inverse mapping theorems for formal power
series in noncommutative indeterminates   and   for free holomorphic
functions. We recall \cite{Po-holomorphic} that  a free holomorphic
functions on the open operatorial $n$-ball of radius $\gamma>0$ (or
$\gamma=\infty$) is defined
 as a  formal power series $f=\sum_{\alpha\in
\FF_n^+}a_\alpha Z_\alpha$ in noncommutative indeterminates $Z_1,\ldots, Z_n$
with radius of convergence $r(f)\geq \gamma$,
i.e.,
 $\{a_\alpha\}_{\alpha\in \FF_n^+}$ are complex numbers  with
$r(f)^{-1}:=\limsup_{k\to\infty} \left(\sum_{|\alpha|=k}
|a_\alpha|^2\right)^{1/2k}\leq 1/\gamma,$
 where $\FF_n^+$ is the
free semigroup with $n$ generators  $g_1,\ldots, g_n$ and the identity $g_0$.  The length
of $\alpha\in \FF_n^+$ is defined by $|\alpha|:=0$ if $\alpha=g_0$
and $|\alpha|:=k$ if
 $\alpha=g_{i_1}\cdots g_{i_k}$, where $i_1,\ldots, i_k\in \{1,\ldots, n\}$.
If $(X_1,\ldots, X_n)\in B(\cH)^n$, we denote $X_\alpha:=
X_{i_1}\cdots X_{i_k}$ and $X_{g_0}:=I_\cH$.  A free holomorphic
function $f$ on  the open ball
$$
[B(\cH)^n]_\gamma:=\left\{ (X_1,\ldots, X_n)\in B(\cH)^n: \
\|X_1X_n^*+\cdots + X_nX_n^*\|^{1/2}<\gamma\right\},
$$
 is
the representation of   $f $~ on the Hilbert space $\cH$, that is,
the mapping
$$
[B(\cH)^n]_\gamma\ni (X_1,\ldots, X_n)\mapsto f(X_1,\ldots,
X_n):=\sum_{k=0}^\infty \sum_{|\alpha|=k}
 a_\alpha X_\alpha\in
B(\cH),
$$
where  the convergence is in the operator norm topology.
   Due to the fact that a free holomorphic function is
uniquely determined by its representation on an infinite dimensional
Hilbert space,  throughout this paper, we  identify  a free
holomorphic function   with its representation on a   separable
infinite dimensional Hilbert space.

 A free holomorphic function $f$ on $[B(\cH)^n]_\gamma$ is
bounded if $
 \|f\|_\infty:=\sup  \|f(X)\|<\infty,
  $
where the supremum is taken over all $X\in [B(\cH)^n]_\gamma$ and
$\cH$ is an infinite dimensional Hilbert space. Let $H^\infty_{{\bf
ball}_\gamma}$ be the set of all bounded free holomorphic functions
and let $A_{{\bf ball}_\gamma}$ be the set of all elements $f\in
H^\infty_{{\bf ball}_\gamma }$ such that the mapping
$$[B(\cH)^n]_\gamma\ni (X_1,\ldots, X_n)\mapsto f(X_1,\ldots, X_n)\in B(\cH)$$
 has a continuous extension to the closed  ball $[B(\cH)^n]^-_\gamma$.
We  showed in \cite{Po-holomorphic} that $H^\infty_{{\bf
ball}_\gamma}$ and $A_{{\bf ball}_\gamma}$ are Banach algebras under
pointwise multiplication and the norm $\|\cdot \|_\infty$.

 For each $i=1,\ldots, n$,  we define the free  partial derivation  $\frac{\partial } {\partial Z_i}$  on $\CC[Z_1,\ldots, Z_n]$, the algebra of noncommutative polynomials with complex coefficients and indeterminats $Z_1,\ldots, Z_n$, as the unique linear operator  on this algebra, satisfying the conditions
$$
\frac{\partial I} {\partial Z_i}=0, \quad  \frac{\partial Z_i}
{\partial Z_i}=I, \quad  \frac{\partial Z_j} {\partial Z_i}=0\
\text{ if }  \ i\neq j,
$$
and
$$
\frac{\partial (\varphi \psi )} {\partial Z_i}=\frac{\partial
\varphi} {\partial Z_i} \psi +\varphi\frac{\partial \psi} {\partial
Z_i}
$$
for any  $\varphi,\psi\in \CC[Z_1,\ldots, Z_n]$ and $i,j=1,\ldots
n$. Note that if $\alpha=g_{i_1}\cdots g_{i_p}$, $|\alpha|=p$, and
$q$ of the $g_{i_1},\ldots, g_{i_p}$ are equal to $g_j$, then
$\frac{\partial Z_\alpha} {\partial Z_j}$ is the sum of the $q$
words obtained by deleting each occurrence of $Z_j$ in
$Z_\alpha:=Z_{i_1}\cdots Z_{i_p}$ and replacing it by  the identity
$I$. The directional derivative of $Z_\alpha$ at $Z_i$ in the
direction $Y$, denoted by $\left(\frac{\partial Z_\alpha} {\partial
Z_i}\right)[Y]$,    is defined similarly by replacing each
occurrence of $Z_j$ in $Z_\alpha:=Z_{i_1}\cdots Z_{i_p}$  by  $Y$
(see \cite{He-C-V}). Note that $\frac{\partial Z_\alpha} {\partial
Z_i} =\left(\frac{\partial Z_\alpha} {\partial Z_i}\right)[I]$.
These   definitions extend to formal power series in the
noncommuting indeterminates $Z_1,\ldots, Z_n$.
  If $F:=\sum\limits_{\alpha\in \FF_n^+} a_\alpha Z_\alpha $ is a
  power series, then   the    free partial derivative
    of $F$ with respect to $Z_{i} $ is the power series
$
\frac {\partial F}{\partial Z_{i}} := \sum_{\alpha\in
\FF_n^+}a_\alpha
 \frac {\partial Z_\alpha}{\partial Z_{i} } .
$

We denote by ${\bold S}[Z_1,\ldots,Z_n]$ the algebra of all formal
power series in noncommuting indeterminates $Z_1,\ldots, Z_n$ and
complex coefficients. We remark that, for any power series $G\in
{\bold S}[Z_1,\ldots,Z_n]$,
$$
\left(\frac {\partial F}{\partial Z_{i}}\right)[G] :=
\sum_{\alpha\in \FF_n^+}a_\alpha
 \left(\frac {\partial Z_\alpha}{\partial Z_{i} }\right)[G].
$$
is a power series in ${\bold S}[Z_1,\ldots,Z_n]$. Indeed, it is
enough to notice that all the  monomials of degree $m\geq 1$ in
$Z_1,\ldots, Z_n$ occur   in the sum $\sum_{k=0}^{m+1}
\sum_{|\alpha|=k} \left(\frac {\partial Z_\alpha}{\partial Z_{i}
}\right)[G]$. Consequently, we can use the directional derivative of
$F$ at $Z_i$ to define the mapping
$$\left(\frac {\partial F}{\partial Z_{i}}\right): {\bold
S}[Z_1,\ldots,Z_n]\to {\bold S}[Z_1,\ldots,Z_n],\qquad
 g\mapsto\left(\frac
{\partial F}{\partial Z_{i}}\right)[G].
$$
Let $H$ be a   formal power series in indeterminates  $W_1,\ldots,
W_n$ and let $G=(G_1,\ldots, G_n)$ be an $n$-tuple of formal power
series in indeterminates  $Z_1,\ldots, Z_n$ with $G(0)=0$.  Then we
have the following chain rule
$$
\frac {\partial (H\circ G)}{\partial Z_{i}} =\sum_{k=1}^n
\left\{\left(\frac {\partial H}{\partial W_{k}}\right)\left[ \frac
{\partial G_k }{\partial Z_{i}}\right]\right\}_{W=G(Z)},
$$
where $Z=(Z_1,\ldots, Z_n)$ and $W=(W_1,\ldots,W_n)$. Indeed, it is
enough to prove this rule when $H=W_\alpha$ and
$\alpha:=g_{i_1}\cdots g_{i_k}\in \FF_n^+$. Note that, for each
$i=1,\ldots,n$, we have
$$
\frac {\partial G_\alpha}{\partial Z_{i}} =\sum_{k=1}^n
\left\{\left(\frac {\partial W_\alpha}{\partial W_{k}}\right)\left[
\frac {\partial G_k }{\partial Z_{i}}\right]\right\}_{W=G(Z)}.
$$
Let $F:=(F_1,\ldots, F_n)$ be an $n$-tuple of formal power
series in indeterminates  $W_1,\ldots, W_n$. We define the Jacobian
matrix  of $F$ to be $J_F:=\left[\frac {\partial F_i}{\partial
W_{j}}\right]_{n\times n}$ with entries in ${\bold
S}[W_1,\ldots,W_n]$. Note that
$$
J_{F\circ G}=\left[\sum_{k=1}^n \left\{\left(\frac {\partial
F_i}{\partial W_{k}}\right)\left[ \frac {\partial G_k }{\partial
Z_{j}}\right]\right\}_{W=G(Z)}\right]_{n\times n},
$$
which,  symbolically, can be written   as
$$(J_F[\cdot])\, \lozenge\,
J_G=\left[\left(\frac {\partial F_i}{\partial W_{k}}\right)[ \cdot
]\right]_{n\times n}\lozenge \left[\frac {\partial G_k}{\partial
Z_{j}}\right]_{n\times n}, $$ which is the substitute for the matrix
multiplication from the commutative case.
In particular, we can easily deduce the following result.

\begin{lemma}\label{Jacobian}
Let $F:=(F_1,\ldots, F_n)$ and $G:=(G_1,\ldots, G_n)$ be formal
power series in $n$-indeterminates and such that $G(0)=0$. Then
$$
J_{F\circ G}(0)=J_{F}(0)J_{G}(0).
$$
If  $F$ is an $n$-tuple  of noncommutative polynomials, the
condition $G(0)=0$ is not necessary.
\end{lemma}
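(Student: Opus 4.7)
The plan is to invoke the chain rule derived in the paragraph immediately before the lemma, and to observe that once $G(0)=0$ is imposed, only the degree-one part of each $F_i$ can contribute to the first-order Jacobian of $F_i\circ G$ at the origin. The cleanest way to see this is to avoid unpacking the directional-derivative notation in full and instead use a linearity argument.

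Concretely, I would split $F_i = L_i + R_i$, where $L_i := \sum_{k=1}^n (J_F(0))_{i,k}\,W_k$ is the homogeneous linear part and $R_i$ collects the constant term together with every monomial of degree $\geq 2$. By linearity of $\partial/\partial Z_j$,
$$
\frac{\partial (F_i\circ G)}{\partial Z_j}\bigg|_{Z=0}
 = \frac{\partial (L_i\circ G)}{\partial Z_j}\bigg|_{Z=0}
 + \frac{\partial (R_i\circ G)}{\partial Z_j}\bigg|_{Z=0}.
$$
The first summand is elementary: $L_i\circ G=\sum_k (J_F(0))_{i,k}\,G_k$, whose partial derivative at $0$ is $\sum_k (J_F(0))_{i,k}(J_G(0))_{k,j}$, exactly the $(i,j)$-entry of $J_F(0)J_G(0)$. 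The second summand vanishes: under the substitution $W=G(Z)$ prescribed by the chain rule, every word $W_\alpha$ of length $|\alpha|\geq 2$ becomes a product of $|\alpha|$ formal power series each with zero constant term (because $G_\ell(0)=0$), hence a series in $Z$ of order $\geq 2$; its first partials at $Z=0$ are therefore zero. The constant part of $R_i$ contributes nothing trivially. Assembling entries yields $J_{F\circ G}(0)=J_F(0)J_G(0)$.

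For the case of noncommutative polynomials, the composition $F\circ G$ is automatically a well-defined formal power series because each of its coefficients is a finite sum of coefficients of $F$ and $G$; the convergence issue that forces $G(0)=0$ in the general power-series case simply does not arise. Consequently the chain rule from the preceding paragraph applies without restriction on $G$, and the same linearity split delivers the stated Jacobian identity. The main content of the proof is the higher-order vanishing statement for $R_i\circ G$ above; every remaining step is linearity and bookkeeping, with no serious obstacle to overcome.
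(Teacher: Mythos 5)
Your treatment of the main identity is correct and is in substance what the paper intends: Lemma \ref{Jacobian} is presented there as an immediate consequence of the chain rule and the symbolic product $(J_F[\cdot])\,\lozenge\, J_G$ evaluated at the origin, and your decomposition $F_i=L_i+R_i$ simply makes explicit why only the linear part of $F_i$ survives, namely that $R_i\circ G$ has order $\geq 2$ in $Z$ because each $G_\ell$ has zero constant term.

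The last paragraph, however, does not hold up. You claim that for polynomial $F$ ``the same linearity split delivers the stated Jacobian identity'' without $G(0)=0$, attributing the role of that hypothesis solely to the well-definedness of the composition. But in your own argument the hypothesis is used substantively, not just for convergence: the vanishing of $\partial (R_i\circ G)/\partial Z_j$ at $Z=0$ rests precisely on each $G_\ell$ having zero constant term, so that a word of length $p\geq 2$ in $F_i$ turns into a product of $p$ series of order $\geq 1$. If $G(0)\neq 0$ this fails, and so does the displayed identity: take $n=1$, $F=W^2$, $G=I+Z$; then $F\circ G=I+2Z+Z^2$, so $J_{F\circ G}(0)=2$, whereas $J_F(0)=0$ and $J_G(0)=1$. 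What is true for polynomial $F$ is that $F\circ G$ and the chain rule make sense for arbitrary $G$, and evaluating the chain rule at $Z=0$ (where the directional derivative in a scalar direction reduces to a scalar multiple of the ordinary partial derivative) yields $J_{F\circ G}(0)=J_F(G(0))\,J_G(0)$, which collapses to the stated formula only when $G(0)=0$. So the final sentence of the lemma should be understood as asserting that the composition, hence $J_{F\circ G}(0)$, is defined without the hypothesis --- which is all the paper ever uses, since every later application has $G(0)=0$ --- and your assertion that the identity transfers verbatim is both unjustified by your argument and false as stated.
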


\begin{theorem}
\label{Schroder} Let $f=(f_1,\ldots, f_n)$ be  an $n$-tuple of
formal power series in indeterminates  $Z_1,\ldots, Z_n$  and with
the property that $$\det J_f(0):=\det\left[\left.\frac {\partial
f_i}{\partial Z_{j}}\right|_{Z=0}\right]\neq 0. $$
 Then  the set
$\{f_\alpha\}_{\alpha\in \FF_n^+}$   is  linearly independent
   in
${\bold S}[Z_1,\ldots,Z_n]$.
\end{theorem}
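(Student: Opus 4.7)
The plan is to reduce the linear-independence question to a graded linear-algebra fact about the homogeneous decomposition of $f_\alpha$. I would assume first that $f(0)=0$ (the general case follows from this by a lower-triangular change of variables, as indicated at the end). Decompose each $f_i=\sum_{k\geq 1} f_i^{(k)}$ into homogeneous parts, set $L_i:=f_i^{(1)}=\sum_{j=1}^n a_{ij}Z_j$ with $(a_{ij})=J_f(0)$ invertible by hypothesis, and for $\alpha=g_{i_1}\cdots g_{i_k}$ put $L_\alpha:=L_{i_1}\cdots L_{i_k}$. A direct expansion of the product $f_{i_1}\cdots f_{i_k}$ shows that its lowest-degree homogeneous component is exactly $L_\alpha$, in degree $|\alpha|$.

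The first key step is to check that $\{L_\alpha\}_{|\alpha|=k}$ is linearly independent for each $k\geq 0$. Since $\det J_f(0)\neq 0$, the linear forms $L_1,\ldots,L_n$ are linearly independent, so the substitution $Z_j\mapsto L_j$ extends to a graded algebra automorphism of $\CC[Z_1,\ldots,Z_n]$, carrying the basis $\{Z_\alpha\}_{|\alpha|=k}$ of the degree-$k$ component bijectively onto $\{L_\alpha\}_{|\alpha|=k}$.

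Then I would argue by contradiction. Assume $\sum_{\alpha\in F}c_\alpha f_\alpha=0$ is a nontrivial finite relation and let $k_0:=\min\{|\alpha|:\alpha\in F,\ c_\alpha\neq 0\}$. Extract the homogeneous component of degree $k_0$ from the sum: terms with $|\alpha|>k_0$ start in degree $|\alpha|>k_0$ and contribute nothing, terms with $|\alpha|<k_0$ have $c_\alpha=0$, so the degree-$k_0$ part collapses to $\sum_{|\alpha|=k_0}c_\alpha L_\alpha=0$. By the previous step, all such $c_\alpha$ vanish, contradicting the choice of $k_0$.

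The main obstacle is the combinatorial verification that the leading homogeneous part of $f_{i_1}\cdots f_{i_k}$ is $L_{i_1}\cdots L_{i_k}$; this is routine once $f(0)=0$. For the general case $f(0)\neq 0$, writing $\tilde f:=f-f(0)$ and expanding $f_\alpha=\prod_j(\tilde f_{i_j}+c_{i_j})$ as a sum over subsets $S\subseteq\{1,\ldots,|\alpha|\}$ of the form $\bigl(\prod_{j\in S}c_{i_j}\bigr)\tilde f_{\alpha_{S^c}}$ yields, upon grading by word length, a lower-triangular transition matrix with identity diagonal blocks; this is invertible, so linear independence of $\{\tilde f_\beta\}$ (already proved) transfers to $\{f_\alpha\}$.
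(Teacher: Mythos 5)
Your proof is correct, but it takes a genuinely different route through the key step. The paper also works from the observation that, when $f(0)=0$, the lowest-degree part of $f_\alpha$ lives in degree $|\alpha|$; however, to handle the linear algebra \emph{within} a fixed degree it first conjugates by a unitary so that $J_f(0)$ becomes upper triangular, introduces a graded-lexicographic total order on $\FF_n^+$, writes $f_\alpha$ as (diagonal product)$\cdot Z_\alpha$ plus strictly smaller words of the same length plus higher-degree terms, and then strips off coefficients one word at a time starting from the largest. You replace all of that machinery with a single clean observation: the substitution $Z_j\mapsto L_j$ is a graded endomorphism whose restriction to the degree-$k$ component is $J_f(0)^{\otimes k}$ acting on $(\CC^n)^{\otimes k}$, hence invertible, so $\{L_\alpha\}_{|\alpha|=k}$ is automatically a basis of that component --- no triangularization and no word order needed. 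The minimal-degree contradiction you then run is the same inductive idea as the paper's, and it even extends verbatim to infinite relations $\sum_\alpha c_\alpha f_\alpha=0$ (well-defined since only $|\alpha|\le d$ contribute in degree $d$), which recovers the injectivity of $F\mapsto F\circ f$ that the paper's proof also establishes. For the case $f(0)\neq 0$, your unitriangular transition matrix between $\{f_\alpha\}_{|\alpha|\le m}$ and $\{\tilde f_\beta\}_{|\beta|\le m}$ is essentially the paper's argument (which phrases it as equality of the two spans plus a dimension count); the expansion over subsets is legitimate in the free algebra because the constants $c_{i_j}$ are central. The one point worth making explicit is that each $L_i\neq 0$ (rows of an invertible matrix are nonzero), so that the degree-$|\alpha|$ component of $f_\alpha$ really is $L_\alpha$ and the lower-degree components vanish; but since you prove $\{L_\alpha\}_{|\alpha|=k}$ is a basis, this is automatic.
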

\begin{proof}  First, we consider the case when $f(0)=0$.
 Let $A:=J_f(0)^{t}$, where $t$ stands for the transpose,
and let $f=G=[G_1,\ldots G_n]$ be an $n$-tuple of power series in
noncommuting indeterminates $Z_1,\ldots, Z_n$,  of the form
$$
G=[Z_1,\ldots, Z_n]A+[Q_1,\ldots, Q_n],
$$
where $Q_1, \ldots, Q_n$ are  noncommutative power series containing
only monomials  of degree greater than or equal to $2$. In what
follows, we prove that the composition  map $C_G:{\bold
S}[Z_1,\ldots,Z_n]\to {\bold S}[Z_1,\ldots,Z_n]$ defined by $C_G
\Psi:=\Psi\circ G$ is an injective homomorphism.  Let $F$ be a
formal power series   such that $ F\circ G=0$. Since $A\in
M_{n\times n}$ there is a unitary matrix $U\in M_{n\times n}$ such
that
  $U^{-1} A U$ is an upper triangular matrix. Setting
$\Phi_U:=[Z_1,\ldots, Z_n] U$, the equation $F\circ G=0$ is
equivalent to $F'\circ G'=0$, where $F':=\Phi_U\circ F \circ
\Phi_{U^{-1}}$ and
$$
G':=\Phi_U\circ G \circ \Phi_{U^{-1}} =[Z_1,\ldots,
Z_n]U^{-1}AU+U^{-1}[Q_1,\ldots, Q_n] U.
$$
 Therefore,  we can assume that $A=[a_{ij}]\in M_{n\times n}$ is  an invertible upper triangular matrix
  and, therefore $a_{ii}\neq 0$ for any $i=1,\ldots,n$..
We introduce a total order $\leq $ on the free semigroup $\FF_n^+$
as follows. If $\alpha, \beta\in \FF_n^+$ with $|\alpha|< |\beta|$
we say that $\alpha<\beta$. If $\alpha,\beta\in \FF_n^+$ are such
that $|\alpha|=|\beta|$, then $\alpha=g_{i_1}\cdots g_{i_k}$ and
$\beta=g_{j_1}\cdots g_{j_k}$ for some $i_1,\ldots, i_k,
j_1,\ldots,j_k\in \{1,\ldots,k\}$. We say that $\alpha<\beta$ if
either $i_1<j_1$ or there exists $p\in \{2,\ldots,k\}$ such that $
i_1=j_1,\ldots, i_{p-1}=j_{p-1}$ and $i_p<j_p$.  The  relation
$\leq$ is a total order on $\FF_n^+$.
According to the hypothesis and due to the fact that  $A$ is an
upper triangular matrix, we have
\begin{equation} \label{La}
G_j=\sum_{i=1}^j a_{ij} Z_i+ Q_j,\qquad j=1,\ldots,n.
\end{equation}
Consequently, if $\alpha=g_{i_1}\cdots g_{i_k}\in \FF_n^+$,
$i_1,\ldots i_k\in \{1,\ldots,n\}$, then
\begin{equation} \label{La-al}
G_\alpha:=G_{i_1}\cdots G_{i_k}=L^{<\alpha}+a_{i_1 i_1}\cdots a_{i_k
i_k} Z_\alpha +\Psi^{(\alpha)},
\end{equation}
where $L^{<\alpha}$ is a power series containing only monomials
$Z_\beta$ such that $|\beta|=|\alpha|$ and  $\beta<\alpha$, and
$\Psi^{(\alpha)}$ is a power series containing only monomials
$Z_\gamma$ with $|\gamma|\geq |\alpha|+1$.

Now, assume that $F$ has the representation   $F=\sum_{p=0}^\infty
\sum_{|\alpha|=p} c_\alpha Z_\alpha$, $c_\alpha\in \CC$,   and
satisfies the equation $ F\circ G=0$. We will show by induction over
$p$, that
 $\sum_{|\alpha|=p} c_\alpha Z_\alpha=0$ for any $p=0,1,\ldots$. Note that the above-mentioned equation is equivalent to
 \begin{equation}\label{Sch2}
 \sum_{p=0}^\infty \sum_{|\alpha|=p} c_\alpha G_\alpha=0.
 \end{equation}
Due to relation \eqref{La}, we have $c_0=0$. Assume that
$c_\alpha=0$ for any $\alpha\in \FF_n^+$ with $|\alpha|< k$.
  According to  equations \eqref{La-al} and \eqref{Sch2}, we have

\begin{equation*}
\sum_{|\alpha|=k} c_\alpha \left(L^{<\alpha}+ d_A(\alpha) Z_\alpha
+\Psi^{(\alpha)}\right)+ \sum_{p=k+1}^\infty \sum_{|\alpha|=p}
c_\alpha G_\alpha=0,
 \end{equation*}
where $d_A(\alpha):=a_{i_1 i_1}\cdots a_{i_k i_k}$ if
$\alpha=g_{i_1}\cdots g_{i_k}\in \FF_n^+$ and $i_1,\ldots i_k\in
\{1,\ldots,n\}$. Since $\Psi^{(\alpha)}$ is a power series
containing only monomials $Z_\gamma$ with $|\gamma|\geq |\alpha|+1$,
and  the power series $G_\alpha$, $|\alpha|\geq k+1$, contains only
monomials $Z_\sigma$ with $|\sigma|\geq k+1$, we deduce that

\begin{equation} \label{sum-red}
\sum_{|\alpha|=k} c_\alpha \left(L^{<\alpha}+ d_A(\alpha) Z_\alpha
 \right) =0.
 \end{equation}
We arrange  the elements of  the set $\{\alpha\in \FF_n^+:
|\alpha|=k\}$ increasingly with respect to the total  order, i.e.,
$\beta_1<\beta_2<\cdots <\beta_{n^k}$.  Note that $\beta_1= g_1^k$
and $\beta_{n^k}=g_n^k$. The relation \eqref{sum-red} becomes

\begin{equation} \label{sum-red2}
\sum_{j=1}^{n^k}\left(c_{\beta_j} L^{< \beta_j}+ c_{\beta_j}
d(\beta_j) Z_{ {\beta_j}}\right) =0.
\end{equation}
Taking into account that $L^{<\alpha}$ is a power series containing
only monomials $Z_\beta$ such that $|\beta|=|\alpha|$ and
$\beta<\alpha$, one can see that the
 monomial $Z_{\beta_{n^k}}$ occurs just once in   relation \eqref{sum-red2}.
 Consequently, we  must  have
$ c_{\beta_{n^k}} d(\beta_{n^k})=0.
 $
 Since $0\neq a_{nn}^k=d(\beta_{n^k})$, we must have $c_{\beta_{n^k}}=0$.
 Then, equation \eqref{sum-red2} becomes
 \begin{equation*}
\sum_{j=1}^{n^k-1}\left(c_{\beta_j} \Psi^{< \beta_j}+ c_{\beta_j}
d(\beta_j) Z_{ {\beta_j}}\right) =0.
\end{equation*}
Continuing the process, we deduce that $c_{\beta_j}=0$ for
$j=1,\ldots, n^k$. Therefore $c_\alpha=0$ for any $\alpha\in
\FF_n^+$ with $|\alpha|=k$, which completes our induction. This shows that $F=0$.

Now, we consider the  case when $f(0)\neq 0$. Then $f_i=f_i(0)I+
G_i$, $i=1,\ldots,n$, for some $n$-tuple $G=[G_1,\ldots G_n]$ of
formal power series in ${\bold S}[Z_1,\ldots,Z_n]$ with $G(0)=0$.
According to the first part of the proof, the set
 $\{G_\alpha\}_{\alpha\in \FF_n^+}$ is linearly independent in ${\bold S}[Z_1,\ldots,Z_n]$.
  Consequently,
 setting $\cM_k:=\text{\rm span}\{G_\alpha\}_{|\alpha|\leq k }$, $k\geq 0$, we have
 $\dim \cM_k=1+n+n^2+\cdots n^k$. Now, assume that $\{f_\alpha\}_{\alpha\in \FF_n^+}$ is not
 linearly independent in ${\bold S}[Z_1,\ldots,Z_n]$. Then there exists $m\geq 1$ such that
 $\{f_\alpha\}_{|\alpha|\leq m}$ is not
 linearly independent. This shows that the space
   $\cN_m:=\text{\rm span}\{f_\alpha\}_{|\alpha|\leq m }$ has
    $\dim \cN_m<1+n+n^2+\cdots n^m=\dim \cM_m.
   $
   On the other hand, note that for each $\alpha\in \FF_n^+$, $f_\alpha$ is a linear
   combination of $G_\beta$ with $\beta\in \FF_n^+$, $|\beta|\leq |\alpha|$, and each $G_\alpha$
is a linear
   combination of $f_\beta$ with $\beta\in \FF_n^+$, $|\beta|\leq |\alpha|$. Consequently,
    $\cN_m=\cM_m$ and, therefore,
$\dim \cN_m=\dim \cM_m$, which  is in contradiction with the strict
inequality above. The proof is complete.
\end{proof}

Now we prove an inverse mapping theorem for formal power series in noncommuting indeterminates.

\begin{theorem}\label{inv-series}
Let $F=(F_1,\ldots, F_n)$ be an $n$-tuple of formal power series in indeterminates
$Z_1,\ldots, Z_n$. Then the following statements are equivalent.
\begin{enumerate}
\item[(i)] There is an $n$-tuple of formal power series $G=(G_1,\ldots, G_n)$ such that
    $$G(0)=0\quad \text{ and }\quad F\circ G=id.
    $$
    \item[(ii)] $F(0)=0$ and the Jacobian $\det J_F(0)\neq 0$.
\end{enumerate}
In this case, $G$ is unique and $G\circ F=id$.
\end{theorem}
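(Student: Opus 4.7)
My plan is to treat the two implications separately: (i) $\Rightarrow$ (ii) is immediate from the Jacobian lemma, while (ii) $\Rightarrow$ (i) requires a degree-by-degree construction of $G$ that simultaneously yields uniqueness. The identity $G\circ F=id$ will follow by bootstrapping the construction to $G$ itself. For (i) $\Rightarrow$ (ii), evaluating $F\circ G=id$ at $Z=0$ and using $G(0)=0$ gives $F(0)=F(G(0))=0$, and Lemma \ref{Jacobian} yields $J_F(0)J_G(0)=J_{id}(0)=I$, so $\det J_F(0)\neq 0$.

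For (ii) $\Rightarrow$ (i), set $C:=J_F(0)$, invertible by hypothesis, and decompose $F_i=\sum_j c_{ij}Z_j+\widetilde F_i$, where $\widetilde F_i$ contains only monomials of degree $\geq 2$. I will search for $G_i=\sum_{k\geq 1}G_i^{(k)}$, with $G_i^{(k)}$ the homogeneous component of degree $k$, by matching the degree-$k$ parts in $F\circ G=id$. At degree $1$ the equation reduces to $CD=I$, where $D$ collects the linear coefficients of $G$; invertibility of $C$ forces $D=C^{-1}$. For $k\geq 2$, the key observation is that since each $G_j$ has no constant term, substituting $G$ into any monomial of degree $m\geq 2$ in $\widetilde F_i$ produces only contributions of degree $\geq m$, and the degree-$k$ part of such a substitution involves only components $G_j^{(l)}$ with $l\leq k-1$. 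Hence the degree-$k$ equation takes the form $C[G_j^{(k)}]_{j=1}^n=R_k$, where $R_k$ is a polynomial expression in the previously determined components. Since $C$ is invertible, $G^{(k)}$ is determined uniquely at each step, simultaneously proving existence and uniqueness of $G$ satisfying $G(0)=0$ and $F\circ G=id$.

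Finally, to prove $G\circ F=id$, I observe that $G$ satisfies $G(0)=0$ and, by Lemma \ref{Jacobian} applied to $F\circ G=id$, has $J_G(0)=C^{-1}$ invertible. Applying the implication just proved to $G$ in place of $F$ yields $H$ with $H(0)=0$ and $G\circ H=id$. Composition of formal power series is associative whenever the inner series vanish at $0$, which is the case here, so
\[
F = F\circ(G\circ H) = (F\circ G)\circ H = id\circ H = H,
\]
and therefore $G\circ F=G\circ H=id$. The main obstacle is the inductive step in (ii) $\Rightarrow$ (i): one must verify carefully that the degree-$k$ system involves only previously determined components. This is exactly where the condition $G(0)=0$ is indispensable, since otherwise substitution into $\widetilde F_i$ would produce an infinite sum of degree-$k$ contributions and the recursion would fail to close.
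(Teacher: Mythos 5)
Your proof is correct and follows essentially the same route as the paper: the forward implication via the chain rule for Jacobians at $0$, the converse by a degree-graded recursion in which invertibility of $J_F(0)$ solves for each new homogeneous component of $G$ in terms of lower ones (the paper phrases this word-by-word as a matrix equation for the coefficients $b^{(i)}_\beta$ with $|\beta|=m$, which is the same induction), and the identity $G\circ F=id$ by the same bootstrapping-and-associativity argument. No gaps.
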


\begin{proof}
Assume that item (i) holds. For each $i=1,\ldots,n$, let
$$
F_i:=\sum_{k=0}^\infty \sum_{|\alpha|=k} a_\alpha^{(i)} Z_\alpha \quad \text{ and} \quad
G_i:=\sum_{k=1}^\infty \sum_{|\alpha|=k} b_\alpha^{(i)} Z_\alpha
$$
be such that $G_i(0)=0$ and $F\circ G=id$. Hence, we deduce that
\begin{equation*}
a_g^{(i)} +\sum_{j=1}^n a_{g_j}^{(i)} G_j+ \sum_{|\alpha|\geq
2}a_\alpha^{(i)} G_\alpha =Z_i,\qquad i=1,\ldots,n.
\end{equation*}
Since $G_i(0)=0$, if $|\alpha|\geq 2$, then each monomial in $G_\alpha$ has degree $\geq 2$. Consequently, we have $a_g^{(i)}=0$ for $i=1,\ldots,n$, i.e., $F(0)=0$, and
$\sum_{j=1}^n a_{g_j}^{(i)} b_{g_p}^{(j)}=\delta_{ip}$ for any $i,p\in \{1,\ldots, n\}$. The latter condition is equivalent to $J_F(0) J_G(0)=I_n$, which implies
$\det J_F(0)\neq 0$ and $\det J_G(0)\neq 0$. Therefore, item (ii) holds.

Now, we prove the implication $(ii)\implies (i)$. Assume that condition (ii) is satisfied  and let $F_i:=\sum_{k=1}^\infty \sum_{|\alpha|=k} a_\alpha^{(i)} Z_\alpha$.
We need to find and $n$-tuple  $G=(G_1,\ldots, G_n)$ with $G_i:=\sum_{k=1}^\infty \sum_{|\alpha|=k} b_\alpha^{(i)} Z_\alpha$
such that
    $G(0)=0$ and  $F\circ G=id$.  Therefore, we should have
\begin{equation}
\label{Zi2}
 \sum_{|\alpha|\geq 1}a_\alpha^{(i)} G_\alpha =Z_i,\qquad i=1,\ldots,n.
\end{equation}
We denote by $\text{\rm Coef}_{Z_\alpha}(H)$ the coefficient of
 the monomial $Z_\alpha$, $\alpha\in \FF_n^+$, in the formal power series $H$. Due to
 relation \eqref{Zi2}, we have
$$
\delta_{ip}=\text{\rm Coef}_{Z_p}(Z_i)=\sum_{j=1}a_{g_j}^{(i)} \text{\rm Coef}_{Z_p}(G_j)=\sum_{j=1}a_{g_j}^{(i)} b_{g_p}^{(j)}
$$
for any $i,p\in \{1,\ldots, n\}$. Hence, we deduce that
$J_F(0) J_G(0)=I_n$, where $J_F(0)=[a_{g_j}^{(i)}]_{i,j=1,\ldots,n}$  and
$J_G(0)=[b_{g_j}^{(i)}]_{i,j=1,\ldots,n}$. This implies that $J_G(0)$ is the inverse of $J_F(0)$ and, therefore, the coefficients $\{b_\alpha\}_{|\alpha|=1, i=1,\ldots,n}$ are uniquely determined and $\det J_G(0)\neq 0$.
Now, we prove by induction over $m$ that the coefficients $\{b^{(i)}_\alpha\}_{|\alpha|\leq m, i=1,\ldots,n}$ are uniquely determined by condition \eqref{Zi2}. Assume that the coefficients $\{b^{(i)}_\alpha\}_{|\alpha|\leq m-1, i=1,\ldots,n}$, $m\geq 2$,  are uniquely determined by \eqref{Zi2}.
Let $\beta=g_{p_1}\cdots g_{p_m}\in \FF_n^+$ with $p_1,\ldots, p_m\in \{1,\ldots, n\}$ and $m\geq 2$.  Note that  condition \eqref{Zi2} implies
\begin{equation*}
\begin{split}
\text{\rm Coef}_{Z_\beta}\left( \sum_{|\alpha|\geq 1}a_\alpha^{(i)} G_\alpha \right)
&=\text{\rm Coef}_{Z_\beta} \left(\sum_{j_1=1}^n a^{(i)}_{g_{j_1}} G_{j_1}+
\sum_{j_1,j_2=1}^n a^{(i)}_{g_{j_1} g_{j_2}} G_{j_1}G_{j_2}+\cdots +
\sum_{j_1,\ldots, j_m=1}^n a^{(i)}_{g_{j_1}\cdots g_{j_m}} G_{j_1}\cdots G_{j_m}
\right)\\
&=
\sum_{j_1=1}^n a^{(i)}_{g_{j_1}} b_\beta^{(j_1)}
+
\sum_{j_1,j_2=1}^n a^{(i)}_{g_{j_1} g_{j_2}}\left( \sum_{\sigma_1 \sigma_2=\beta, \sigma_1,\sigma_2\in \FF_n^+\backslash \{g_0\}} b_{\sigma_1}^{(j_1)} b_{\sigma_2}^{(j_2)}\right)
\\
&\qquad +\cdots +
\sum_{j_1,\ldots, j_m=1}^n a^{(i)}_{g_{j_1}\cdots g_{j_m}}b_{g_{p_1}}^{(j_1)}\cdots b_{g_{p_m}}^{(j_1)}=0
\end{split}
\end{equation*}
for each $i=1,\ldots,n$. We consider the matrices
$$
J_F(0)=\left[a^{(i)}_{g_{j_1}}\right]_{i,j_1=1,\ldots,n},\quad  \quad
 A_{n\times n^k}:=\left[a^{(i)}_{g_{j_1} g_{j_2}\cdots g_{j_k}}\right]_{i,j_1,\ldots,j_k=1,\ldots,n}
$$
for $ 2\leq k\leq m$,
and the column matrices
$$
B_{n\times 1}^{(\beta)}:=\left[b_\beta^{(i)}\right]_{i=1,\ldots,n},\quad \quad
B_{n^k\times 1}^{(\beta)} := \left[ \sum_{\sigma_1\cdots \sigma_k=\beta, \sigma_1,\ldots,\sigma_k\in \FF_n^+\backslash \{g_0\}} b_{\sigma_1}^{(j_1)} \cdots b_{\sigma_k}^{(j_k)}\right]_{j_1,\ldots,j_k=1,\ldots,n}
$$
for $2\leq k\leq m$.
The equality above is equivalent to
$$
J_F(0)B_{n\times 1}^{(\beta)}+A_{n\times n^2} B_{n^2\times 1}^{(\beta)} +\cdots + A_{n\times n^m} B_{n^m\times 1}^{(\beta)}=0_{n\times 1},
$$
where $0_{n\times 1}$ is the column zero matrix. Since the entries
of the matrices  $B_{n^2\times 1}^{(\beta)}, \ldots, B_{n^m\times
1}^{(\beta)}$ contain only  coefficients $b_\omega^{(j)}$, where
$|\omega|\leq m-1$ and $j=1,\ldots,n$, the relation
\begin{equation*}
B_{n\times 1}^{(\beta)}=-J_F(0)^{-1}A_{n\times n^2} B_{n^2\times 1}^{(\beta)} -\cdots - J_F(0)^{-1}A_{n\times n^m} B_{n^m\times 1}^{(\beta)}
\end{equation*}
shows that the coefficients $\{b_\beta^{(i)}\}_{|\beta|=m,
i=1,\ldots,n}$ are uniquely determined. This completes our proof by
induction. Therefore, the  item  (i) holds. Since $G(0)=0$ and $\det
J_G(0)\neq 0$, the result we proved above implies the existence of
an $n$-tuple of formal power series $H=(H_1,\ldots, H_n)$ such that
$H(0)=0$, $\det J_H(0)\neq 0$, and $G\circ H=id$. Hence, and using
item (i), we deduce that
$$
H=id\circ H=(F\circ G)\circ H=F\circ (G\circ H) =F\circ id =F
$$
and $G\circ F=id$. The uniqueness of $G$ is now obvious.  The proof is complete.
\end{proof}

The $n$-tuple $G=(G_1,\ldots, G_n)$ of Theorem \ref{inv-series} is called the inverse
of $F=(F_1,\ldots, F_n)$ with respect to the composition of power series.
We remark that under the conditions of Theorem \ref{inv-series}, the
composition  map $C_F:{\bold S}[Z_1,\ldots,Z_n]\to {\bold
S}[Z_1,\ldots,Z_n]$ defined by $C_F \Lambda:=\Lambda\circ F$ is an algebra
isomomorphism.

Let $H_n$ be an $n$-dimensional complex  Hilbert space with
orthonormal
      basis
      $e_1$, $e_2$, $\dots,e_n$, where $n\in\{1,2,\dots\}$.
       We consider the full Fock space  of $H_n$ defined by
      $$F^2(H_n):=\CC 1\oplus \bigoplus_{k\geq 1} H_n^{\otimes k},$$
      where   $H_n^{\otimes k}$ is the (Hilbert)
      tensor product of $k$ copies of $H_n$.
       We denote $e_\alpha:=
e_{i_1}\otimes\cdots \otimes  e_{i_k}$  if $\alpha=g_{i_1}\cdots
g_{i_k}$, where $i_1,\ldots, i_k\in \{1,\ldots,n\}$, and
$e_{g_0}:=1$. Note that $\{e_\alpha\}_{\alpha\in \FF_n^+}$ is an
orthonormal basis for $F^2(H_n)$. Define the left  (resp.~right)
creation
      operators  $S_i$ (resp.~$R_i$), $i=1,\ldots,n$, acting on $F^2(H_n)$  by
      setting
      $$
       S_i\varphi:=e_i\otimes\varphi, \qquad  \varphi\in F^2(H_n),
      $$
       (resp.~$
       R_i\varphi:=\varphi\otimes e_i$). Note that $S_iR_j=R_jS_i$ for $i,j\in \{1,\ldots, n\}$.
The noncommutative disc algebra $\cA_n$ (resp.~$\cR_n$) is the norm
closed algebra generated by the left (resp.~right) creation
operators and the identity. The   noncommutative analytic Toeplitz
algebra $F_n^\infty$ (resp.~$R_n^\infty$)
 is the the weakly
closed version of $\cA_n$ (resp.~$\cR_n$). These algebras were
introduced in \cite{Po-von} in connection with a noncommutative
version of the classical  von Neumann inequality (\cite{von}), and
have  been studied
    in several papers
\cite{Po-charact},  \cite{Po-multi},  \cite{Po-funct},
\cite{Po-analytic}, \cite{Po-disc}, \cite{Po-poisson},
 \cite{ArPo},
  \cite{DP1}, \cite{DP2},   \cite{DP},
  \cite{ArPo2},  and \cite{Po-curvature}.

Let $\Omega\subset B(\cH)^n$ be a set containing a ball
$[B(\cH)^n]_r$ for some $r>0$. We say that $f:\Omega\to B(\cH)$ is a
free holomorphic function  on $\Omega$ if there are some complex
numbers $a_\alpha$, $\alpha\in \FF_n^+$, such that
$$
f(X)=\sum_{k=0}^\infty \sum_{|\alpha|=k} a_\alpha X_\alpha, \qquad X=(X_1,\ldots, X_n)\in \Omega,
$$
where  the convergence is in the operator norm topology.  As in \cite{Po-holomorphic},
 one can show that  any free holomorphic function on $\Omega$ has a  unique representation as above.

If $f=(f_1,\ldots, f_n)$  is an    $n$-tuple  of
 formal power series, we define the radius of convergence  of $f$ by setting $r(f)=\min_{i=1,\ldots,n} r(f_i)$.
 According to \cite{Po-holomorphic}, $f_i$ is a free holomorphic function on the open ball
  $[B(\cH)^n]_{r(f)}$ for any $i=1,\ldots,n$.
The next result can be viewed as an inverse function theorem for free holomorphic functions.

\begin{theorem} \label{inv-holo} Let  $f=(f_1,\ldots, f_n)$  be  an   $n$-tuple  of
 formal power series with nonzero radius of convergence such that  $f(0)=0$ and $\det J_f(0)\neq 0$.
  Let  $g=(g_1,\ldots, g_n)$ be  the inverse power series  of $f$ with respect to the composition.

  If $g$ has a non-zero radius of convergence, then
there are open neighborhoods  $D$ and $G$ of $0$ in $B(\cH)^n$ such
that $f|_D:D\to G$  is a bijective free holomorphic function whose
inverse is a free holomorphic on $G$ which  coincides with
$g|_G:G\to D$.
\end{theorem}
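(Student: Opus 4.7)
The plan is to lift the formal identities $g\circ f = \mathrm{id}$ and $f\circ g = \mathrm{id}$, provided by Theorem \ref{inv-series}, to genuine operator identities $g(f(X))=X$ and $f(g(Y))=Y$ on small operator balls around the origin; once these are in hand, standard continuity arguments produce the required open neighborhoods $D$ and $G$.

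First I would establish a ``composition-equals-evaluation'' lemma: if $F$ is a formal power series with radius of convergence $r(F)>0$ and $H=(H_1,\ldots,H_n)$ is an $n$-tuple of formal power series with $H(0)=0$ and $r(H)>0$, then for every $X=(X_1,\ldots,X_n)\in B(\cH)^n$ lying in a sufficiently small ball $[B(\cH)^n]_\rho$, the operator $F(H(X))$ coincides with the evaluation at $X$ of the formal composite $F\circ H$. Since $H$ is free holomorphic on $[B(\cH)^n]_{r(H)}$ and $H(0)=0$, continuity of $H$ at $0$ lets us shrink $\rho$ so that $H([B(\cH)^n]_\rho)\subseteq [B(\cH)^n]_{r(F)-\varepsilon}$ for some $\varepsilon>0$; hence the series $F(H(X))=\sum_{\beta}a_\beta (H(X))_\beta$ converges in operator norm. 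Using the noncommutative Cauchy estimates from \cite{Po-holomorphic} for the homogeneous components of $F$ and each $H_j$, each $(H(X))_\beta$ then expands as an absolutely norm-convergent series in the monomials $X_\alpha$, and the resulting double sum over $\FF_n^+\times\FF_n^+$ is dominated by a product of two convergent geometric-type majorants; this justifies a Fubini-style rearrangement yielding $F(H(X))=(F\circ H)(X)$.

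Applying this lemma with $(F,H)=(g,f)$ and $(F,H)=(f,g)$ in turn, and invoking Theorem \ref{inv-series}, I obtain radii $r_1,r_2>0$ such that $g(f(X))=X$ for all $X\in [B(\cH)^n]_{r_1}$ and $f(g(Y))=Y$ for all $Y\in [B(\cH)^n]_{r_2}$. Shrinking $r_1$ if necessary, continuity of $f$ at $0$ also ensures $f([B(\cH)^n]_{r_1})\subseteq [B(\cH)^n]_{r_2}$. Setting $D:=[B(\cH)^n]_{r_1}$ and $G:=\{Y\in [B(\cH)^n]_{r_2}:g(Y)\in D\}$, the set $G$ is an open neighborhood of $0$ because $g$ is continuous on $[B(\cH)^n]_{r(g)}$. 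For $X\in D$ we have $f(X)\in [B(\cH)^n]_{r_2}$ and $g(f(X))=X\in D$, so $f(X)\in G$; and for $Y\in G$ the definition of $G$ gives $g(Y)\in D$ with $f(g(Y))=Y$. Thus $f|_D:D\to G$ and $g|_G:G\to D$ are mutually inverse bijections, both free holomorphic on their respective open sets.

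The main obstacle is the absolute-convergence / rearrangement step in the composition lemma: one must control a double sum indexed by words in the free semigroup $\FF_n^+$ and verify that the operator-norm value of the outer series agrees with the formal rearrangement as a power series in the $X_\alpha$'s. The decisive input is the noncommutative Cauchy inequality from \cite{Po-holomorphic}, which provides geometric decay of the norms of the homogeneous parts of $F$ and of each $H_j$ on any strict sub-ball of their radii of convergence; this decay supplies the absolute convergence that legitimizes Fubini and identifies the operator-valued sum with $(F\circ H)(X)$.
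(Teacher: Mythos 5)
Your proposal is correct and follows essentially the same route as the paper: both reduce the theorem to showing that the formal identities $g\circ f=\mathrm{id}$ and $f\circ g=\mathrm{id}$ become genuine operator identities on small balls around $0$, and then finish with the same elementary point-set argument (one of $D$, $G$ taken to be a ball and the other realized as a preimage under a continuous map). The only difference is that where you prove the composition-equals-evaluation step directly via noncommutative Cauchy estimates and a Fubini rearrangement, the paper obtains it by combining the Schwartz lemma for free holomorphic functions (to control the range of $f$ near $0$) with the composition theorem of \cite{Po-automorphism} and the uniqueness theorem for free holomorphic functions.
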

\begin{proof} First, note that according to Theorem \ref{inv-series}, since $f(0)=0$ and $\det J_f(0)\neq 0$,  there is an $n$-tuple $g=(g_1,\ldots, g_n)$ of formal  power series   such that $g(0)=0$ and
$g\circ f=f\circ g=id$.
Assume that  $f$ and $g$
have  nonzero radius of convergence $r(f)>0$ and $r(g)>0$, respectively.
Fix $\epsilon_0>0$ such that
 $\epsilon_0<r(g)$.
Since $r(f)>0$  and $f(0)=0$, the Schwartz lemma for free holomorphic functions
 (see \cite{Po-holomorphic}) implies that there is $\delta_0\in (0, r(f))$ such that
$\|f(Y)\|<r(g)-\epsilon_0$ for any $Y\in [B(\cH)^n]_{\delta_0}^-$. On the other hand, using Theorem 1.2 from \cite{Po-automorphism}, the composition $Y\mapsto g(f(Y))$ is a free holomorphic function on $[B(\cH)^n]_{\delta_0}^-$. Due to the uniqueness theorem for free holomorphic functions  and the fact that $g\circ f=id$ as formal power series, we deduce that $g(f(Y))=Y$
for any $Y\in [B(\cH)^n]_{\delta_0}^-$. Hence, $f|_{[B(\cH)^n]_{\delta_0}^-}$ is a one-to-one free holomorphic function.

Now, fix $c_0\in (0, \delta_0)$.
 Since $r(g)>0$  and $g(0)=0$, using again the Schwartz lemma for free holomorphic functions, we find  $\gamma\in (0, r(g))$ such that
$\|g(X)\|<\delta_0-c_0$ for any $X\in [B(\cH)^n]_\gamma^-$. As above, the composition $X\mapsto f(g(X))$ is a free holomorphic function on $[B(\cH)^n]_\gamma^-$. Due to the uniqueness theorem for free holomorphic functions  and that $f\circ g=id$ as formal power series, we deduce that $f(g(X))=X$
for any $X\in [B(\cH)^n]_\gamma$. Consequently, $g|_{[B(\cH)^n]_{\gamma}}$ is a one-to-one free holomorphic function.

Set $G:=[B(\cH)^n]_\gamma$ and $D:=g([B(\cH)^n]_\gamma)$. Note that $g$ and $f$ are free holomorphic (and, therefore, continuous) on $[B(\cH)^n]_{r(g)}\supset G$ and $[B(\cH)^n]_{\delta_0}\supset [B(\cH)^n]_{\delta_0-c_0}\supset D$, respectively.
Due to the fact that $f|_{[B(\cH)^n]_{\delta_0}}:[B(\cH)^n]_{\delta_0}\to B(\cH)^n$ is a one-to-one continuous  function and $f(g(X))=X$
for any $X\in [B(\cH)^n]_\gamma$,  we deduce that  the pre-image
$\left((f|_{[B(\cH)^n]_{\delta_0}}\right)^{-1}\left([B(\cH)^n]_\gamma\right)$ is an open set in $[B(\cH)^n]_{\delta_0}$ which coincides with
$$[B(\cH)^n]_{\delta_0}\cap g([B(\cH)^n]_\gamma)=[B(\cH)^n]_{\delta_0-c_0}\cap g([B(\cH)^n]_\gamma)=g([B(\cH)^n]_\gamma=D.$$
Consequently,  since $D\subset [B(\cH)^n]_{\delta_0}$ is an open set in $[B(\cH)^n]_{\delta_0}$, we deduce that   $D$ is an open set in $B(\cH)^n$.
The proof is complete.
\end{proof}
In Theorem \ref{inv-holo},  we conjecture  that the condition that $g$ has a non-zero radius of convergence is a consequence of the fact that   $f=(f_1,\ldots, f_n)$ has  nonzero radius of convergence such that  $f(0)=0$ and $\det J_f(0)\neq 0$. However, this remains an open problem.

We  also remark that there is a converse for Theorem \ref{inv-holo}.
Let $D,G$ be open   neighborhoods of $0$ in $B(\cH)^n$ and let $\varphi:D\to G$ and $\psi:G\to D$ be free holomorphic functions such that $\psi=(\psi_1,\ldots, \psi_n)$ is the inverse of $\varphi=(\varphi_1,\ldots, \varphi_n)$. Then the associated formal power series are inverses of each other with respect to the composition.
Indeed, assume that $\varphi_i$ has the representation $\sum_{k=1}^\infty \sum_{|\alpha|=k} a^{(i)}_\alpha X_\alpha$ on $D$, and $\psi_i$ has the representation $\sum_{k=1}^\infty \sum_{|\alpha|=k} c^{(i)}_\alpha X_\alpha$ on $G$.
Then, we ca find $0<r<1$ such that  $[B(\cH)^n]_r^-\subset G$ and
$\varphi(\psi(X))=X$ for any $X\in [B(\cH)^n]_r^-$, where the convergence of the series defining $\psi(X)$ and $\varphi(\psi(X))$ are in the operator norm topology.  Hence, we deduce that
$\varphi(\psi(rS_1,\ldots, rS_n))=(rS_1,\ldots, rS_n)$. Since $\varphi_i(\psi(rS_1,\ldots, rS_n)) $ is in the noncommutative disc algebra $\cA_n$, it has a unique Fourier representation
$\sum_{k=1}^\infty \sum_{|\alpha|=k} c_\alpha^{(i)} r^{|\alpha|} S_\alpha$, where the coefficients $c_\alpha^{(i)}$ are exactly those of the formal power series $\varphi_i\circ \psi$. The equality above shows that $c_\alpha^{(i)}=0$ if $|\alpha|\geq 2$ and  $c_{g_j}^{(i)}=\delta_{ij}$. Therefore, $\varphi\circ \psi=id$. Due to Theorem \ref{inv-series}, we also deduce that $\psi\circ \varphi=id$, which proves our assertion.

\section{Polynomial automorphisms of $B(\cH)^n$}

In this section   we introduce the set of $n$-tuples of polynomials
  with property $(\cA)$, which is
one of the three classes of $n$-tuples $f=(f_1,\ldots, f_n)$ of
formal power series for which an operator model theory and dilation
theory  for the domain $\BB_f(\cH)$ will be developed in the coming
sections.

Let $\CC[Z_1,\ldots, Z_n]$ be the algebra of noncommutative
polynomials over $\CC$ (complex numbers) and noncommuting
indeterminates $Z_1,\ldots, Z_n$. We  say that an $n$-tuple
$p=(p_1,\ldots, p_n)$ of
   polynomials
 is invertible in $\CC[Z_1,\ldots, Z_n]^n$  with respect to the
composition if
  there exists an
 $n$-tuple $q=(q_1,\ldots, q_n)$ of
  polynomials such that
 $p\circ q=q\circ p=id.
 $
We remark that such an $n$-tuple of polynomials induces a free
holomorphic automorphism of $B(\cH)^n$, i.e., the map $\Phi_p:
B(\cH)^n\to B(\cH)^n$ defined by
$$\Phi_p(X):=(p_1(X),\ldots, p_n(X)),\qquad X=(X_1,\ldots, X_n)\in B(\cH)^n.
$$
We say that  $\Phi_p$  is a polynomial automorphism  of $B(\cH)^n$
and write $\Phi_p\in Aut(B(\cH)^n)$. Note that if $p,p'$ are
$n$-tuples of polynomials and  $\Phi_p, \Phi_{p'}$ are in  $
Aut(B(\cH)^n)$, then so is $\Phi_{p\circ p'}$ and $\Phi_{p\circ
p'}=\Phi_p \Phi_{p'}$.

\begin{theorem} \label{poly-equi} If $p=(p_1,\ldots, p_n)$ is  an $n$-tuple of
 noncommutative polynomials in $Z_1,\ldots, Z_n$, then the following statements are equivalent.
 \begin{enumerate}
\item[(i)]  $p$ is invertible in $\CC[Z_1,\ldots, Z_n]^n$  with respect to the
composition.
\item[(ii)]  There exists an
 $n$-tuple $q=(q_1,\ldots, q_n)$ of
 noncommutative polynomials in $Z_1,\ldots, Z_n$ such that
 $ q\circ p=id$.
 \item[(iii)]   $Z_1,\ldots, Z_n$  are contained in the linear span of
$ \{p_\alpha\}_{\alpha\in \FF_n^+}$ (where $p_0:=I$).
 \item[(iv)] The set
 $\{p_\alpha\}_{\alpha\in \FF_n^+}$  is  a linear  basis in
 $\CC[Z_1,\ldots,Z_n]$.

 \end{enumerate}
 \end{theorem}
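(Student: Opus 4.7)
\medskip
\noindent The plan is to close the cyclic chain $(i)\Rightarrow(ii)\Rightarrow(iii)\Rightarrow(iv)\Rightarrow(i)$. The first two implications are bookkeeping: $(i)\Rightarrow(ii)$ is immediate from the definition, and for $(ii)\Rightarrow(iii)$ one writes each polynomial $q_i=\sum_{\alpha\in\FF_n^+}c^{(i)}_\alpha Z_\alpha$ (a finite sum) and reads off from $q_i\circ p=Z_i$ the relation $Z_i=\sum_\alpha c^{(i)}_\alpha p_\alpha$, placing $Z_1,\dots,Z_n$ in the linear span of $\{p_\alpha\}_{\alpha\in\FF_n^+}$.

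\medskip
\noindent For $(iii)\Rightarrow(iv)$, the spanning part extends from the $Z_i$'s to all monomials via $p_\alpha p_\beta=p_{\alpha\beta}$: any $Z_\gamma$ is a product of $Z_i$'s, each expanded as a finite combination of $p_\alpha$'s, so $Z_\gamma$ itself is a finite combination of $p_\beta$'s. Linear independence is the substantive part. From (iii) I first produce an inverse: writing $Z_i=\sum_\alpha c^{(i)}_\alpha p_\alpha$ (finite) and setting $q_i:=\sum_\alpha c^{(i)}_\alpha Z_\alpha$ yields a polynomial $q$ with $q\circ p=id$. Since $p(0)$ need not vanish, I introduce the translation $a:=p(0)$, $\tilde p(Z):=p(Z)-a$, and $\tilde q(Z):=q(Z+a)$. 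Then $\tilde p(0)=0$; $\tilde q(0)=q(a)=q(p(0))=0$ using $q\circ p=id$; and $\tilde q\circ\tilde p=id$. Lemma \ref{Jacobian} now gives $J_{\tilde q}(0)J_{\tilde p}(0)=I_n$, so $\det J_p(0)=\det J_{\tilde p}(0)\ne 0$, and Theorem \ref{Schroder} yields the linear independence of $\{p_\alpha\}_{\alpha\in\FF_n^+}$.

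\medskip
\noindent For $(iv)\Rightarrow(i)$, the construction above again produces a polynomial $q$ with $q\circ p=id$, and the same shift gives polynomials $\tilde p,\tilde q$ vanishing at $0$ with $\tilde q\circ\tilde p=id$ and $\det J_{\tilde p}(0)\ne 0$. Applying Theorem \ref{inv-series} to $\tilde p$ yields a unique formal power series $G$ with $G(0)=0$, $\tilde p\circ G=id$, and $G\circ\tilde p=id$. From $\tilde q\circ\tilde p=G\circ\tilde p=id$ and the injectivity of the composition map $\Psi\mapsto\Psi\circ\tilde p$ on formal power series (which is Theorem \ref{Schroder} applied to $\tilde p$), I conclude $G=\tilde q$, hence $\tilde p\circ\tilde q=id$. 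Undoing the shift gives $p\circ q=id$, establishing (i).

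\medskip
\noindent The main obstacle is that $p(0)$ may be non-zero, which prevents a direct invocation of Theorems \ref{Schroder} and \ref{inv-series}. The translation $\tilde p:=p-p(0)$, together with the key observation $\tilde q(0)=q(p(0))=0$ afforded by $q\circ p=id$, reduces everything to the $f(0)=0$ setting in which those theorems are proved.
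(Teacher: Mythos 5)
Your proof is correct, and it closes the equivalences by a route that differs from the paper's in two places. The paper proves $(iii)\Rightarrow(iv)$ and $(iii)\Rightarrow(i)$ first under the assumption $p(0)=0$, and then handles $p(0)\neq 0$ by passing to $p':=p-p(0)$ and running a dimension count comparing $\dim\,\text{\rm span}\{p_\alpha\}_{|\alpha|\leq m}$ with $\dim\,\text{\rm span}\{p'_\alpha\}_{|\alpha|\leq m}$ (essentially repeating the second half of the proof of Theorem \ref{Schroder}); its final step to $(i)$ is the cancellation argument: linear independence of $\{p_\alpha\}$ makes $p$ not a right zero divisor, so $(p\circ q-id)\circ p=0$ forces $p\circ q=id$. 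You instead exploit the affine substitution $\tilde p:=p-p(0)$, $\tilde q(Z):=q(Z+a)$, with the clean observation $\tilde q(0)=q(p(0))=0$ coming from $q\circ p=id$; this reduces everything at once to the $f(0)=0$ setting, and for the closing implication you invoke Theorem \ref{inv-series} to produce the two-sided formal inverse $G$ of $\tilde p$ and identify it with $\tilde q$. Your approach buys a shorter treatment of the nonzero constant term (you lean on the fact that Theorem \ref{Schroder} is already stated without the hypothesis $f(0)=0$, whereas the paper re-derives that case inline), at the cost of importing the heavier Theorem \ref{inv-series} where the paper's right-zero-divisor cancellation is purely algebraic and self-contained. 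Two small citation points: the injectivity of $\Psi\mapsto\Psi\circ\tilde p$ on all of ${\bold S}[Z_1,\ldots,Z_n]$ is established in the \emph{proof} of Theorem \ref{Schroder} (and in the remark after Theorem \ref{inv-series}) rather than in its statement, so you should point there; and note that you could shortcut your last step, since Theorem \ref{inv-series} applied with $F=\tilde q$ and $G=\tilde p$ (using $\tilde q\circ\tilde p=id$ and $\tilde p(0)=0$) yields $\tilde p\circ\tilde q=id$ directly, without introducing $G$ at all.
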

\begin{proof} First we consider the case when $p_i(0)=0$,
$i=1,\ldots,n$. The implications $(i)\implies (ii)$ and
$(ii)\implies (iii)$ are obvious. To prove that  $(iii)\implies
(iv)$, assume that condition (iii) holds.  Since $Z_1,\ldots, Z_n$
are contained in the linear span of $ \{p_\alpha\}_{\alpha\in
\FF_n^+}$, there are some complex numbers
$\{a_\alpha^{(i)}\}_{\alpha\in \FF_n^+, |\alpha|\leq m}$ such that
$Z_i=\sum_{|\alpha|\leq m}a_\alpha^{(i)} p_\alpha(Z)$,
$i=1,\ldots,n$. Setting $q=(q_1,\ldots, q_n)$ with
$q_i(Z):=\sum_{|\alpha|\leq m}a_\alpha^{(i)}Z_\alpha$, we have
$q(0)=0$ and $q\circ p=id$.     Due to Lemma \ref{Jacobian}, we
obtain
$
\det J_p(0) \det J_q(0)=1,
$
which implies  $\det J_p(0)\neq 0$. Using now Theorem
\ref{Schroder}, we deduce that  the set
 $\{p_\alpha\}_{\alpha\in \FF_n^+}$  is  a linearly    independent  in
 $\CC[Z_1,\ldots,Z_n]$. On the other hand, condition (iii) also
 implies that $\CC[Z_1,\ldots,Z_n]$ is spanned by $\{p_\alpha\}_{\alpha\in
 \FF_n^+}$. Therefore, condition (iv) holds.

 Since $(iv)\implies (iii)$ is obvious, it remains to prove that
 $(iii)\implies (i)$. As above, if (iii) holds, then there is an
 $n$-tuple $q=(q_1,\ldots, q_n)$ of polynomials with $q_i(0)=0$ such
 that $q\circ p=id$ and  the set
 $\{p_\alpha\}_{\alpha\in \FF_n^+}$  is  a linearly    independent  in
 $\CC[Z_1,\ldots,Z_n]$.  The latter property shows that $p$ is not a
 right zero divisor with respect to the composition of polynomials, that
 is, if $\psi\in \CC[Z_1,\ldots,Z_n]^n$ and $\psi\circ p=0$, then
 $\psi=0$. Due to relation $q\circ p=id$, we obtain $(p\circ
 q-id)\circ p=0$. Since $p$ is not a
 right zero divisor, we deduce that $p\circ q=id$, which completes the
 proof.

Now, we consider the  case when $p(0)\neq 0$.   Assume that (iii)
holds. Then $p_i':=p_i-p_i(0)I $, $i=1,\ldots,n$, has the property
that $p_i'(0)=0$ and  $Z_1,\ldots, Z_n$  are contained in the linear
span of $ \{p_\alpha'\}_{\alpha\in \FF_n^+}$. Applying  the first
part of the proof to $p':=(p_1',\ldots, p_n')$, we deduce that  the
set
 $\{p'_\alpha\}_{\alpha\in \FF_n^+}$ is  a linear basis for  $\CC[Z_1,\ldots,Z_n]$.

 Consequently,
 setting $\cM_k:=\text{\rm span}\{p_\alpha'\}_{|\alpha|\leq k }$, $k\geq 0$, we have
 $\dim \cM_k=1+n+n^2+\cdots n^k$. Now, assume that $\{p_\alpha\}_{\alpha\in \FF_n^+}$ is not
 linearly independent in $\CC[Z_1,\ldots,Z_n]$. Then there exists $m\geq 1$ such that
 $\{p_\alpha\}_{|\alpha|\leq m}$ is not
 linearly independent. This shows that the space
   $\cN_m:=\text{\rm span}\{p_\alpha\}_{|\alpha|\leq m }$ has
    $\dim \cN_m$ strictly less than $\dim \cM_m=1+n+n^2+\cdots n^m.
   $
   On the other hand, note that for each $\alpha\in \FF_n^+$, $p_\alpha$ is a linear
   combination of $p'_\beta$ with $\beta\in \FF_n^+$, $|\beta|\leq |\alpha|$, and each $p'_\alpha$
is a linear
   combination of $p_\beta$ with $\beta\in \FF_n^+$, $|\beta|\leq |\alpha|$. Consequently,
    $\cN_m=\cM_m$ and, therefore,
$\dim \cN_m=\dim \cM_m$, which  is in contradiction with the strict
inequality above.  Therefore, the set
 $\{p_\alpha\}_{\alpha\in \FF_n^+}$  is  a linearly    independent  in
 $\CC[Z_1,\ldots,Z_n]$.  Since $\CC[Z_1,\ldots,Z_n]$ is spanned by $\{p_\alpha\}_{\alpha\in
 \FF_n^+}$, we deduce that
 $\{p_\alpha\}_{\alpha\in \FF_n^+}$  is  a linear  basis in
 $\CC[Z_1,\ldots,Z_n]$, which shows that  condition (iv) holds.
 Moreover, it shows
 that $p$ is not a
 right zero divisor with respect to the composition of polynomials.
   Since $Z_1,\ldots, Z_n$  are contained in the linear span of
$ \{p_\alpha\}_{\alpha\in \FF_n^+}$, we find $q\in \CC[Z_1,\ldots,
Z_n]$ such that
 $q\circ p=id$.  Hence,  we obtain $(p\circ
 q-id)\circ p=0$. Since $p$ is not a
 right zero divisor, we deduce that $p\circ q=id$, which implies (i).
The proof is complete.
\end{proof}

We say that $p=(p_1,\ldots, p_n)$ has property $(\cA)$ if any of the equivalences of Theorem \ref{poly-equi} holds.

\begin{example}\label{Ex2} If
\begin{equation*}
\begin{split}
p_1&= a_0I+a_1Z_1+a_2 Z_2 + a_3Z_3Z_2\\
p_2&=  b_0I+b_1Z_2+ b_2Z_3 + b_3Z_3^2 \\
p_3&= c_0I + c_1Z_3
\end{split}
\end{equation*}
are polynomials with complex coefficients such that $a_1 b_1c_1\neq
0$ then $p=(p_1,p_2, p_3)$ has property  $(\cA)$.
\end{example}

In what  follows we present a large class  of  polynomial
automorphisms of $B(\cH)^n$.
\begin{proposition}\label{Ex} Let $p=(p_1,\ldots, p_n)$ be an $n$-tuple
of noncommutative polynomials in $Z_1,\ldots, Z_n$ of the form
\begin{equation*}
\begin{split}
[p_1,\ldots, p_n]&=[a_1I,\ldots, a_n I]+[Z_1,\ldots, Z_n]A\\
&\qquad \quad +[q_1(Z_2,\ldots,Z_n), q_2(Z_3,\ldots, Z_n),\ldots,
q_{n-1}(Z_n), 0]A,
\end{split}
\end{equation*}
where $a_i\in \CC$, $A\in M_{n\times n}$ is an invertible scalar
matrix, and $q_1,\ldots, q_{n-1}$ are arbitrary noncommutative
polynomials in the specified indeterminates.
 Then   $p$  has property $(\cA)$.
\end{proposition}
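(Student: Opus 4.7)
The plan is to verify condition (iii) of Theorem \ref{poly-equi}, namely that $Z_1,\ldots, Z_n$ lie in the linear span of $\{p_\alpha\}_{\alpha\in \FF_n^+}$. The key observation is that since $A$ is invertible, one can invert the ``linear part'' of the defining relation to solve for the $Z_j$'s modulo the lower-order pieces $q_j(Z_{j+1},\ldots,Z_n)$, which involve only indeterminates of strictly higher index. This triangular structure is custom-built for a reverse induction on $j$.

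Concretely, I would first multiply the defining equation on the right by $A^{-1}$ to obtain the identity
$$
[Z_1,\ldots, Z_n] = [p_1,\ldots, p_n]A^{-1}-[a_1,\ldots, a_n]A^{-1} -[q_1(Z_2,\ldots, Z_n), q_2(Z_3,\ldots, Z_n),\ldots, q_{n-1}(Z_n), 0],
$$
valid in $\CC[Z_1,\ldots, Z_n]^n$. Reading off the $j$-th entry gives
$$
Z_j = \sum_{i=1}^n (A^{-1})_{ij}\, p_i - c_j I - q_j(Z_{j+1},\ldots, Z_n),\qquad j=1,\ldots,n,
$$
where $c_j\in \CC$ depends only on the $a_i$'s and $A^{-1}$, and with the convention $q_n:=0$.

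Next I would establish by reverse induction on $j\in\{n, n-1,\ldots, 1\}$ that $Z_j\in \text{\rm span}\{p_\alpha\}_{\alpha\in \FF_n^+}$. The base case $j=n$ is immediate from the displayed formula since $q_n=0$, giving $Z_n$ as a linear combination of $p_1,\ldots, p_n$ and $p_0=I$. For the inductive step, assume $Z_{j+1},\ldots, Z_n$ already lie in $\text{\rm span}\{p_\alpha\}_{\alpha\in \FF_n^+}$. The crucial point is that this span is an algebra: a product $p_\alpha p_\beta$ equals $p_{\alpha\beta}$, hence the linear span of $\{p_\alpha\}_{\alpha\in \FF_n^+}$ is closed under multiplication. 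Therefore any noncommutative polynomial in $Z_{j+1},\ldots, Z_n$ — in particular $q_j(Z_{j+1},\ldots, Z_n)$ — lies in the span. Substituting into the displayed formula for $Z_j$ yields $Z_j\in \text{\rm span}\{p_\alpha\}_{\alpha\in \FF_n^+}$, completing the induction.

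Having verified (iii) of Theorem \ref{poly-equi}, the equivalences there immediately imply that $p$ has property $(\cA)$. There is really no hard step here; the only thing to be careful about is the order of the induction (it must be reverse, since $q_j$ is a polynomial in the \emph{higher-index} indeterminates) and the observation that the multiplicative closure of $\{p_\alpha\}_{\alpha\in \FF_n^+}$ is automatic from the free semigroup structure of $\FF_n^+$.
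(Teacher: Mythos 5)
Your proposal is correct and follows exactly the paper's argument: the paper likewise reduces to condition (iii) of Theorem \ref{poly-equi}, multiplies the defining relation on the right by $A^{-1}$, and solves for $Z_n, Z_{n-1},\ldots, Z_1$ in that order. You have simply written out the reverse induction and the multiplicative closure of $\text{\rm span}\{p_\alpha\}_{\alpha\in\FF_n^+}$ that the paper's one-line proof leaves implicit.
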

\begin{proof}
 According to Theorem \ref{poly-equi}, it is enough to show that  $Z_1,\ldots, Z_n$  are contained in the linear span of
$ \{p_\alpha\}_{\alpha\in \FF_n^+}$. To solve the formal  system, multiply (to the right)
both sides  of the equality by $A^{-1}$ and solve for the indeterminates $Z_n,Z_{n-1},\ldots, Z_1$ in this order.
\end{proof}

   As we saw in the proof of Theorem \ref{poly-equi}, if  $p=(p_1,\ldots, p_n)$ is an $n$-tuple
of noncommutative polynomials  with property  $(\cA)$,
then the Jacobian matrix
$$
J_p(0):=\left[\left.\frac {\partial p_i}{\partial
Z_{j}}\right|_{Z=0}\right]_{1\leq i,j\leq n}
  $$
is invertible.  Moreover,   for the class of noncommutative
polynomials considered in Proposition \ref{Ex}, we have that $
 J_p(X)$ is an invertible operator  for any $X\in B(\cH)^n$.   This leads
to the following question. Is the Jacobian conjecture true in our
noncommutative setting? In other words, assuming that
$p=(p_1,\ldots, p_n)$ is an $n$-tuple of noncommutative polynomials
such that  the Jacobian matrix $
 J_p(X)$ is invertible for any $X\in
B(\cH)^n$ (or only for $X=0$), does this imply that $\Phi_p$ is a
polynomial automorphism of $B(\cH)^n$ ? Of course, this is true if  each polynomial $p_i$ has  degree  $1$.

Let  $p=(p_1,\ldots, p_n)$  be  be an $n$-tuple of noncommutative
polynomials in $Z_1,\ldots, Z_n$ with property $(\cA)$. We introduce
an inner product on $\CC[Z_1,\ldots,Z_n]$ by setting $\left< p_\alpha, p_\beta\right>:=\delta_{\alpha \beta}$, $\alpha,\beta\in \FF_n^+$.
Let $\HH^2(p)$ be the completion  of the linear space $\bigvee
\{p_\alpha\}_{\alpha\in \FF_n^+}$ with respect to this inner
product.
  It is easy to see that, due to Theorem \ref{poly-equi},
the noncommutative polynomials $\CC[Z_1,\ldots, Z_n]$ are dense in
$\HH^2(p)$. We define the  noncommutative domain
$$
\BB_p(\cH):=\{(X_1,\ldots, X_n)\in B(\cH)^n: \  \sum_{j=1}^n
p_j(X)p_j(X)^*\leq I\},
$$
which will be studied in the next sections.

\bigskip

\section{ Hilbert spaces of noncommutative formal power series}

In this section,   we introduce the class of $n$-tuples
$f=(f_1,\ldots, f_n)$ of
  of
formal power series (resp. free holomorphic functions) with property
$(\cS)$ (resp. $(\cF)$) and the  Hilbert space $\HH^2(f)$.
The associated domain $\BB_f(\cH)$ has a universal model
$(M_{Z_1},\ldots, M_{Z_n})$ of multiplication operators acting on
$\HH^2(f)$, which plays a crucial role   in the dilation theory on
the noncommutative domain $\BB_f(\cH)$.

We recall (see \cite{Po-holomorphic}) that  the algebra $H_{{\bf ball}}$~ of free holomorphic
functions on the open operatorial  $n$-ball of radius one is defined
 as the set of all power series $f=\sum_{\alpha\in
\FF_n^+}a_\alpha Z_\alpha$ with radius of convergence $\geq 1$,
i.e.,
 $\{a_\alpha\}_{\alpha\in \FF_n^+}$ are complex numbers  with
$\limsup_{k\to\infty} \left(\sum_{|\alpha|=k}
|a_\alpha|^2\right)^{1/2k}\leq 1.$
In this case, the mapping
$$
[B(\cH)^n]_1\ni (X_1,\ldots, X_n)\mapsto f(X_1,\ldots,
X_n):=\sum_{k=0}^\infty \sum_{|\alpha|=k}
 a_\alpha X_\alpha\in
B(\cH)
$$
is well defined, where  the convergence is in the operator norm topology. Moreover, the series converges absolutely, i.e., $\sum_{k=0}^\infty \left\|\sum_{|\alpha|=k} a_\alpha X_\alpha\right\|<\infty$, and uniformly  on any ball $[B(\cH)^n]_\gamma$ with $0\leq \gamma<1$.

Another case when  the  evaluation of $f$  can be defined is the following.
Assume that there exists an $n$-tuple $\rho=(\rho_1,\ldots, \rho_n)$ of strictly positive numbers such that
$$\limsup_{k\to\infty} \left(\sum_{|\alpha|=k}
|a_\alpha|\rho_\alpha\right)^{1/k}\leq 1.$$ Then the series
$f(X_1,\ldots, X_n):=\sum_{k=0}^\infty \sum_{|\alpha|=k}a_\alpha
X_\alpha$ converges absolutely and uniformly on any noncommutative
polydisc

 $$P({\bf r}):=\left\{(X_1,\ldots, X_n)\in B(\cH)^n:\ \|X_j\|\leq r_j, j=1,\ldots,n\right\}$$
of multiradius ${\bf{r}}=(r_1,\ldots, r_n)$ with $r_j<\rho_j$, $j=1,\ldots, n$.

We should also remark that, when $(X_1,\ldots, X_n)\in B(\cH)^n$ is a nilpotent $n$-tuple of operators, i.e.,
there is $m\geq 1$ such that $X_\alpha=0$ for all $\alpha\in \FF_n^+$ with $|\alpha|=m$, then  $f(X_1,\ldots,X_n)$ makes sense  since the series defining it  has only finitely many nonzero terms.

We need a few more definitions. Let $g=\sum_{k=0}^\infty
\sum_{|\alpha|=k} a_\alpha Z_\alpha$ be a formal power series in
indeterminates  $Z_1,\ldots, Z_n$.
 We denote by $\cC_g(\cH)$ (resp. $\cC_g^a(\cH)$, $\cC_g^{SOT}(\cH)$) the set of all
 $Y:=(Y_1,\ldots, Y_n)\in B(\cH)^n$ such that the series
$$
g(Y_1,\ldots, Y_n):=\sum_{k=0}^\infty \sum_{|\alpha|=k} a_\alpha Y_\alpha
$$
 is norm (resp. absolutely, SOT) convergent. These sets are called  sets of norm (resp. absolutely, SOT) convergence for the power series $g$.
We also introduce the set $\cC_g^{rad}(\cH)$  of all $Y:=(Y_1,\ldots, Y_n)\in B(\cH)^n$ such that there exists $\delta\in (0,1)$ with the property that
$rY\in \cC_g(\cH)$ for any $r\in (\delta, 1)$ and
$$
\widehat{g}(Y_1,\ldots, Y_n):=\text{\rm SOT-}\lim_{r\to 1}\sum_{k=0}^\infty \sum_{|\alpha|=k} a_\alpha r^{|\alpha|}Y_\alpha
$$
  exists.
Note that
$$ \cC_g^a(\cH)\subseteq \cC_g(\cH)\subseteq \cC_g^{SOT}(\cH)\quad \text{ and  } \quad
 \cC_g^{rad}(\cH)\subseteq \overline{\cC_g(\cH)}^{SOT}.
 $$

Now, consider    an $n$-tuple of
 formal power series $f=(f_1,\ldots, f_n)$ in indeterminates  $Z_1,\ldots, Z_n$  with the property that  the Jacobian
$$
\det J_f(0):=\det \left[\left. \frac{\partial f_i} {\partial
Z_j}\right|_{Z=0}\right]_{i,j=1}^n\neq 0.
$$  Due to Theorem \ref{Schroder},
    the set
 $\{f_\alpha\}_{\alpha\in \FF_n^+}$ (where $f_0:=I$)  is  linearly independent
in ${\bold S}[Z_1,\ldots,Z_n]$.   We  introduce an  inner product on
the linear span of $\{f_\alpha\}_{\alpha\in \FF_n^+}$ by setting
$
\left< f_\alpha, f_\beta\right>:=\delta_{\alpha \beta},
$
$\alpha,\beta\in \FF_n^+$.
 Let $\HH^2(f)$ be the
completion of the linear space  $\bigvee \{f_\alpha\}_{\alpha\in
\FF_n^+}$ with respect to this inner product. Assume now that
$f(0)=0$. Theorem \ref{inv-series} shows that $f$ is not  a {\it right
zero divisor}  with respect to the composition of power series,
i.e., there is no  non-zero power series $G$ in
$\mathbf{S}[Z_1,\ldots, Z_n]$ such that
 $G\circ f = 0$. Consequently,
the elements of  $\HH^2(f)$  can be seen as formal power series in
${\bold S}[Z_1,\ldots,Z_n]$  of  the form $\sum_{\alpha\in \FF_n^+}
a_\alpha f_\alpha$, where $\sum_{\alpha\in \FF_n^+}
|a_\alpha|^2<\infty$.

    Let $f=(f_1,\ldots, f_n)$  be  an $n$-tuple of
 formal power series  in $Z_1,\ldots, Z_n$ such that $f(0)=0$. We  say that $f$ has  property $(\mathcal{S})$
 if the following conditions
  hold.
\begin{enumerate}
\item[($\mathcal{S}_1$)] The $n$-tuple $f$ has nonzero radius of convergence  and
$ \det  J_f(0)\neq 0. $
\item[($\mathcal{S}_2$)]  The indeterminates   $Z_1,\ldots, Z_n$\ are in the Hilbert space
$\HH^2(f)$
 and  each left multiplication operator
  $M_{Z_i}:\HH^2(f)\to \HH^2(f)$  defined by
   $$
   M_{Z_i}\psi:=Z_i\psi, \qquad \psi\in \HH^2(f),
  $$
is a bounded multiplier of $\HH^2(f)$.
\item[($\mathcal{S}_3$)]
The left multiplication operators $M_{f_j}:\HH^2(f)\to \HH^2(f)$,
$M_{f_j}\psi=f_j \psi$,  satisfy the equations
\begin{equation*}
  M_{f_j}=f_j(M_{Z_1},\ldots, M_{Z_n}),\quad j=1,\ldots,n,
\end{equation*}
where  $(M_{Z_1},\ldots, M_{Z_n})$ is either in the convergence set
$\cC_f^{SOT}(\HH^2(f))$ or $\cC_f^{rad}(\HH^2(f))$.
\end{enumerate}
We remark that if $f$ is an $n$-tuple of noncommutative polynomials, then the condition $(\cS_3)$ is automatically satisfied.
We should also mention that, in case $(M_{Z_1},\ldots, M_{Z_n})$ is in
the set $\cC_f^{rad}(\HH^2(f))$, then  the condition
$(\mathcal{S}_3)$ should be understood as
$$
M_{f_j}=\widehat{f}_j(M_{Z_1},\ldots, M_{Z_n}):=\text{\rm
SOT-}\lim_{r\to 1} f_j(rM_{Z_1},\ldots, rM_{Z_n}),\quad
j=1,\ldots,n.
$$

\begin{remark} If $p=(p_1,\ldots, p_n)$ is an $n$-tuple of noncommutative polynomials with property $(\cA)$, then it has property  $(\cS)$.

\end{remark}

 \begin{proposition}\label{dense} If  $f=(f_1,\ldots, f_n)$ is an $n$-tuple of
 formal power series  with $f(0)=0$ and  property $(\mathcal{S})$, then $\CC[Z_1,\ldots, Z_n]$
  is dense in the Hilbert space $\HH^2(f)$.

 \end{proposition}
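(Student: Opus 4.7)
The plan is to show that the closed subspace $\cP \subseteq \HH^2(f)$ obtained by closing $\CC[Z_1,\ldots,Z_n]$ inside $\HH^2(f)$ is all of $\HH^2(f)$. By $(\mathcal{S}_2)$, every indeterminate $Z_i$ lies in $\HH^2(f)$, so $\cP$ is well-defined. Since each $M_{Z_i}$ is a bounded operator on $\HH^2(f)$ that maps $\CC[Z_1,\ldots,Z_n]$ into itself, $\cP$ is invariant under $M_{Z_1},\ldots,M_{Z_n}$. The constant vector $1 = f_0$ also lies in $\cP$.

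The heart of the argument is the invariance of $\cP$ under each $M_{f_j}$, which I would derive from $(\mathcal{S}_3)$. If $(M_{Z_1},\ldots,M_{Z_n}) \in \cC_f^{SOT}(\HH^2(f))$, then $M_{f_j}$ is the SOT limit of the partial sums
$$
p_N^{(j)}(M_{Z_1},\ldots,M_{Z_n}) := \sum_{k=0}^N \sum_{|\alpha|=k} a_\alpha^{(j)} M_{Z_\alpha},
$$
where $f_j = \sum_\alpha a_\alpha^{(j)} Z_\alpha$. Each $p_N^{(j)}$ is a polynomial in the $M_{Z_i}$, hence preserves $\cP$, and since $\cP$ is closed the SOT limit $M_{f_j}$ also preserves it. In the radial case, for each $r \in (\delta,1)$ the operator $f_j(rM_{Z_1},\ldots,rM_{Z_n})$ is the operator-norm limit of such polynomials and therefore preserves $\cP$; the subsequent SOT limit as $r \to 1$ preserves $\cP$ by closedness again. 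In either case, $M_{f_j}(\cP) \subseteq \cP$.

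Induction on $|\alpha|$ then yields
$$
f_\alpha = M_{f_{i_1}}\cdots M_{f_{i_k}}(1) \in \cP
$$
for every $\alpha = g_{i_1}\cdots g_{i_k} \in \FF_n^+$, using that $\cP$ contains $1$ and is invariant under every $M_{f_j}$. Since $\{f_\alpha\}_{\alpha \in \FF_n^+}$ is by construction an orthonormal basis of $\HH^2(f)$, this forces $\cP = \HH^2(f)$.

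I expect the only real obstacle to be a clean presentation of the radial case: one needs to invoke closedness of $\cP$ twice — once to pass from polynomials in $M_{Z_i}$ to the norm-convergent operator $f_j(rM_{Z_1},\ldots,rM_{Z_n})$ at fixed $r$, and then again to pass from the family indexed by $r$ to the SOT limit $M_{f_j}$. All remaining steps are direct consequences of the definitions and of property $(\mathcal{S})$.
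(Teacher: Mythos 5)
Your proof is correct and follows essentially the same route as the paper: both arguments reduce to approximating each $f_\alpha$ by noncommutative polynomials via the SOT (or radial-SOT) convergence supplied by condition $(\mathcal{S}_3)$, together with an induction on $|\alpha|$. Your packaging in terms of the closed invariant subspace $\cP$ merely abstracts the explicit two-step $\epsilon$-estimates the paper carries out for $f_jf_i$.
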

 \begin{proof}
Since $Z_i\in \HH^2(f)$ and $M_{Z_i}$ are bounded  multipliers of $\HH^2(f)$,
 we deduce that $Z_\alpha\in \HH^2(f)$ for any $\alpha\in \FF_n^+$ and,
 therefore, $\CC[Z_1,\ldots, Z_n]\subset \HH^2(f)$. Let $f_j$, $j=1,\ldots,n$, have the representation
 $f_j(Z_1,\ldots, Z_n)=\sum_{k=0}^\infty\sum_{\alpha\in \FF_n^+, |\alpha|=k}
c_\alpha^{(j)} Z_\alpha$. First,  we  assume that $(M_{Z_1},\ldots, M_{Z_n})$ is  in  the set $\cC_f^{SOT}(\HH^2(f))$
and
 $$f_j(M_{Z_1},\ldots, M_{Z_n})=\sum_{k=0}^\infty\sum_{\alpha\in \FF_n^+, |\alpha|=k}
c_\alpha^{(j)} M_{Z_\alpha},$$
where the convergence of the series is in the strong operator topology.
Then, for any $\epsilon>0$ and any polynomial $\psi\in \CC[Z_1,\ldots, Z_n]$, there exists $N_j\geq 1$
 such that
\begin{equation}
\label{approx1}
\left\|f_j(M_{Z_1},\ldots, M_{Z_n})\psi-\sum_{k=0}^{N_j}\sum_{\alpha\in \FF_n^+, |\alpha|=k}
c_\alpha^{(j)} M_{Z_\alpha}\psi\right\|_{\HH^2(f)}<\epsilon,\qquad j=1,\ldots,n.
\end{equation}
Fix $i,j\in \{1,\ldots, n\}$. Due to  relation \eqref{approx1}, we can find  polynomials $p$ and $q$ such that
$$\|f_j(M_{Z_1},\ldots, M_{Z_n})1-p\|_{\HH^2(f)}<\frac{\epsilon}{2\|M_{f_j}\|}$$ and
$$\|f_j(M_{Z_1},\ldots, M_{Z_n})p-qp\|_{\HH^2(f)}<\frac{\epsilon}{2}.
$$
Hence, and using condition $(\mathcal{S}_2)$,  we deduce that
\begin{equation*}
\begin{split}
\|f_jf_i-qp\|_{\HH^2(f)}&\leq \|f_j(M_{Z_1},\ldots, M_{Z_n})f_i(M_{Z_1},\ldots, M_{Z_n})1
-f_j(M_{Z_1},\ldots, M_{Z_n})p\|\\
&\qquad +\|f_j(M_{Z_1},\ldots, M_{Z_n})p-qp\|\\
&\leq \|f_j(M_{Z_1},\ldots, M_{Z_n})\|\frac{\epsilon}{2\|M_{f_j}\|}+\frac{\epsilon}{2}=\epsilon.
\end{split}
\end{equation*}
An inductive argument shows that each power series $f_\alpha$, $\alpha\in \FF_n^+$, can be approximated
in $\HH^2(f)$ by polynomials in $\CC[Z_1,\ldots, Z_n]$.
Taking into account that $\text{\rm span} \{f_\alpha\}_{\alpha\in \FF_n^+}$ is dense in
  $\HH^2(f)$, we deduce that $\CC[Z_1,\ldots, Z_n]$ is dense in $\HH^2(f)$.

Now, we consider the case when  $(M_{Z_1},\ldots, M_{Z_n})$ is  in  the set $\cC_f^{rad}(\HH^2(f))$ and
\begin{equation*}
\widehat{f}_j(M_{Z_1},\ldots, M_{Z_n})=\text{\rm SOT-}\lim_{r\to 1}
\sum_{k=0}^\infty\sum_{\alpha\in \FF_n^+, |\alpha|=k}
c_\alpha^{(j)}r^{|\alpha|} M_{Z_\alpha},
\end{equation*}
where the convergence of the series is in the   operator norm topology for each $0\leq r<1$.
Hence, we deduce that,   for any $\epsilon>0$ and any polynomial $\psi\in \CC[Z_1,\ldots, Z_n]$, there exists
$r_0\in (0,1)$ such that

\begin{equation*}
\left\|\widehat{f}_j(M_{Z_1},\ldots, M_{Z_n})\psi-\sum_{k=0}^\infty\sum_{\alpha\in \FF_n^+, |\alpha|=k}
c_\alpha^{(j)}r_0^{|\alpha|} M_{Z_\alpha}\psi\right\|_{\HH^2(f)}<\epsilon,\qquad j=1,\ldots,n.
\end{equation*}
 Using the convergence of the series in the operator norm topology, we find $N_j\geq 1$ such that
\begin{equation*}
\left\|\widehat{f}_j(M_{Z_1},\ldots, M_{Z_n})\psi-\sum_{k=0}^{N_j}\sum_{\alpha\in \FF_n^+, |\alpha|=k}
c_\alpha^{(j)}r_0^{|\alpha|} M_{Z_\alpha}\psi\right\|_{\HH^2(f)}<\epsilon,\qquad j=1,\ldots,n.
\end{equation*}
Now, one can proceed as  in the first part of the proof to show that $\CC[Z_1,\ldots, Z_n]$
  is dense in the Hilbert space $\HH^2(f)$.
The proof is complete.
 \end{proof}

According to \cite{Po-holomorphic} and \cite{Po-pluriharmonic}, the
noncommutative Hardy space
  $H_{\text{\bf ball}}^\infty (B(\cE,\cG))$   can be identified to the operator
  space
$  F_n^\infty\bar\otimes B(\cE,\cG)$ (the weakly closed operator
space generated by the spatial tensor product), where $F_n^\infty$
is the noncommutative analytic Toeplitz algebra. More precisely, a
bounded free holomorphic function $F$ is uniquely determined by its
{\it (model) boundary function} $\widetilde F\in
F_n^\infty\bar \otimes B(\cE, \cG)$ defined by
$\widetilde F:=\text{\rm SOT-}\lim_{r\to 1}
F(rS_1,\ldots, rS_n). $ Moreover, $F$ is  the noncommutative
Poisson transform  \cite{Po-poisson} of $\widetilde F$ at $X \in [B(\cH)^n]_1$, i.e.,
$
F(X )=(P_X\otimes I)[\widetilde F].
$
Similar results hold for bounded free holomorphic functions on the
noncommutative ball  $[B(\cH)^n]_\gamma$, $\gamma>0$.

The next result provides a characterization for  the $n$-tuples of
formal power series with property $(\cS)$.

\begin{lemma} \label{M2} Let  $f=(f_1,\ldots, f_n)$ be  an $n$-tuple  of
 formal power series with   $f(0)=0$.  Then $f$  has property $(\mathcal{S})$ if and only if
     the following conditions hold:
 \begin{enumerate}
 \item[(i)] the $n$-tuple  $f$ has nonzero radius of convergence and $\det J_f(0)\neq
 0$;
 \item[(ii)]  the inverse  of $f$, say $g=(g_1,\ldots,
 g_n)$,
 is a  bounded free holomorphic function  on
 $[B(\cH)^n]_1$;
 \item[(iii)]  the model boundary  function $\widetilde{g}=(\widetilde{g}_1,\ldots, \widetilde{g}_n)$ satisfies either one of the following conditions:
     \begin{enumerate}
     \item[(a)] $\widetilde{g}$ is  in $\cC_f^{SOT}(\HH^2(f))$  and
 $
 S_i=f_i(\widetilde{g}_1,\ldots, \widetilde{g}_n)$, $ i=1,\ldots,n;
 $
 \item[(b)] $\widetilde{g}$ is  in $\cC_f^{rad}(\HH^2(f))$  and
 $
 S_i= \text{\rm SOT-}\lim_{r\to 1} f_j(r\widetilde{g}_1,\ldots, r\widetilde{g}_n)$, $i=1,\ldots,n,
 $
 where $(S_1,\ldots, S_n)$ are the left creation operators on the full Fock space $F^2(H_n)$.
 \end{enumerate}
 If  $f$ is an $n$-tuple of noncommutative polynomials, then the condition $(iii)$ is automatically satisfied.
\end{enumerate}
\end{lemma}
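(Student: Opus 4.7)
The plan is to transport the problem from $\HH^2(f)$ to the full Fock space $F^2(H_n)$ via the canonical unitary $U\colon\HH^2(f)\to F^2(H_n)$ determined by $Uf_\alpha := e_\alpha$, which is well-defined and surjective by Theorem \ref{Schroder} as soon as (i) (equivalently $(\mathcal{S}_1)$) holds. The key observation is that left multiplication by $f_j$ on $\HH^2(f)$ matches the left creation operator $S_j$ on $F^2(H_n)$, since $M_{f_j}f_\alpha=f_{g_j\alpha}$ corresponds to $S_je_\alpha=e_{g_j\alpha}$; dually, right multiplication by $f_i$ corresponds to the right creation $R_i$. Thus $UM_{f_j}U^* = S_j$ and $UM^{r}_{f_i}U^* = R_i$ on the dense subspace $\mathrm{span}\{f_\alpha\}$, whenever these operators are bounded.

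For the direction $(\mathcal{S})\Rightarrow$ (i), (ii), (iii), I would set $T_j := UM_{Z_j}U^*$, bounded by $(\mathcal{S}_2)$. Associativity of formal power series multiplication gives $M_{Z_j}M^{r}_{f_i} = M^{r}_{f_i}M_{Z_j}$ on the dense subspace; by boundedness this passes to commutation on all of $\HH^2(f)$, so $T_j\in\{R_1,\ldots,R_n\}' = F_n^\infty$. Hence $T_j = \widetilde{h}_j$ for some bounded free holomorphic $h_j$ on $[B(\cH)^n]_1$. Since $g\circ f = \mathrm{id}$, the indeterminate $Z_j$ equals $\sum_\alpha a_\alpha^{(j)}f_\alpha$ as a formal series, where $g_j=\sum_\alpha a_\alpha^{(j)}Z_\alpha$; computing $T_j\cdot 1 = U(Z_j) = \sum_\alpha a_\alpha^{(j)}e_\alpha$ and comparing with $\widetilde{h}_j\cdot 1$ forces $h_j = g_j$. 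This establishes (ii) and the identity $\widetilde{g}_j = UM_{Z_j}U^*$, after which applying $U(\cdot)U^*$ to $(\mathcal{S}_3)$ produces (iii).

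For the converse, assume (i), (ii), (iii) and define the bounded operator $M_{Z_j} := U^*\widetilde{g}_jU$ on $\HH^2(f)$. Square-summability of the Taylor coefficients of $g_j$ (which follows from (ii), as $\widetilde{g}_j\cdot 1 \in F^2(H_n)$) lets us view $Z_j = g_j(f_1,\ldots,f_n) = \sum_\alpha a_\alpha^{(j)}f_\alpha$ as a genuine element of $\HH^2(f)$, giving the first half of $(\mathcal{S}_2)$. To verify that this $M_{Z_j}$ really is left multiplication by $Z_j$ on the basis, I would use $e_\alpha = R_{\widetilde{\alpha}}\cdot 1$ together with $\widetilde{g}_j\in F_n^\infty$ commuting with every $R_i$, obtaining $\widetilde{g}_je_\alpha = \sum_\beta a_\beta^{(j)}e_{\beta\alpha}$, which under $U^*$ reads $Z_j\cdot f_\alpha$. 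Condition $(\mathcal{S}_3)$ is then (iii) transported through $U^*(\cdot)U$, with the SOT or radial-SOT convergence surviving unchanged under the unitary conjugation.

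The main obstacle is the careful bookkeeping in the backward direction: one must confirm that the abstract operator $U^*\widetilde{g}_jU$ coincides pointwise with genuine left multiplication by the formal series $Z_j$, and that the SOT and radial-SOT conditions in (iii) translate correctly under $U$. For the final assertion, when $f$ consists of polynomials condition (iii) is automatic, because $f_j(\widetilde{g}_1,\ldots,\widetilde{g}_n)$ is a finite sum and the multiplicative property of the model boundary map $H^\infty_{{\bf ball}}\to F_n^\infty$ gives $f_j(\widetilde{g}_1,\ldots,\widetilde{g}_n) = \widetilde{f_j\circ g} = \widetilde{Z_j} = S_j$, using $f\circ g = \mathrm{id}$.
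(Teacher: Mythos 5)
Your proof is correct and follows essentially the same route as the paper: conjugate by the canonical unitary $U:\HH^2(f)\to F^2(H_n)$ sending $f_\alpha\mapsto e_\alpha$, identify $M_{f_j}$ with $S_j$ and $M_{Z_j}$ with a bounded left multiplier of $F^2(H_n)$, and then invoke the identification of $F_n^\infty$ with $H^\infty_{\bf ball}$ to match $(\mathcal{S}_2)$ with (ii) and $(\mathcal{S}_3)$ with (iii). Your detour through the commutant $\{R_1,\ldots,R_n\}'=F_n^\infty$ is only a minor variation on the paper's direct observation that $UM_{Z_j}U^*$ is left multiplication by $U(Z_j)$ on $F^2(H_n)$.
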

\begin{proof} Since the condition $(\cS_1)$ coincides with $(i)$,
  we show  that the condition
   $(\mathcal{S}_2)$ holds if and
 only if $f$ satisfies the condition (ii). To prove the direct implication note that,
  due to Theorem \ref {inv-series},  the
composition  map $C_f:{\bold S}[Z_1,\ldots,Z_n]\to {\bold
S}[Z_1,\ldots,Z_n]$ defined by $C_f \psi:=\psi\circ f$ is an
isomorphism. Therefore, there is an $n$-tuple $g=(g_1,\ldots, g_n)$
of power series such that $f\circ g=g\circ f=id$.
 On the other hand, condition
$(\cS_2)$ implies the existence of an $n$-tuple
$\chi=(\chi_1,\ldots, \chi_n)$ of formal power series with
$\chi(0)=0$ and $\chi_i\in H^2_{\bf ball}$, i.e.,
$\chi_i=\sum_{\alpha\in \FF_n^+} a_\alpha^{(i)} Z_\alpha$ for some
$a_\alpha^{(i)}\in \CC$
  with $\sum_{\alpha\in \FF_n^+} |a_\alpha^{(i)}|^2<\infty$,
 and such that $\chi\circ f=id$.
Consequently, $(f\circ \chi-id)\circ f=0$ and, using the injectivity
of $C_f$, we deduce that $f\circ \chi=id$. Since the inverse of $f$
is unique, we must have $g=\chi$.

   Due to condition $(\mathcal{S}_2)$, the left multiplication operator
  $M_{Z_i}:\HH^2(f)\to \HH^2(f)$  defined by
   $$
   M_{Z_i}\psi:=Z_i\psi, \qquad \psi\in \HH^2(f),
  $$
  is a bounded multiplier of $\HH^2(f)$.
Let $U:\HH^2(f)\to F^2(H_n)$ be the unitary operator defined by
$U(f_\alpha):=e_\alpha$, $\alpha\in \FF_n^+$. Note that
$Z_i=\sum_{\alpha\in \FF_n^+} a_\alpha^{(i)}
f_\alpha=U^{-1}(\varphi_i)$, where $\varphi:=\sum_{\alpha\in
\FF_n^+}a_\alpha^{(i)} e_\alpha\in F^2(H_n)$. One can easily see
  that $M_{Z_i}$ is a bounded multiplier of $\HH^2(f)$ if and only if $\varphi_i$ is a bounded
   multiplier of $F^2(H_n)$.
 Moreover, $M_{Z_i}=U^{-1} \varphi_i(S_1,\ldots,S_n) U$, where
   $\varphi_i(S_1,\ldots,S_n)$ is   in  the noncommutative Hardy algebra $F_n^\infty$ and has the
    Fourier
    representation  $\sum_{\alpha\in \FF_n^+} a_\alpha^{(i)} S_\alpha$.
     According to Theorem 3.1 from \cite{Po-holomorphic}, we deduce that
      $g_i=\sum_{\alpha\in \FF_n^+} a_\alpha^{(i)} Z_\alpha$ is a bounded free
      holomorphic function on the unit ball $[B(\cH)^n]_1$ and  has its model boundary
       function $\widetilde{g_i}=\varphi_i(S_1,\ldots, S_n)$.
Therefore, condition  $(\cS_2)$ is equivalent to item (ii).
Since each $g\in \HH^2(f)$ has a unique representation
$g=\sum_{\alpha\in \FF_n^+} a_\alpha f_\alpha$ with $\sum_{\alpha\in
\FF_n^+} |a_\alpha|^2<\infty$,
  the multiplication operator
$M_{f_j}:\HH^2(f)\to \HH^2(f)$ defined by
$$
M_{f_j}\left(\sum_{\alpha\in \FF_n^+} a_\alpha
f_\alpha\right)=\sum_{\alpha\in \FF_n^+}
 a_\alpha f_jf_\alpha
$$
satisfies the equation
\begin{equation}
\label{simi} M_{f_j}=U^{-1} S_j U,\qquad j=1,\ldots,n,
\end{equation}
where $S_1,\ldots, S_n$ are the left creation operators on
$F^2(H_n)$. Consequently, $M_{f_\alpha}=U^{-1} S_\alpha U$,
$\alpha\in \FF_n^+$. Since   $M_{Z_i}=U^{-1} \widetilde{g}_i U$,
where $\widetilde{g}_i$ is the model boundary function of $g_i\in
H^\infty_{\bf ball}$, it is easy to see that the equality $
M_{f_j}=f_j(M_{Z_1},\ldots, M_{Z_n})$, $ j=1,\ldots,n, $ of
$(\mathcal{S}_3)$ is equivalent to condition (iii). This completes
the proof.
\end{proof}

Let $g=(g_1,\ldots, g_n)$ be the $n$-tuple of power series, as in
Lemma \ref{M2}, having the representations
$$
g_i:=\sum_{k=0}^\infty\sum_{\alpha\in \FF_n^+, |\alpha|=k}
a_\alpha^{(i)}Z_\alpha,  \qquad i=1,\ldots,n,
$$
where the  sequence $\{a_\alpha^{(i)}\}_{\alpha\in \FF_n^+}$  is
uniquely determined   by the condition    $g\circ f=id$. We say that
an $n$-tuple of operators $X=(X_1,\ldots, X_n)\in B(\cH)^n$
satisfies the equation $g(f(X))=X$   if
 either
one of the following conditions hold:
\begin{enumerate}
     \item[(a)] $X\in \cC_f^{SOT}(\cH)$  and either $
X_i=\sum_{k=1}^\infty\sum_{\alpha\in \FF_n^+, |\alpha|=k}
a_\alpha^{(i)}[f(X)]_\alpha$,  $i=1,\ldots,n,$
  where the  convergence of the  series  is in the strong operator topology, or
  $$
 X_i= \text{\rm SOT-}\lim_{r\to 1} \sum_{k=1}^\infty\sum_{\alpha\in \FF_n^+, |\alpha|=k}
a_\alpha^{(i)}r^{|\alpha|}[{f}(X)]_\alpha ,\qquad i=1,\ldots,n;
 $$
 \item[(b)] $X\in \cC_f^{rad}(\cH)$  and either
 $
X_i=\sum_{k=1}^\infty\sum_{\alpha\in \FF_n^+, |\alpha|=k}
a_\alpha^{(i)}[\widehat{f}(X)]_\alpha$,  $i=1,\ldots,n,
$
  where the  convergence of the  series  is in the strong operator topology, or
 $$
 X_i= \text{\rm SOT-}\lim_{r\to 1} \sum_{k=1}^\infty\sum_{\alpha\in \FF_n^+, |\alpha|=k}
a_\alpha^{(i)}r^{|\alpha|}[\widehat{f}(X)]_\alpha ,\qquad
i=1,\ldots,n.
 $$
  \end{enumerate}
We consider  the noncommutative domains
$$
\BB_f(\cH):=\{X=(X_1,\ldots, X_n)\in B(\cH)^n:\   g(f(X))=X \text{ and }   \|f(X)\|\leq 1 \}
$$
and
$$
\BB_{f}^{<}(\cH):=\{X=(X_1,\ldots, X_n)\in B(\cH)^n:\   g(f(X))=X \text{ and }   \|f(X)\|< 1 \}.
$$
We say that $(T_1,\ldots, T_n)\in B(\cH)^n$ is  a pure $n$-tuple  of
operators in  $\BB_f(\cH)$ if
$$
\text{\rm SOT-}\lim_{k\to \infty} \sum_{\alpha\in \FF_n,\,
|\alpha|=k} [f(T)]_\alpha [f(T)]_\alpha^*=0.
$$
The set of all pure elements of $\BB_f(\cH)$ is denoted by
$\BB^{pure}_f(\cH)$. Note that
$$
\BB_{f}^{<}(\cH)\subseteq \BB^{pure}_f(\cH) \subseteq \BB_f(\cH).
$$
An $n$-tuple of operators $X:=(X_1,\ldots, X_n)\in B(\cH)^n$ is called nilpotent
if there is $m\geq 1$ such that $X_\alpha=0$ for any $\alpha\in \FF_n^+$ with $|\alpha|=m$.
 We denote by $\BB^{nil}_f(\cH)$ the set of all nilpotent $n$-tuples in
$\BB_f(\cH)$.

\begin{proposition} \label{single}
Let $g\in H^\infty(\DD)$ be such that $g(0)=0$ and $g'(0)\neq 0$,
and let $f$ be its inverse power series with respect to the
composition. If $S$ is the unilateral shift on the Hardy space
$H^2(\DD)$ and
$$
f(g(S))=S
$$
for an appropriate evaluation of $f$ at $g(S)$ (where  $g(S)$ is
defined using the Nagy-Foias functional calculus), then $f$ has the
property $(\mathcal{S})$.
\end{proposition}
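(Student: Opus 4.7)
The plan is to verify the three conditions of Lemma~\ref{M2} in the scalar case $n=1$, using the natural identification of the full Fock space $F^2(H_1)$ with the Hardy space $H^2(\DD)$ and of the left creation operator $S_1$ with the unilateral shift $S$. Under this identification, $\HH^2(f)$ becomes a copy of $H^2(\DD)$ via the unitary $U\colon\HH^2(f)\to H^2(\DD)$ sending $f^k\mapsto z^k$, and one checks that $UM_{f_1}U^{-1}=S$ while $UM_{Z_1}U^{-1}$ equals $g(S)$, the model boundary function.

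For condition (i), since $g\in H^\infty(\DD)$ is holomorphic near $0$ with $g(0)=0$ and $g'(0)\neq 0$, the classical holomorphic inverse function theorem produces a local holomorphic inverse at $0$; by the uniqueness in Theorem~\ref{inv-series}, this inverse agrees with the formal power series $f$, so $f$ has positive radius of convergence. Differentiating $g\circ f=\mathrm{id}$ at $0$ yields $g'(0)f'(0)=1$, so $\det J_f(0)=f'(0)\neq 0$.

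For condition (ii), in the one-variable setting the class $H^\infty_{\mathbf{ball}_1}$ of bounded free holomorphic functions on $[B(\cH)]_1$ coincides, via the Nagy-Foias $H^\infty$ functional calculus for contractions, with $H^\infty(\DD)$. The hypothesis $g\in H^\infty(\DD)$ with $g(0)=0$ therefore realizes $g$ as a bounded free holomorphic function on $[B(\cH)]_1$, and its model boundary function is exactly $\widetilde g=g(S)$, which coincides with the radial SOT limit $\mathrm{SOT\text{-}}\lim_{r\to 1}g(rS)$. For condition (iii), we use the hypothesis $f(g(S))=S$ directly: depending on which "appropriate evaluation" of $f$ at $g(S)$ is meant, this equation falls into case (iii)(a) (when the series $\sum_k a_k^{(1)} g(S)^k$ is SOT-convergent, i.e., $\widetilde g\in\cC_f^{SOT}(\HH^2(f))$) or case (iii)(b) (when only the radial SOT limit exists, i.e., $\widetilde g\in\cC_f^{rad}(\HH^2(f))$). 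In either case, the requirement that $S=f(\widetilde g)$ in the appropriate sense is given.

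The main delicate point is precisely this translation of "appropriate evaluation" into one of the two convergence regimes $\cC_f^{SOT}$ or $\cC_f^{rad}$. Because $g(S)$ is a contraction by the von Neumann inequality but need not lie strictly inside $[B(H^2(\DD))]_{r(f)}$, the series defining $f(g(S))$ cannot in general be summed in operator norm, and one must invoke either SOT-convergence or its radial version; the hypothesis is precisely the assertion that one of these two summation procedures produces $S$. Once this is recognized, all three conditions of Lemma~\ref{M2} are satisfied, and hence $f$ has property~$(\cS)$.
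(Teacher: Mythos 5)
Your proposal is correct and follows essentially the same route as the paper: the paper's proof simply cites the classical (Cartan) inverse function theorem to get that $f$ has nonzero radius of convergence and then invokes Lemma~\ref{M2} with $n=1$, using the hypothesis $S=f(g(S))$. What you have written is a careful unpacking of exactly that application of Lemma~\ref{M2} — verifying conditions (i), (ii), (iii) one by one under the identifications $F^2(H_1)\cong H^2(\DD)$, $S_1\cong S$ — so no further comment is needed.
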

\begin{proof}   According to \cite{Ca}, the power series associated with $g$ has an inverse
 $f$, with respect to the composition, with nonzero radius of convergence.  Using the fact that
 $S=f(g(S))$ and applying Lemma \ref{M2} when $n=1$, we deduce that  $f$ has the
property $(\mathcal{S})$.
\end{proof}

In what follows, we present several examples  of $n$-tuples of
formal power series with property $(\cS)$. First, we consider the
single variable case.

\begin{example} The power series defined by
$$
f=Z\left(I+\frac{1}{a} Z\right)^{-1},\qquad a>2,
$$
has property $(\mathcal{S})$  and
$$
[B(\cH)]_1^-\subsetneq [B(\cH)]_{\frac{a}{a-1}}\subset \BB_f(\cH).
$$
\end{example}
\begin{proof} A straightforward computation shows that the inverse
power series of $f$ is $g= Z\left(I-\frac{1}{a} Z\right)^{-1}$. The
corresponding function $z\mapsto g(z)$  is analytic and bounded on
$\DD$. Moreover,
$$
g(S)=S-\frac{1}{a}S^2+\frac{1}{a^2}S^3+\cdots
$$ is a bounded operator, where the convergence is in the operator
norm topology, and $\|g(S)\|<2$.  Taking into account that
$\left\|\frac{1}{a} g(S)\right\|<1$, we deduce that
\begin{equation*}
\begin{split}
f(g(S))&= g(S)\left( I+\frac{1}{a} g(S)\right)^{-1}\\
&=S\left(I-\frac{1}{a} S\right)^{-1}\left( I+\frac{1}{a}
S\left(I-\frac{1}{a} S\right)^{-1}\right)^{-1}=S.
\end{split}
\end{equation*}
 Therefore, $f$ has property $(\mathcal{S})$.
Consider the noncommutative domain
$$
\BB_f(\cH):=\{X\in B(\cH): \ X=g(f(X)) \text{ and }  \|f(X)\|\leq
1\}.
$$
Note that if $\|X\|<a$, then $f(X):=X\left(I+\frac{1}{a}
X\right)^{-1}$ is well-defined. If, in addition, $\|f(X)\|\leq 1$,
then one can easily see that
$$
g(f(X))=f(X)\left(I-\frac{1}{a} f(X)\right)^{-1}=X.
$$
Hence
$$
\{X\in B(\cH): \|X\|<a \text{ and } \|f(X)\|\leq 1\} \subset
\BB_f(\cH).
$$
Note also that if $\|X\|\leq \frac{a}{a-1}$, then
$$
\|f(X)\|\leq \|X\|\frac{1}{1+\frac{\|X\|}{a}}\leq 1.
$$
Since $a>2$, we have $1<\frac{a}{a-1}<a$ and
$$
[B(\cH)]_1^-\subsetneq [B(\cH)]_{\frac{a}{a-1}}\subset \BB_f(\cH).
$$
 This completes the proof.
\end{proof}

Now we consider some  tuples of noncommutative polynomials  with the
property $(\mathcal{S})$.

\begin{example}\label{Ex22} If
$$
\begin{cases}
p_1 &= Z_1-Z_2-\frac{1}{2} Z_1Z_2 \\
 p_2 &=  Z_2
 \end{cases}
\quad \text{ and } \quad
\begin{cases}
q_1 &= Z_1-\frac{1}{3} Z_1Z_2\\
 q_2 &=  Z_2-\frac{1}{2} Z_3Z_2\\
 q_3 &= Z_3,
 \end{cases}
 $$
then $p=(p_1,p_2)$ and $q=(q_1,q_2, q_3)$  have  property
$(\mathcal{S})$.
\end{example}

\begin{proof} Note that
$$
\begin{cases}
Z_1 &= (p_1+p_2)\left( I+\frac{p_2}{2}  + \left(\frac{p_2}{2}\right)^2+\cdots\right)\\
 Z_2 &=  p_2
 \end{cases}
 $$
 Setting $g_1:=(Z_1+ Z_2)\left( I+\frac{Z_2}{2}  + \left(\frac{Z_2}{2}\right)^2+\cdots\right)$ and
 $g_2=Z_2$, it is easy to see that $p\circ g=g\circ p=id$. On the other hand, $g=(g_1, g_2)$ is a bounded free holomorphic function on $[B(\cH)^2]_1$ and the model boundary function $\widetilde{g}=(\widetilde{g}_1, \widetilde{g}_2)$  is given by
$\widetilde{g}_1:=(S_1+ S_2)\left( I+\frac{1}{2} S_2 +
\left(\frac{1}{2} S_2\right)^2+\cdots\right)$ and
 $\widetilde{g}_2=S_2$. According to Lemma \ref{M2}, $p=(p_1,p_2)$   has  property
$(\mathcal{S})$. The second example can be treated similarly.
Setting $r=(r_1,r_2,r_3)$, where
$$
\begin{cases}
{r}_1 &=  Z_1\left[I-\frac{1}{3} Z_2\left( I-\frac{1}{2} Z_3\right)^{-1}\right]^{-1}\\
  {r}_2&=  Z_2\left(I-\frac{1}{2} Z_3\right)^{-1}\\
  {r}_3&= Z_3,
 \end{cases}
 $$
 one can check that  $r\circ q=q\circ r$ and the model boundary functions
 $\widetilde{r}_1 =  S_1\left[I-\frac{1}{3} S_2\left( I-\frac{1}{2} S_3\right)^{-1}\right]^{-1}$,
 $\widetilde{r}_2=  S_2\left( I-\frac{1}{2} S_3\right)^{-1}$ and
  $\widetilde{r}_3= S_3$ are in the noncommutative disc algebra $\cA_3$. Applying again Lemma
   \ref{M2},  we deduce that $q=(q_1,q_2)$   has  property
$(\mathcal{S})$.
\end{proof}

\begin{example}\label{Ex3} Let $\gamma>0$ and  $a \in \CC$ with   $|a|>1$ and let
\begin{equation*}
\begin{split}
f_1 &= \frac{1}{\gamma}Z_1-\frac{1}{\gamma}Z_2-\left(\frac{1}{\gamma}Z_2\right)^2-\cdots\\
f_2 &=  \frac{a}{\gamma}Z_2.
\end{split}
\end{equation*}
Then $f=(f_1,f_2)$ has property  $(\mathcal{S})$.
\end{example}
\begin{proof}
First note that $f=(f_1,f_2)$   satisfies condition $(\cS_1)$. Since
\begin{equation*}
\begin{split}
Z_1&=\gamma f_1 +\gamma\sum_{j=1}^\infty\left(\frac{1}{a}f_2 \right)^j\\
Z_2&=\frac{\gamma}{a} f_2
\end{split}
\end{equation*}
and $\sum_{j=1}^\infty\frac{1}{|a |^{2j}}<\infty$, we deduce that
$Z_1,\ldots, Z_n$\ are in $\HH^2(f)$. Let $U:\HH^2(f)\to F^2(H_2)$
be the unitary operator defined by $U(f_\alpha):=e_\alpha$,
$\alpha\in \FF_2^+$. Note that the multiplication operator
$M_{Z_1}\in B(\HH^2(f))$ is unitarily equivalent to the operator
$\varphi_1(S_1,S_2)\in B(F^2(H_2))$ defined by
$$
\varphi_1(S_1,S_2):=\gamma S_1+\gamma\sum_{j=1}^\infty\left(\frac{1}{a}S_2\right)^j,
$$
which is in the noncommutative disc algebra $\cA_2$. Similarly, the
operator $M_{Z_2}\in B(\HH^2(\psi))$ is unitarily equivalent to
$\varphi_2(S_1,S_2):=\frac{\gamma}{a} S_2\in \cA_2$. Therefore,
condition $(\mathcal{S}_2)$ is satisfied.
It remains to check  condition $(\mathcal{S}_3)$. Since $|a|>1$, we
have $\|M_{Z_2}\|<\gamma$ and, therefore,
$$
f(M_{Z_1}, M_{Z_2})= \frac{1}{\gamma} M_{Z_1}-\sum_{j=1}^\infty
\left(\frac{1}{\gamma} M_{Z_2}\right)^j,
$$
where the convergence is in the operator norm topology. On the other hand, since
 the operator $M_{f_1}\in B(\HH^2(f))$ is unitarily equivalent to the left creation operator $S_1$
 on $F^2(H_2)$, the condition
$M_{f_1}=\lim_{m\to \infty} \left[\frac{1}{\gamma}
M_{Z_1}-\sum_{j=1}^m
 \left(\frac{1}{\gamma} M_{Z_2}\right)^j\right]$ is equivalent to
 $$
 S_1=\lim_{m\to\infty}\left[S_1+\sum_{j=1}^\infty \left(\frac{1}{a}S_2\right)^j
 -\sum_{j=1}^m \left(\frac{1}{a}S_2\right)^j\right],
 $$
 which is obviously true. This completes the proof.
\end{proof}
 Similarly, one can treat the following
\begin{example} If
\begin{equation*}
\begin{split}
f_1&= Z_1-Z_2-Z_2Z_1 -Z_2^2-Z_2^3\cdots\\
f_2&=2Z_2,
\end{split}
\end{equation*}
then $f=(f_1,f_2)$ has property $(\mathcal{S})$.
\end{example}

\bigskip
\bigskip

{\bf Hilbert spaces of  of free holomorphic functions.}
  Let $\varphi=(\varphi_1,\ldots,
\varphi_n)$ be an $n$-tuple of free holomorphic functions on
$[B(\cH)^n]_\gamma$, $\gamma>0$,  with range in $[B(\cH)^n]_1$.
 We say that $\varphi$
is  not a {\it  right zero divisor}  with respect to the  composition
 with free holomorphic functions on $[B(\cH)^n]_1$
if   for any non-zero free holomorphic function $G$ on $[B(\cH)^n]_1$,  the composition
 $G\circ \varphi$ is not identically zero. We recall (see
 \cite{Po-automorphism}) that $G\circ \varphi$ is a free holomorphic
 function on $[B(\cH)^n]_\gamma$.
Consider the vector space of free holomorphic functions
  $$
  \HH^2(\varphi):=\{G\circ \varphi: \ G\in  H_{\bf ball}^2\},
  $$
  where the noncommutative Hardy space
$H^2_{\bf ball}$ is the Hilbert space of all free holomorphic
functions on $ [B(\cH)^n]_1 $
 of the
form
$$f(X_1,\ldots, X_n)=\sum_{k=0}^\infty \sum_{|\alpha|=k}
a_\alpha  X_\alpha, \qquad   \sum_{\alpha\in \FF_n^+}|a_\alpha|^2<\infty,
$$ with the inner product
$ \left< f,g\right>:=\sum_{k=0}^\infty \sum_{|\alpha|=k}a_\alpha
{\overline b}_\alpha,$ where  $g=\sum_{k=0}^\infty \sum_{|\alpha|=k}
b_\alpha  X_\alpha$ is
 another free holomorphic function  in $H^2_{\bf ball}$.
  Note that each element $\psi\in \HH^2(\varphi)$ is a free holomorphic function on $[B(\cH)^n]_\gamma$ which  has a unique
  representation of the form  $\psi=G\circ \varphi$ for some
   $G\in  H_{\bf ball}^2$.
We introduce an inner product on $\HH^2(\varphi)$ by setting
\begin{equation*}
\label{inner} \left<F\circ \varphi, G\circ \varphi\right>_{\HH^2(\varphi)}:=  \left<F, G\right>_{H_{\bf ball}^2}.
\end{equation*}
It is easy to see that $\HH^2(\varphi)$ is a Hilbert space with respect to this inner product.
 We make the
following assumptions:
\begin{enumerate}
\item[($\mathcal{F}_1$)]  the $n$-tuple  $\varphi=(\varphi_1,\ldots, \varphi_n)$   of  free holomorphic
functions on $[B(\cH)^n]_\gamma$  has  range in $[B(\cH)^n]_1$  and it is
 not a   right zero divisor with respect to the  composition
  with free holomorphic functions on $[B(\cH)^n]_1$.
\item[($\mathcal{F}_2$)] The coordinate functions $X_1,\ldots, X_n$ on $[B(\cH)^n]_\gamma$
 are contained in $\HH^2(\varphi)$ and
  the left multiplication
 by $X_i$
is a bounded multiplier of $\HH^2(\varphi)$, for each $i=1,\ldots,n$.
\item[($\mathcal{F}_3$)] For each $i=1,\ldots,n$,
the left multiplication operator $M_{\varphi_i}:\HH^2(\varphi)\to
\HH^2(\varphi)$ satisfies the equation
$$
M_{\varphi_i}=\varphi_i(M_{Z_1},\ldots, M_{Z_n}),
$$
 where  $(M_{Z_1},\ldots, M_{Z_n})$  is either in the convergence set
$\cC_\varphi^{SOT}(\HH^2(\varphi))$ or
$\cC_\varphi^{rad}(\HH^2(\varphi))$.
\end{enumerate}
If $\varphi$ is an $n$-tuple of noncommutative polynomials, then the
condition $(\cF_3)$ is automatically satisfied. Under the
above-mentioned  conditions, the free holomorphic function $\varphi$
is said to have property $(\mathcal{F})$. We remark that, unlike the power series with property $(\cS)$,  $\varphi(0)$
could be different from $0$.

Using Theorem 2.1 from \cite{Po-automorphism}, we can show that
$\varphi$  has  property $(\mathcal{F})$ if and only if there exists
$g=(g_1,\ldots, g_n)$ a bounded free holomorphic function on
$[B(\cH)^n]_1$ such that
\begin{equation} \label{gfi}
g(\varphi(X))=X,\qquad X\in [B(\cH)^n]_\gamma,
\end{equation}
where $\varphi(X)$ is in the set of norm-convergence of $g$, and the
model boundary   function $\widetilde{g}=(\widetilde{g}_1,\ldots,
\widetilde{g}_n)$ satisfies either one of the following conditions:
     \begin{enumerate}
     \item[(a)] $\widetilde{g}$ is  in $\cC_\varphi^{SOT}(\HH^2(\varphi))$  and
 $
 S_i=\varphi_i(\widetilde{g}_1,\ldots, \widetilde{g}_n)$, $i=1,\ldots,n;
 $
 \item[(b)] $\widetilde{g}$ is  in $\cC_\varphi^{rad}(\HH^2(\varphi))$  and
 $
 S_i= \text{\rm SOT-}\lim_{r\to 1} \varphi_j(r\widetilde{g}_1,\ldots, r\widetilde{g}_n)$, $ i=1,\ldots,n,
 $
 where $(S_1,\ldots, S_n)$ are the left creation operators on the full Fock space $F^2(H_n)$.
 \end{enumerate}

\begin{example}\label{Ex4} If
\begin{equation*}
\begin{split}
\varphi_1 &= \frac{1}{6}Z_1-\frac{1}{8}Z_2-\left(\frac{1}{8}Z_2\right)^2-\cdots\\
\varphi_2 &=  \frac{1}{3}Z_2.
\end{split}
\end{equation*}
Then $\varphi=(\varphi_1,\varphi_2)$ is a free holomorphic function  on $[B(\cH)^2]_2$ and
 has property  $(\mathcal{F})$. In this case,  $\HH^2(\varphi)$ is a Hilbert space of free holomorphic functions on $[B(\cH)^2]_2$.
\end{example}

The theory of noncommutative characteristic functions for row
contractions \cite{Po-charact} was used  in \cite{Po-automorphism}
to determine the group $Aut(B(\cH)^n_1)$  of all  free holomorphic automorphisms of the
noncommutative ball $[B(\cH)^n]_1$. We showed that any $\Psi\in
Aut(B(\cH)^n_1)$ has the form
$$
\Psi=\Phi_U \circ \Psi_\lambda,
$$
where $\Phi_U$ is an automorphism implemented by a unitary operator
$U$ on $\CC^n$, i.e.,
\begin{equation*}
 \Phi_U(X_1,\ldots,
X_n):=[X_1,\ldots, X_n]U , \qquad (X_1,\ldots, X_n)\in [B(\cH)^n]_1,
\end{equation*}
and $\Psi_\lambda$ is an involutive free holomorphic automorphism
associated with $\lambda:=\Psi^{-1} (0)\in \BB_n$. The  automorphism
$\Psi_\lambda:[B(\cH)^n]_1\to [B(\cH)^n]_1$  is   given by
\begin{equation*}
 \Psi_\lambda(X_1,\ldots, X_n):={
\lambda}-\Delta_{ \lambda}\left(I_\cH-\sum_{i=1}^n \bar{{
\lambda}}_i X_i\right)^{-1} [X_1,\ldots, X_n]
\Delta_{{\lambda}^*},\quad (X_1,\ldots, X_n)\in [B(\cH)^n]_1,
\end{equation*}
where $\Delta_\lambda$ and $\Delta_{\lambda^*}$ are the defect
operators associated with  the row contraction
$\lambda:=(\lambda_1,\ldots, \lambda_n)$.
   Note that, when
$\lambda=0$, we have $\Psi_0(X)=-X$. We recall that if $\lambda \in
\BB_n\backslash \{0\}$ and   $\gamma:=\frac{1}{\|\lambda\|_2}$, then
$\Psi_\lambda$ is a free holomorphic function on $[B(\cH)^n]_\gamma$
which has the following properties:
\begin{enumerate}
\item[(i)]
$\Psi_\lambda (0)=\lambda$ and $\Psi_\lambda(\lambda)=0$;

\item[(ii)] $\Psi_\lambda$ is an involution, i.e., $\Psi_\lambda(\Psi_\lambda(X))=X$
for any $X\in [B(\cH)^n]_\gamma$;
\item[(iii)] $\Psi_\lambda$ is a free holomorphic automorphism of the
noncommutative unit ball $[B(\cH)^n]_1$;
\item[(iv)] $\Psi_\lambda$ is a homeomorphism of $[B(\cH)^n]_1^-$
onto $[B(\cH)^n]_1^-$;
\item[(v)]  the model boundary function $\tilde \Psi_\lambda$ is unitarily equivalent  to the row contraction
$[S_1,\ldots, S_n]$.
\end{enumerate}

\begin{proposition}\label{ex-auto}  Any  free holomorphic automorphism
of $[B(\cH)^n]_1$ has property  $(\mathcal{F})$.
\end{proposition}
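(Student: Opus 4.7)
The plan is to invoke the equivalent characterization of property $(\cF)$ given earlier in this section (derived from Theorem~2.1 of \cite{Po-automorphism}) with $\varphi:=\Psi$ and $g:=\Psi^{-1}$. Since every free holomorphic automorphism of $[B(\cH)^n]_1$ has the form $\Phi_U\circ\Psi_\lambda$, the inverse $g=\Psi^{-1}=\Psi_\lambda\circ\Phi_{U^{-1}}$ is itself a free holomorphic automorphism of $[B(\cH)^n]_1$, hence bounded there with range in $[B(\cH)^n]_1^-$. Setting $\gamma:=1/\|\lambda\|_2$ when $\lambda\neq 0$ (and any $\gamma>0$ when $\lambda=0$), $\Psi$ is free holomorphic on $[B(\cH)^n]_\gamma$ and maps into $[B(\cH)^n]_1$ by property (iii); the identity $g\circ\Psi=\mathrm{id}$ on $[B(\cH)^n]_\gamma$ is immediate, and $\Psi$ is not a right zero divisor with respect to composition since any free holomorphic $G$ on $[B(\cH)^n]_1$ with $G\circ\Psi\equiv 0$ satisfies $G(Y)=G(\Psi(g(Y)))=0$ for every $Y\in[B(\cH)^n]_1$.

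The key remaining point is the boundary-function identity $S_i=\Psi_i(\widetilde{g}_1,\ldots,\widetilde{g}_n)$ (in the SOT or radial SOT sense), which encodes condition $(\cF_3)$. By property (v) the row operator $\widetilde{\Psi_\lambda}$ is unitarily equivalent to $[S_1,\ldots,S_n]$; composing with $\Phi_{U^{-1}}$ amounts to right-multiplication of this row by the scalar unitary $U^{-1}$, which preserves the unitary-equivalence class, so $\widetilde{g}$ is also unitarily equivalent to $[S_1,\ldots,S_n]$. In particular $\|\widetilde{g}\|=1$, and the radial substitutions $\Psi_i(r\widetilde{g}_1,\ldots,r\widetilde{g}_n)$ are well defined for $r\in(0,1)$. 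I would then apply the composition theorem for free holomorphic functions (Theorem~2.1 of \cite{Po-automorphism}) to $\Psi\circ g=\mathrm{id}$ and pass to model boundary functions to obtain
$$
\text{\rm SOT-}\lim_{r\to 1}\Psi_i(r\widetilde{g}_1,\ldots,r\widetilde{g}_n)=\widetilde{(\Psi_i\circ g)}=\widetilde{Z_i}=S_i,
$$
which is exactly condition (b) of the quoted characterization, so that $(\cF_1)$--$(\cF_3)$ all follow.

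The main technical obstacle is justifying the displayed identity, that is, exchanging the outer composition by $\Psi$ with the radial SOT limit used to define $\widetilde{g}$ while the inner argument sits on the unit sphere of $B(F^2(H_n))^n$. I plan to handle this by treating the two factors of $\Psi=\Phi_U\circ\Psi_\lambda$ separately: for the scalar linear map $\Phi_U$ the substitution is exact and the interchange is trivial, whereas for $\Psi_\lambda$ the explicit closed form involves only the scalar defect factors $\Delta_\lambda,\Delta_{\lambda^*}$ and the resolvent $(I-\sum_i\bar\lambda_i X_i)^{-1}$, whose Neumann series converges in operator norm when evaluated at $r\widetilde{g}$ because $\|\lambda\|_2<1$ and $\|r\widetilde{g}\|\leq r<1$, uniformly on compact subsets of $(0,1)$. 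This norm convergence permits termwise evaluation and interchange with the radial SOT limit as $r\to 1^{-}$; composing the two cases delivers the identity, and property $(\cF)$ follows from the characterization.
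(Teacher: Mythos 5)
Your proposal is correct and follows the same overall skeleton as the paper's proof: both reduce the claim to the boundary-function characterization of property $(\cF)$ stated after the definition (conditions (a)/(b) with $g=\Psi^{-1}$), with the crux being the identity $S_i=\Psi_i(\widetilde{g}_1,\ldots,\widetilde{g}_n)$. Where you differ is in how that identity is established. The paper argues abstractly: it observes that under the canonical unitary $\HH^2(\Psi)\cong F^2(H_n)$ the operator $M_{\Psi_i}$ goes to $S_i$ and $M_{X_i}$ goes to the boundary function of the inverse, so $(\cF_3)$ is literally the statement $S_i=\Psi_i(\widetilde{g})$, and then it disposes of this in one stroke by invoking the functional calculus for row contractions \cite{Po-funct}. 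You instead verify the identity by hand, exploiting the explicit decomposition $\Psi=\Phi_U\circ\Psi_\lambda$ and the closed Möbius-type formula for $\Psi_\lambda$: since $\|\lambda\|_2<1$, the Neumann series for $\bigl(I-\sum_i\bar\lambda_i X_i\bigr)^{-1}$ converges in operator norm on the whole closed row-ball, so $\Psi_\lambda$ is norm-continuous there and the radial SOT limit can be interchanged with the composition. This buys a self-contained, computational verification (and in fact shows the convergence in (b) is in norm, not just SOT), at the cost of depending on the specific form of the automorphism group, whereas the paper's argument would survive in any setting where the functional-calculus lemma applies. One small slip to repair: you take $\gamma=1/\|\lambda\|_2>1$ as the domain radius, but $(\cF_1)$ requires $\Psi\bigl([B(\cH)^n]_\gamma\bigr)\subseteq[B(\cH)^n]_1$, and property (iii) only guarantees this for inputs in the \emph{unit} ball --- on the larger ball the resolvent factor can push the range outside. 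The correct choice is simply $\gamma=1$ (which is what the paper implicitly uses), and with that adjustment the rest of your argument, including the non-zero-divisor step $G(Y)=G(\Psi(g(Y)))=0$, goes through.
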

\begin{proof} Let $\varphi\in Aut(B(\cH)^n_1)$. Since the
composition of free holomorphic functions is a free holomorphic
function, one can easily show, by contradiction, that condition
$(\cF_1)$ is satisfied by $\varphi$. Now, taking into account the
properties of the free holomorphic automorphisms of $[B(\cH)^n]_1$
and the remarks above, we have $\varphi\in H^\infty_{\bf ball}$ and
$\varphi(\varphi(X))=X$ for all $X\in [B(\cH)^n]_1$, which shows
that condition $(\cF_2)$ holds. Moreover, since the multiplication
$M_{X_i}:\HH^2(\varphi)\to \HH^2(\varphi)$ is unitarily equivalent
to  the model boundary function $\widetilde \varphi$ acting on
$F^2(H_n)$, and $M_{\varphi_i}:\HH^2(\varphi)\to \HH^2(\varphi)$ is
unitarily equivalent to $S_i\in B(F^2(H_n))$, the equation
$M_{\varphi_i}=\varphi_i(M_{Z_1},\ldots, M_{Z_n})$ is equivalent to
the equation $S_i=\varphi_i(\widetilde \varphi_1,\ldots,
\widetilde \varphi_n)$, where $(\widetilde \varphi_1,\ldots,
\widetilde \varphi_n)$ is in the convergence set
$\cC_\varphi^{rad}(\HH^2(\varphi))$. Due to the functional calculus
for row contractions \cite{Po-funct},   the latter equality holds for any
$\varphi\in Aut(B(\cH)^n_1)$.  Therefore, $\varphi$ satisfies
condition $(\cF_3)$, which proves our assertion.
\end{proof}

We saw  above that, due to condition $(\cF_2)$, there is  a bounded free
holomorphic function $g:[B(\cH)^n]_1\to B(\cH)^n$ such that
$X=g(\varphi(X))$ for any $X\in [B(\cH)^n]_\gamma$.    We consider
the noncommutative domain
$$
\BB_\varphi(\cH):=\{Y=(Y_1,\ldots, Y_n)\in B(\cH)^n:\
g(\varphi(Y))=Y \text{ and }  \|\varphi(Y)\|\leq 1  \}
$$
which will be studied  in the next sections.
  Note that the ball $[B(\cH)^n]_\gamma$
is included in  $\BB_\varphi(\cH)$.

\bigskip

\section{Noncommutative domains  and the universal model $(M_{Z_1},\ldots, M_{Z_n})$}

Throughout this section, unless otherwise specified,  we assume that
 $f=(f_1,\ldots, f_n)$
   is either one of the following:
 \begin{enumerate}
 \item[(i)]   an $n$-tuple of polynomials with  property $(\cA)$;

 \item[(ii)] an $n$-tuple of  formal power series  with  $f(0)=0$ and    property
 $(\mathcal{S})$;

 \item[(iii)]  an $n$-tuple of  free holomorphic functions  with property $(\mathcal{F})$.
\end{enumerate}
In this case, we say that $f$ has the {\it model property}. We denote by $\cM$ the set of all $n$-tuples $f$ with the model property.  The
noncommutative domain associated with $f$ is
$$
\BB_f(\cH):=\{X=(X_1,\ldots, X_n)\in B(\cH)^n: \ g(f(X))=X \text{
and } \|f(X)\|\leq 1\},
$$
where $g:=(g_1,\ldots, g_n)$ is the inverse power series of $f$ with
respect to the composition, and the evaluations are well-defined
(see previous section).  We recall that  the condition $ g(f(X))=X$
is
 automatically satisfied when $f$ is an $n$-tuple of polynomials with  property
 $(\cA)$.

In this section, we present some of the basic properties of the
universal model $(M_{Z_1},\ldots, M_{Z_n})$ associated with the
noncommutative domain $\BB_f$.

Two $n$-tuples $(A_1,\ldots, A_n)\in B(\cH)$ and $(B_1,\ldots,
B_n)\in B(\cK)$ are said to be unitarily equivalent if there is a
unitary operator $U:\cH\to \cK$ such that $A_i=U^*B_i U$ for all
$i=1,\ldots,n$.

\begin{theorem} \label{model}
Let $T:=(T_1,\ldots, T_n)$ be an $n$-tuple of operators  in
$B(\cH)^n$ and let $f$ have the model property. Then the following
statements are equivalent:
\begin{enumerate}
\item[(i)] $T=(T_1,\ldots, T_n)$ is  a pure $n$-tuple of operators in $\BB_f(\cH)$;
\item[(ii)] there exists a Hilbert space $\cD$ and  a co-invariant
 subspace $\cM\subseteq \HH^2(f)\otimes \cD$
 under each operator $M_{Z_1}\otimes I_\cD,\ldots,M_{Z_n}\otimes I_\cD$ such that
 the $n$-tuple
$(T_1,\ldots, T_n)$ is unitarily equivalent  to
$$
(P_\cM(M_{Z_1}\otimes I_\cD)|_\cM,\ldots, P_\cM(M_{Z_n}\otimes I_\cD)|_\cM).
$$
\end{enumerate}

\end{theorem}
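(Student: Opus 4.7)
The plan is to prove both implications using the noncommutative Poisson kernel associated with the pure row contraction $(f_1(T),\ldots,f_n(T))$, leveraging the unitary identification between $(M_{f_1},\ldots,M_{f_n})$ on $\HH^2(f)$ and the left creation operators $(S_1,\ldots,S_n)$ on $F^2(H_n)$ established in Lemma \ref{M2}.

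For $(i)\Rightarrow(ii)$, I would set $\Delta_{f,T}:=(I-\sum_{i=1}^n f_i(T)f_i(T)^*)^{1/2}$, $\cD:=\overline{\Delta_{f,T}\cH}$, and define the Poisson kernel $K_{f,T}:\cH\to \HH^2(f)\otimes \cD$ by
$$K_{f,T}h:=\sum_{\alpha\in\FF_n^+}f_\alpha\otimes \Delta_{f,T}f_\alpha(T)^*h.$$
The telescoping identity $\sum_{|\alpha|\leq N}f_\alpha(T)\Delta_{f,T}^2 f_\alpha(T)^*=I-\sum_{|\alpha|=N+1}f_\alpha(T)f_\alpha(T)^*$, combined with the purity of $T$ in $\BB_f(\cH)$, shows $K_{f,T}$ is an isometry. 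Next, a direct computation on the orthonormal basis $\{f_\alpha\}$, using $M_{f_i}^*f_{g_i\gamma}=f_\gamma$ and $M_{f_i}^*f_\beta=0$ when $\beta$ does not begin with $g_i$, yields
$$K_{f,T}f_i(T)^*=(M_{f_i}^*\otimes I_\cD)K_{f,T},\qquad i=1,\ldots,n.$$

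To upgrade this to intertwining with $M_{Z_i}^*\otimes I_\cD$, I would use that under the unitary $U:\HH^2(f)\to F^2(H_n)$ sending $f_\alpha\mapsto e_\alpha$ (from Lemma \ref{M2}) one has $M_{f_j}\cong S_j$ and $M_{Z_j}\cong \widetilde{g}_j\in F_n^\infty$, so $M_{Z_j}$ lies in the WOT-closure of polynomials in $M_{f_1},\ldots,M_{f_n}$. Consequently, the subspace $\cM:=K_{f,T}\cH$, which is co-invariant under each $M_{f_j}\otimes I_\cD$, is also co-invariant under $M_{Z_j}\otimes I_\cD$. To obtain the precise compression identity, I would iterate the $M_{f_i}^*$-intertwining to get $K_{f,T}f_\alpha(T)^*=(M_{f_\alpha}^*\otimes I_\cD)K_{f,T}$ for every $\alpha\in\FF_n^+$, extend this by linearity to polynomials, and then pass to SOT-limits of the radial dilates $g_i^{(r)}(Z):=g_i(rZ)$, $r\nearrow 1$. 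Since $T\in\BB_f(\cH)$ entails $T_i=g_i(f(T))$ in the prescribed evaluation, and property $(\cS_3)/(\cF_3)/(\cA)$ plus $g\circ f=\mathrm{id}$ forces $M_{Z_i}=g_i(M_{f_1},\ldots,M_{f_n})$ via the same calculus, the limits exist on both sides and produce
$$K_{f,T}T_i^*=(M_{Z_i}^*\otimes I_\cD)K_{f,T},\qquad i=1,\ldots,n,$$
which identifies $(T_1,\ldots,T_n)$ unitarily with $(P_\cM(M_{Z_1}\otimes I_\cD)|_\cM,\ldots,P_\cM(M_{Z_n}\otimes I_\cD)|_\cM)$.

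For the converse $(ii)\Rightarrow(i)$, assume $T_i=P_\cM(M_{Z_i}\otimes I_\cD)|_\cM$ with $\cM$ co-invariant. Since compressions to co-invariant subspaces respect evaluation of noncommutative polynomials, and $M_{f_i}=f_i(M_Z)$ by the model property, approximation by the radial polynomial truncations of $f_i$ (whose SOT-limits are preserved under compression to $\cM$) gives $f_i(T)=P_\cM(M_{f_i}\otimes I_\cD)|_\cM$. Because $(M_{f_1}\otimes I_\cD,\ldots,M_{f_n}\otimes I_\cD)$ is unitarily equivalent to $(S_1\otimes I_\cD,\ldots,S_n\otimes I_\cD)$, a pure row isometric $n$-tuple, it follows that $(f_1(T),\ldots,f_n(T))$ is a pure row contraction. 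Applying the same compression-through-functional-calculus argument to $g$ (using that $g$ is a bounded free holomorphic function on $[B(\cH)^n]_1$ and $f\circ g=\mathrm{id}$) yields $g(f(T))=P_\cM((g\circ f)(M_Z)\otimes I_\cD)|_\cM=P_\cM(M_{Z_i}\otimes I_\cD)|_\cM=T_i$, so $T\in\BB_f(\cH)$ and is pure.

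The principal obstacle is the final step of $(i)\Rightarrow(ii)$: converting the natural $M_{f_i}^*$-intertwining produced by the Poisson kernel into an $M_{Z_i}^*$-intertwining. This requires carefully justifying the passage of the $g$-functional calculus through the Poisson kernel by a uniform SOT-approximation argument, which in turn hinges on $\|f(T)\|\leq 1$, the purity of $(f_1(T),\ldots,f_n(T))$, and the compatibility conditions packaged into the model property (conditions $(\cA)$, $(\cS_3)$, or $(\cF_3)$).
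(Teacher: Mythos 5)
Your proposal is correct and follows essentially the same route as the paper: the same noncommutative Poisson kernel $K_{f,T}$, the same telescoping-plus-purity argument for the isometry, the same passage from the $M_{f_i}$-level intertwining to the $M_{Z_i}$-level via the expansion $Z_i=\sum_\alpha a_\alpha^{(i)}f_\alpha$ and the identity $T=g(f(T))$, and the same compression argument (purity and the relation $g_i(f(\cdot))=\cdot$ are inherited from the universal model) for the converse. The only cosmetic difference is that you isolate the $M_{f_i}^*$-intertwining as an intermediate step before applying $g$, whereas the paper folds this into a single computation of $K_{f,T}^*(M_{Z_i}f_\beta\otimes h)$.
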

\begin{proof} We shall prove the theorem when $f$ is an $n$-tuple of
formal power series with $f(0)=0$ and has property $(\cS)$. The
other two cases can be treated similarly. Let $g=(g_1,\ldots, g_n)$
be the inverse of $f$ with respect to the composition. Note that
condition $(\mathcal{S}_3)$ implies
$$
\sum_{j=1}^n f_j(M_{Z_1},\ldots, M_{Z_n}) f_j(M_{Z_1},\ldots,
M_{Z_n})^* =\sum_{j=1}^n M_{f_j} M_{f_j}^*=U^{-1}\left(\sum_{j=1}^n
S_jS_j^*\right)U\leq I,
$$
where $U:\HH^2(f)\to F^2(H_n)$ is the unitary operator defined by
$U(f_\alpha):=e_\alpha$, $\alpha\in \FF_n^+$. Since $M_{f_\alpha}=[f
(M_{Z_1},\ldots, M_{Z_n})]_\alpha$, $M_{f_\alpha}=U^{-1} S_\alpha
U$, $\alpha\in \FF_n^+$,  and SOT-$\lim_{p\to \infty}
\sum_{|\alpha|=p}S_\alpha S_\alpha^*=0$, we deduce that the
$n$-tuple $M_Z:=(M_{Z_1},\ldots, M_{Z_n})$ is a pure element with
$\|f(M_Z)\|\leq 1$. Now let us show that  $M_Z $ is in the
noncommutative domain $\BB_f(\HH^2(f))$. It remains to prove that
$g(f(M_Z))=M_Z$ which, due to condition $(\mathcal{S}_3)$, is
equivalent to
\begin{equation}\label{gi}
g_i(M_{f_1},\ldots, M_{f_n})=M_{Z_i},\qquad i=1,\ldots,n.
\end{equation}
According to Lemma \ref{M2}, if $g_i=\sum_{\alpha\in \FF_n^+}
a_\alpha^{(i)} Z_\alpha$,  then $M_{Z_i}=U^{-1}
\varphi_i(S_1,\ldots,S_n) U$, where $\varphi_i(S_1,\ldots,S_n)\in
F_n^\infty$   has the Fourier representation $\sum_{\alpha\in
\FF_n^+} a_\alpha^{(i)} S_\alpha$. Proving the equality above is
equivalent to showing that
$$
\text{\rm SOT-}\lim_{r\to 1}\sum_{k=0}^\infty \sum_{|\alpha|=k}
a_\alpha^{(i)} r^{|\alpha|}
S_\alpha=\varphi_i(S_1,\ldots,S_n),\qquad i=1,\ldots,n.
$$
The latter relation is well-known (see \cite{Po-funct}). Therefore,
$M_Z\in \BB_f(\HH^2(f))$. If $\cD$ is a Hilbert space and
$\cM\subseteq \HH^2(f)\otimes \cD$
 is a co-invariant  subspace under $M_{Z_1}\otimes I_\cD,\ldots,M_{Z_n}\otimes I_\cD$, then
$$
[f(P_\cM(M_{Z_1}\otimes I_\cD)|_\cM,\ldots, P_\cM(M_{Z_n}\otimes
I_\cD)|_\cM)]_\alpha = P_\cM\left\{[f(M_{Z_1},\ldots,
M_{Z_n})]_\alpha\otimes I_\cD\right\}|_\cM
$$
for any $\alpha\in \FF_n^+$.  Due  to relation
$$
g_i(f_1(M_{Z_1},\ldots, M_{Z_n}),\ldots, f_n(M_{Z_1},\ldots,
M_{Z_n}))=M_{Z_i},\qquad i=1,\ldots,n,
$$
we deduce that
$$
\text{\rm SOT-}\lim_{r\to 1} \sum_{k=0}^\infty \sum_{|\alpha|=k} a_\alpha^{(i)} r^{|\alpha|}
 [f(M_{Z_1},\ldots, M_{Z_n})]_\alpha=M_{Z_i},\qquad i=1,\ldots,n.
$$
Taking the compression to the  subspace  $\cM\subseteq \HH^2(f)\otimes \cD$, we deduce that
  $$
  g_i(f(P_\cM(M_{Z_1}\otimes I_\cD)|_\cM,\ldots, P_\cM(M_{Z_n}\otimes
  I_\cD)|_\cM))
  =P_\cM(M_{Z_i}\otimes I_\cD)|_\cM
  $$
  for  each $i=1,\ldots,n$.
  Since $(M_{Z_1},\ldots, M_{Z_n})$ is a
pure element in $\BB_f(\HH^2(f))$, we deduce   that the
$n$-tuple $ (P_\cM(M_{Z_1}\otimes I_\cD)|_\cM,\ldots,
P_\cM(M_{Z_n}\otimes I_\cD)|_\cM) $
 is a pure element in $\BB_f(\cM)$. Therefore, the implication
 $(ii)\implies (i)$ holds.

Now, we prove the implication $(i)\implies (ii)$. Assume that
condition (i) holds. Let $T=(T_1,\ldots,T_n)\in \BB_f(\cH)$ be a
pure $n$-tuple of operators. Consider the defect operator
$$
\Delta_{f,T}:=\left(I-\sum_{j=1}^nf_j(T)f_j(T)^*\right)^{1/2}
$$
and the defect space $\cD_{f,T}:=\overline{\Delta_{f}(T)\cH}$. Define
the noncommutative Poisson kernel $K_{f,T}:\cH\to \HH^2(f)\otimes
\cD_{f,T}$ by setting
\begin{equation}
\label{KfT} K_{f,T}h:=\sum_{\alpha\in \FF_n^+} f_\alpha\otimes
\Delta_{f,T} [f (T)]_\alpha^*h,\qquad h\in \cH.
\end{equation}
We need to prove that $K_{f,T}$ is an isometry and
$K_{f,T}T_i^*=(M_{Z_i}\otimes I_{\cD_{f,T}})K_{f,T}$ for any
$i=1,\ldots,n$. Indeed, a straightforward calculation reveals that
\begin{equation*}
\begin{split}
\left\|\sum_{\alpha\in \FF_n^+, |\alpha|\leq q} f_\alpha\otimes
\Delta_{f,T} [f (T)]_\alpha^*h\right\|^2_{\HH^2(f)\otimes
\cH}&=\sum_{\alpha\in \FF_n^+,
 |\alpha|\leq q} \left\| \Delta_{f,T} [f (T)]_\alpha^*h\right\|^2_\cH\\
&=\sum_{\alpha\in \FF_n^+, |\alpha|\leq q} \left<[f (T)]_\alpha\Delta_{f,T}^2 [f (T)]_\alpha^*h, h\right>\\
&=\|h\|-\left<\left(\sum_{\alpha\in \FF_n^+,
 |\alpha|=q} [f (T)]_\alpha [f (T)]_\alpha^*\right)h, h\right>
\end{split}
\end{equation*}
for any $q\in \NN$. Since $T=(T_1,\ldots, T_n)$ is  a pure $n$-tuple
in $\BB_f(\cH)$ we have
$$
\text{\rm SOT-}\lim_{q\to \infty} \sum_{\alpha\in \FF_n,\, |\alpha|=q} [f (T)]_\alpha [f (T)]_\alpha^*=0.
$$
Consequently, we obtain  $\|K_{f,T}h\|=\|h\|$ for any $h\in \cH$. On
the other hand,  for any $h,h'\in \cH$ and $\alpha\in \FF_n^+$, we
have
\begin{equation*}
\begin{split}
\left<K_{f,T}^*(f_\alpha\otimes h), h'\right>&=\left<f_\alpha\otimes h, K_{f,T} h'\right>\\
&=\left< h, \Delta_{f,T} [f (T)]_\alpha^*h'\right>\\
&=\left< [f (T)]_\alpha\Delta_{f,T}h, h'\right>.
\end{split}
\end{equation*}
Therefore,
\begin{equation}\label{KT}
K_{f,T}^*(f_\alpha\otimes h)=[f (T)]_\alpha\Delta_{f,T}h,\qquad h\in
\cH.
\end{equation}
Since the $n$-tuple  $f$ has the property $(\mathcal{S})$, for each
$i=1,\ldots,n$,  $Z_i\in \HH^2(f)$, i.e., there is a sequence
$\{a_\alpha^{(i)}\}_{\alpha\in \FF_n^+}$ with $\sum_{\alpha\in
\FF_n^+} |a_\alpha^{(i)}|^2<\infty$  such that
$$
Z_i=\sum_{\alpha\in \FF_n^+} a_\alpha^{(i)}[f(Z)]_\alpha.
$$
 Taking into account that  $T:=(T_1,\ldots, T_n)\in \BB_f(\cH)$,
we have either $T\in \cC_f^{SOT}(\cH)$ or  $T\in
  \cC_f^{rad}(\cH)$.
Let us consider first the case when $T\in \cC_f^{SOT}(\cH)$. The
equation $T=g(f(T))$ shows that  either
     \begin{equation}\label{one}
T_i=\sum_{k=0}^\infty\sum_{\alpha\in \FF_n^+, |\alpha|=k}
a_\alpha^{(i)}[f(T)]_\alpha,  \qquad i=1,\ldots,n,
\end{equation}
  where the  convergence of the  series  is in the strong operator topology, or
  \begin{equation} \label{two}
 T_i= \text{\rm SOT-}\lim_{r\to 1} \sum_{k=0}^\infty\sum_{\alpha\in \FF_n^+, |\alpha|=k}
a_\alpha^{(i)}r^{|\alpha|}[{f}(T)]_\alpha ,\qquad i=1,\ldots,n.
 \end{equation}
When relation \eqref{one} holds, then we have
\begin{equation}
\label{conv2} T_i[f(T)]_\beta=\sum_{k=0}^\infty\sum_{\alpha\in
\FF_n^+, |\alpha|=k} a_\alpha^{(i)}[f(T)]_\alpha[f(T)]_\beta, \qquad
i=1,\ldots,n,\ \beta\in \FF_n^+,
\end{equation}
where the  convergence of the series  is in the strong operator
topology. Using relation \eqref{KT}, we deduce that
$$
K_{f,T}^*\left(\sum_{k=0}^p \sum_{|\alpha|=k} a_\alpha^{(i)}
f_\alpha f_\beta\otimes h\right) =\left(\sum_{k=0}^p
\sum_{|\alpha|=k} a_\alpha^{(i)}[f (T)]_\alpha
[f(T)]_\beta\right)\Delta_{f,T}h,\qquad h\in \cH,
$$
for any $p\in \NN$.  Hence, due to relation \eqref{conv2} and the
fact that $M_{Z_i}f_\beta=\sum_{\alpha\in \FF_n^+} a_\alpha^{(i)}
f_\alpha f_\beta$ in $\HH^2(f)$, we obtain
$$
K_{f,T}^*(M_{Z_i}f_\beta\otimes h)=T_i
[f(T)]_\beta\Delta_{f,T}h,\qquad h\in \cH,
$$
which, combined with relation \eqref{KT}, implies
\begin{equation*}
\begin{split}
K_{f,T}^*(M_{Z_i}\otimes I)(f_\beta\otimes h)&=
T_iK_{f,T}^*(f_\beta\otimes h)
\end{split}
\end{equation*}
for any $\beta\in \FF_n^+$ and $i=1,\ldots,n$. Consequently
$$K_{f,T}T_i^*=(M^*_{Z_i}\otimes I)K_{f,T}$$
 for any $i=1,\ldots,n$.
Now, we assume that relation \eqref{two} holds. Then, using relation
\eqref{KT}, we deduce that
$$
K_{f,T}^*\left(\sum_{k=0}^p \sum_{|\alpha|=k}
a_\alpha^{(i)}r^{|\alpha|} f_\alpha f_\beta\otimes h\right)
=\left(\sum_{k=0}^p \sum_{|\alpha|=k} a_\alpha^{(i)}r^{|\alpha|}[f
(T)]_\alpha [f(T)]_\beta\right)\Delta_{f,T}h,\qquad h\in \cH,
$$
for any $p\in \NN$ and $r\in [0,1)$. Taking  first $p\to \infty$ and
then $r\to 1$, we obtain $ K_{f,T}^*(M_{Z_i}f_\beta\otimes h)=T_i
[f(T)]_\beta\Delta_{f,T}h$, $h\in \cH$. This implies
$K_{f,T}T_i^*=(M^*_{Z_i}\otimes I)K_{f,T}$
 for any $i=1,\ldots,n$. The case when $T\in
  \cC_f^{rad}(\cH)$ can be treated similarly.
The proof is complete.
\end{proof}

Any $n$-tuple $(T_1,\ldots, T_n)\in \BB_f(\cH)$ gives
 rise to a Hilbert module over $\CC[Z_1,\ldots, Z_n]$ by setting
$$
p\cdot h:=p(T_1,\ldots, T_n)h,\qquad p\in \CC[Z_1,\ldots, Z_n] \ \text{ and } \  h\in \cH,
$$
which we call $\BB_f$-Hilbert module.
The homomorphisms in this category are the contractive operators intertwining the module action.
 If $\cK\subseteq \cH$ is a closed subspace  of $\cH$ which is invariant under the action
 of the associated operators with $\cH$, i.e., $T_1,\ldots, T_n $, then $\cK$ and
  the quotient $\cH/\cK$ have natural $\BB_f$-Hilbert module structure coming from that of $\cH$.
  More precisely, the canonical  $n$-tuples associated with
$\cK$  and $\cH/\cK$  are  $(T_1|_\cK,\ldots, T_n|_\cK)\in \BB_f(\cK)$
and $(P_{\cK^\perp}T_1|_{\cK^\perp},\ldots, P_{\cK^\perp}T_n|_{\cK^\perp})\in \BB_f(\cK^\perp)$,
 respectively, where $P_{\cK^\perp}$ is the orthogonal projection of $\cH$ onto $\cK^\perp:=\cH\ominus \cK$.

Each noncommutative domain $\BB_f$  has  a universal model
$(M_{Z_1},\ldots, M_{Z_n})\in \BB_f(\HH^2(f))$. The module structure
defined by $M_{Z_1},\ldots, M_{Z_n}$ on the Hilbert space $\HH^2(f)$
occupies the position of the rank-one free module in the algebraic
theory \cite{K}. More precisely, the free $\BB_f$-Hilbert module  of
rank one $\HH^2(f)$ has a universal property in the category of pure
$\BB_f$-Hilbert modules of finite rank. Indeed, it is a consequence
of Theorem \ref{model}  that if $\cH$ is a pure finite rank
$\BB_f$-Hilbert module over $\CC[Z_1,\ldots, Z_n]$,   then there
exist $m\in \NN$ and a closed submodule $\cM$ of $\HH^2(f)\otimes
I_{\CC^m}$ such that $(\HH^2(f)\otimes I_{\CC^m})/\cM$ is isomorphic
to $\cH$. To clarify our terminology,  we mention that the rank of a
$\BB_f$-Hilbert module $\cH$  is  the rank of the defect operator
$\Delta_{f,T}$, while $\cH$ is called
 pure if   $T$ is  a pure  $n$-tuple in $\BB_f(\cH)$.

We introduce the {\it dilation index} of $T=(T_1,\ldots, T_n)\in
\BB_f(\cH)$,  denoted by $\text{\rm dil-ind}\,(T)$, to be the
minimum dimension of the Hilbert space $\cD$  in Theorem
\ref{model}. According to the proof of  the latter  theorem, we
deduce that $\text{\rm dil-ind}\,(T)\leq \dim \cD_{f,T}=\rank
\Delta_{f,T}$. On the other hand, let $\cG$ be a Hilbert space  such
that $\cH$ can be identified with a co-invariant subspace of
$\HH^2(f)\otimes \cG$ under $M_{Z_i}\otimes I_\cG$, \ $i=1,\ldots,
n$, and  such that
 $T_i=P_\cH(M_{Z_i}\otimes I_\cG)|\cH$ for  $i=1,\ldots, n$.
Then
\begin{equation*}
\begin{split}
I_\cH-\sum_{i=1}^n f_i(T)f_i(T)^*&=
P_\cH\left[ \left(I_{\HH^2(f)}-\sum_{i=1}^n f_i(M_{Z_1},\ldots, M_{Z_n})f_i(M_{Z_1},\ldots, M_{Z_n})^*\right)\otimes I_\cG
\right]|_\cH\\
&=P_\cH (\Delta_{f,M_Z}^2\otimes I_\cG)|_\cH=P_\cH (P_\CC\otimes I_\cG)|_\cH.
\end{split}
\end{equation*}
Hence,  we obtain $\rank \Delta_{f,T}\leq \dim \cG$. Therefore, we
have proved that $\text{\rm dil-ind}\,(T)=\rank \Delta_{f,T}$.

\begin{corollary} If $(T_1,\ldots, T_n)$ is  a pure $n$-tuple of operators in $\BB_f(\cH)$, then
$$
 T_\alpha T_\beta^*=K_{f,T}^*[ (M_{Z_\alpha} M_{Z_\beta}^* )\otimes
 I]K_{f,T},\qquad \alpha,\beta\in \FF_n^+,
$$
and
$$
 \left\|\sum_{i=1}^m q_i(T_1,\ldots, T_n)q_i(T_1,\ldots,
T_n)^*\right\| \leq \left\|\sum_{i=1}^m q_i(M_{Z_1},\ldots,
M_{Z_n})q_i(M_{Z_1},\ldots, M_{Z_n})\right\|
$$
for any     $q_i\in \CC[Z_1,\ldots,Z_n]$ and $m\in\NN$.
\end{corollary}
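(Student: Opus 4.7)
The plan is to derive both assertions directly from the two key properties of the noncommutative Poisson kernel $K_{f,T}$ already established in the proof of Theorem \ref{model}: namely, that $K_{f,T}:\cH \to \HH^2(f)\otimes \cD_{f,T}$ is an isometry, and that $K_{f,T} T_i^* = (M_{Z_i}^*\otimes I_{\cD_{f,T}})K_{f,T}$ for each $i=1,\ldots,n$.

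First I would promote the intertwining relation from letters to words by a routine induction on $|\alpha|$, obtaining
\begin{equation*}
K_{f,T}\, T_\alpha^* = (M_{Z_\alpha}^*\otimes I_{\cD_{f,T}})\, K_{f,T}, \qquad \alpha\in \FF_n^+.
\end{equation*}
Taking adjoints gives the dual intertwining $T_\alpha K_{f,T}^* = K_{f,T}^*(M_{Z_\alpha}\otimes I_{\cD_{f,T}})$. Then, using the isometry identity $K_{f,T}^* K_{f,T} = I_\cH$, I would compute
\begin{equation*}
T_\alpha T_\beta^* \;=\; T_\alpha\, K_{f,T}^* K_{f,T}\, T_\beta^* \;=\; K_{f,T}^*(M_{Z_\alpha}\otimes I)(M_{Z_\beta}^*\otimes I) K_{f,T} \;=\; K_{f,T}^*\bigl[(M_{Z_\alpha} M_{Z_\beta}^*)\otimes I_{\cD_{f,T}}\bigr] K_{f,T},
\end{equation*}
which is the first formula.

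For the norm inequality, I would expand each polynomial $q_i = \sum_\alpha c_{\alpha,i} Z_\alpha$ and observe that $q_i(T) q_i(T)^*$ is a finite linear combination of monomials $T_\alpha T_\beta^*$. Summing over $i$ and applying the first formula term by term, I can factor out $K_{f,T}^*(\cdot \otimes I)K_{f,T}$ to obtain
\begin{equation*}
\sum_{i=1}^m q_i(T_1,\ldots,T_n)\, q_i(T_1,\ldots,T_n)^* \;=\; K_{f,T}^*\!\left[\left(\sum_{i=1}^m q_i(M_{Z_1},\ldots,M_{Z_n})\, q_i(M_{Z_1},\ldots,M_{Z_n})^*\right)\!\otimes I_{\cD_{f,T}}\right]\! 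K_{f,T}.
\end{equation*}
Since $K_{f,T}$ is an isometry, $\|K_{f,T}\|=\|K_{f,T}^*\|=1$, so taking operator norms yields the desired inequality.

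There is no substantive obstacle here: both assertions are immediate consequences of the Poisson kernel identities proved in Theorem \ref{model}. The only point requiring care is the inductive step for the intertwining relation on words, and the verification that the correspondence $T_\alpha T_\beta^* \leftrightarrow K_{f,T}^*(M_{Z_\alpha} M_{Z_\beta}^*\otimes I)K_{f,T}$ is linear in the appropriate way so that it extends from monomials to arbitrary polynomial combinations of the stated form.
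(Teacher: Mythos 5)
Your proof is correct and follows exactly the route the paper intends: the corollary is stated without proof precisely because it is the immediate consequence of the two Poisson-kernel properties ($K_{f,T}$ isometric and $K_{f,T}T_i^*=(M_{Z_i}^*\otimes I)K_{f,T}$) established in the proof of Theorem \ref{model}, combined with the sandwich identity and linearity as you describe. (Incidentally, your version silently corrects the missing adjoint on the last factor of the right-hand side of the displayed inequality, which is a typo in the statement.)
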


\begin{theorem}\label{irreducible} Let $f=(f_1,\ldots, f_n)$ be  an  $n$-tuple of formal power series
 with the model
 property and let    $(M_{Z_1},\ldots,
 M_{Z_n})$  be  the universal model  associated with the noncommutative domain $\BB_f$.
 Then the   $C^*$-algebra $C^*(M_{Z_1},\ldots,
 M_{Z_n})$  is irreducible  and  coincides with
 $$
 \overline{\text{\rm span}} \{M_{Z_\alpha} M_{Z_\beta}^*:\ \alpha,\beta\in
 \FF_n^+\}.
 $$
\end{theorem}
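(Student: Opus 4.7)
The plan is to transfer the problem to the full Fock space $F^2(H_n)$ via the canonical unitary $U\colon\HH^2(f)\to F^2(H_n)$ that sends $f_\alpha\mapsto e_\alpha$. Under $U$, the left multiplier $M_{f_j}$ conjugates to the left creation operator $S_j$, while the model-property identity $Z_i=g_i(f_1,\ldots,f_n)$ (combined with $\{f_\alpha\}$ being the orthonormal basis) shows that $M_{Z_i}$ conjugates to $\widetilde g_i:=g_i(S_1,\ldots,S_n)\in F_n^\infty$. The dual model-property identity $S_i=f_i(\widetilde g_1,\ldots,\widetilde g_n)$, holding in the norm / SOT / radial-SOT sense guaranteed respectively by $(\cA)$, $(\mathcal{S}_3)$, or $(\mathcal{F}_3)$, is the bridge linking the two generating sets.

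For irreducibility I would argue as follows. Suppose $A$ lies in the commutant of $C^*(M_{Z_1},\ldots,M_{Z_n})$ and set $\widetilde A:=UAU^{-1}$; then $\widetilde A$ commutes with every $\widetilde g_j$ and every $\widetilde g_j^*$, hence with every polynomial in these operators. Since left and right multiplication by a bounded operator is SOT-continuous on norm-bounded sets, $\widetilde A$ also commutes with each SOT-limit $S_i=f_i(\widetilde g)$ and with $S_i^*$. Thus $\widetilde A$ lies in the commutant of the Cuntz--Toeplitz algebra $C^*(S_1,\ldots,S_n)$, which is $\CC I$ (this algebra contains all compact operators on $F^2(H_n)$ and is therefore irreducible). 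Consequently $A\in\CC I$, which gives the irreducibility of $C^*(M_{Z_1},\ldots,M_{Z_n})$.

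For the span identification, the inclusion $\overline{\text{\rm span}}\{M_{Z_\alpha}M_{Z_\beta}^*\}\subseteq C^*(M_{Z_1},\ldots,M_{Z_n})$ is immediate. For the reverse I would first establish the orthogonality relation $M_{Z_i}^*M_{Z_j}=\langle Z_j,Z_i\rangle_{\HH^2(f)}\,I$ (equivalently, $\widetilde g_i^*\widetilde g_j=c_{ij}I$ on $F^2(H_n)$). With this in hand, a Wick-ordering induction on lengths collapses any mixed product $M_{Z_{\beta_1}}^*\cdots M_{Z_{\beta_k}}^*M_{Z_{\gamma_1}}\cdots M_{Z_{\gamma_\ell}}$ to a scalar multiple of $M_{Z_\sigma}M_{Z_\tau}^*$ for suitable $\sigma,\tau\in\FF_n^+$; hence the $*$-algebra generated by $M_{Z_1},\ldots,M_{Z_n}$ and $I$ coincides with $\text{\rm span}\{M_{Z_\alpha}M_{Z_\beta}^*\}$, and taking the norm closure gives the claim.

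The main obstacle is establishing the scalar orthogonality $M_{Z_i}^*M_{Z_j}=c_{ij}I$. This is a nontrivial structural property of the universal model that must be extracted from the compositional inverse relations $f\circ g=g\circ f=\mathrm{id}$, the row-isometry property $S_i^*S_j=\delta_{ij}I$, and the boundary identification $S_i=f_i(\widetilde g)$. The expected route is to combine the expansion $Z_i=\sum_\beta b_\beta^{(i)}f_\beta$ with $M_{f_\alpha}^*M_{f_\beta}\in\{0,M_{f_\gamma},M_{f_\delta}^*,I\}$ to reduce the claim to cancellation identities among the coefficients of $g$ that follow from $g\circ f=\mathrm{id}$; controlling these cancellations in the appropriate convergence sense permitted by the model property is the delicate step.
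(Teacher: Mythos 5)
Your irreducibility argument is correct and takes a genuinely different route from the paper's. The paper argues directly with reducing subspaces: if $\cM\neq\{0\}$ reduces every $M_{Z_i}$ and $y=\sum_\alpha a_\alpha f_\alpha\in\cM$ has $a_\beta\neq 0$, then $a_\beta\cdot 1=\bigl(I-\sum_i M_{f_i}M_{f_i}^*\bigr)M_{f_\beta}^*\,y\in\cM$, so $1\in\cM$, hence $\CC[Z_1,\ldots,Z_n]\subseteq\cM$ and $\cM=\HH^2(f)$ by Proposition \ref{dense}. Your detour through the commutant of the Cuntz--Toeplitz algebra on $F^2(H_n)$ is equally valid and arguably cleaner; both arguments rest on the same two inputs (that $M_{f_i}$ is unitarily equivalent to $S_i$, and that the model property lets one pass between the $M_{Z_i}$ and the $M_{f_i}$ by the permitted limits), so nothing essential is gained or lost either way.

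The second half of your proposal has a genuine gap, exactly where you flagged it: you reduce everything to the scalar relation $M_{Z_i}^*M_{Z_j}=\left<Z_j,Z_i\right>_{\HH^2(f)}I$ and then concede you cannot establish it. The difficulty is not merely technical --- under the stated hypotheses this relation is \emph{false}. Take Example \ref{Ex3}, where $Z_2=\frac{\gamma}{a}f_2$ and $Z_1=\gamma f_1+\gamma\sum_{j\geq 1}a^{-j}f_2^{\,j}$. Using $M_{f_2}^*M_{f_1}=0$ and $M_{f_2}^*M_{f_2}^{\,j}=M_{f_2}^{\,j-1}$ one computes
\begin{equation*}
M_{Z_2}^*M_{Z_1}=\frac{\gamma^2}{\bar a}\sum_{j\geq 1}\frac{1}{a^{j}}\,M_{f_2}^{\,j-1}
=\frac{\gamma^2}{|a|^2}\Bigl(I+\frac{1}{a}M_{f_2}+\frac{1}{a^2}M_{f_2}^{2}+\cdots\Bigr),
\end{equation*}
which is not a scalar multiple of the identity, whereas $\left<Z_1,Z_2\right>I=\frac{\gamma^2}{|a|^2}I$. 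The cancellations you hope to extract from $g\circ f=\mathrm{id}$ do not occur, because $M_{f_\alpha}^*M_{f_\beta}=\delta_{\alpha\beta}I$ only for words of \emph{equal} length; when $\beta=\alpha\gamma$ the product is $M_{f_\gamma}$, and these cross terms survive. (Be warned that the paper's own verification of this identity makes precisely this illegitimate substitution for arbitrary $\alpha,\beta$, so it is not a model to imitate here.) Consequently the Wick-ordering induction you propose never gets started. Any repair must proceed differently --- e.g.\ by Wick-ordering at the level of the isometries $M_{f_\alpha}$ after expanding $M_{Z_i}=\sum_\alpha a_\alpha^{(i)}M_{f_\alpha}$, and then re-expressing each $M_{f_\alpha}$ as a limit of polynomials in the $M_{Z_k}$ --- and one must then control in which topology these limits land inside $\overline{\text{\rm span}}\{M_{Z_\alpha}M_{Z_\beta}^*\}$, which realistically requires norm convergence and hence an additional hypothesis of the type $f\in\cM^{||}$.
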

\begin{proof}
Let $\cM\subset \HH^2(f)$ be a nonzero subspace which is jointly
reducing for $M_{Z_1},\ldots, M_{Z_n}$, and let $y=\sum_{\alpha\in
\FF_n^+} a_\alpha f_\alpha$ be a nonzero power series in  $\cM$.
Then there is $\beta\in \FF_n^+$ such that $a_\beta\neq 0$. Since
$f=(f_1,\ldots, f_n)$ is  an  $n$-tuple of formal power series
 with the model
 property, we have $M_{f_i}=f_i(M_{Z})$,
where  $M_Z:=(M_{Z_1},\ldots, M_{Z_n})$ is either in the convergence
set $\cC_f^{SOT}(\HH^2(f))$ or $\cC_f^{rad}(\HH^2(f))$.
Consequently, we obtain
$$
a_\beta=P_\CC M_{f_\beta}^*y=\left(I-\sum_{i=1}^n
f_i(M_{Z})f_i(M_{Z})^*\right)[f(M_{Z})]_\beta^* y.
$$
Taking into account that $\cM$ is reducing for $M_{Z_1},\ldots,
M_{Z_n}$ and $a_\beta\neq 0$, we deduce that $1\in \cM$. Using again
that $\cM$ is invariant under  $M_{Z_1},\ldots, M_{Z_n}$, we obtain
$\CC[Z_1,\ldots, Z_n]\subset \cM$. Since, according to Proposition
\ref{dense}, $\CC[Z_1,\ldots, Z_n]$ is dense in $\HH^2(f)$, we
conclude that $\cM=\HH^2(f)$, which shows that $C^*(M_{Z_1},\ldots,
 M_{Z_n})$  is irreducible.

Since $f$ has the model property, we have $Z_i=\sum_{\alpha\in
\FF^+} a_\alpha^{(i)} f_\alpha\in \HH^2(f)$ and the multiplication
$M_{Z_i}$ is a bounded multiplier of $\HH^2(f)$ which satisfies the
equation
$$
M_{Z_i}=\text{SOT-}\lim_{r\to 1} \sum_{k=0}^\infty\sum_{|\alpha|=k}
a_\alpha^{(i)}r^{|\alpha|} M_{f_\alpha},\qquad i=1,\ldots,n.
$$
Hence, and taking into account that
$$
f_i(M_Z)^* f_j(M_Z)=M_{f_i}^*M_{f_j}=\delta_{ij}I,\quad i,j\in
\{1,\ldots,n\},
$$
we deduce that, for any $x,y\in \HH^2(f)$,
 \begin{equation*}
\begin{split}
\left< M_{Z_i}^* M_{Z_j}x,y\right>&=\lim_{r\to 1}
\left<\sum_{k=0}^\infty\sum_{|\beta|=k}
a_\beta^{(j)}r^{|\beta|}[f(M_Z)]_\beta
x,\sum_{k=0}^\infty\sum_{|\alpha|=k} a_\alpha^{(i)} r^{|\alpha|}
[f(M_Z)]_\alpha
y\right>\\
&= \lim_{r\to 1}\lim_{m\to\infty} \left<\sum_{|\alpha|\leq
m}\sum_{k=0}^\infty\sum_{|\beta|=k} \overline{a_\alpha^{(i)}}
a_\beta^{(j)}r^{|\alpha|+|\beta|}[f(M_Z)]_\alpha^*[f(M_Z)]_\beta x,
y\right>\\
&= \lim_{r\to 1}\lim_{m\to\infty} \left<\sum_{|\alpha|\leq
m}\sum_{k=0}^\infty\sum_{|\beta|=k} \overline{a_\alpha^{(i)}}
a_\beta^{(j)}r^{|\alpha|+|\beta|} \delta_{\alpha \beta} x,
y\right>\\
&=\lim_{r\to 1}\lim_{m\to\infty}\sum_{|\alpha|\leq
m}\overline{a_\alpha^{(i)}}
a_\alpha^{(j)}r^{2|\alpha|}\left<x,y\right>\\
&=\lim_{r\to 1}\sum_{k=0}^\infty
\sum_{|\alpha|=k}\overline{a_\alpha^{(i)}}
a_\alpha^{(j)}r^{2|\alpha|}\left<x,y\right>\\
&=\left< Z_j,Z_i\right>_{\HH^2(f)}\left<x, y\right>_{\HH^2(f)}.
\end{split}
\end{equation*}
Hence, we deduce that
$$
M_{Z_i}^* M_{Z_j}=\left< Z_j,Z_i\right>_{\HH^2(f)}
I_{\HH^2(f)},\qquad i,j\in \{1,\ldots,n\},
$$
and, therefore, $C^*(M_{Z_1},\ldots,
 M_{Z_n})$   coincides with
 $$
 \overline{\text{\rm span}} \{M_{Z_\alpha} M_{Z_\beta}^*:\ \alpha,\beta\in
 \FF_n^+\}.
 $$
The proof is complete.
\end{proof}

Let $f=(f_1,\ldots, f_n)$ be   an $n$-tuple of formal power series
with the model property.
      We say that  $f$ has the radial
approximation property, and write  $f\in \cM_{rad}$,   if there is
$\delta\in (0,1)$ such that $(rf_1,\ldots, rf_n)$ has the model
property  for any $r\in (\delta, 1]$.
Denote by  $\cM^{||}$  the set of all  formal power series
$f=(f_1,\ldots, f_n)$ having  the model property  and such that the
universal model $(M_{Z_1},\ldots, M_{Z_n})$ associated with the
noncommutative domain $\BB_f$ is in the set of norm-convergence (or radial
norm-convergence) of $f$. We also introduce  the class
$\cM_{rad}^{||}$ of all formal power series $f=(f_1,\ldots, f_n)$
with the property that there is $\delta\in (0,1)$ such that $rf\in
\cM^{||}$ for any $r\in (\delta, 1]$.

\begin{lemma}\label{rad}
 Let $f=(f_1,\ldots, f_n)$ be   an $n$-tuple of formal power series
 with the model
  property  and let $g=(g_1,\ldots,g_n)$ be its inverse with respect
  to the composition. Setting $g_i=\sum_{\alpha\in \FF_n^+}
a_\alpha^{(i)} Z_\alpha$,  the following statements are equivalent.
\begin{enumerate}
\item[(i)] The $n$-tuple $ f$ has
the radial approximation property.
\item[(ii)] There is \ $\delta\in (0,1)$ with the property
that  $g_i(\frac{1}{r} S):=\sum_{\alpha\in \FF_n^+}
\frac{a_\alpha^{(i)}}{r^{|\alpha|}} S_\alpha$
 is the Fourier representation of an element
    in $F_n^\infty$
and
  $$
  \frac{1}{r} S_j=f_j\left(g_1\left(\frac{1}{r} S\right),\ldots,
  g_n\left(\frac{1}{r} S\right)\right),\qquad i,j\in\{1,\ldots, n\}, \ r\in (\delta,
  1],
  $$
where  $g(\frac{1}{r} S)$ is either in the convergence set \
$\cC_f^{SOT}(F^2(H_n))$ or $\cC_f^{rad}(F^2(H_n))$, and
$S=(S_1,\ldots, S_n)$ is the $n$-tuple of
      left creation operators on $F^2(H_n)$.
If $f$ is an $n$-tuple of noncommutative polynomials, then the later
condition is automatically satisfied.
      \end{enumerate}
      Moreover, $f\in \cM_{rad}^{||}$ if and only if item (ii) holds and
       $g(\frac{1}{r} S)$ is in the set of norm-convergence (or radial
norm-convergence) of $f$.
\end{lemma}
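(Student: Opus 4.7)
The plan is to apply Lemma \ref{M2} to the rescaled $n$-tuple $rf := (rf_1, \ldots, rf_n)$ and translate each of its conditions into a statement about $g$ and the left creation operators $S = (S_1, \ldots, S_n)$ on $F^2(H_n)$. First I would compute the composition inverse of $rf$: since $f \circ g = id$, one checks that $(rf)(g(W/r)) = W$, so the inverse of $rf$ is $h(W) := g(W/r)$, that is, $h_i(W) = \sum_{\alpha \in \FF_n^+} \frac{a_\alpha^{(i)}}{r^{|\alpha|}} W_\alpha$. Second, $rf$ inherits nonzero radius of convergence from $f$ and $\det J_{rf}(0) = r^n \det J_f(0) \neq 0$, so condition $(i)$ of Lemma \ref{M2} for $rf$ is automatic. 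Therefore $rf$ has property $(\cS)$ precisely when (a) $h$ is a bounded free holomorphic function on $[B(\cH)^n]_1$, and (b) its model boundary function $\widetilde h$ satisfies $S_i = (rf)_i(\widetilde h)$ for each $i$, in either the SOT or the radial sense.

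Next, I would invoke the correspondence between $H^\infty_{{\bf ball}}$ and $F_n^\infty$ via model boundary functions recalled just before Lemma \ref{M2}. This identifies (a) with the requirement that the formal series $\sum_{\alpha} \frac{a_\alpha^{(i)}}{r^{|\alpha|}} S_\alpha$ be the Fourier representation of an element of $F_n^\infty$, which is precisely the element denoted $g_i(\frac{1}{r}S)$ in item (ii). Writing $(rf)_j = r f_j$, condition (b) rewrites as $\frac{1}{r} S_j = f_j(g_1(\frac{1}{r}S), \ldots, g_n(\frac{1}{r}S))$ with $g(\frac{1}{r}S) \in \cC_f^{SOT}(F^2(H_n))$ or $\cC_f^{rad}(F^2(H_n))$, which is the second half of (ii). Letting $r$ range over $(\delta, 1]$ yields the equivalence $(i) \Leftrightarrow (ii)$; the polynomial case is automatic from the corresponding remark in Lemma \ref{M2}.

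For the \emph{moreover} statement, the key observation is how the universal model of $rf$ appears on $F^2(H_n)$. Under the unitary $U: \HH^2(rf) \to F^2(H_n)$ sending $(rf)_\alpha \mapsto e_\alpha$, the expansion $Z_i = \sum_{\alpha} \frac{a_\alpha^{(i)}}{r^{|\alpha|}}(rf)_\alpha$ in $\HH^2(rf)$ shows that the universal model $(M'_{Z_1}, \ldots, M'_{Z_n})$ associated to $\BB_{rf}$ transports to $(g_1(\frac{1}{r}S), \ldots, g_n(\frac{1}{r}S))$. Since the coefficients of $rf$ differ from those of $f$ only by the scalar factor $r$, the (radial) norm-convergence sets for $rf$ and for $f$ coincide. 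Hence $rf \in \cM^{||}$ iff $g(\frac{1}{r}S)$ lies in the (radial) norm-convergence set of $f$; letting $r$ range over $(\delta, 1]$ gives the asserted characterization of $f \in \cM_{rad}^{||}$.

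The main obstacle is not conceptual but consists in careful bookkeeping of convergence modes across three identifications: the unitary $U$ carrying $\HH^2(rf)$ to $F^2(H_n)$, the boundary correspondence linking a bounded free holomorphic function to its representative in $F_n^\infty$, and the composition of $f$ with $g(\frac{1}{r}S)$. One must verify that the ``SOT or radial'' alternative in $(\cS_3)$ applied to $rf$ transports to the same alternative in (ii), and that passing between $f$ and $rf$ preserves both the norm- and radial-norm-convergence sets; both are routine but must be stated without ambiguity.
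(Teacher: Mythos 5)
Your proposal is correct and follows exactly the route the paper intends: the paper's proof of this lemma is a one-line reference to Lemma \ref{M2} (and its proof) applied to the rescaled tuples $rf$, and your argument carries out precisely that reduction, correctly identifying the composition inverse of $rf$ as $g(W/r)$, its boundary function as $g(\frac{1}{r}S)$, and the transported universal model for the \emph{moreover} clause. No gaps.
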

\begin{proof} The proof is straightforward if one uses Lemma
\ref{M2} (and the proof) and its analogues when $f$ is an $n$-tuple
of polynomials with property $(\cA)$  or a free holomorphic function
with property $(\cF)$.
\end{proof}

\begin{remark}  In all
the examples presented in this paper, the corresponding  $n$-tuple $f=(f_1,\ldots, f_n)$ is in the class
 $\cM_{rad}^{||}$. Moreover, any   $n$-tuple   of polynomials
with property $(\cA)$ is also in  the class $\cM_{rad}^{||}$.
\end{remark}

\begin{proposition}\label{lots}
Let  $f=(f_1,\ldots, f_n)$   be an  $n$-tuple  of
 formal power series with $f(0)=0$ and $\det J_f(0)\neq 0$, and let
$g=(g_1,\ldots, g_n)$ be its inverse. Assume that $f$ and $g$ have
 nonzero radius of convergence.   Then
 \begin{enumerate}
 \item[(i)] $f(g(X))=X$ for any $X\in [B(\cH)^n]_{\gamma_1}$, where
 $0<\gamma_1<r(g)$ and  $g\left([B(\cH)^n]_{\gamma_1}\right)\subset [B(\cH)^n]_{r(f)}$.

  \item[(ii)]
$ g(f(X))=X$ for any $X\in [B(\cH)^n]_{\gamma_2}$, where
$0<\gamma_2<r(f)$ and  $f([B(\cH)^n]_{\gamma_2})\subset [B(\cH)^n]_{r(g)}$.

 \end{enumerate}
 If \ $\gamma_1>1$,  then $ f\in \cM_{rad}^{||}$, and,
  if $0<\gamma<\gamma_1\leq 1$, then $\frac{1}{\gamma} f$ has the same  property.
\end{proposition}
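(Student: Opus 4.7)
The plan is to handle parts (i) and (ii) by the same free-holomorphic continuation argument used in the proof of Theorem \ref{inv-holo}. Since $g(0)=0$ and $g$ is free holomorphic on $[B(\cH)^n]_{r(g)}$, the Schwartz lemma for free holomorphic functions (\cite{Po-holomorphic}) produces, for any $\eta\in (0, r(f))$, a radius $\gamma_1\in (0, r(g))$ such that $g\bigl([B(\cH)^n]_{\gamma_1}^-\bigr)\subset [B(\cH)^n]_{r(f)}$. Theorem~1.2 of \cite{Po-automorphism} then turns $X\mapsto f(g(X))$ into a free holomorphic function on $[B(\cH)^n]_{\gamma_1}$, and the uniqueness theorem combined with the formal identity $f\circ g=\mathrm{id}$ (Theorem \ref{inv-series}) forces $f(g(X))=X$ on that ball. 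Part (ii) is obtained by swapping the roles of $f$ and $g$.

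For the first half of the final assertion, assume $\gamma_1>1$. I will verify the hypotheses of Lemma \ref{rad} in order to conclude $f\in \cM_{rad}^{||}$. First, $g$ is free holomorphic on an open ball strictly containing $[B(\cH)^n]_1^-$, so $g\in H^\infty_{\bf ball}$; part (i) together with Lemma \ref{M2} (items (ii) and (iii)(a)) then yields that $f$ has property $(\cS)$, hence the model property. Now pick $\delta\in (1/\gamma_1, 1)$. For $r\in (\delta, 1]$, the tuple $\frac{1}{r}S=(\frac{1}{r}S_1,\ldots, \frac{1}{r}S_n)$ satisfies $\|\frac{1}{r}S\|=\frac{1}{r}<\gamma_1<r(g)$, so each $g_i(\frac{1}{r}S)=\sum_\alpha a_\alpha^{(i)} r^{-|\alpha|} S_\alpha$ converges in operator norm, giving the desired element of $F_n^\infty$. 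The inclusion $g([B(\cH)^n]_{\gamma_1})\subset [B(\cH)^n]_{r(f)}$ forces $\|g(\frac{1}{r}S)\|<r(f)$, so $f_j(g(\frac{1}{r}S))$ also converges in operator norm; applying part (i) to $X=\frac{1}{r}S$ gives $f_j(g(\frac{1}{r}S))=\frac{1}{r}S_j$, which is precisely the equation demanded by Lemma \ref{rad}(ii), with norm convergence. Hence $f\in \cM_{rad}^{||}$.

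For the second half, given $0<\gamma<\gamma_1\leq 1$, set $\widetilde f:=\frac{1}{\gamma}f$. Its inverse with respect to composition is $\widetilde g(Y):=g(\gamma Y)$, with $r(\widetilde f)=r(f)$ and $r(\widetilde g)=r(g)/\gamma$. Setting $\widetilde\gamma_1:=\gamma_1/\gamma$, we have $\widetilde\gamma_1>1$ (since $\gamma<\gamma_1$), $\widetilde\gamma_1<r(\widetilde g)$ (since $\gamma_1<r(g)$), and
$$
\widetilde g\bigl([B(\cH)^n]_{\widetilde \gamma_1}\bigr)=g\bigl([B(\cH)^n]_{\gamma_1}\bigr)\subset [B(\cH)^n]_{r(f)}=[B(\cH)^n]_{r(\widetilde f)},
$$
so the first half of the assertion applied to $\widetilde f$ yields $\widetilde f=\frac{1}{\gamma}f\in \cM_{rad}^{||}$. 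The only real obstacle is bookkeeping: lining up the Schwartz-lemma radii with the operator norm $\|\frac{1}{r}S\|=\frac{1}{r}$ so that $\frac{1}{r}S$ sits inside the ball where $g$ is already controlled by the hypothesis, thereby upgrading the convergence of $f_j(g(\frac{1}{r}S))$ from a merely SOT/radial sense to genuine norm convergence, which is what distinguishes $\cM_{rad}^{||}$ from $\cM_{rad}$. No deeper analytic difficulty is anticipated.
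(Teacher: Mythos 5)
Your argument for (i) and (ii) — Schwartz lemma to control the range of $g$, Theorem 1.2 of \cite{Po-automorphism} to make the composition free holomorphic, and the uniqueness theorem to upgrade the formal identity $f\circ g=id$ to an operator identity — is exactly the paper's proof, and for the final assertion the paper likewise just invokes Lemma \ref{rad}; you merely spell out the verification (norm convergence of $g_i(\frac1r S)$ for $r>1/\gamma_1$, the identity $f_j(g(\frac1r S))=\frac1r S_j$ from part (i), and the rescaling $\widetilde f=\frac1\gamma f$, $\widetilde g(Y)=g(\gamma Y)$) in more detail than the paper does. The proposal is correct and takes essentially the same route.
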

\begin{proof}
Since $g$ has nonzero radius of convergence and $g(0)=0$, the
Schwartz lemma for free holomorphic functions implies that there is
$\gamma_1\in (0, r(g))$ such that $\|g(X)\|<r(f)$ for any $X\in
[B(\cH)^n]_{\gamma_1}$. On the other hand, using Theorem 1.2 from
\cite{Po-automorphism}, the composition $f\circ g$ is a free
holomorphic function on $[B(\cH)^n]_{\gamma_1}$. Due to the
uniqueness theorem for free holomorphic functions  and the fact that
$f\circ g=id$, as formal power series, we deduce that $f(g(X))=X$
for any $X\in [B(\cH)^n]_{\gamma_1}$. Item (ii) can be proved
similarly. Now, using Lemma \ref{rad}, we can deduce the last part
of the proposition.
\end{proof}

We remark that  Proposition \ref{lots}  does not imply the existence
of a free biholomorphic   function from $[B(\cH)^n]_{\gamma_1}$ to
$[B(\cH)^n]_{\gamma_2}$ (see the examples presented in this paper).

Let $f=(f_1,\ldots, f_n)$ be an $n$-tuple with the model property
and let $T:=(T_1,\ldots, T_n)\in \BB_f(\cH)$. We say that an
$n$-tuple $V:=(V_1,\ldots, V_n)$ of operators on a Hilbert space
$\cK\supseteq \cH$ is a minimal dilation of $T$ if the following
properties are satisfied:
\begin{enumerate}
\item[(i)] $(V_1,\ldots, V_n)\in \BB_f(\cK)$;
\item[(ii)] there  is a $*$-representation $\pi:C^*(M_{Z_1},\ldots, M_{Z_n})\to B(\cK)$ such that $\pi(M_{Z_i})=V_i$, $i=1,\ldots,n$;
\item[(iii)]$V_i^*|_\cH=T_i^*$ for  $i=1,\ldots,n$;
\item[(iv)] $\cK=\bigvee_{\alpha\in \FF_n^+} V_\alpha \cH$.
\end{enumerate}
Without the condition (iv), the $n$-tuple $V$ is called dilation of $T$.
 For  information  on completely bounded (resp. positive) maps,  we refer
 to Paulsen's book \cite{Pa-book}.

\begin{theorem}\label{Poisson-C*} Let $f=(f_1,\ldots, f_n)$ be  an $n$-tuple of formal power series
with the radial approximation property and let
 $T:=(T_1,\ldots, T_n)$ be an $n$-tuple of operators in the
 noncommutative domain $ \BB_f(\cH)$. Then  the following statements hold.
 \begin{enumerate}
 \item[(i)]
 There is
    a unique unital completely contractive linear map
$$
\Psi_{f,T}: C^*(M_{Z_1},\ldots, M_{Z_n})\to B(\cH)
$$
such that
 $$
\Psi_{f,T}(M_{Z_\alpha} M_{Z_\beta}^*)=T_\alpha T_\beta^*, \quad
\alpha,\beta\in \FF_n^+.
$$
\item[(ii)] If $f\in \cM_{rad}\cap \cM^{||}$, then there is a
 minimal dilation of $T$ which is unique up to an isomorphism.
\end{enumerate}

\end{theorem}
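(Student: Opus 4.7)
The plan is to derive (i) by a radial-approximation argument that reduces the general case to the pure case handled by the Corollary following Theorem \ref{model}, and then to obtain (ii) by applying Stinespring's theorem to the ucp map $\Psi_{f,T}$ produced in (i), together with a multiplicative-domain argument yielding coinvariance of $\cH$.

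For (i), given $T\in\BB_f(\cH)$ and $r\in(\delta,1)$ from the radial approximation property, I set $T^{(r)}:=g(rf(T))$. Since $\|rf(T)\|\leq r<1$ and $g$ is a bounded free holomorphic function on $[B(\cH)^n]_1$ (cf.\ Lemma \ref{M2}), $T^{(r)}$ is well defined by a norm-convergent series with $\|T^{(r)}\|\leq\|g\|_\infty$. The composition identity $f\circ g=\mathrm{id}$ valid near $0$ (Proposition \ref{lots}), together with the uniqueness principle for free holomorphic functions, yields $f(T^{(r)})=rf(T)$, hence $g(f(T^{(r)}))=T^{(r)}$; the elementary estimate $\sum_{|\alpha|=k}[f(T^{(r)})]_\alpha[f(T^{(r)})]_\alpha^*\leq r^{2k}I$ shows in addition that $T^{(r)}\in\BB^{pure}_f(\cH)$. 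The Corollary then supplies the ucp map $\Psi_r:=K_{f,T^{(r)}}^*(\,\cdot\otimes I_{\cD_{f,T^{(r)}}})K_{f,T^{(r)}}$ on $C^*(M_{Z_1},\ldots,M_{Z_n})$, satisfying $\Psi_r(M_{Z_\alpha}M_{Z_\beta}^*)=T^{(r)}_\alpha T^{(r)*}_\beta$ and, via the intertwining $K_{f,T^{(r)}}^*(M_{Z_\alpha}\otimes I)=T^{(r)}_\alpha K_{f,T^{(r)}}^*$, also $\Psi_r(M_{Z_\alpha})=T^{(r)}_\alpha$. As $r\to 1$, Abel's theorem for Banach-space-valued series combined with the definition of $g(f(T))=T$ in $\BB_f$ as an SOT (or radial-SOT) limit gives $T^{(r)}\to T$ in SOT with uniformly bounded norms, hence $T^{(r)}_\alpha T^{(r)*}_\beta\to T_\alpha T_\beta^*$ in WOT. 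Because $\{M_{Z_\alpha}M_{Z_\beta}^*\}$ has norm-dense linear span in $C^*(M_{Z_1},\ldots,M_{Z_n})$ by Theorem \ref{irreducible} and $\{\Psi_r\}$ is a bounded family of ucp maps, a point-weak cluster point produces a ucp (hence unital completely contractive) map $\Psi_{f,T}$ with $\Psi_{f,T}(M_{Z_\alpha}M_{Z_\beta}^*)=T_\alpha T_\beta^*$. Uniqueness is immediate from the same density.

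For (ii), with $f\in\cM_{rad}\cap\cM^{||}$, apply Stinespring's theorem to $\Psi_{f,T}$ to obtain a $*$-representation $\pi:C^*(M_{Z_1},\ldots,M_{Z_n})\to B(\cK)$ and an isometry $W:\cH\to\cK$ with $\Psi_{f,T}(X)=W^*\pi(X)W$ and minimality $\cK=\overline{\mathrm{span}}\,\pi(C^*(M_{Z_1},\ldots,M_{Z_n}))W\cH$. Set $V_i:=\pi(M_{Z_i})$. The identity $\Psi_{f,T}(M_{Z_i}\cdot M_{Z_\alpha}M_{Z_\beta}^*)=T_iT_\alpha T_\beta^*=T_i\Psi_{f,T}(M_{Z_\alpha}M_{Z_\beta}^*)$, extended by linearity and norm-continuity, reads $\Psi_{f,T}(M_{Z_i}x)=\Psi_{f,T}(M_{Z_i})\Psi_{f,T}(x)$ for all $x\in C^*(M_{Z_1},\ldots,M_{Z_n})$; the standard multiplicative-domain computation $W^*V_i(I-WW^*)\pi(x)W=0$ together with Stinespring minimality then forces $V_i^*W\cH\subset W\cH$, so $\cH\cong W\cH$ is coinvariant under each $V_i$ and $V_i^*|_{W\cH}=T_i^*$. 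The hypothesis $f\in\cM^{||}$ ensures that $f(M_Z)$ is norm-defined, so $f(V)=\pi(f(M_Z))$ has norm $\leq 1$ and $g(f(V))=\pi(g(f(M_Z)))=\pi(M_Z)=V$, placing $V\in\BB_f(\cK)$. Restricting $\pi$ to the reducing subspace $\bigvee_{\alpha\in\FF_n^+}V_\alpha W\cH$ (reducing because $V_i^*V_j=\pi(M_{Z_i}^*M_{Z_j})=\langle Z_j,Z_i\rangle I$ by Theorem \ref{irreducible}) produces the minimal dilation, and uniqueness up to unitary isomorphism follows from the uniqueness of the minimal Stinespring dilation.

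The main technical obstacle lies in the convergence analysis in (i): reconciling the norm evaluation of $g$ on strict row contractions $rf(T)$ with the SOT (or radial-SOT) definition of $g(f(T))=T$ on the boundary of $\BB_f$, and making precise the extraction of a ucp limit point. Once the radial reduction $T^{(r)}\to T$ is validated, the remaining steps follow standard patterns from noncommutative Poisson-transform theory for row contractions.
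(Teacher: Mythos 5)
Your part (ii) is essentially the proof in the paper (Stinespring applied to $\Psi_{f,T}$, coinvariance of $\cH$ via the multiplicative domain, and the use of $f\in \cM^{||}$ to push $f(M_Z)$ and $g(f(M_Z))=M_Z$ through $\pi$), and it is fine. Part (i), however, follows a genuinely different route: you perturb the operator tuple, setting $T^{(r)}:=g(rf(T))$ inside the fixed domain $\BB_f$, whereas the paper keeps $T$ fixed and perturbs the domain, observing that $T$ is already a \emph{pure} element of $\BB_{rf}(\cH)$ whose universal model $(M^{(r)}_{Z_1},\ldots,M^{(r)}_{Z_n})$ acts on $\HH^2(rf)$. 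Your route has two genuine gaps.

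First, the identity $f(T^{(r)})=rf(T)$ is not justified under the hypothesis of part (i). Proposition \ref{lots} gives $f(g(X))=X$ only on a ball $[B(\cH)^n]_{\gamma_1}$ small enough that $g$ maps it into $[B(\cH)^n]_{r(f)}$; for $X=rf(T)$ with $\|X\|$ close to $1$, the tuple $g(X)$ can have norm up to $\|g\|_\infty$, far outside the ball of norm convergence of $f$, so ``$f\circ g$'' is not a norm-defined free holomorphic function of $X$ on $[B(\cH)^n]_1$ and the uniqueness principle has nothing to bite on. Extending $f(g(X))=X$ to all strict row contractions is precisely the content of Lemma \ref{ball}(i), which requires $f\in \cM^{||}$ --- a hypothesis available in part (ii) but not in part (i), where only the radial approximation property is assumed. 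Without this identity you cannot conclude that $T^{(r)}$ lies in $\BB_f(\cH)$, let alone that it is pure. Second, even granting $T^{(r)}\to T$ in SOT with uniform bounds, this does not give $T^{(r)}_\alpha T^{(r)*}_\beta\to T_\alpha T_\beta^*$ in WOT: in the decomposition $T^{(r)}_\alpha T^{(r)*}_\beta-T_\alpha T_\beta^*=(T^{(r)}_\alpha-T_\alpha)T^{(r)*}_\beta+T_\alpha(T^{(r)*}_\beta-T^*_\beta)$, the first term pairs an SOT-null net against the merely weakly convergent vectors $T^{(r)*}_\beta h$, and such pairings need not converge; one would need $*$-strong convergence of $T^{(r)}$, which is not established. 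So the point-weak cluster point you extract cannot be identified with the required map. The paper's argument avoids both problems at once: the operator $\sum a_{\alpha,\beta}T_\alpha T_\beta^*$ never moves, the inequality $\bigl\|\sum a_{\alpha,\beta}T_\alpha T_\beta^*\bigr\|\leq \bigl\|\sum a_{\alpha,\beta}M^{(r)}_{Z_\alpha}M^{(r)*}_{Z_\beta}\bigr\|$ comes from Theorem \ref{model} applied to $T$ as a pure element of $\BB_{rf}(\cH)$, and the only limit needed is the scalar limit of operator norms, obtained from the norm continuity of $t\mapsto \varphi_i(\tfrac{t}{r}S)$ in the disc algebra $\cA_n$. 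You should restructure part (i) along those lines.
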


\begin{proof} According to Lemma \ref{rad}, there is $\delta\in (0,1)$ such that, for each $r\in (\delta,1]$ and $i=1,\ldots,n$, the multiplication operator
$M_{Z_i}^{(r)}:\HH^2(rf)\to \HH^2(rf)$, defined by $M_{Z_i}^{(r)}\psi:=Z_i \psi$, is unitarily  equivalent to an operator $\varphi_i(\frac{1}{r}S)\in F_n^\infty$ having the Fourier representation $\sum_{\alpha\in \FF_n^+}a_\alpha^{(i)}\frac{1}{r^{|\alpha|}}S_\alpha$.
Therefore,  for any $a_{\alpha,\beta}\in \CC$,
\begin{equation}
\label{equ}
\left\|\sum_{|\alpha|,|\beta|\leq m} a_{\alpha,\beta} M_{Z_\alpha}^{(r)}{M_{Z_\beta}^{(r)}}^*\right\|
=\left\|\sum_{|\alpha|,|\beta|\leq m} a_{\alpha,\beta} \varphi_\alpha\left(\frac{1}{r}S\right)\varphi_\beta\left(\frac{1}{r}S\right)^*\right\|.
\end{equation}
Note that $(T_1,\ldots, T_n)$ is a pure $n$-tuple in $\BB_{rf}(\cH)$ for any $r\in(\delta, 1)$. Applying Theorem \ref{model}, we deduce that
\begin{equation}
\label{equ2}
\left\|\sum_{|\alpha|,|\beta|\leq m} a_{\alpha,\beta} T_\alpha T_\beta^*\right\|
\leq \left\|\sum_{|\alpha|,|\beta|\leq m} a_{\alpha,\beta} M_{Z_\alpha}^{(r)}{M_{Z_\beta}^{(r)}}^*\right\|.
\end{equation}
On the other hand, according to \cite{Po-holomorphic},
$\varphi_i(\frac{t}{r}S)$ is in the noncommutative disc algebra
$\cA_n$   for any $t\in (0,1)$, and the map $(0,1)\ni t\to
\varphi_i(\frac{t}{r}S)$ is continuous in the operator norm
topology. Consequently,
$$
\lim_{r\to 1}\left\|\sum_{|\alpha|,|\beta|\leq m} a_{\alpha,\beta}
\varphi_\alpha\left(\frac{1}{r}S\right)\varphi_\beta\left(\frac{1}{r}S\right)^*\right\|
=\left\|\sum_{|\alpha|,|\beta|\leq m} a_{\alpha,\beta}
\varphi_\alpha\left(S\right)\varphi_\beta\left(S\right)^*\right\|
=\left\|\sum_{|\alpha|,|\beta|\leq m} a_{\alpha,\beta}
M_{Z_\alpha}M_{Z_\beta}^*\right\|.
$$
Combining this with relations \eqref{equ} and \eqref{equ2}, we have
$$
\left\|\sum_{|\alpha|,|\beta|\leq m} a_{\alpha,\beta} T_\alpha T_\beta^*\right\|
\leq \left\|\sum_{|\alpha|,|\beta|\leq m} a_{\alpha,\beta} M_{Z_\alpha}M^*_{Z_\beta}\right\|.
$$
A similar inequality can be obtained if we pass to matrices with entries in
 $C^*(M_{Z_1},\ldots, M_{Z_n})$. Now, an approximation  argument shows that
 the map
 $$\sum_{|\alpha|,|\beta|\leq m} a_{\alpha,\beta} M_{Z_\alpha}M^*_{Z_\beta}\mapsto
 \sum_{|\alpha|,|\beta|\leq m} a_{\alpha,\beta} T_\alpha T_\beta^*
 $$
 can be extended to a unique unital completely contractive map on $
 \overline{\text{\rm span}} \{M_{Z_\alpha} M_{Z_\beta}^*:\ \alpha,\beta\in
 \FF_n^+\}.
 $
 Since, due to Theorem \ref{irreducible},  the latter span  coincides with
  $C^*(M_{Z_1},\ldots, M_{Z_n})$,  item (i) follows.
 Now, we assume that $f\in \cM_{rad}\cap \cM^{||}$. Applying Stinespring's dilation \cite{St} to the unital  completely positive linear map $\Psi_{f,T}$ and taking into account that
$C^*(M_{Z_1},\ldots, M_{Z_n})=\overline{\text{\rm span}} \{M_{Z_\alpha} M_{Z_\beta}^*:\ \alpha,\beta\in
 \FF_n^+\},
 $
we  find a unique representation $\pi:C^*(M_{Z_1},\ldots, M_{Z_n}) \to B(\cK)$, where $\cK\supseteq \cH$, such that $\pi(M_{Z_i})^*|_{\cH}=T_i^*$, $i=1,\ldots,n$, and $\cK=\bigvee_{\alpha\in \FF_n^+} \pi(M_{Z_\alpha})\cH$. Setting
$V_i:=\pi(M_{Z_i})$, $i=1,\ldots,n$, it remains to prove that $(V_1,\ldots, V_n)\in \BB_f(\cK)$. To this end, note that since $(M_{Z_1},\ldots, M_{Z_n})\in  \cM^{||}$, we have $f_i(M_{Z_1},\ldots, M_{Z_n})\in C^*(M_{Z_1},\ldots, M_{Z_n})$ for $i=1,\ldots,n$. Consequently, the inequality
$\sum_{i=1}^n f_i(M_{Z_1},\ldots, M_{Z_n})f_i(M_{Z_1},\ldots, M_{Z_n})^*\leq I_{\HH^2(f)}$ implies
$$\sum_{i=1}^n f_i(\pi(M_{Z_1}),\ldots, \pi(M_{Z_n}))f_i(\pi(M_{Z_1}),\ldots, \pi(M_{Z_n}))^*\leq I_{\cK}
$$
On the other hand, since $g(f(M_{Z_1},\ldots, M_{Z_n}))=M_{Z_i}$, where the convergence is in the operator norm topology, we deduce that $g_i(f(\pi(M_{Z_1}),\ldots, \pi(M_{Z_n})))=\pi(M_{Z_i})$, $i=1,\ldots,n$. Therefore, the $n$-tuple
$(\pi(M_{Z_1}),\ldots, \pi(M_{Z_n}))$ is in the noncommutative domain $\BB_f(\cK)$.
The proof is complete.
\end{proof}

Let $\cS\subset C^*(M_{Z_1},\ldots, M_{Z_n})$ be the operator system
defined by
$$
\cS:=\{p(M_{Z_1},\ldots, M_{Z_n})+q(M_{Z_1},\ldots, M_{Z_n})^*: \
p,q \in \CC[Z_1,\ldots,Z_n]   \}.
$$

\begin{theorem}\label{cc}
Let $f=(f_1,\ldots, f_n)\in \cM_{rad}\cap \cM^{||}$  and $ (T_1,\ldots,
T_n)\in B(\cH)^n$.  Then the following  statements are equivalent:
\begin{enumerate}
\item[(i)] $(T_1,\ldots, T_n)\in \BB_f(\cH)$;
\item[(ii)] the map \ $q(M_{Z_1},\ldots, M_{Z_n})\mapsto q(T_1,\ldots,
T_n)$ is completely contractive;
\item[(iii)]
The map $\Psi:\cS\to B(\cH)$ defined by
$$
\Psi\left(p(M_{Z_1},\ldots, M_{Z_n})+q(M_{Z_1},\ldots,
M_{Z_n})^*\right):= p(T_1,\ldots, T_n) +q(T_1,\ldots, T_n)^*
$$
is completely positive.

\end{enumerate}
\end{theorem}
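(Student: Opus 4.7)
The plan is to prove the chain $(i) \Rightarrow (ii) \Leftrightarrow (iii) \Rightarrow (i)$, yielding the equivalence of the three conditions. The implication $(i) \Rightarrow (ii)$ is immediate from Theorem \ref{Poisson-C*}(i), which applies since $f \in \cM_{rad} \cap \cM^{||}$: for any $T \in \BB_f(\cH)$, the unital completely contractive Poisson-type map $\Psi_{f,T}: C^*(M_{Z_1},\ldots,M_{Z_n}) \to B(\cH)$ restricts on the polynomial subalgebra to $q(M_{Z_1},\ldots,M_{Z_n}) \mapsto q(T_1,\ldots,T_n)$, which is therefore unital and completely contractive. For the equivalence $(ii) \Leftrightarrow (iii)$, I would invoke Paulsen's theorem (see, e.g., \cite{Pa-book}): a unital linear homomorphism $\Phi$ on a unital operator subalgebra $\cA$ of a $C^*$-algebra is completely contractive if and only if its symmetrization $\tilde\Phi(a + b^*) := \Phi(a) + \Phi(b)^*$ on $\cA + \cA^*$ is completely positive. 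Taking $\cA$ to be the unital polynomial subalgebra of $C^*(M_{Z_1},\ldots,M_{Z_n})$, so that $\cA + \cA^* = \cS$, this equivalence is exactly $(ii) \Leftrightarrow (iii)$.

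For the substantive direction $(iii) \Rightarrow (i)$, observe first that $(iii) \Rightarrow (ii)$ is the trivial half of Paulsen, so the polynomial map $q(M_Z) \mapsto q(T)$ is a unital completely contractive homomorphism; it therefore extends continuously to a unital completely contractive homomorphism $\Phi: \cA(\BB_f) \to B(\cH)$ on the domain algebra. Since $f \in \cM^{||}$, each $f_i(M_{Z_1},\ldots,M_{Z_n})$ lies in $\cA(\BB_f)$ (directly as a norm-convergent sum of polynomials in the non-radial case, and via the radial approximations $rf$, $r \in (\delta,1)$, afforded by $f \in \cM_{rad}$, in the radial case). Setting $f_i(T) := \Phi(f_i(M_Z))$, complete contractivity of $\Phi$ on rows yields
$$
\|f(T)\|^2 = \Bigl\|\sum_{i=1}^n f_i(T) f_i(T)^*\Bigr\| \leq \Bigl\|\sum_{i=1}^n f_i(M_Z) f_i(M_Z)^*\Bigr\| \leq 1,
$$
while the multiplicativity and norm-continuity of $\Phi$, applied to $g_i(f(M_Z)) = M_{Z_i}$, transfer this identity to $g_i(f(T)) = \Phi(M_{Z_i}) = T_i$. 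Hence $T \in \BB_f(\cH)$.

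The principal technical obstacle will be handling the convergence issues in evaluating $f_i$ and $g_i$ at operators through the extended homomorphism $\Phi$. When $f \in \cM^{||}$ holds only in the radial sense (a strong-operator limit of partial sums at $M_Z$ rather than a norm-convergent series), the term-by-term application of $\Phi$ is not directly available, since $\Phi$ is only norm-continuous. This is precisely where the auxiliary hypothesis $f \in \cM_{rad}$ is essential: the required estimates and identities must first be established at the level of $rf$ for $r \in (\delta, 1)$, where the evaluations $r f_i(M_Z^{(r)})$ lie in a genuine norm-closed polynomial algebra, and then passed to the limit $r \to 1$ via the norm-continuity argument already exploited in the proof of Theorem \ref{Poisson-C*}.
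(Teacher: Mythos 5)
Your proof is correct, and the substantive direction is the same as the paper's: both reduce everything to $(ii)\Rightarrow(i)$, recover $f_j(T)$ as a norm limit of polynomial evaluations $q_m^{(j)}(T)$ using the $\cM^{||}$ hypothesis and the contractivity of the polynomial calculus, bound the row norm $\|f(T)\|\leq\|f(M_Z)\|\leq 1$, and then use Lemma \ref{rad} (the radial approximation property) to write $M_{Z_i}=g_i(f(M_Z))$ as a norm limit of polynomials in $f(M_Z)$ and transfer the identity $g_i(f(T))=T_i$ by a von Neumann type inequality; your packaging of this via the extended homomorphism $\Phi$ on $\cA(\BB_f)$ is a cosmetic difference. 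The one genuine structural difference is how the cycle through $(iii)$ is closed: the paper proves $(i)\Rightarrow(iii)$ directly, since the Poisson-type map $\Psi_{f,T}$ of Theorem \ref{Poisson-C*} is unital and completely positive on all of $C^*(M_{Z_1},\ldots,M_{Z_n})$ and restricts to $\Psi$ on $\cS$, and then only needs the easy implication $(iii)\Rightarrow(ii)$; you instead invoke the full Paulsen symmetrization equivalence (a unital map on a unital subalgebra is completely contractive iff its symmetrization on $\cA+\cA^*$ is completely positive) to get $(ii)\Leftrightarrow(iii)$ as a standalone operator-space fact. Your route requires the nontrivial half of that equivalence but makes $(ii)\Leftrightarrow(iii)$ independent of the domain structure; the paper's route leans on the already-established Poisson transform and uses only the trivial half. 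Both are valid, and your reliance on \cite{Pa-book} for the symmetrization theorem is legitimate.
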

\begin{proof} The implication $(i)\implies (ii)$ and  $(i)\implies
(iii)$  are due to  Theorem  \ref{Poisson-C*}. Since the implication
$(iii)\implies (ii)$ follows from the theory of completely positive
 (resp. contractive) maps, it remains to prove the implication
$(ii)\implies (i)$. To this end, assume that the map
$q(M_{Z_1},\ldots, M_{Z_n})\mapsto q(T_1,\ldots, T_n)$ is completely
contractive.  For each $j=1,\ldots,n$, assume that $f_j$ has the
representation $\sum_{\alpha\in \FF_n^+} c_\alpha^{(j)} Z_\alpha$
and let $q_m^{(j)}:=\sum_{k=0}^m\sum_{|\alpha|=k}c_\alpha^{(j)}
Z_\alpha$, $m\in \NN$. Since the universal  model
$M_Z:=(M_{Z_1},\ldots, M_{Z_n})$ is in the set  of norm
convergence for the $n$-tuple $f$, we have $f_j(M_{Z }
)=\lim_{m\to\infty} q_m^{(j)}(M_{Z } )$ with the convergence in the
operator norm topology. On the other hand, due to Theorem
\ref{Poisson-C*}, we have
$$
\left\|q_m^{(j)}(T_1,\ldots, T_n)-q_k^{(j)}(T_1,\ldots,
T_n)\right\|\leq \left\|q_m^{(j)}(M_{Z_1},\ldots,
M_{Z_n})-q_k^{(j)}(M_{Z_1},\ldots, M_{Z_n})\right\|
$$
for any $m,k\in \NN$. Consequently, $\{q_m^{(j)}(T_1,\ldots,
T_n)\}_{m=1}^\infty$ is a Cauchy sequence in $B(\cH)$ and,
therefore,
$f_j(T_1,\ldots,T_n):=\lim_{m\to\infty}q_m^{(j)}(T_1,\ldots, T_n)$
exists in the operator norm. Now, since
$$
\left\|[q_m^{(1)}(T_1,\ldots, T_n), \ldots, q_m^{(n)}(T_1,\ldots,
T_n)]\right\|\leq \left\|[q_m^{(1)}(M_{Z_1},\ldots, M_{Z_n}),
\ldots, q_m^{(n)}(M_{Z_1},\ldots, M_{Z_n})]\right\|,
$$
taking the limit as $m\to\infty$, we obtain $\|f(T)\|\leq
\|f(M_Z)\|\leq 1$. Since $f=(f_1,\ldots, f_n)$ has  the radial
approximation property, relation \eqref{gi}  and Lemma \ref{rad}
show that  the  sequence
$p_m^{(i)}:=\sum_{k=0}^m\sum_{|\alpha|=k}a_\alpha^{(j)} Z_\alpha$ of
noncommutative polynomials satisfies the relation
$$M_{Z_i}=g_i(f(M_Z))=\lim_{m\to\infty}p_m^{(i)}(f(M_Z)),
$$
where the limit is in the operator norm.  Therefore, we have
$\|p_m^{(i)}(f(M_Z))-M_{Z_i}\|\to
 0$ as $m\to\infty$. Using  the von Neumann type inequality
 $$
\|p_m^{(i)}(f(T))-T_i\|\leq \|p_m^{(i)}(f(M_Z))-M_{Z_i}\|, \quad
m\in \NN,
$$
we deduce that $T_i=\lim_{m\to\infty}p_m^{(i)}(f(T)) $ in the
operator norm and, therefore, $g_i(f(T))=T_i$ for  all  $i=1,\ldots,
n$.
 This shows that $(T_1,\ldots, T_n)\in \BB_f(\cH)$
and completes the proof.
\end{proof}

We introduce the  noncommutative  domain algebra $\cA(\BB_f)$ as the
norm closure of all polynomials in $M_{Z_1},\ldots, M_{Z_n}$ and the
identity.

\begin{theorem} Let $f=(f_1,\ldots, f_n)\in \cM_{rad}\cap \cM^{||}$ and $(A_1,\dots,A_n)\in B(\cH)^n$. Then
there is an $n$-tuple of operators $(T_1,\dots,T_n)\in \BB_f(\cH)$
and an invertible operator $X$ such that
$$
A_i=X^{-1}T_i X,\text{\quad for any\ }i=1,\dots,n,
$$
 if and only if
the $n$-tuple $(A_1,\dots,A_n)$ is  completely polynomially bounded
with respect to the noncommutative  domain algebra $\cA(\BB_f)$.
\end{theorem}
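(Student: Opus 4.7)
The plan is to deduce this as a direct corollary of Theorem \ref{cc} combined with Paulsen's similarity theorem \cite{Pa}, via the standard operator--algebraic dictionary between completely contractive homomorphisms and tuples in a prescribed operator domain.

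For the ``only if'' direction I would argue as follows. Assume $A_i=X^{-1}T_iX$ with $T=(T_1,\dots,T_n)\in \BB_f(\cH)$ and $X\in B(\cH)$ invertible. By the implication $(i)\Rightarrow(ii)$ of Theorem \ref{cc}, the assignment $q(M_{Z_1},\dots,M_{Z_n})\mapsto q(T_1,\dots,T_n)$ is a unital completely contractive homomorphism on the polynomial subalgebra, which extends by density and continuity to a unital completely contractive homomorphism $\Psi:\cA(\BB_f)\to B(\cH)$. The conjugate $\Phi(h):=X^{-1}\Psi(h)X$ is then a unital completely bounded homomorphism on $\cA(\BB_f)$ with $\|\Phi\|_{cb}\leq \|X\|\,\|X^{-1}\|$, satisfying $\Phi(M_{Z_i})=A_i$. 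Hence $(A_1,\dots,A_n)$ is completely polynomially bounded with respect to $\cA(\BB_f)$.

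For the ``if'' direction, complete polynomial boundedness of $(A_1,\dots,A_n)$ with respect to $\cA(\BB_f)$ means that the map $p(M_{Z_1},\dots,M_{Z_n})\mapsto p(A_1,\dots,A_n)$, initially defined on polynomials in $M_{Z_1},\dots,M_{Z_n}$ and the identity, extends (by density in $\cA(\BB_f)$) to a unital completely bounded homomorphism $\rho:\cA(\BB_f)\to B(\cH)$ with $\rho(M_{Z_i})=A_i$. Paulsen's similarity theorem \cite{Pa} then produces an invertible $X\in B(\cH)$ such that $h\mapsto X\rho(h)X^{-1}$ is a unital completely contractive homomorphism on $\cA(\BB_f)$. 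Setting $T_i:=XA_iX^{-1}$ for $i=1,\dots,n$, we see that $q(M_{Z_1},\dots,M_{Z_n})\mapsto q(T_1,\dots,T_n)$ is completely contractive, so by the implication $(ii)\Rightarrow(i)$ of Theorem \ref{cc}, whose hypothesis $f\in \cM_{rad}\cap \cM^{||}$ is exactly the one in force here, we conclude $(T_1,\dots,T_n)\in\BB_f(\cH)$. The relation $A_i=X^{-1}T_iX$ is then immediate.

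The only point requiring care is the transition from the polynomial level (on which Theorem \ref{cc} is phrased) to the completed algebra $\cA(\BB_f)$ (on which complete polynomial boundedness and Paulsen's theorem live); since $\cA(\BB_f)$ is by definition the norm closure of polynomials in $M_{Z_1},\dots,M_{Z_n}$, complete contractivity and complete boundedness transfer between the two levels without loss, and the two ingredients slot together cleanly. There is no hard analytic step beyond this: the free-holomorphic content has been absorbed into Theorem \ref{cc}, and Paulsen's theorem handles the similarity.
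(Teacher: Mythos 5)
Your proposal is correct and follows exactly the route the paper intends: the paper's own proof consists of the single sentence ``Using Theorem \ref{cc} and Paulsen's similarity result \cite{Pa}, the result follows,'' and your argument is precisely the expansion of that sentence, applying $(i)\Rightarrow(ii)$ of Theorem \ref{cc} for the forward direction and Paulsen's theorem followed by $(ii)\Rightarrow(i)$ of Theorem \ref{cc} for the converse. Nothing further is needed.
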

\begin{proof}
Using Theorem \ref{cc} and Paulsen's similarity result \cite{Pa},
the result follows.
\end{proof}

\begin{lemma} \label{ball} Let  $f=(f_1,\ldots, f_n)$ be   an $n$-tuple  of
 formal power series in the class  $\cM^{||}$, and let $g=(g_1,\ldots, g_n)$ be
  the inverse of $f$.
   Then the following statements hold.
  \begin{enumerate}
  \item[(i)] The set $\BB_{f}^{<}(\cH)$ coincides with
  $g\left([B(\cH)^n]_1\right)$. When $\cH=\CC$, the result holds true  when $f$ has only the model property.
  \item[(ii)] The set $\BB^{pure}_f(\cH)$  coincides with the image of all pure row contractions under
  $g$.
  \item[(iii)]  If  $f(0)=0$, then $\BB_f^<(\cH)$ contains an open ball in $B(\cH)^n$ centered at $0$, and
 $$
 \{X\in B(\cH)^n: \  X \text{ is nilpotent   and } \|f(X)\|\leq 1\}= \BB^{nil}_f(\cH)\subset \BB^{pure}_f(\cH).
$$
      \end{enumerate}

\end{lemma}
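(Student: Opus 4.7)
The plan is to handle the three parts in succession, exploiting the formal identities $f\circ g = g\circ f = id$ together with the free holomorphic function machinery from Section~1. For (i), the inclusion $\BB_{f}^{<}(\cH)\subseteq g([B(\cH)^n]_1)$ is immediate from the definition: if $X\in \BB_{f}^{<}(\cH)$ and $Y:=f(X)$, then $\|Y\|<1$ and $X=g(f(X))=g(Y)\in g([B(\cH)^n]_1)$. For the reverse inclusion, given $Y\in [B(\cH)^n]_1$ I would set $X:=g(Y)$, which is well defined since $g$ is a bounded free holomorphic function on $[B(\cH)^n]_1$ (Lemma~\ref{M2}). The task is then to verify $f(X)=Y$ in the appropriate convergence sense and $g(f(X))=X$. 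My strategy is to first invoke Theorem~\ref{inv-holo} to obtain $f\circ g = id$ as free holomorphic functions on some neighborhood of $0$, and then propagate this identity to all of $[B(\cH)^n]_1$ by the uniqueness theorem for free holomorphic functions, using that $f\in \cM^{||}$ ensures $g$ takes values in the appropriate convergence set of $f$. This yields $f(g(Y))=Y$, so $\|f(X)\|=\|Y\|<1$ and $g(f(X))=g(Y)=X$. When $\cH=\CC$, the same scheme reduces to classical scalar analytic continuation on the Euclidean ball, so only the model property (without the norm-convergence refinement $\cM^{||}$) is required.

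For (ii) the argument parallels (i). If $X\in \BB^{pure}_f(\cH)$ and $Y:=f(X)$, then $Y\in [B(\cH)^n]_1^{-}$ and $\text{SOT-}\lim_{k\to\infty}\sum_{|\alpha|=k}Y_\alpha Y_\alpha^{*}=0$, so $Y$ is a pure row contraction with $X=g(Y)$. Conversely, starting from a pure row contraction $Y$, I would use the functional calculus for row contractions, extending the boundary-value interpretation of $g$ from the open ball to pure row contractions in the closed ball, to define $X:=g(Y)$, and then apply the extended identity $f\circ g=id$ to conclude $f(X)=Y$ and $g(f(X))=X$; the purity of $Y$ then transfers directly to purity of $X$ in $\BB_f(\cH)$.

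For (iii) the open ball assertion follows from continuity: since $f(0)=0$ and $f$ is free holomorphic on $[B(\cH)^n]_{r(f)}$, there exists $\delta\in(0,r(f))$ with $\|f(X)\|\leq 1$ on $[B(\cH)^n]_\delta$; shrinking $\delta$ further if needed and applying Theorem~\ref{inv-holo} gives $g(f(X))=X$ on $[B(\cH)^n]_\delta$, so $[B(\cH)^n]_\delta\subseteq \BB_{f}^{<}(\cH)$. For the nilpotent equality, if $X$ is nilpotent with $X_\alpha=0$ for $|\alpha|\geq m$, then, because $f(0)=0$ forces every monomial in each $f_j$ to have degree $\geq 1$, both $f(X)$ and $g(f(X))$ collapse to finite sums of monomials in the entries of $X$ and are automatically defined; the formal identity $g\circ f=id$ then yields $g(f(X))=X$, so any nilpotent $X$ with $\|f(X)\|\leq 1$ lies in $\BB_f(\cH)$. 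For the same reason $f(X)$ itself is nilpotent, whence $\sum_{|\alpha|=k}[f(X)]_\alpha [f(X)]_\alpha^{*}=0$ for all $k$ large enough, proving $\BB^{nil}_f(\cH)\subseteq \BB^{pure}_f(\cH)$.

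The main obstacle is, in parts (i) and (ii), the verification of the identity $f(g(Y))=Y$ for every $Y\in [B(\cH)^n]_1$ (respectively for every pure row contraction in $[B(\cH)^n]_1^{-}$). Reconciling the several convergence modes (norm, SOT, radial) entering the definitions of $f(X)$ and $g(f(X))$ with the hypothesis $f\in \cM^{||}$ is the technical core of the argument, and requires a careful check that $g$ maps into the correct convergence set of $f$ before the uniqueness theorem for free holomorphic functions can be applied to extend the formal identity from a small ball around $0$ to the whole unit ball.
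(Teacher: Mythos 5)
Your treatment of part (iii) and of the easy inclusions in (i) and (ii) matches the paper, but the hard direction of (i) and (ii) --- the verification that $f(g(Y))=Y$ for every $Y\in[B(\cH)^n]_1$ (resp.\ every pure row contraction) --- is exactly the step you defer to a ``careful check,'' and the mechanism you propose for it does not work. You suggest establishing $f\circ g=id$ on a small ball around $0$ via Theorem~\ref{inv-holo} and then ``propagating'' to all of $[B(\cH)^n]_1$ by the uniqueness theorem for free holomorphic functions. But the uniqueness theorem only compares two functions that are already known to be free holomorphic on the larger ball; to invoke it you would first need to know that $Y\mapsto f(g(Y))$ is a well-defined free holomorphic function on all of $[B(\cH)^n]_1$, i.e.\ that $g(Y)$ lies in the convergence set of $f$ for every such $Y$. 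That is precisely the content being proved, and the hypothesis $f\in\cM^{||}$ does not assert it directly: it only says that the \emph{universal model} $(M_{Z_1},\ldots,M_{Z_n})$ --- equivalently the boundary function $\widetilde g$ on the Fock space --- lies in the norm-convergence set of $f$.

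The missing idea is the paper's transfer device. By Lemma~\ref{M2}(iii) together with $f\in\cM^{||}$ one has the single operator identity $S_i=f_i(\widetilde g_1,\ldots,\widetilde g_n)$ with the series converging in the operator norm. Applying the noncommutative Poisson transform $P_Y$ (a unital completely contractive map sending $\widetilde g_\alpha$ to $[g(Y)]_\alpha$ and $S_i$ to $Y_i$, which commutes with norm-convergent limits) to both sides yields $Y_i=f_i(g(Y))$ for every $Y\in[B(\cH)^n]_1$ in one stroke; in particular this \emph{proves} that $g(Y)$ lies in the convergence set of $f$, rather than assuming it. The same transfer with $Y$ a pure row contraction gives (ii). For the scalar refinement in (i), where only the model property is assumed and the identity $S_i=f_i(\widetilde g)$ may hold only SOT or radially, the paper does not use scalar analytic continuation either: it pairs the identity against the vector $z_\lambda=\sum_\alpha\overline\lambda_\alpha e_\alpha\in F^2(H_n)$, so that the weak limits become convergent scalar limits and $\lambda_j=f_j(g(\lambda))$ drops out directly. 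Without one of these transfer arguments your propagation step is circular.
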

\begin{proof} We shall prove items (i) and (ii) when $f$ is an $n$-tuple of
formal power series  with property $(\cS)$. The other two cases
(when $f$ has property $(\cA)$ or property $(\cF)$) can be treated
similarly.
 First, note that  $\BB_{f}^<(\cH)\subseteq g([B(\cH)^n]_1)$. To prove the reversed
 inclusion let $Y=g(X)$, where $X\in [B(\cH)^n]_1$. According to Lemma \ref{M2} part (iii), we have
  either
 $\widetilde{g}\in \cC_f^{SOT}(\HH^2(f))$  and
 \begin{equation}
 \label{sifi}
 S_i=f_i(\widetilde{g}_1,\ldots, \widetilde{g}_n),\qquad i=1,\ldots,n,
 \end{equation}
 or
 $\widetilde{g}\in \cC_f^{rad}(\HH^2(f))$  and
 \begin{equation}
 \label{sifi2}
 S_i= \text{\rm SOT-}\lim_{r\to 1} f_j(r\widetilde{g}_1,\ldots, r\widetilde{g}_n),\qquad i=1,\ldots,n.
 \end{equation}
 Since $f\in \cM^{||}$, the $n$-tuple $(M_{Z_1},\ldots,
 M_{Z_n})$ is in the set of norm-convergence (or radial
 norm-convergence) for the $n$-tuple of formal power series
 $f=(f_1,\ldots, f_n)$. This  implies that the convergence above is
 in the operator topology.
  Applying the noncommutative Poisson transform $P_X$, we deduce that
 $X_i=f_i(g_1(X),\ldots, g_n(X))$, $i=1,\ldots,n$. This implies  that
 $f(Y)=f(g(X))=X$ and $g(f(Y))=g(X)=Y$, which  shows that $Y\in \BB_{f}^<(\cH)$.
Therefore, $\BB^<_{f}(\cH)= g([B(\cH)^n]_1)$, the function $g$ is one-to-one on
$[B(\cH)^n]_1$ and $f$ is its inverse on  $\BB^<_{f}(\cH)$.
Now consider the case when $\cH=\CC$ and assume that $f$ has  the model property.
Since $\BB_{f}^<(\CC)\subseteq g(\BB_n)$, we prove the reverse inclusion.
Let $\mu=g(\lambda)$ for some $\lambda\in \BB_n$ and assume that one of the relations \eqref{sifi} or \eqref{sifi2} holds, say the latter. Setting $z_\lambda:=\sum_{\alpha\in \FF_n^+} \overline{\lambda}_\alpha e_\alpha\in F^2(H_n)$, we deduce that
\begin{equation*}
\begin{split}
\lambda_j&=\left<S_j(1), z_\lambda\right>=\lim_{r\to 1} \left< f_j(r\widetilde{g}_1,\ldots, r\widetilde{g}_n)(1), z_\lambda\right>\\
&=\lim_{r\to 1} f_j(rg_1(\lambda),\ldots, rg_n(\lambda))=f_j(g(\lambda)).
\end{split}
\end{equation*}
This implies  that
 $f(\mu)=f(g(\lambda))=\lambda$ and $g(f(\mu))=g(\lambda)=\mu$, which  shows that $\mu\in \BB_{f}^<(\CC)$.
Therefore, $\BB^<_{f}(\CC)= g(\BB_n)$, the function $g$ is one-to-one on
$\BB_n$ and $f$ is its inverse on  $\BB^<_{f}(\BB_n)$.  Similarly, one can assume that relation \eqref{sifi} holds and reach  the same conclusion.

To prove  item (ii),  set $[B(\cH)^n]_1^{pure}:=\{ X\in [B(\cH)^n]_1^-:\ X \text{ is  a pure row contraction}\}$
and note that $\BB^{pure}_f(\cH)\subseteq \{g(X): \ X\in [B(\cH)^n]_1^{pure} \}$. The reversed inclusion follows similarly to the proof of item (i) using the noncommutative Poison transform $P_X$, where $X$ is a pure row contraction. In this case, we also show  that
 $f(g(X))=X$  and deduce that $ g:[B(\cH)^n]_1^{pure}\to
\BB^{pure}_f(\cH) $ is a bijection with inverse
$f:\BB^{pure}_f(\cH)\to [B(\cH)^n]_1^{pure}$.
Now we prove part (iii).
 Since $f$ has nonzero radius of convergence and $f(0)=0$, the Schwartz lemma for free holomorphic functions implies that there is $\gamma>0$ such that
$\|f(X)\|<1$ for any $X\in [B(\cH)^n]_\gamma$. On the other hand,
using Theorem 1.2 from \cite{Po-automorphism}, the composition
$g\circ f$ is a free holomorphic function on $[B(\cH)^n]_\gamma$.
Due to the uniqueness theorem for free holomorphic functions  and
the fact  that $g\circ f=id$,  as formal power series, we deduce
that $g(f(X))=X$ for any $X\in [B(\cH)^n]_\gamma$.

If   $X\in B(\cH)^n$  is  a nilpotent  $n$-tuple  with $\|f(X)\|\leq 1$, then taking into account that  $f(0)=0$, we deduce that
$[f_1(X),\ldots, f_n(X)]$ is a nilpotent $n$-tuple. Hence and using that $g\circ f=id$, we deduce that $g(f(X))=X$, which completes the proof.
\end{proof}

\begin{lemma}\label{gb} If $f=(f_1,\ldots, f_n)$ is   an $n$-tuple of formal power
 series in the class $\cM_{rad}^{||}$, then
 $$\BB_f(\cH)=g\left([B(\cH)^n]_1^-\right),
 $$
 where $g=(g_1,\ldots,g_n)$ is the inverse   of $f$ with respect to the composition of power series. Moreover, the function  $g:[B(\cH)^n]_1^-\to \BB_f(\cH)$ is a bijection with inverse
 $f:\BB_f(\cH)\to [B(\cH)^n]_1^-$.  When $\cH=\CC$, the result holds true  when $f$ has only the radial approximation  property.
\end{lemma}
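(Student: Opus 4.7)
The plan is to prove $\BB_f(\cH)=g([B(\cH)^n]_1^-)$ by establishing two inclusions and then to read off the bijection claim from the construction.

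The inclusion $\BB_f(\cH)\subseteq g([B(\cH)^n]_1^-)$ is immediate from the definition of $\BB_f(\cH)$: if $Y\in\BB_f(\cH)$, then $f(Y)$ is well-defined, $\|f(Y)\|\leq 1$, and $g(f(Y))=Y$, so setting $X:=f(Y)\in [B(\cH)^n]_1^-$ exhibits $Y=g(X)$.

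For the reverse inclusion I would exploit the radial approximation property to reduce to Lemma \ref{ball}(i). By assumption there is $\delta\in(0,1)$ such that $rf\in\cM^{||}$ for every $r\in(\delta,1]$, and a direct substitution shows that the composition inverse of $rf$ is $g_r(Z):=g(r^{-1}Z)$; the fact that $rf$ has the model property is telling us precisely that $g_r$ is a bounded free holomorphic function on $[B(\cH)^n]_1$, i.e.\ that $g$ extends to the larger ball $[B(\cH)^n]_{1/r}$. Applying Lemma \ref{ball}(i) to $rf$ in place of $f$ yields
\[
\BB_{rf}^{<}(\cH)=g_r([B(\cH)^n]_1)=g([B(\cH)^n]_{1/r}).
\]
Now fix $X\in[B(\cH)^n]_1^-$ and set $Y:=g(X)$. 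For every $r\in(\delta,1)$ one has $\|X\|\leq 1<1/r$, so $X\in [B(\cH)^n]_{1/r}$ and hence $Y\in \BB_{rf}^{<}(\cH)$. Unwinding the definition of $\BB_{rf}^{<}(\cH)$ shows that $f(Y)$ is defined, $\|f(Y)\|<1/r$, and $g(f(Y))=Y$. Letting $r\nearrow 1$ gives $\|f(Y)\|\leq 1$ together with $g(f(Y))=Y$, so $Y\in\BB_f(\cH)$, which establishes the second inclusion.

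The bijectivity of $g:[B(\cH)^n]_1^-\to\BB_f(\cH)$ with inverse $f$ is then immediate. The identity $g\circ f=id$ on $\BB_f(\cH)$ is part of the definition, and $f\circ g=id$ on $[B(\cH)^n]_1^-$ follows from the injectivity of $g_r$ on $[B(\cH)^n]_1$ provided by Lemma \ref{ball}(i): for $X\in [B(\cH)^n]_1^-$ and $r\in(\delta,1)$, both $rX$ and $rf(g(X))$ lie in $[B(\cH)^n]_1$ and are mapped to the common value $g(X)$ by $g_r$, hence $rf(g(X))=rX$ and so $f(g(X))=X$. In the case $\cH=\CC$ exactly the same argument goes through, since the addendum to Lemma \ref{ball}(i) holds under only the model property of each $rf$, which is precisely the radial approximation property of $f$. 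I expect the only nontrivial point to be the verification, across the three flavors of the model property (polynomials with $(\cA)$, power series with $(\cS)$, free holomorphic with $(\cF)$), that $g_r(Z)=g(r^{-1}Z)$ really inverts $rf$ under the ``appropriate evaluations'' appearing in the definitions of $\BB_{rf}^{<}(\cH)$ and $\BB_f(\cH)$; once this bookkeeping is in hand, the result is simply a boundary-to-interior limit $r\nearrow 1$ applied to Lemma \ref{ball}(i).
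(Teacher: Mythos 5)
Your proof is correct, but it reaches the conclusion by a different route than the paper. The paper's proof works directly from Lemma \ref{rad}: it takes the operator identity $\frac{1}{r}S_j=f_j\bigl(g(\frac{1}{r}S)\bigr)$ for $r\in(\delta,1]$, applies the noncommutative Poisson transform $P_{rX}$ to obtain $X_j=f_j(g(X))$ for $X\in[B(\cH)^n]_1^-$, and then reads off $g(f(Y))=Y$ and $\|f(Y)\|\leq 1$ for $Y=g(X)$; the scalar case is handled separately by pairing against the vectors $z_{r\lambda}$. You instead observe that the composition inverse of $rf$ is $g_r(Z)=g(r^{-1}Z)$, apply the open-ball Lemma \ref{ball}(i) to each rescaling $rf\in\cM^{||}$ to get $\BB_{rf}^<(\cH)=g\bigl([B(\cH)^n]_{1/r}\bigr)\supseteq g\bigl([B(\cH)^n]_1^-\bigr)$, and let $r\nearrow 1$ in the resulting bound $\|f(Y)\|<1/r$. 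This is more modular — it reuses Lemma \ref{ball} as a black box rather than redoing the Poisson-transform computation, and the scalar addendum comes along for free from the corresponding addendum in Lemma \ref{ball} — at the cost of two pieces of bookkeeping you correctly flag: that $g_r$ really is the inverse of $rf$ in all three flavors of the model property (routine, since composition inverses are unique and the coefficient rescalings cancel, so the series defining $g_r\bigl((rf)(Y)\bigr)$ and $g(f(Y))$ are identical term by term and the convergence sets $\cC_{rf}$ and $\cC_f$ coincide), and that the injectivity of $g_r$ you invoke for $f\circ g=id$ is established in the proof of Lemma \ref{ball}(i) rather than in its statement (alternatively, the identity $(rf)(g_r(W))=W$ from that proof gives $f(g(X))=X$ directly without appealing to injectivity). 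Neither point is a gap.
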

\begin{proof}
First, note that $\BB_f(\cH)\subseteq g\left([B(\cH)^n]_1^-\right)$.
To prove the reverse inclusion, let $Y:=g(X)$ and
$X=(X_1,\ldots,X_n)\in [B(\cH)^n]_1^-$. Since $f$   has the
 radial approximation property, $g=\sum_{\alpha\in \FF_n^+} a_\alpha^{(i)} Z_\alpha$
 is a free holomorphic function on $[B(\cH)^n]_\gamma$ for some $\gamma>1$. Moreover,
  according to Lemma \ref{rad},
 there is \ $\delta\in (0,1)$ with the property   that   for any $r\in (\delta,1]$, the series  $g_i(\frac{1}{r}S):=\sum_{k=0}^\infty\sum_{|\alpha|=k} \frac{a_\alpha^{(i)}}{r^{|\alpha|}} S_\alpha$ is convergent in the operator norm topology and represents   an element
    in  the noncommutative disc algebra $\cA_n$,
and
  \begin{equation}
  \label{rad-sifi}
  \frac{1}{r} S_j=f_j\left(g_1\left(\frac{1}{r} S\right),\ldots,
  g_n\left(\frac{1}{r} S\right)\right),\qquad j\in\{1,\ldots, n\}, \ r\in (\delta,
  1],
  \end{equation}
where  $g(\frac{1}{r} S)$ is in the norm-convergence (or radial
norm-convergence) of $f$. Applying  now the noncommutative Poisson
transform $P_{rX}$,   we deduce that $X_j=f_j(g(X))$ for
$j=1,\ldots,n$. This also shows that $g$ is one-to-one on
$[B(\cH)^n]_1^-$. On the other hand, the relation above implies
$Y=g(X)=g(f(g(X)))=g(f(Y))$ and $\|f(Y)\|\leq 1$, which shows that
$Y\in \BB_f(\cH)$. Therefore,
$\BB_f(\cH)=g\left([B(\cH)^n]_1^-\right)$ and $f$ is one-to-one on
$\BB_f(\cH)$.

Now consider the case when $\cH=\CC$ and assume that $f$ has only the radial approximation property.
Since $\BB_{f}(\CC)\subseteq g(\overline{\BB}_n)$, we prove the reverse inclusion.
Let $\mu=g(\lambda)$ for some $\lambda\in \overline{\BB}_n$ and assume that  relation \eqref{rad-sifi} holds, where  $g(\frac{1}{r} S)$ is either in the convergence set \
$\cC_f^{SOT}(F^2(H_n))$ or $\cC_f^{rad}(F^2(H_n))$.  For example, assume  that $g(\frac{1}{r} S)\in \cC_f^{SOT}(F^2(H_n))$.
For each $r\in (\delta, 1)$, consider
 $z_{r\lambda}:=\sum_{\alpha\in \FF_n^+} \overline{\lambda}_\alpha r^{|\alpha|} e_\alpha\in F^2(H_n)$, and note  that
\begin{equation*}
\begin{split}
\lambda_j&=\left<\frac{1}{r}S_j(1), z_{r\lambda}\right>= \left< f_j\left({g_1(\frac{1}{r} S)},\ldots, {g}_n(\frac{1}{r}S)\right)(1), z_{r\lambda}\right>\\
&= f_j(g_1(\lambda),\ldots, g_n(\lambda))=f_j(g(\lambda)).
\end{split}
\end{equation*}
This implies  that
 $f(\mu)=f(g(\lambda))=\lambda$ and $g(f(\mu))=g(\lambda)=\mu$, which  shows that $\mu\in \BB_{f}(\CC)$.
Therefore, $\BB_{f}(\CC)= g(\overline{\BB}_n)$, the function $g$ is one-to-one on
$\overline{\BB}_n$ and $f$ is its inverse on  $\BB_{f}(\CC)$.  Similarly, one can treat the case when  $g(\frac{1}{r} S)\in \cC_f^{rad}(F^2(H_n))$..
The proof is complete.
\end{proof}

In what follows,  we identify the characters of the noncommutative
  domain
algebra $\cA(\BB_f)$. Let $\lambda=(\lambda_1,\dots,\lambda_n)$ be
in $\BB_f(\CC)$
  and define the
evaluation functional
$$
\Phi_\lambda:\cP(M_{Z_1},\ldots, M_{Z_n})\to
\CC,\quad\Phi_\lambda(p(M_{Z}))=p(\lambda),
$$
where $\cP(M_{Z_1},\ldots, M_{Z_n})$  denotes the algebra of all
polynomials in $M_{Z_1},\ldots, M_{Z_n}$ and the identity.
According to  Theorem \ref{Poisson-C*},   we have
$
|p(\lambda)|=\|p(\lambda I_{\CC})\|\leq
\|p(M_{Z})\|.
$
Hence, $\Phi_\lambda$ has a unique extension to the domain algebra
$\cA(\BB_f)$. Therefore
$\Phi_\lambda$ is a character of $\cA(\BB_f)$.
\begin{theorem} Let $f=(f_1,\ldots, f_n)$ be an  $n$-tuple of formal power series with the radial approximation property.
 and
let $M_{\cA(\BB_f)}$ be the set of all characters of $\cA(\BB_f)$.
Then the map
$$\Psi: \BB_f(\CC)\to M_{\cA(\BB_f)},\quad
\Psi(\lambda):=\Phi_\lambda, $$
 is a homeomorphism   and $\BB_f(\CC)$ is homeomorphic to the closed
 unit
ball $\overline{\BB}_n$.
\end{theorem}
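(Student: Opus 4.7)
The overall approach is to view $\Psi$ as a continuous bijection between a compact space and a Hausdorff space, so that the standard compact-to-Hausdorff argument promotes bijectivity to a homeomorphism. The compactness of $\BB_f(\CC)$ will come from identifying it with $\overline{\BB}_n$ via the inversion data supplied by Lemma \ref{gb}. As a preliminary step I would invoke Lemma \ref{gb} with $\cH=\CC$, which under the radial approximation hypothesis gives a bijection $g:\overline{\BB}_n\to\BB_f(\CC)$ with inverse $f|_{\BB_f(\CC)}$. Since $g$ is a bounded free holomorphic function on $[B(\cH)^n]_1$ whose scalar representation extends continuously to $\overline{\BB}_n$ by the radial approximation machinery of Lemma \ref{rad}, the map $g|_{\overline{\BB}_n}$ is a continuous bijection from a compact space to a Hausdorff one, hence a homeomorphism. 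This establishes the second assertion of the theorem and exhibits $\BB_f(\CC)$ as compact.

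Next, I would check that $\Psi$ is a continuous injection. The fact that $\Phi_\lambda\in M_{\cA(\BB_f)}$ for each $\lambda\in\BB_f(\CC)$ is the estimate $|p(\lambda)|=\|p(\lambda I_\CC)\|\le\|p(M_Z)\|$ coming from Theorem \ref{Poisson-C*} and already recorded in the paragraph preceding the theorem. Injectivity is immediate: if $\Phi_\lambda=\Phi_\mu$ then $\lambda_i=\Phi_\lambda(M_{Z_i})=\Phi_\mu(M_{Z_i})=\mu_i$ for each $i$. For continuity in the Gelfand topology of $M_{\cA(\BB_f)}$, assume $\lambda^{(k)}\to\lambda$ in $\BB_f(\CC)$; then $p(\lambda^{(k)})\to p(\lambda)$ for every polynomial $p$, and since each $\Phi_{\lambda^{(k)}}$ has norm one and polynomials in $M_{Z_1},\ldots,M_{Z_n}$ are norm-dense in $\cA(\BB_f)$, a routine $\varepsilon/3$-argument promotes pointwise-on-polynomials convergence to $\Phi_{\lambda^{(k)}}(a)\to\Phi_\lambda(a)$ for every $a\in\cA(\BB_f)$.

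The main work is the surjectivity of $\Psi$, which I expect to be the principal obstacle. Given a character $\phi\in M_{\cA(\BB_f)}$, set $\lambda_i:=\phi(M_{Z_i})$; one must show $\lambda\in\BB_f(\CC)$. Since $\phi$ is a contractive unital homomorphism with scalar values, it is automatically completely contractive, so for every matrix-valued noncommutative polynomial $q$ one has $\|q(\lambda)\|\le\|q(M_Z)\|$. This is precisely the hypothesis of Theorem \ref{cc}, which then forces $\lambda\in\BB_f(\CC)$. The point needing care is that Theorem \ref{cc} is formally stated under $f\in\cM_{rad}\cap\cM^{||}$, whereas here only the radial approximation property is assumed; however, all the relevant power series are evaluated at the scalar tuple $\lambda$ rather than at operators, so operator-norm convergence reduces to ordinary absolute convergence of a scalar power series at the point $f(\lambda)\in\overline{\BB}_n$, and the argument of Theorem \ref{cc} carries through by appealing to Lemma \ref{rad} together with the uniform boundedness of $g_i(\tfrac{1}{r}S)$ in $F_n^\infty$ as $r\uparrow 1$. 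Once surjectivity is secured, $\Psi$ is a continuous bijection from compact $\BB_f(\CC)$ onto Hausdorff $M_{\cA(\BB_f)}$, hence a homeomorphism; combined with the identification from the first step this yields $M_{\cA(\BB_f)}\cong\BB_f(\CC)\cong\overline{\BB}_n$.
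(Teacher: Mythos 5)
Your proposal follows essentially the same route as the paper's proof: injectivity and continuity of $\Psi$ via density of polynomials and the uniform bound $\|\Phi_{\lambda}\|\leq 1$, surjectivity by setting $\lambda_i:=\phi(M_{Z_i})$ and invoking Theorem \ref{cc} for the scalar tuple, the compact-to-Hausdorff argument, and Lemma \ref{gb} to identify $\BB_f(\CC)$ with $g(\overline{\BB}_n)\cong\overline{\BB}_n$. If anything, you are more careful than the paper in flagging that Theorem \ref{cc} is stated for $f\in\cM_{rad}\cap\cM^{||}$ while only the radial approximation property is assumed here, and in explaining why the scalar case still goes through.
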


\begin{proof} First, notice  that $\Psi$ is injective.
To prove that $\Psi$ is surjective, assume that $\Phi:\cA(\BB_f)\to\CC$ is a character. Setting
$\lambda_i:=\Phi(M_{Z_i})$,\ $i=1,\ldots,n$, we  deduce that
$
\Phi(p(M_{Z}))=p(\lambda)
$
for any polynomial $p(M_{Z_1},\ldots,M_{Z_n})$ in $\cA(\BB_f)$.
Since $\Phi$ is a character it follows that it is completely
contractive. Applying Theorem \ref{cc} in the particular case
when
$A_i:=\lambda_i I_{\CC}$,\ $i=1,\ldots,n$,  it follows that
$(\lambda_1 I_{\Bbb{C}},\ldots,\lambda_n I_{\CC})\in \BB_f(\CC)$.
Moreover, since
$$
\Phi(p(M_{Z}))=p(\lambda)=
\Phi_\lambda(p(M_{Z}))
$$
for any polynomial $p(M_{Z})$ in $\cA(\BB_f)$,
   we must have $\Phi=\Phi_\lambda$.
Suppose now that
$\lambda^\alpha:=(\lambda_1^\alpha,\dots,\lambda_n^\alpha);\
\alpha\in J$, is
a net in $\BB_f(\CC)$ such that
$\lim_{\alpha\in
J}\lambda^\alpha=\lambda:=(\lambda_1,\dots,\lambda_n)$.
It is clear  that
$$
\lim_{\alpha\in J}\Phi_{\lambda^\alpha}(p(M_{Z}))=
\lim_{\alpha\in J} p(\lambda^\alpha)=
p(\lambda)=\Phi_\lambda(p(M_{Z}))
$$
for every  polynomial  $p(M_{Z})$. Since the set of all
polynomials $\cP(M_{Z_1},\ldots,M_{Z_n})$  is  dense
in $\cA(\BB_f)$  and $\sup_{\alpha\in J}
\|\Phi_{\lambda^\alpha}\|\leq1$,
 it follows that $\Psi$
is continuous. According to Lemma \ref{gb},
$\BB_f(\CC)=g(\overline{\BB}_n)$ is a  compact subset of $\CC^n$ and
$g:\overline{\BB}_n\to \BB_f(\CC)$  is  a bijection. Since both $
\BB_f(\CC)$ and $M_{\cA(\BB_f)}$ are compact Hausdorff
spaces and $\Psi$ is also one-to-one and onto,   we deduce that
$\Psi$  is a homeomorphism. On the other hand, since the map
$\lambda\mapsto g(\lambda)$ is holomorphic on a ball
$(\CC^n)_\gamma$  for some  $\gamma>1$, one can see that
$\BB_f(\CC)$ is homeomorphic to the closed
 unit
ball $\overline{\BB}_n$. The proof is complete.
\end{proof}

\bigskip

\section{ The invariant subspaces under $M_{Z_1},\ldots, M_{Z_n}$}
\label{inv-sub}

In this section we obtain a Beurling type characterization of the
joint invariant subspaces under the    multiplication operators
$M_{Z_1},\ldots, M_{Z_n}$ associated with the noncommutative domain
$\BB_f$ and a minimal dilation theorem for pure $n$-tuples of
operators in $\BB_f(\cH)$.

  An operator
$A:\HH^2(f)\otimes \cH\to \HH^2(f)\otimes \cK$ is called
multi-analytic  with respect to $M_{Z_1},\ldots, M_{Z_n}$ if
$A(M_{Z_i}\otimes I_\cH)=(M_{Z_i}\otimes I_\cK)A$ \, for any
$i=1,\ldots,n$. If, in addition,  $A$ is a partial isometry, we call
it inner.

\begin{theorem}\label{Beurling}
Let $f=(f_1,\ldots, f_n)$ be  $n$-tuple of formal power series with
the model
 property
 and let $M_Z:=(M_{Z_1},\ldots, M_{Z_n})$ be the universal model associated with $\BB_f$.
 If $Y\in
B(\HH^2(f)\otimes \cH)$, then the following statements are
equivalent.
\begin{enumerate}
\item[(i)]
There is a Hilbert space $\cE$ and a multi-analytic operator
$\Psi:\HH^2(f)\otimes \cE\to \HH^2(f)\otimes \cH$ with respect to
the     multiplication  operators $M_{Z_1},\ldots, M_{Z_n}$ such
that $Y=\Psi \Psi^*$.
\item[(ii)]
$\Phi_{f,M_Z\otimes I}(Y)\leq Y$, where the positive linear mapping $\Phi_{f,M_Z\otimes
I}:B(\HH^2(f)\otimes\cH)\to B(\HH^2(f)\otimes \cH)$  is defined by
$$
\Phi_{f,M_Z\otimes I}(Y):=\sum\limits_{i=1}^n (f_i(M_Z)\otimes
I_\cH) Y(f_i(M_Z)\otimes I_\cH)^*.
$$
\end{enumerate}
\end{theorem}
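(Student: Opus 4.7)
The implication (i)$\Rightarrow$(ii) will follow from a direct computation, while (ii)$\Rightarrow$(i) is obtained by transferring the whole problem to the full Fock space $F^2(H_n)$ via the canonical unitary $U\colon\HH^2(f)\to F^2(H_n)$ defined by $U(f_\alpha)=e_\alpha$, and then invoking the classical free Beurling-type factorization for the left creation operators $S_1,\ldots,S_n$. Under $U$ one has $UM_{f_i}U^{-1}=S_i$ (since $M_{f_i}f_\alpha=f_{g_i\alpha}$), while $UM_{Z_i}U^{-1}=\varphi_i(S)\in F_n^\infty$, and the model property provides the identity $S_i=f_i(\varphi_1(S),\ldots,\varphi_n(S))$ (in the SOT or radial-SOT sense, depending on which of the three subclasses of $\cM$ the $n$-tuple $f$ belongs to). This already yields the key observation that an operator on a tensor product intertwines $\{M_{Z_i}\otimes I\}$ if and only if it intertwines $\{f_i(M_Z)\otimes I\}$: both families are mutually expressible in terms of one another via polynomial and SOT-limit operations.

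For (i)$\Rightarrow$(ii), starting from $\Psi(M_{Z_i}\otimes I_\cE)=(M_{Z_i}\otimes I_\cH)\Psi$ for every $i$, the observation above yields the stronger intertwining $\Psi(f_j(M_Z)\otimes I_\cE)=(f_j(M_Z)\otimes I_\cH)\Psi$ for each $j=1,\ldots,n$. Pulling these factors across $\Psi$ then gives
\[
\Phi_{f,M_Z\otimes I}(\Psi\Psi^*)=\Psi\Bigl(\sum_{j=1}^n(f_j(M_Z)\otimes I_\cE)(f_j(M_Z)\otimes I_\cE)^*\Bigr)\Psi^*\leq \Psi\Psi^*=Y,
\]
the final inequality being a consequence of $(M_{Z_1},\ldots,M_{Z_n})\in\BB_f(\HH^2(f))$, which forces $\sum_j f_j(M_Z)f_j(M_Z)^*\leq I_{\HH^2(f)}$.

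For (ii)$\Rightarrow$(i), set $\widetilde Y:=(U\otimes I_\cH)Y(U^{-1}\otimes I_\cH)$. Since conjugation by $U\otimes I_\cH$ turns $f_i(M_Z)\otimes I$ into $S_i\otimes I$, hypothesis (ii) transforms into
\[
\sum_{i=1}^n(S_i\otimes I_\cH)\widetilde Y(S_i\otimes I_\cH)^*\leq \widetilde Y.
\]
By the classical Beurling-type factorization theorem on the Fock space (cf.\ \cite{Po-isometric}, \cite{Po-multi}), there exist a Hilbert space $\cE$ and an inner multi-analytic operator $\widetilde\Psi\colon F^2(H_n)\otimes\cE\to F^2(H_n)\otimes\cH$ with respect to $S_1,\ldots,S_n$ such that $\widetilde Y=\widetilde\Psi\widetilde\Psi^*$. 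Then $\Psi:=(U^{-1}\otimes I_\cH)\widetilde\Psi(U\otimes I_\cE)$ satisfies $\Psi\Psi^*=Y$, and the intertwining $\widetilde\Psi(S_\alpha\otimes I_\cE)=(S_\alpha\otimes I_\cH)\widetilde\Psi$, extended by SOT-limits to every $\phi(S)\otimes I$ with $\phi(S)\in F_n^\infty$, applies in particular to $\varphi_i(S)\otimes I$, which translates back through $U$ to $\Psi(M_{Z_i}\otimes I_\cE)=(M_{Z_i}\otimes I_\cH)\Psi$.

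The main obstacle is to make rigorous the equivalence between multi-analyticity with respect to $M_{Z_1},\ldots,M_{Z_n}$ and with respect to $M_{f_1},\ldots,M_{f_n}$ (equivalently, to $S_1,\ldots,S_n$ under $U$), uniformly across the three classes of $n$-tuples $f$ that constitute $\cM$. This hinges on the identities $M_{f_i}=f_i(M_Z)$ and $S_i=f_i(\varphi_1(S),\ldots,\varphi_n(S))$, together with the $F_n^\infty$-functional calculus and SOT-limit manipulations needed to pass intertwining relations back and forth between the two generating families.
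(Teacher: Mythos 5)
Your argument is correct, but the route for (ii)$\Rightarrow$(i) is genuinely different from the paper's. The paper never transfers the inequality to the Fock space; instead it works directly on $\HH^2(f)\otimes\cH$: it first deduces $Y\geq 0$ from purity of $M_Z$, sets $\cM:=\overline{\operatorname{range}Y^{1/2}}$, defines contractions $Q_i$ on $\cM$ by $Q_i(Y^{1/2}x):=Y^{1/2}(f_i(M_Z)^*\otimes I_\cH)x$, checks that $A:=(Q_1^*,\ldots,Q_n^*)$ is a \emph{pure row contraction}, and then takes $\Gamma:=Y^{1/2}K_A^*$ where $K_A$ is the Poisson kernel of $A$ from \cite{Po-poisson}; the intertwining $A_iK_A^*=K_A^*(M_{f_i}\otimes I_\cE)$ gives $\Gamma(M_{f_i}\otimes I_\cE)=(M_{f_i}\otimes I_\cH)\Gamma$ and $\Gamma\Gamma^*=Y$. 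Your proof instead conjugates by $U\otimes I_\cH$ and quotes the known Fock-space factorization (the $f=(Z_1,\ldots,Z_n)$ case of the theorem); this is legitimate and shorter, since the paper's construction is precisely the standard proof of that special case transplanted to $\HH^2(f)$, but it makes the result depend on locating the exact factorization statement in \cite{Po-isometric}/\cite{Po-multi} rather than being self-contained modulo the Poisson kernel. Both proofs converge at the same final step: upgrading the $M_{f_i}$-intertwining to $M_{Z_i}$-intertwining via $M_{Z_i}=g_i(M_{f_1},\ldots,M_{f_n})$ (equivalently $\varphi_i(S)\in F_n^\infty$), which is where the model property is really used. A point in your favor: you supply the (i)$\Rightarrow$(ii) direction explicitly (correctly reducing it to $\sum_j M_{f_j}M_{f_j}^*\leq I$ after pulling $f_j(M_Z)\otimes I$ across $\Psi$ by SOT-limits of polynomials), whereas the paper's proof treats only (ii)$\Rightarrow$(i) and leaves the converse implicit.
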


\begin{proof}
First, assume that condition (ii) holds and note that
$Y-\Phi_{f,M_Z\otimes I}^m(Y)\geq 0$ for any $m=1,2,\ldots$. Since
$(M_{Z_1},\ldots, M_{Z_n})$ is a pure $n$-tuple  with respect to the
noncommutative domain $\BB_f(\HH^2(f))$, we deduce that
SOT-$\lim_{m\to\infty}\Phi_{f,M_Z\otimes I}^m(Y)=0$, which implies
$Y\geq0$. Denote $\cM:=\overline{\text{\rm range} Y^{1/2}}$ and
define
\begin{equation}\label{ai}
Q_i(Y^{1/2} x):=Y^{1/2} (f_i(M_Z)^*\otimes I_\cH)x,\qquad x\in
\HH^2(f)\otimes \cH,
\end{equation}
 for any $i=1,\ldots, n$.  We have
\begin{equation*}
\begin{split}
\sum_{i=1}^n \|Q_i(Y^{1/2}x)\|^2&\leq \sum_{i=1}^n \|Y^{1/2}
(f_i(M_Z)^*\otimes I_\cH)x\|^2\\
&=\left< \Phi_{f,M_Z\otimes I}(Y)x,x\right>\leq \|Y^{1/2} x\|^2
\end{split}
\end{equation*}
for any $x\in \HH^2(f)\otimes \cH$, which implies $ \|Q_iY^{1/2}
x\|^2\leq \|Y^{1/2} x\|^2$, for any $x\in \HH^2(f)\otimes \cH$.
 Consequently, $Q_i$ can be uniquely be extended to a
bounded operator (also denoted by $Q_i$) on the subspace $\cM$.
Setting $A_i:=Q_i^*$, $i=1,\ldots, n$, we deduce that $ \sum_{i=1}^n
A_iA_i^*\leq I_\cM.$ Denoting $\varphi_A(Y):=\sum_{i=1}^n A_iYA_i^*$
and using relation \eqref{ai}, we have
\begin{equation*}
\begin{split}
\left<\varphi_A^m(I) Y^{1/2}x, Y^{1/2} x\right> &= \left< \Phi_{f,
M_Z\otimes I}^m(Y)x,  x\right>\\
&\leq \|Y\| \left< \Phi_{f, M_Z\otimes I}^m(I)x,  x\right>
\end{split}
\end{equation*}
for any $x\in \HH^2(f)\otimes \cH$. Since \
SOT-$\lim\limits_{m\to\infty}\Phi_{f,M_Z\otimes I}^m(I)=0$, we have
\ SOT-$\lim\limits_{m\to\infty}\varphi_A^m(I)=0$. Therefore
$A:=(A_1,\ldots, A_n)$ is a pure row contraction. According to
\cite{Po-poisson},
 the Poisson kernel $K_{A}:\cM\to
\HH^2(f)\otimes \cE$ ($\cE$ is an appropriate Hilbert space) defined
by $$K_Ah:= \sum_{\alpha\in \FF_n^+} f_\alpha\otimes
\Delta_AA_\alpha^*h, \qquad h\in \cM, $$ where
$\Delta_A:=(I-A_1A_1^*-\ldots, A_nA_n^*)^{1/2}$ is an isometry with
the property that
\begin{equation}
\label{int-KT} A_iK_{A}^*=K_{A}^* (M_{f_i}\otimes I_\cE),\qquad
i=1,\ldots,n.
\end{equation}
Let $\Gamma:=Y^{1/2} K_{A}^*:\HH^2(f)\otimes \cE\to \HH^2(f)\otimes
\cH$ and note that, due to the fact that $f$ has the model property,
  $M_{f_i}=f_i(M_{Z_1},\ldots,
M_{Z_n})$ for $i=1,\ldots,n$. Consequently, we have
\begin{equation*}
\begin{split}
\Gamma(M_{f_i}\otimes I_\cE)&=Y^{1/2} K_{A}^*(M_{f_i}\otimes
I_\cE)=Y^{1/2}
A_i K_{A}^*\\
&=(f_i(M_Z)\otimes I_\cH) Y^{1/2} K_{A}^* =(M_{f_i}\otimes I_\cH)
\Psi
\end{split}
\end{equation*}
for any $i=1,\ldots, n$. Now, let $g=(g_1,\ldots,g_n)$ be the
inverse of $f=(f_1,\ldots, f_n)$ with respect to the composition  of
power series. In the proof of Theorem \ref{model}, we showed that
  $g_i(M_{f_1},\ldots, M_{f_n})=M_{Z_i}$ for all $i=1,\ldots,n$. Hence, we deduce that the
operator $M_{Z_i}$ is in the SOT-closure of all polynomials in
$M_{f_1},\ldots, M_{f_n}$ and the identity. Consequently, the
relation $\Gamma(M_{f_i}\otimes I_\cE)=(M_{f_i}\otimes I_\cH)\Gamma$
implies $\Gamma (M_{Z_i}\otimes I_\cH)=(M_{Z_i}\otimes I_\cH)
\Gamma$  \ for $i=1,\ldots, n$, which shows  that $\Gamma$ is a
multi-analytic  with respect to $M_{Z_1},\ldots, M_{Z_n}$. Note that
we  also  have $\Gamma \Gamma^*=Y^{1/2} K_{A}^* K_{A} Y^{1/2} =Y$.
The proof is complete.
\end{proof}

The next result is  a Beurling \cite{Be} type characterization  of
the invariant subspaces  under the    multiplication operators
$M_{Z_1},\ldots, M_{Z_n}$ associated with the noncommutative domain
$\BB_f$.

\begin{theorem} \label{inv-subs} Let $f=(f_1,\ldots, f_n)$ be  an  $n$-tuple of formal power series with
the model property  and  let $(M_{Z_1},\ldots, M_{Z_n})$ be the
multiplication operators associated  with the noncommutative domain
$\BB_f$. A subspace $\cN\subseteq \HH^2(f)\otimes \cH$ is invariant
under each operator $M_{Z_1}\otimes I_\cH,\ldots, M_{Z_n}\otimes
I_\cH$  if and only if there exists an inner multi-analytic operator
$\Psi:\HH^2(f)\otimes \cE\to \HH^2(f)\otimes \cH$ with respect to
$M_{Z_1},\ldots, M_{Z_n}$ such that
$$
\cN=\Psi[\HH^2(f)\otimes \cE].
$$
\end{theorem}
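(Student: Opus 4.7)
The plan is to deduce the theorem directly from the representation result in Theorem \ref{Beurling} applied to the orthogonal projection $Y := P_\cN$.

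\emph{Sufficiency.} Suppose $\cN = \Psi[\HH^2(f)\otimes \cE]$ for some inner multi-analytic $\Psi$. For any $h \in \HH^2(f)\otimes \cE$ and any $i$, the intertwining relation gives $(M_{Z_i}\otimes I_\cH)\Psi h = \Psi(M_{Z_i}\otimes I_\cE)h \in \cN$, so $\cN$ is invariant under each $M_{Z_i}\otimes I_\cH$.

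\emph{Necessity.} Assume $\cN$ is invariant under $M_{Z_i}\otimes I_\cH$, $i=1,\ldots,n$. The central observation is that this implies $\cN$ is invariant under $f_i(M_Z)\otimes I_\cH$ for every $i$. Indeed, since $f$ has the model property, $M_{f_i}=f_i(M_{Z_1},\ldots,M_{Z_n})$ is either the SOT-limit or the radial SOT-limit of noncommutative polynomials in $M_{Z_1},\ldots,M_{Z_n}$; because $\cN$ is closed and invariant under these polynomials (tensored with $I_\cH$), it remains invariant after passing to the limit. Consequently,
\begin{equation*}
(f_i(M_Z)\otimes I_\cH)\,P_\cN = P_\cN (f_i(M_Z)\otimes I_\cH)\,P_\cN,\qquad i=1,\ldots,n,
\end{equation*}
and, taking adjoints, $P_\cN(f_i(M_Z)\otimes I_\cH)^* = P_\cN(f_i(M_Z)\otimes I_\cH)^* P_\cN$. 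Using these identities together with the fact that $(M_{Z_1},\ldots,M_{Z_n})\in \BB_f(\HH^2(f))$, i.e.\ $\sum_{i=1}^n f_i(M_Z)f_i(M_Z)^*\leq I_{\HH^2(f)}$, we compute
\begin{equation*}
\Phi_{f,M_Z\otimes I}(P_\cN)=\sum_{i=1}^n P_\cN(f_i(M_Z)\otimes I_\cH)P_\cN(f_i(M_Z)\otimes I_\cH)^*P_\cN \leq P_\cN\Bigl(\sum_{i=1}^n(f_i(M_Z)\otimes I_\cH)(f_i(M_Z)\otimes I_\cH)^*\Bigr)P_\cN \leq P_\cN.
\end{equation*}

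Hence the hypothesis of Theorem \ref{Beurling} is satisfied by $Y=P_\cN$, which produces a Hilbert space $\cE$ and a multi-analytic operator $\Psi:\HH^2(f)\otimes\cE\to \HH^2(f)\otimes\cH$ with $\Psi\Psi^*=P_\cN$. Since $\Psi\Psi^*$ is a projection, a standard argument (via $TT^*$ being a projection iff $T^*T$ is, iff $T$ is a partial isometry) shows that $\Psi$ is a partial isometry, i.e.\ inner. Finally, for any partial isometry $\Psi$ the operator $\Psi\Psi^*$ is the orthogonal projection onto $\operatorname{range}\Psi$, so $\cN=\operatorname{range}P_\cN=\Psi[\HH^2(f)\otimes \cE]$, completing the proof.

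The only subtle point is the invariance of $\cN$ under $f_i(M_Z)\otimes I_\cH$ when $f$ is a genuine formal power series or free holomorphic function (not a polynomial); this is the step where the model property ($\mathcal{S}_3$)/($\mathcal{F}_3$)—namely that $M_{f_i}$ is a (radial) SOT-limit of polynomial expressions in $M_{Z_1},\ldots,M_{Z_n}$—does the essential work. Everything else is a direct application of Theorem \ref{Beurling} and basic properties of partial isometries.
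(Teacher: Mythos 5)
Your proposal is correct and follows the same route as the paper: verify $\Phi_{f,M_Z\otimes I}(P_\cN)\leq P_\cN$ using the invariance of $\cN$ under $f_i(M_Z)\otimes I_\cH$ (which the paper also uses, via $M_{f_i}=f_i(M_{Z_1},\ldots,M_{Z_n})$ and $M_Z\in\BB_f(\HH^2(f))$), apply Theorem \ref{Beurling} to $Y=P_\cN$, and note that $\Psi\Psi^*=P_\cN$ forces $\Psi$ to be a partial isometry with range $\cN$. Your explicit justification that invariance under the $M_{Z_i}\otimes I_\cH$ passes to $f_i(M_Z)\otimes I_\cH$ via (radial) SOT-limits of polynomials is the step the paper leaves implicit, but it is the same argument.
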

\begin{proof}
Assume that $\cN\subseteq \HH^2(f)\otimes \cH$ is invariant under
each operator $M_{Z_1}\otimes I_\cH,\ldots, M_{Z_n}\otimes I_\cH$.
Since $P_\cN(M_{Z_i}\otimes I_\cH)P_\cN=(M_{Z_i}\otimes I_\cH)P_\cN$
for any $i=1,\ldots, n$, and  $M_Z:=(M_{Z_1},\ldots, M_{Z_n})\in
\BB_f(\HH^2(f))$, we have
\begin{equation*}
\begin{split}
\Phi_{f, M_Z\otimes I_\cH}(P_\cN)&=P_\cN\left[ \sum_{i=1}^n
(f_i(M_{Z})\otimes   I_\cH) P_\cN
(f_i(M_{Z})^*\otimes   I_\cH)\right] P_\cN\\
&\leq P_\cN\left[ \sum_{ i= 1}^n (f_i(M_{Z})\otimes I_\cH)
(f_i(M_{Z})^*\otimes   I_\cH)\right] P_\cN\\
&=P_\cN \left(\sum_{i=1}^n M_{f_i} M_{f_i}^*\otimes I_\cH\right)
P_\cN\leq P_\cN.
\end{split}
\end{equation*}
Here, we also used the fact  that $M_{f_i}=f_i(M_{Z_1},\ldots,
M_{Z_n})$.  Applying now  Theorem \ref{Beurling}, we find a
multi-analytic operator $\Psi:\HH^2(f)\otimes \cE\to \HH^2(f)\otimes
\cH$ with respect to the   operators $M_{Z_1},\ldots, M_{Z_n}$ such
that $P_\cN=\Psi \Psi^*$. Since $P_\cN$ is an orthogonal projection,
we deduce that $\Psi$ is a partial isometry and
$\cN=\Psi[\HH^2(f)\otimes \cE]$. Since the converse is obvious, the
proof is complete.
\end{proof}

\begin{theorem}\label{co-inv} Let $f=(f_1,\ldots, f_n)$ be  an  $n$-tuple of formal power series with
the model property  and  let $(M_{Z_1},\ldots, M_{Z_n})$ be the
universal model   associated  with the noncommutative domain
$\BB_f$. If $\cN\subseteq \HH^2(f)\otimes \cH$ is a coinvariant
subspace under $M_{Z_1}\otimes I_{\cH},\ldots, M_{Z_n}\otimes
I_{\cH}$, then there is  a subspace $\cE\subseteq \cH$ such that
$$
\overline{\text{\rm span}}\,\left\{(M_{Z_\alpha}\otimes I_\cH)\cN:\
\alpha\in \FF_n^+\right\}=\HH^2(f)\otimes \cE.
$$
In particular, $\cN$ is cyclic for  the operators $M_{Z_1}\otimes
I_{\cH},\ldots, M_{Z_n}\otimes I_{\cH}$ if and only if \
$(P_\CC\otimes I_\cH)\cN=\cH$, where $P_\CC$ is the orthogonal
projection on $\CC$.
\end{theorem}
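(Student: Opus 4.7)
Write $\cM := \overline{\text{\rm span}}\{(M_{Z_\alpha}\otimes I_\cH)\cN:\alpha\in \FF_n^+\}$, the invariant subspace generated by $\cN$. The plan is to prove that $\cM$ is in fact \emph{reducing} for each $M_{Z_i}\otimes I_\cH$, and then invoke the irreducibility of $C^*(M_{Z_1},\ldots, M_{Z_n})$ (Theorem \ref{irreducible}) to conclude that $\cM$ must have the tensor form $\HH^2(f)\otimes \cE$.

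\textbf{Step 1 (coinvariance of $\cM$).} Invariance under $M_{Z_i}\otimes I_\cH$ holds by construction, so it suffices to show $(M_{Z_i}^*\otimes I_\cH)\cM\subseteq \cM$. The key identity is the one extracted in Theorem \ref{irreducible}, namely
\[
M_{Z_i}^*M_{Z_j}=\left<Z_j,Z_i\right>_{\HH^2(f)}I_{\HH^2(f)}=:c_{ij}I,\qquad i,j=1,\ldots,n.
\]
Given $\xi\in \cN$ and $\alpha=g_{j_1}\cdots g_{j_k}\in \FF_n^+$ with $k\geq 1$, a telescoping computation gives
\[
(M_{Z_i}^*\otimes I_\cH)(M_{Z_\alpha}\otimes I_\cH)\xi
=c_{ij_1}(M_{Z_{\alpha'}}\otimes I_\cH)\xi\in \cM,
\]
where $\alpha'=g_{j_2}\cdots g_{j_k}$. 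When $\alpha=g_0$ we use the hypothesis that $\cN$ is coinvariant to get $(M_{Z_i}^*\otimes I_\cH)\xi\in \cN\subseteq \cM$. Passing to the closed linear span yields $(M_{Z_i}^*\otimes I_\cH)\cM\subseteq \cM$ for each $i$, so $\cM$ reduces every $M_{Z_i}\otimes I_\cH$.

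\textbf{Step 2 (tensor form of $\cM$).} Step 1 says $P_\cM$ lies in the commutant of the $*$-algebra generated by $M_{Z_1}\otimes I_\cH,\ldots,M_{Z_n}\otimes I_\cH$. By Theorem \ref{irreducible}, $C^*(M_{Z_1},\ldots, M_{Z_n})$ is irreducible on $\HH^2(f)$, so its commutant is $\CC I_{\HH^2(f)}$. The tensor commutation theorem then gives
\[
\left(C^*(M_{Z_1},\ldots,M_{Z_n})\otimes I_\cH\right)'=I_{\HH^2(f)}\otimes B(\cH).
\]
Hence $P_\cM=I_{\HH^2(f)}\otimes P_\cE$ for a (unique) projection $P_\cE$ on $\cH$, and setting $\cE:=P_\cE\cH$ we obtain $\cM=\HH^2(f)\otimes \cE$, which is the first assertion.

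\textbf{Step 3 (identification of $\cE$).} The inclusion $\overline{(P_\CC\otimes I_\cH)\cN}\subseteq \cE$ is immediate from $\cN\subseteq \cM=\HH^2(f)\otimes\cE$. For the reverse inclusion, note that since $\cN$ is coinvariant under each $M_{Z_i}\otimes I_\cH$ and hence under every polynomial in the adjoints, and since $M_{f_i}^*=f_i(M_{Z_1},\ldots,M_{Z_n})^*$ is an SOT-limit of such polynomials (this is built into the model property of $f$), the closed subspace $\cN$ is also coinvariant under each $M_{f_i}^*\otimes I_\cH$. Using the ONB $\{f_\alpha\}_{\alpha\in\FF_n^+}$ of $\HH^2(f)$, one has $P_\CC M_{f_\alpha}^*=\langle \cdot,f_\alpha\rangle 1$, so for $\xi=\sum_\alpha f_\alpha\otimes h_\alpha\in\cN$,
\[
h_\alpha=(P_\CC M_{f_\alpha}^*\otimes I_\cH)\xi=(P_\CC\otimes I_\cH)\bigl[(M_{f_\alpha}^*\otimes I_\cH)\xi\bigr]\in (P_\CC\otimes I_\cH)\cN.
\]
Thus every Fourier coefficient of $\xi$ lies in $(P_\CC\otimes I_\cH)\cN$, so $\cN\subseteq \HH^2(f)\otimes\overline{(P_\CC\otimes I_\cH)\cN}$ and consequently $\cM\subseteq \HH^2(f)\otimes\overline{(P_\CC\otimes I_\cH)\cN}$, giving $\cE\subseteq \overline{(P_\CC\otimes I_\cH)\cN}$. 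Therefore $\cE=\overline{(P_\CC\otimes I_\cH)\cN}$, and $\cN$ is cyclic (i.e.\ $\cM=\HH^2(f)\otimes\cH$) precisely when $(P_\CC\otimes I_\cH)\cN$ is dense in $\cH$.

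\textbf{Main obstacle.} The crux is Step 1: the fact that the cyclic subspace $\cM$ generated by a coinvariant $\cN$ is automatically coinvariant. This is where the particular structure of the universal model enters through Theorem \ref{irreducible}; without the rank-one relation $M_{Z_i}^*M_{Z_j}=c_{ij}I$, there would be no reason for $\cM$ to reduce the tuple. Everything else is either a direct tensor-commutant argument or a computation in the orthonormal basis $\{f_\alpha\}$.
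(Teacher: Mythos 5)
Your proof is correct, but it reaches the main assertion by a genuinely different route than the paper. The paper's proof is constructive: it defines $\cE:=(P_\CC\otimes I_\cH)\cN$ at the outset, shows $\cN\subseteq \HH^2(f)\otimes\cE$ by extracting Fourier coefficients via $h_\beta=(P_\CC M_{f_\beta}^*\otimes I_\cH)h$ (which gives $\cG\subseteq\HH^2(f)\otimes\cE$ for the generated invariant subspace $\cG$), and then proves the reverse inclusion by writing $P_\CC=I-\sum_i M_{f_i}M_{f_i}^*$, using coinvariance of $\cN$ together with invariance of $\cG$ to get $\cE\subseteq\cG$, and finally invoking the density of $\CC[Z_1,\ldots,Z_n]$ in $\HH^2(f)$ (Proposition \ref{dense}). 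You instead prove abstractly that the generated subspace $\cM$ is \emph{reducing}, using the relation $M_{Z_i}^*M_{Z_j}=\left<Z_j,Z_i\right>I$ from Theorem \ref{irreducible} to handle words of positive length and coinvariance of $\cN$ to handle the empty word, and then obtain the tensor form from irreducibility via the commutant computation $\left(C^*(M_{Z_1},\ldots,M_{Z_n})\otimes \CC I_\cH\right)'=\CC I\otimes B(\cH)$. This is legitimate and not circular: although the paper states Corollary \ref{co-inv2} (reducing $\Leftrightarrow$ tensor form) as a consequence of Theorem \ref{co-inv}, the implication you need follows directly from Theorem \ref{irreducible}, which precedes both. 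What your route buys is a transparent structural explanation (the only possible obstruction to $\cM$ being reducing sits at the empty word, and coinvariance of $\cN$ removes it), and it yields Corollary \ref{co-inv2} en route rather than as a corollary; what the paper's route buys is that it avoids the von Neumann commutation theorem and delivers the identification $\cE=(P_\CC\otimes I_\cH)\cN$ — needed for the ``in particular'' clause — as part of the main argument rather than as your separate Step 3 (which you do carry out correctly, by essentially the same Fourier-coefficient computation the paper uses). The only cosmetic discrepancy is that you conclude cyclicity is equivalent to \emph{density} of $(P_\CC\otimes I_\cH)\cN$ in $\cH$, but the paper is equally loose on this point since it does not close $\cE$ either.
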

\begin{proof}
Let $\cE:=(P_\CC\otimes I_\cH)\cN\subset \cH$, where $1\otimes \cH$
is identified with $\cH$, and let $h\in \cN$ be a nonzero vector
with representation $h=\sum_{\alpha\in \FF_n^+} f_\alpha \otimes
h_\alpha$, $h_\alpha\in \cH$. Choose $\beta\in \FF_n^+$ with
$h_\beta\neq 0$. Since  $\cN$ is a co-invariant subspace under
$M_{Z_1}\otimes I_{\cH},\ldots, M_{Z_n}\otimes I_{\cH}$, and $f$ has
the model property, we have $M_{f_i}=f_i(M_{Z_1},\ldots, M_{Z_n})$
for $i=1,\ldots,n$,  and  deduce that
$$
(P_\CC\otimes I_\cH)([f(M_Z)]_\alpha\otimes I_\cH)h=(P_\CC M_{f_\alpha}\otimes I_\cH)h=h_\beta\in \cE.
$$
This implies $(M_{f_\beta}\otimes I_\cH)(1\otimes
h_\beta)=f_\beta\otimes h_\beta\in \HH^2(f)\otimes \cE$ for any
$\beta\in \FF_n^+$. Hence, we deduce that $h=\sum_{\alpha\in
\FF_n^+} f_\alpha\otimes h_\alpha\in \HH^2(f)\otimes \cE$.
Therefore, $\cN\subset \HH^2(f)\otimes \cE$, which implies
$$
\cG:=\overline{\text{\rm span}}\,\left\{(M_\alpha\otimes I_\cH)\cN:\
\alpha\in \FF_n^+\right\}\subseteq\HH^2(f)\otimes \cE.
$$
Now, we prove the reverse inclusion. Let $h_0\in \cE$, $h_0\neq 0$. Due to the definition of the subspace $\cE$, there exists $x\in \cM$ such that $x=1\otimes h_0+
\sum_{|\alpha|\geq 1}f_\alpha\otimes h_\alpha$. Hence, we obtain
$$
h_0=(P_\CC\otimes I_\cH)x=\left(I-\sum_{i=1}^n M_{f_i}M_{f_i}^*\otimes I_\cH\right)x.
$$
Since $M_{f_i}$ is a SOT-limit of polynomials in $M_{Z_1},\ldots,
M_{Z_n}$, and $\cN$   is a co-invariant subspace under
$M_{Z_1}\otimes I_{\cH},\ldots, M_{Z_n}\otimes I_{\cH}$, we deduce
that $h_0\in \cG$. Therefore, $\cE\subset \cG$ and
$(M_{Z_\alpha}\otimes I_\cH)(1\otimes \cE)\subset \cG$ for
$\alpha\in \FF_n^+$. Since, due to Proposition \ref{dense},
$\CC[Z_1,\ldots, Z_n]$ is dense in $\HH^2(f)$, we deduce that
$\HH^2(f)\otimes \cE\subseteq \cG$.  The last part of the theorem is
now obvious. The proof is complete.
\end{proof}

A simple consequence of Theorem \ref{co-inv} is the following result.

\begin{corollary}\label{co-inv2}
A subspace  $\cN\subseteq \HH^2(f)\otimes \cH$ is reducing  under
each operator  $M_{Z_i}\otimes I_{\cH}$, $i=1,\ldots,n$,   if and
only if  there is a subspace $\cE\subseteq \cH$ such that
$\cN=\HH^2(f)\otimes \cE$.
\end{corollary}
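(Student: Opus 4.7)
The plan is to deduce the corollary directly from Theorem \ref{co-inv} together with the trivial invariance properties of tensor-product subspaces.

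For the easy direction, suppose $\cN = \HH^2(f)\otimes \cE$ for some closed subspace $\cE\subseteq\cH$. Then for any operator $A\in B(\HH^2(f))$ the operator $A\otimes I_{\cH}$ leaves $\HH^2(f)\otimes \cE$ invariant, so in particular both $M_{Z_i}\otimes I_{\cH}$ and its adjoint $M_{Z_i}^*\otimes I_{\cH}$ do; hence $\cN$ is reducing.

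For the nontrivial direction, assume $\cN$ is reducing under each $M_{Z_i}\otimes I_{\cH}$. In particular $\cN$ is coinvariant, so Theorem \ref{co-inv} applies and produces a subspace $\cE\subseteq\cH$ with
$$
\overline{\text{\rm span}}\,\left\{(M_{Z_\alpha}\otimes I_{\cH})\cN:\ \alpha\in \FF_n^+\right\}=\HH^2(f)\otimes \cE .
$$
From the proof of that theorem we also have the inclusion $\cN\subseteq \HH^2(f)\otimes \cE$. On the other hand, because $\cN$ is now also invariant under each $M_{Z_i}\otimes I_{\cH}$, every space $(M_{Z_\alpha}\otimes I_{\cH})\cN$ lies inside $\cN$, so the left-hand side of the displayed equation is contained in $\cN$. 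Combining the two inclusions gives $\cN=\HH^2(f)\otimes \cE$, which is the desired conclusion.

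No step here is a real obstacle: the whole content is packaged in Theorem \ref{co-inv}, and the corollary just records the symmetric situation in which both invariance and coinvariance hold. The only thing worth double-checking is that the inclusion $\cN\subseteq \HH^2(f)\otimes \cE$ really is already established inside the proof of Theorem \ref{co-inv} (it is, via the computation $(P_\CC\otimes I_{\cH})(M_{f_\alpha}^*\otimes I_{\cH})h\in\cE$ for $h\in\cN$), so that no extra argument is needed here.
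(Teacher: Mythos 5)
Your proof is correct and is exactly the deduction the paper intends: the paper offers no separate argument, introducing the corollary only as ``a simple consequence of Theorem \ref{co-inv},'' and your two inclusions (invariance forces the closed span $\overline{\text{\rm span}}\{(M_{Z_\alpha}\otimes I_\cH)\cN\}$ to collapse to $\cN$, while the theorem identifies that span with $\HH^2(f)\otimes\cE$) are precisely how that consequence is obtained.
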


We remark that, in Theorem \ref{inv-subs}, the inner multi-analytic
operator $\Psi:\HH^2(f)\otimes \cE\to \HH^2(f)\otimes \cH$ with
respect to $M_{Z_1},\ldots, M_{Z_n}$ and  with the property  that $
\cN=\Psi[\HH^2(f)\otimes \cE] $ can be chosen to be an isometry.
Indeed, let $\cM:=\{x\in \HH^2(f)\otimes \cE: \
\|\Psi(x)\|=\|x\|\}$. Since $f$ has the model property, we deduce
that $f_i(M_{Z_1},\ldots, M_{Z_n})=M_{f_i}$ is an isometry for each
$i=1,\ldots,n$. Consequently, we have
\begin{equation*}
\begin{split}
\|\Psi (M_{f_i}\otimes I_\cE)x\|&=\|\Psi f_i(M_{Z_1}\otimes I_\cE,\ldots, M_{Z_n}\otimes I_\cE) x\|
=\|f_i(M_{Z_1}\otimes I_\cH,\ldots, M_{Z_n}\otimes I_\cH)\Psi(x)\|\\
&=\|\Psi(x)\|=\|x\|=\|f_i(M_{Z_1}\otimes I_\cE,\ldots, M_{Z_n}\otimes I_\cE) x\|=\|(M_{f_i}\otimes I_\cE)x\|
\end{split}
\end{equation*}
for any $x\in \cM$ and $i=1,\ldots,n$. This implies that $\cM$ is an invariant subspace under $M_{f_i}\otimes I_\cE$, $i=1,\ldots,n$. Using the fact that $M_{Z_i}=g_i(M_{f_1},\ldots, M_{f_n})$, $i=1,\ldots,n$, where $g=(g_1,\ldots, g_n)$ is the inverse of $f=(f_1,\ldots, f_n)$, we deduce that $\cM$ is invariant
under $M_{Z_1}\otimes I_\cE,\ldots, M_{Z_n}\otimes I_\cE$. On the other hand, since $\cM^\perp=\ker \Psi$ and $\Psi(M_{Z_i}\otimes I_\cE)=(M_{Z_i}\otimes I_\cH)\Psi$,
it is clear that $\cM^\perp$ is also invariant under $M_{Z_1}\otimes I_\cE,\ldots, M_{Z_n}\otimes I_\cE$, which shows that $\cM$ is a reducing subspace for $M_{Z_1}\otimes I_\cE,\ldots, M_{Z_n}\otimes I_\cE$. Now, due to Corollary \ref{co-inv2}, $\cM=\HH^2(f)\otimes \cG$ for some subspace $\cG\subseteq \cE$.
Therefore, we have
$$
\cN=\Psi[\HH^2(f)\otimes \cE]=\Psi(\cM)=\Psi[\HH^2(f)\otimes \cG]
$$
and the restriction of $\Psi$ to $\HH^2(f)\otimes \cG$ is  an isometric multi-analytic operator, which proves our assertion.

The next result can be viewed as a continuation of Theorem
\ref{model}.

\begin{theorem}\label{pure-dilation} Let $T=(T_1,\ldots, T_n)\in
\BB_f(\cH)$ be a pure $n$-tuple of operators and let $f=(f_1,\ldots,
f_n)$ have the model theory. Then the noncommutative Poisson kernel
$K_{f,T}:\cH\to \HH^2(f)\otimes \cD_{f,T}$ defined by relation
\eqref{KfT} is an isometry, the subspace $K_{f,T}(\cH)$ is
co-invariant under $M_{Z_1}\otimes I_{\cH},\ldots, M_{Z_n}\otimes
I_{\cH}$, and
\begin{equation*}
  T_i=K_{f,T}^* (M_{Z_i}\otimes
I_{\cD_{f,T}})K_{f,T},\qquad i=1,\ldots,n.
\end{equation*}
Moreover, the dilation  above is minimal, i.e.,
$$
\HH^2(f)\otimes \cD_{f,T}=\bigvee_{\alpha\in
\FF_n^+}(M_{Z_\alpha}\otimes I_{\cD_{f,T}})K_{f,T}(\cH),
$$
and unique up to an isomorphism.
\end{theorem}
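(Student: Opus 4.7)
The plan is to derive the first three assertions directly from Theorem~\ref{model}, to prove minimality via the cyclicity criterion of Theorem~\ref{co-inv}, and to establish uniqueness by a standard intertwining-unitary construction.

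The proof of Theorem~\ref{model} already constructs $K_{f,T}$ exactly as in \eqref{KfT}, uses the telescoping identity
\[
\Bigl\|\sum_{|\alpha|\le q}f_\alpha\otimes \Delta_{f,T}[f(T)]_\alpha^*h\Bigr\|^2
=\|h\|^2-\Bigl\langle \sum_{|\alpha|=q}[f(T)]_\alpha[f(T)]_\alpha^*h,\,h\Bigr\rangle
\]
together with purity of $T$ to show that $K_{f,T}$ is an isometry, and establishes the intertwining $K_{f,T}T_i^*=(M_{Z_i}^*\otimes I_{\cD_{f,T}})K_{f,T}$. Taking adjoints shows that $K_{f,T}(\cH)$ is co-invariant under $M_{Z_1}\otimes I_{\cD_{f,T}},\dots,M_{Z_n}\otimes I_{\cD_{f,T}}$; compressing and using $K_{f,T}^*K_{f,T}=I_\cH$ yields the dilation formula $T_i=K_{f,T}^*(M_{Z_i}\otimes I_{\cD_{f,T}})K_{f,T}$.

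For minimality, I would apply Theorem~\ref{co-inv} to $\cN:=K_{f,T}(\cH)$. By that theorem, $\cN$ is cyclic for $M_{Z_1}\otimes I_{\cD_{f,T}},\dots,M_{Z_n}\otimes I_{\cD_{f,T}}$ if and only if $(P_\CC\otimes I_{\cD_{f,T}})\cN=\cD_{f,T}$. Since $(P_\CC\otimes I)(f_\alpha\otimes v)=\delta_{\alpha,g_0}(1\otimes v)$, the formula \eqref{KfT} immediately gives $(P_\CC\otimes I_{\cD_{f,T}})K_{f,T}h=1\otimes \Delta_{f,T}h$, whose range is dense in $1\otimes \cD_{f,T}$ by the very definition $\cD_{f,T}=\overline{\Delta_{f,T}\cH}$.

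For uniqueness, suppose $K':\cH\to \HH^2(f)\otimes\cE$ is another isometry whose range is co-invariant, satisfying $T_i=K'^{*}(M_{Z_i}\otimes I_\cE)K'$ and $\bigvee_{\alpha\in\FF_n^+}(M_{Z_\alpha}\otimes I_\cE)K'(\cH)=\HH^2(f)\otimes\cE$. The idea is to define $U:\cD_{f,T}\to\cE$ by $U(\Delta_{f,T}h):=(P_\CC\otimes I_\cE)K'h$ and show that $I_{\HH^2(f)}\otimes U$ implements the desired unitary equivalence. The main step --- which I expect to be the principal obstacle --- is verifying $\|(P_\CC\otimes I_\cE)K'h\|=\|\Delta_{f,T}h\|$. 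For this one uses the identity $P_\CC=I-\sum_{i=1}^n M_{f_i}M_{f_i}^*$ in $\HH^2(f)$ (valid via the unitary $\HH^2(f)\to F^2(H_n)$ of Lemma~\ref{M2}, since $\sum S_iS_i^*=I-P_\CC$ on $F^2(H_n)$), combined with the relation $(M_{f_i}^*\otimes I_\cE)K'=K'f_i(T)^*$ obtained by extending the polynomial intertwining $K'p(T)^*=(p(M_Z)^*\otimes I_\cE)K'$ from $p=Z_i$ to $p=f_i$ by the same SOT/norm approximation underlying property $(\cS_3)$. The resulting computation
\[
\|(P_\CC\otimes I_\cE)K'h\|^2=\|h\|^2-\sum_{i=1}^n\|(M_{f_i}^*\otimes I_\cE)K'h\|^2=\|h\|^2-\sum_{i=1}^n\|f_i(T)^*h\|^2=\|\Delta_{f,T}h\|^2
\]
shows $U$ is a well-defined isometry; minimality of $K'$ together with Theorem~\ref{co-inv} gives $(P_\CC\otimes I_\cE)K'(\cH)=\cE$, so $U$ extends to a unitary $\cD_{f,T}\to\cE$. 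The intertwining $(I_{\HH^2(f)}\otimes U)(M_{Z_\alpha}\otimes I_{\cD_{f,T}})=(M_{Z_\alpha}\otimes I_\cE)(I_{\HH^2(f)}\otimes U)$ and cyclicity on both sides then force $(I_{\HH^2(f)}\otimes U)K_{f,T}=K'$, completing the uniqueness.
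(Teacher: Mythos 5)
Your first two assertions (isometry, co-invariance, compression formula, and minimality via $(P_\CC\otimes I_{\cD_{f,T}})K_{f,T}(\cH)=\cD_{f,T}$ together with Theorem~\ref{co-inv}) follow the paper exactly. The uniqueness step, however, is genuinely different. The paper does not construct the unitary by hand: it observes that $K_{f,T}$ and $V$ both compress the representations $X\mapsto X\otimes I_{\cD_{f,T}}$ and $X\mapsto X\otimes I_{\cE}$ of $C^*(M_{Z_1},\ldots,M_{Z_n})$ to the same completely positive map $\Phi(M_{Z_\alpha}M_{Z_\beta}^*)=T_\alpha T_\beta^*$ (using Theorem~\ref{irreducible} to know that these monomials span the $C^*$-algebra), invokes uniqueness of minimal Stinespring dilations to obtain an intertwining unitary $W$ with $WK_{f,T}=V$, and then uses irreducibility of $C^*(M_{Z_1},\ldots,M_{Z_n})$ to conclude $W=I_{\HH^2(f)}\otimes\Gamma$. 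Your route instead builds the unitary $U:\cD_{f,T}\to\cE$ directly from the defect-space identity $\|(P_\CC\otimes I_\cE)K'h\|^2=\|h\|^2-\sum_i\|f_i(T)^*h\|^2=\|\Delta_{f,T}h\|^2$, which rests on $P_\CC=I-\sum_iM_{f_i}M_{f_i}^*$ and on extending the polynomial intertwining to $(M_{f_i}^*\otimes I_\cE)K'=K'f_i(T)^*$; this is sound, since co-invariance of the closed subspace $K'(\cH)$ under adjoints of polynomials in the $M_{Z_j}$ passes to the WOT/SOT limits defining $M_{f_i}$. Your approach is more elementary — it bypasses Stinespring's theorem and the irreducibility of the $C^*$-algebra entirely — at the cost of the explicit Fourier computation; note also that the final identity $(I_{\HH^2(f)}\otimes U)K_{f,T}=K'$ does not actually need cyclicity: writing $K'h=\sum_\alpha f_\alpha\otimes k_\alpha$ and observing $(P_\CC M_{f_\alpha}^*\otimes I_\cE)K'h=1\otimes k_\alpha$ gives it directly, term by term. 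The paper's argument is shorter given the $C^*$-machinery already in place, and its irreducibility step is what pins down the intertwiner as $I\otimes\Gamma$, a fact you obtain for free by constructing $U$ on the defect space from the start.
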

\begin{proof}
The first part of the theorem was proved in Theorem \ref{model}. Due
to the definition of the noncommutative Poisson kernel  $K_{f,T}$,
we have $(P_\CC\otimes I_{\cD_{f,T}})K_{f,T}(\cH)=\cD_{f,T}$.
Applying Theorem \ref{co-inv}, we deduce the minimality of the
dilation. To prove the uniqueness, consider another minimal dilation
of $(T_1,\ldots, T_n)$, that is,
\begin{equation}
\label{another} T_i=V^*(M_{Z_i}\otimes I_\cE)V,\qquad i=1,\ldots,n,
\end{equation}
where $V:\cH\to \HH^2(f)\otimes \cE$ is an isometry, $V(\cH)$ is co-invariant under $M_{Z_i}\otimes I_\cE$, $i=1,\ldots,n$, and
$$
\HH^2(f)\otimes \cE=\bigvee_{\alpha\in
\FF_n^+}(M_{Z_\alpha}\otimes I_{\cE})V(\cH).
$$
According to  Theorem \ref{irreducible} and Theorem \ref{model}, we have
$$
 \overline{\text{\rm span}} \{M_{Z_\alpha} M_{Z_\beta}^*:\ \alpha,\beta\in
 \FF_n^+\}=C^*(M_{Z_1},\ldots, M_{Z_n})
 $$
 and
 there is a completely positive linear map $\Phi:C^*(M_{Z_1},\ldots, M_{Z_n})\to B(\cH)$
  such that $\Phi(M_{Z_\alpha}M_{Z_\beta}^*)=T_\alpha T_\beta^*$, $\alpha,\beta\in \FF_n^+$.
 Note that relation \eqref{another} and the fact that $V(\cH)$ is co-invariant under
 $M_{Z_i}\otimes I_\cE$, $i=1,\ldots,n$, imply that
 $$
 \Phi(X)=K_{f,T}^*\pi_1(X)K_{f,T}=V^*\pi_2(X)V,\qquad X\in C^*(M_{Z_1},\ldots, M_{Z_n}),
 $$
where $\pi_1$,$\pi_2$ are the $*$-representations of $C^*(M_{Z_1},\ldots, M_{Z_n})$ on
 $\HH^2(f)\otimes \cD_{f,T}$ and $\HH^2(f)\otimes \cE$ given by $\pi_1(X):=X\otimes I_{\cD_{f,T}}$
  and $\pi_2(X):=X\otimes I_{\cE}$, respectively.
Since $\pi_1$, $\pi_2$ are minimal Stinespring dilations of $\Phi$,
due to the uniqueness \cite{St},
 there exists a unitary operator $W:\HH^2(f)\otimes \cD_{f,T} \to \HH^2(f)\otimes \cE$ such that
$$
W(M_{Z_i}\otimes I_{\cD_{f,T}})=(M_{Z_i}\otimes I_{\cE})W,\qquad i=1,\ldots,n,
$$
and $WK_{f,T}=V$. Hence, we also deduce that $ W(M_{Z_i}^*\otimes
I_{\cD_{f,T}})=(M_{Z_i}^*\otimes I_{\cE})W$ for $ i=1,\ldots,n$.
Since, due to Theorem \ref{irreducible}, the $C^*$-algebra
$C^*(M_{Z_1},\ldots, M_{Z_n})$ is irreducible, we must have
$W=I_{\HH^2(f)}\otimes \Gamma$, where $\Gamma\in B(\cD_{f,T},\cE)$
is a unitary operator. Consequently, we have $\dim \cD_{f,T}=\dim
\cE$ and $WK_{f,T}V(\cH)=V(\cH)$, which proves that the two minimal
dilations are unitarily equivalent. The proof is complete.
\end{proof}

\begin{corollary} Let $(M_{Z_1},\ldots, M_{Z_n})$ be the universal model associated with the noncommutative domain
$\BB_f$. The $n$-tuples $(M_{Z_1}\otimes I_\cH,\ldots,
M_{Z_n}\otimes I_\cH) $ and $(M_{Z_1}\otimes I_\cK,\ldots,
M_{Z_n}\otimes I_\cK)$ are unitarily equivalent if and only if $\dim
\cH=\dim \cK$.
\end{corollary}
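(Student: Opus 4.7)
The forward implication $\dim\cH=\dim\cK\Longrightarrow$ unitary equivalence is trivial: pick any unitary $U:\cH\to\cK$, and verify that $W:=I_{\HH^2(f)}\otimes U$ is a unitary from $\HH^2(f)\otimes\cH$ to $\HH^2(f)\otimes\cK$ intertwining $M_{Z_i}\otimes I_\cH$ with $M_{Z_i}\otimes I_\cK$ for every $i$. So the real content is the converse, and the plan is to extract a concrete finite-rank (in fact rank-one) projection from $C^*(M_{Z_1},\ldots,M_{Z_n})$ and compare its amplifications.

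For the converse, suppose $W:\HH^2(f)\otimes\cH\to\HH^2(f)\otimes\cK$ is a unitary with $W(M_{Z_i}\otimes I_\cH)=(M_{Z_i}\otimes I_\cK)W$ for $i=1,\ldots,n$. Taking adjoints, $W$ also intertwines $M_{Z_i}^*\otimes I_\cH$ with $M_{Z_i}^*\otimes I_\cK$. Consequently $W$ intertwines the $*$-representations
$$\pi_\cH:X\mapsto X\otimes I_\cH,\qquad \pi_\cK:X\mapsto X\otimes I_\cK$$
of $C^*(M_{Z_1},\ldots,M_{Z_n})$ on $\HH^2(f)\otimes\cH$ and $\HH^2(f)\otimes\cK$ respectively. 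In particular, $W\,\pi_\cH(X)=\pi_\cK(X)\,W$ for every $X\in C^*(M_{Z_1},\ldots,M_{Z_n})$.

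The key step is to locate a rank-one projection inside $C^*(M_{Z_1},\ldots,M_{Z_n})$. Because $f$ has the model property, $M_{f_i}=f_i(M_{Z_1},\ldots,M_{Z_n})$ lies in $C^*(M_{Z_1},\ldots,M_{Z_n})$ for every $i$ (it is an operator-norm or strong limit of polynomials in the $M_{Z_i}$'s, which remains in the $C^*$-algebra after, if needed, a $\mathrm{rf}$-approximation argument; for the polynomial case this is immediate). Since $M_{f_i}=U^{-1}S_iU$ via the unitary $U:\HH^2(f)\to F^2(H_n)$ sending $f_\alpha\mapsto e_\alpha$, we have $\sum_{i=1}^n M_{f_i}M_{f_i}^*=U^{-1}(\sum_{i=1}^n S_iS_i^*)U=I-P_\CC$, where $P_\CC$ is the orthogonal projection of $\HH^2(f)$ onto $\CC\,f_{g_0}=\CC\,I$. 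Hence
$$P_\CC=I-\sum_{i=1}^n M_{f_i}M_{f_i}^*\ \in\ C^*(M_{Z_1},\ldots,M_{Z_n}),$$
and $P_\CC$ is a projection of rank one.

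Apply the intertwining to this element: $W(P_\CC\otimes I_\cH)=(P_\CC\otimes I_\cK)W$. Since $W$ is unitary, $P_\CC\otimes I_\cH$ and $P_\CC\otimes I_\cK$ are unitarily equivalent projections, hence have the same rank. But $P_\CC\otimes I_\cH$ has rank $\dim\cH$ and $P_\CC\otimes I_\cK$ has rank $\dim\cK$, so $\dim\cH=\dim\cK$. The main potential obstacle is merely verifying that the $M_{f_i}$'s sit inside $C^*(M_{Z_1},\ldots,M_{Z_n})$ in all three cases $(\cA),(\cS),(\cF)$ covered by the ``model property''; in the polynomial case it is immediate, and in the two formal/holomorphic cases it follows from condition $(\cS_3)$ (resp.\ $(\cF_3)$) combined with the radial/SOT convergence in the defining identity, which suffices to keep the limit inside the $C^*$-algebra because $P_\CC$ is obtained via finite algebraic operations once $\sum M_{f_i}M_{f_i}^*$ is identified with the norm-limit $I-P_\CC$ coming from Theorem~\ref{irreducible}.
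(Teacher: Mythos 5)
Your proposal is correct, but it follows a genuinely different route from the paper's. The paper's argument is representation-theoretic: since $W$ is unitary and intertwines the $M_{Z_i}\otimes I$, it also intertwines their adjoints, and the irreducibility of $C^*(M_{Z_1},\ldots,M_{Z_n})$ (Theorem \ref{irreducible}) then forces $W=I_{\HH^2(f)}\otimes\Gamma$ for a unitary $\Gamma\in B(\cH,\cK)$, whence $\dim\cH=\dim\cK$. You instead bypass irreducibility altogether: you exhibit the rank-one projection $P_\CC=I-\sum_i M_{f_i}M_{f_i}^*$, observe that $W$ conjugates $P_\CC\otimes I_\cH$ onto $P_\CC\otimes I_\cK$, and compare ranks. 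Your route is more self-contained (it needs only the identity $\sum_i M_{f_i}M_{f_i}^*=I-P_\CC$, which is immediate from $M_{f_i}=U^{-1}S_iU$), while the paper's route yields the stronger structural conclusion $W=I\otimes\Gamma$, which is reused elsewhere (e.g.\ in the uniqueness part of Theorem \ref{pure-dilation} and in Theorem \ref{new-inv}).

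One small imprecision worth fixing: you assert that $M_{f_i}$, and hence $P_\CC$, lies in $C^*(M_{Z_1},\ldots,M_{Z_n})$. In the general case of the model property, $M_{f_i}=f_i(M_{Z_1},\ldots,M_{Z_n})$ is only guaranteed to be a SOT (or radial SOT) limit of polynomials in the $M_{Z_i}$'s, and a SOT limit need not stay in the norm-closed algebra; membership in the $C^*$-algebra is only automatic when $f\in\cM^{||}$ or $f$ is polynomial. This does not damage your argument, because $C^*$-membership is never actually needed: if $p_k(M_Z)\to M_{f_i}$ in SOT with uniformly bounded norms, then $p_k(M_Z)\otimes I_\cH\to M_{f_i}\otimes I_\cH$ in SOT as well, so the intertwining relation $W(p_k(M_Z)\otimes I_\cH)=(p_k(M_Z)\otimes I_\cK)W$ passes to the limit, and taking adjoints (using that $W$ is unitary) gives $W(P_\CC\otimes I_\cH)=(P_\CC\otimes I_\cK)W$ directly. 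Phrased this way, the rank comparison goes through in all three cases $(\cA)$, $(\cS)$, $(\cF)$ without any appeal to where $P_\CC$ sits.
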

\begin{proof}

Let $W:\HH^2(f)\otimes \cH\to \HH^2(f)\otimes \cK$ be  a unitary
operator such that $W(M_{Z_i}\otimes  I_\cH)=(M_{Z_i}\otimes
I_\cK)W$  for $i=1,\ldots,n$. Since $W$ is unitary,
 we have $W(M_{Z_i}^*\otimes  I_\cH)=(M_{Z_i}^*\otimes  I_\cK)W$, $i=1,\ldots,n$. Using
 the fact that $C^*(M_{Z_1},\ldots, M_{Z_n})$ is ireducible, we deduce that
 $W=I_{\HH^2(f)}\otimes \Gamma$ for a unitary operator $\Gamma\in B(\cH,\cK)$, which shows
  that $\dim\cH=\dim \cK$. The converse is obvious, so the proof is complete.
\end{proof}

\bigskip

\section{
The Hardy algebra $H^\infty(\BB_f)$ and the eigenvectors of
$M_{Z_1}^*,\ldots, M_{Z_n}^*$ }

Let $f=(f_1,\ldots, f_n)$ be an $n$-tuple with the model property.
 We define the noncommutative Hardy algebra $H^\infty(\BB_f)$ to be the WOT-closure of all
  noncommutative polynomials in $M_{Z_1},\ldots, M_{Z_n}$ and the
  identity.
  Assume that $f\in \cM^{||}$. We say that $F:\BB_f^<(\cH)\to B(\cH)$ is  a free holomorphic  function on $\BB_f^<(\cH)$ if
  there are some coefficients $c_\alpha\in \CC$ such that
 $$
 F(Y)=\sum_{k=0}^\infty\sum_{|\alpha|=k} c_\alpha   [f(Y)]_\alpha,\qquad Y\in \BB_f^<(\cH),
 $$
 where the convergence of the series is in the operator norm topology. Since, according to
 Lemma
 \ref{ball}, we have $\BB_f^<(\cH)=g([B(\cH)^n]_1)$  and $f(g(X))=X$, $X\in [B(\cH)^n]_1$,  the uniqueness of the representation  of $F$ follows
  from the uniqueness
  of the representation of free holomorphic functions on $[B(\cH)^n]_1$.

 \begin{theorem}\label{algebra} Let $f=(f_1,\ldots, f_n)$ be  an $n$-tuple of formal power series
  with the model property and let $\BB_f$ be the corresponding  noncommutative domain. Then the
   following statements hold.
 \begin{enumerate}
 \item[(i)]  $H^\infty(\BB_f)$ coincides with the algebra of bounded left multipliers of $\HH^2(f)$.

 \item[(ii)] If $f\in \cM^{||}$, then $H^\infty(\BB_f)$  can be identified with the algebra $\HH^\infty(\BB^<_f)$ of all bounded free holomorphic
   functions on the noncommutative domain $\BB_f^<(\cH)$,  which coincides with
     $$
     \left\{\varphi\circ f:\BB_f^<(\cH)\to B(\cH): \  \varphi\in H^\infty_{\bf
     ball}\right\}.
 $$
 \item[(iii)] If $\psi\in H^\infty(\BB_f)$, then  there is a unique
 $\varphi=\sum_{\alpha} c_\alpha S_\alpha$ in the noncommutative analytic Toeplitz algebra $F_n^\infty$ such that
 $$
 \psi=\text{\rm SOT-}\lim_{r\to 1}\sum_{k=0}^\infty\sum_{|\alpha|=k} c_\alpha  r^{|\alpha|}
 [f(M_Z)]_\alpha,\qquad c_\alpha\in \CC,
 $$
 where   $M_Z:=(M_{Z_1},\ldots, M_{Z_n})$  and  the convergence of the series is in the
  operator norm topology.
 \end{enumerate}

 \end{theorem}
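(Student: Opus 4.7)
The plan is to transfer everything to the classical setting of the noncommutative analytic Toeplitz algebra $F_n^\infty$ acting on the full Fock space $F^2(H_n)$ via the unitary $U:\HH^2(f)\to F^2(H_n)$ defined by $U(f_\alpha):=e_\alpha$, $\alpha\in \FF_n^+$, which was introduced in the proof of Lemma \ref{M2}. Throughout, recall that $M_{f_i}=U^{-1}S_i U$ (so $U$ intertwines left multiplication by $f_i$ on $\HH^2(f)$ with the left creation $S_i$ on $F^2(H_n)$), and that, since $f$ has the model property, $M_{Z_i}$ lies in the WOT-closed algebra generated by $\{M_{f_1},\ldots,M_{f_n}, I\}$ (as argued inside the proof of Theorem \ref{model}), while $M_{f_i}=f_i(M_{Z_1},\ldots,M_{Z_n})$ lies in the WOT-closed algebra generated by $\{M_{Z_1},\ldots,M_{Z_n}, I\}$. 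Consequently these two WOT-closed algebras coincide, and $U\,H^\infty(\BB_f)\,U^{-1}=F_n^\infty$.

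For item (i), I would combine the identification $U\,H^\infty(\BB_f)\,U^{-1}=F_n^\infty$ with the known description of $F_n^\infty$ as the algebra of bounded left multipliers of $F^2(H_n)$ (see \cite{Po-analytic}, \cite{Po-disc}). Because the unitary $U$ sends the monomial $f_\alpha$ of $\HH^2(f)$ to $e_\alpha$ of $F^2(H_n)$ and intertwines the $M_{f_i}$'s with the $S_i$'s, it sends bounded left multipliers of $\HH^2(f)$ to bounded left multipliers of $F^2(H_n)$. Thus the bounded left multipliers of $\HH^2(f)$ are exactly $U^{-1}F_n^\infty U=H^\infty(\BB_f)$.

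For item (iii), given $\psi\in H^\infty(\BB_f)$ define $\varphi:=U\psi U^{-1}\in F_n^\infty$. From the Fourier representation theorem for $F_n^\infty$ (see \cite{Po-funct}, \cite{Po-analytic}), $\varphi$ has a unique Fourier series $\sum_{\alpha\in\FF_n^+}c_\alpha S_\alpha$ with $\varphi=\mbox{SOT-}\lim_{r\to 1}\sum_{k=0}^\infty\sum_{|\alpha|=k}c_\alpha r^{|\alpha|}S_\alpha$, the inner sum converging in operator norm for each $0\le r<1$. Applying $U^{-1}(\cdot)U$ and using $U^{-1}S_\alpha U = M_{f_\alpha}=[f(M_Z)]_\alpha$, I obtain exactly the formula
$$\psi=\mbox{SOT-}\lim_{r\to 1}\sum_{k=0}^\infty\sum_{|\alpha|=k}c_\alpha r^{|\alpha|}[f(M_Z)]_\alpha,$$
with the inner sum again converging in operator norm for $0\le r<1$ (a pure norm bound, independent of $f\in\cM^{||}$). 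Uniqueness of $\{c_\alpha\}$ follows from the uniqueness of the Fourier expansion in $F_n^\infty$.

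For item (ii), I would exploit Lemma \ref{ball}, which under $f\in\cM^{||}$ identifies $\BB_f^<(\cH)$ with $g([B(\cH)^n]_1)$ so that $f:\BB_f^<(\cH)\to[B(\cH)^n]_1$ is a bijection with inverse $g$. To each $\psi\in H^\infty(\BB_f)$ I associate, as in (iii), a symbol $\varphi\in F_n^\infty$; by \cite{Po-holomorphic} this $\varphi$ is the model boundary function of a unique $h\in H^\infty_{\bf ball}$ with $h(X)=\sum_\alpha c_\alpha X_\alpha$ for $X\in[B(\cH)^n]_1$. Then $h\circ f$ is a bounded free holomorphic function on $\BB_f^<(\cH)$ (indeed, for $Y\in\BB_f^<(\cH)$ we have $f(Y)\in[B(\cH)^n]_1$, and the composition is given by an absolutely convergent series; boundedness follows from $\|h\circ f\|_\infty\le\|h\|_\infty=\|\varphi\|=\|\psi\|$). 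Conversely, every $h\in H^\infty_{\bf ball}$ produces, via its boundary function $\widetilde h\in F_n^\infty$ and the inverse correspondence $\psi:=U^{-1}\widetilde h\,U$, an element of $H^\infty(\BB_f)$, and then $\psi$ and $h\circ f$ agree after a noncommutative Poisson-transform computation at each $Y=g(X)\in\BB_f^<(\cH)$. The hardest step I anticipate is justifying the evaluation identity $\psi(Y)=h(f(Y))$ for all $Y\in\BB_f^<(\cH)$: this requires applying a Poisson transform to the radial formula of (iii) and interchanging $\mbox{SOT-}\lim_{r\to 1}$ with the Poisson transform, which is legitimate because Poisson transforms are contractive and the radial series in (iii) converges in norm for each fixed $r<1$. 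Uniqueness of the representation $\psi=h\circ f$ follows from injectivity of $f$ on $\BB_f^<(\cH)$ (Lemma \ref{ball}) together with the uniqueness of free holomorphic representations.
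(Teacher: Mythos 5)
Your treatment of items (i) and (iii) is correct and follows the paper's own route: conjugation by the unitary $U(f_\alpha)=e_\alpha$ gives $H^\infty(\BB_f)=U^{-1}F_n^\infty U$ (the two WOT-closed algebras coincide because $M_{f_i}=f_i(M_{Z_1},\ldots,M_{Z_n})$ while $M_{Z_i}=g_i(M_{f_1},\ldots,M_{f_n})$ as a radial SOT-limit), after which the multiplier description and the unique Fourier representation are imported from the known theory of $F_n^\infty$ on $F^2(H_n)$.

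In item (ii), however, there is a genuine gap. The theorem asserts that $\HH^\infty(\BB^<_f)$, the algebra of \emph{all} bounded free holomorphic functions on $\BB_f^<(\cH)$, coincides with $\left\{\varphi\circ f:\ \varphi\in H^\infty_{\bf ball}\right\}$. You prove only one inclusion: starting from $\psi\in H^\infty(\BB_f)$, equivalently from $h\in H^\infty_{\bf ball}$, you produce the bounded free holomorphic function $h\circ f$ on $\BB_f^<(\cH)$. What is missing is the converse: given an arbitrary $G\in\HH^\infty(\BB^<_f)$, that is, a norm-convergent series $G(Y)=\sum_{k=0}^\infty\sum_{|\alpha|=k}c_\alpha[f(Y)]_\alpha$ with $\sup_{Y\in\BB_f^<(\cH)}\|G(Y)\|<\infty$, nothing in your argument shows that the coefficient sequence $\{c_\alpha\}$ defines an element of $H^\infty_{\bf ball}$; a priori the series $\sum_\alpha c_\alpha X_\alpha$ need not even converge on $[B(\cH)^n]_1$, let alone be uniformly bounded there. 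The paper closes this by evaluating $G$ at the points $Y=g(rS_1,\ldots,rS_n)\in\BB_f^<(\cH)$ for $0\le r<1$: since $[f(g(rS))]_\alpha=r^{|\alpha|}S_\alpha$, one obtains $\sup_{r\in[0,1)}\bigl\|\sum_{k}\sum_{|\alpha|=k}c_\alpha r^{|\alpha|}S_\alpha\bigr\|<\infty$, which by the characterization of $H^\infty_{\bf ball}$ in \cite{Po-holomorphic} forces $\varphi(X):=\sum_\alpha c_\alpha X_\alpha$ to lie in $H^\infty_{\bf ball}$ with $G=\varphi\circ f$. Without this step you identify $H^\infty(\BB_f)$ only with a possibly proper subalgebra of $\HH^\infty(\BB^<_f)$. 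Two smaller remarks: the uniqueness of the representation $G=\varphi\circ f$ rests on the surjectivity of $f:\BB_f^<(\cH)\to[B(\cH)^n]_1$ (that is, on $\BB_f^<(\cH)=g([B(\cH)^n]_1)$), not merely on injectivity of $f$; and the evaluation identity you single out as the hardest step is not actually an obligation here, since the identification in (ii) is simply the assignment $\psi\mapsto\varphi\circ f$ through the common Fourier coefficients, so no interchange of Poisson transforms with radial limits is needed once the two sets are shown to be equal.
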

 \begin{proof}
 According to the proof of Lemma \ref{M2}, $M_{f_j}=U^{-1} S_j U$,
 $j=1,\ldots, n$, where $S_1,\ldots, S_n$ are the left creation
 operators on the full Fock space $F^2(H_n)$, and $M_{Z_j}=U^{-1}
 \varphi_j(S_1,\ldots, S_n)U$, where $\varphi_j(S_1,\ldots, S_n)$ is
 in the noncommutative Hardy algebra $F_n^\infty$. We recall that
 $F_n^\infty$ is the WOT   closure of the
 noncommutative polynomials in $S_1,\ldots, S_n$ and the identity.
 Since $H^\infty(\BB_f)$ is the WOT-closure of all noncommutative
 polynomials in $M_{Z_1},\ldots, M_{Z_n}$ and the identity, we
 deduce that $H^\infty(\BB_f)\subseteq U^{-1} F_n^\infty U$. On the
 other hand, using again Lemma \ref{M2}, the creation operator $S_j$
 is in the WOT-closure of polynomials in $\varphi_1(S_1,\ldots,
 S_n),\ldots, \varphi_n(S_1,\ldots, S_n)$ and the identity.
 Consequently, we have $U^{-1} S_jU \in H^\infty(\BB_f)$, $j=1,\ldots, n$, which
 implies $ U^{-1} F_n^\infty U\subseteq H^\infty(\BB_f)$. Thus, we
 have proved that
 \begin{equation}
 \label{FH}
H^\infty(\BB_f)= U^{-1} F_n^\infty U.
\end{equation}
Taking into account that $U(\HH^2(f))=F^2(H_n)$ and that the algebra
of bounded left multipliers on $F^2(H_n)$ coincides with
$F_n^\infty$, we deduce item (i).

To prove (ii), we recall (see \cite{Po-holomorphic}) that if
$\varphi\in H^\infty_{\bf ball}$, then $\varphi(X)=\sum_{k=0}^\infty
\sum_{|\alpha=k} a_\alpha X_\alpha$, $X\in [B(\cH)^n]_1$, where the
convergence is in the operator norm topology. Moreover, $\sup_{X\in
[B(\cH)^n]_1} \|\varphi(X)\|<\infty$, and the model boundary
function $\widetilde \varphi:=\text{\rm SOT-}\lim_{r\to
1}\sum_{k=0}^\infty \sum_{|\alpha=k} a_\alpha  r^{|\alpha|}
S_\alpha$ exists  in $F_n^\infty$. Since, according to Lemma
\ref{ball}, $\BB_f^<(\cH)=g([B(\cH)^n]_1)$ and $f(g(X))=X$ for $X\in
[B(\cH)^n]_1$, the map $F:\BB_f^<(\cH)\to B(\cH)$  defined by
$$
 F(Y)=\sum_{k=0}^\infty\sum_{|\alpha|=k} c_\alpha   [f(Y)]_\alpha,\qquad Y\in \BB_f^<(\cH),
 $$
 is well-defined with the convergence  in the operator norm
 topology. Consequently, $F=\varphi\circ f$ is a bounded  free
 holomorphic function on $\BB_f^<(\cH)$. Now, let $G\in
 \HH^\infty(\BB^<_f)$. Then there are  coefficients  $c_\alpha\in \CC$ such that
$$
 G(Y)=\sum_{k=0}^\infty\sum_{|\alpha|=k} c_\alpha   [f(Y)]_\alpha,\qquad Y\in \BB_f^<(\cH),
 $$
 where  the convergence  is in the   norm
 topology and $\sup_{Y\in \BB_f^<(\cH)}\|G(Y)\|<\infty$.
 Taking $Y=g(rS_1,\ldots, rS_n)$, we deduce that
 $\sup_{r\in [0,1)}\left\|\sum_{k=0}^\infty\sum_{|\alpha|=k} c_\alpha
 r^{|\alpha|} S_\alpha\right\|<\infty$, which shows that the map
 $\varphi:[B(\cH)^n]_1\to B(\cH)$, defined by $\varphi(X)=\sum_{k=0}^\infty\sum_{|\alpha|=k}
 c_\alpha X_\alpha$ is in $H^\infty_{\bf ball}$, and $G=\varphi\circ
 f$. This shows that
 $$
\HH^\infty(\BB^<_f)= \left\{\varphi\circ f:\BB_f^<(\cH)\to B(\cH): \
\varphi\in H^\infty_{\bf
     ball}\right\}.
$$
Hence, using  relation \eqref{FH}  and the fact that  $F_n^\infty$
can be identified with $H^\infty_{\bf ball}$, we deduce item (ii).

To prove part (iii), let $\psi\in H^\infty(\BB_f)$. Due  to relation
\eqref{FH}, the operator $U\psi U^{-1}$ is in the Hardy algebra
$F_n^\infty$ and, therefore,    there are coefficients $c_\alpha\in
\CC$ such that
$$U\psi U^{-1}=\text{\rm SOT-}\lim_{r\to 1}\sum_{k=0}^\infty
\sum_{|\alpha=k} a_\alpha  r^{|\alpha|} S_\alpha,
$$
where the convergence of the series is in norm.  Since $f$ has the
model property, we have  $M_{f_j}=f_j(M_{Z_1},\ldots, M_{Z_n})$,
$j=1,\ldots,n$.  Using now relation $M_{f_j}=U^{-1} S_j U$, we
deduce that item (iii) holds. The proof is complete.
\end{proof}

\begin{theorem}\label{eigenvalues}
Let $f=(f_1,\ldots, f_n)$ be  an  $n$-tuple of formal power series
with the model
 property and  let $M_Z:=(M_{Z_1},\ldots, M_{Z_n})$ be
the universal model associated  with the noncommutative domain
$\BB_f$. The eigenvectors for $M_{Z_1}^*,\ldots, M_{Z_n}^*$ are
precisely the  noncommutative Poisson kernels
$$
\Gamma_\lambda:= \left(1-\sum_{i=1}^n
|f_i(\lambda)|^2\right)^{1/2}\sum_{\alpha\in \FF_n}
[\overline{f(\lambda)}]_\alpha\
  f_\alpha,\qquad \lambda=(\lambda_1,\ldots, \lambda_n)\in \BB_{f}^<(\CC).
$$
They satisfy the equations
$$
M_{Z_i}^* \Gamma_\lambda=\overline{\lambda}_i \Gamma_\lambda,\qquad i=1,\ldots, n.
$$
If $\lambda\in \BB_{f}^<(\CC)$ and $\varphi(M_{Z})$ is in $
H^\infty(\BB_f)$, then
  the
map
$$\Phi_\lambda:H^\infty(\BB_f)\to \CC,\qquad   \Phi_\lambda(\varphi(M_{Z})):=\varphi(\lambda),
$$ is WOT-continuous and multiplicative and
$\varphi(\lambda)=\left< \varphi(M_{Z}) \Gamma_\lambda,
\Gamma_\lambda\right>$. Moreover, $
\varphi(M_{Z})^*\Gamma_\lambda=\overline{\varphi(\lambda)}
\Gamma_\lambda$ and $\lambda\mapsto
\varphi(\lambda)$ is a bounded holomorphic function on
$\BB_{f}^<(\CC)\subset \CC^n$.

\end{theorem}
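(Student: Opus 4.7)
The plan is to exploit the unitary equivalence $U:\HH^2(f)\to F^2(H_n)$ defined by $U(f_\alpha)=e_\alpha$, under which, by Lemma~\ref{M2} and its proof, $UM_{f_j}U^{-1}=S_j$ and $UM_{Z_i}U^{-1}=\widetilde{g}_i(S_1,\ldots,S_n)\in F_n^\infty$. This reduces the eigenvalue problem for $M_{Z_1}^*,\ldots,M_{Z_n}^*$ to a parallel problem on $F^2(H_n)$, where it is classical (from the author's earlier papers on the noncommutative Hardy algebra, e.g.~\cite{Po-analytic}, \cite{ArPo}) that the common eigenvectors of $S_1^*,\ldots,S_n^*$ are exactly the Cauchy kernels $z_\mu:=\sum_{\alpha\in\FF_n^+}\overline{\mu}_\alpha e_\alpha$ with $\mu\in\BB_n$, and that for any $\psi\in F_n^\infty$ one has $\psi(S)^*z_\mu=\overline{\psi(\mu)}\,z_\mu$.

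For the forward direction, given $\lambda\in\BB_f^<(\CC)$ we set $\mu:=f(\lambda)\in\BB_n$. A direct computation using the geometric sum $\sum_{\alpha\in\FF_n^+}|[f(\lambda)]_\alpha|^2=(1-\|f(\lambda)\|^2)^{-1}$ shows that $U\Gamma_\lambda=(1-\|f(\lambda)\|^2)^{1/2}z_{f(\lambda)}$ is a unit vector. Applying the classical identity to $\psi=\widetilde{g}_i$ gives $\widetilde{g}_i(S)^*(U\Gamma_\lambda)=\overline{g_i(f(\lambda))}\,U\Gamma_\lambda=\overline{\lambda_i}\,U\Gamma_\lambda$, which transfers back to $M_{Z_i}^*\Gamma_\lambda=\overline{\lambda_i}\Gamma_\lambda$.

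For the converse, suppose $M_{Z_i}^*\xi=\overline{\lambda_i}\xi$ for all $i$ with $\xi\neq0$, and set $\xi':=U\xi$. Then $\widetilde{g}_i(S)^*\xi'=\overline{\lambda_i}\xi'$, hence for any noncommutative polynomial $p$ one obtains $p(\widetilde{g}(S))^*\xi'=\overline{p(\lambda)}\,\xi'$. Using condition $(\mathcal{S}_3)$ (or its $(\mathcal{F})$-analogue), either $f_j(\widetilde{g}(S))=S_j$ as an SOT-convergent series or $S_j=\text{SOT-}\lim_{r\to 1}f_j(r\widetilde{g}(S))$; approximating by partial sums $p_m^{(j)}$ of $f_j$ and taking weak limits of the identity $p_m^{(j)}(\widetilde{g}(S))^*\xi'=\overline{p_m^{(j)}(\lambda)}\,\xi'$ (which is legitimate because $A\mapsto A^*\xi'$ is WOT-to-weak continuous), we deduce $S_j^*\xi'=\overline{f_j(\lambda)}\,\xi'$. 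Thus $\xi'$ is a common eigenvector of $S_1^*,\ldots,S_n^*$, so $\xi'$ is a scalar multiple of $z_{f(\lambda)}$ with $f(\lambda)\in\BB_n$; then $g(f(\lambda))=\lambda$, so $\lambda\in\BB_f^<(\CC)$ and $\xi$ is proportional to $\Gamma_\lambda$. The main obstacle here is precisely this SOT/WOT limiting step in the functional calculus, which must be carried out separately in the norm-convergent and radial cases.

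For the remaining assertions, by Theorem~\ref{algebra}(iii) every $\varphi(M_Z)\in H^\infty(\BB_f)$ corresponds via $U$ to some $\psi(S)\in F_n^\infty$; applying $\psi(S)^*z_{f(\lambda)}=\overline{\psi(f(\lambda))}\,z_{f(\lambda)}$ and transferring back yields $\varphi(M_Z)^*\Gamma_\lambda=\overline{\varphi(\lambda)}\,\Gamma_\lambda$, where $\varphi(\lambda)$ is identified with $\psi(f(\lambda))$. Since $\|\Gamma_\lambda\|=1$, this gives the reproducing formula $\varphi(\lambda)=\langle\varphi(M_Z)\Gamma_\lambda,\Gamma_\lambda\rangle$, from which WOT-continuity of $\Phi_\lambda$ is immediate as a vector-state restriction, and multiplicativity follows by $\langle\varphi(M_Z)\psi(M_Z)\Gamma_\lambda,\Gamma_\lambda\rangle=\langle\psi(M_Z)\Gamma_\lambda,\varphi(M_Z)^*\Gamma_\lambda\rangle=\varphi(\lambda)\psi(\lambda)$. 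Boundedness $|\varphi(\lambda)|\le\|\varphi(M_Z)\|$ is then obvious, and holomorphy of $\lambda\mapsto\varphi(\lambda)=\psi(f(\lambda))$ on $\BB_f^<(\CC)$ follows from composing the bounded holomorphic function $\psi$ on $\BB_n$ with the holomorphic map $f$.
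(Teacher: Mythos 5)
Your proof is correct, and in substance it runs parallel to the paper's: both hinge on the same two mechanisms, namely the unitary $U:\HH^2(f)\to F^2(H_n)$ from Lemma~\ref{M2} identifying $M_{f_j}$ with $S_j$ and $M_{Z_i}$ with $\widetilde{g}_i(S)\in F_n^\infty$, and the $(\cS_3)$-approximation that lets one pass eigenvalue equations between the $M_{Z_i}^*$ and the $M_{f_i}^*$ (your weak-limit step $p_m^{(j)}(\widetilde g(S))^*\xi'=\overline{p_m^{(j)}(\lambda)}\xi'\to S_j^*\xi'=\overline{f_j(\lambda)}\xi'$ is exactly the paper's computation of $f_i(M_Z)^*\xi=\overline{f_i(\lambda)}\xi$, conjugated by $U$). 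Where you diverge is in packaging: the paper's forward direction identifies $\Gamma_\lambda$ with the Poisson kernel $K_{f,\lambda}$ and quotes the intertwining $K_{f,T}T_i^*=(M_{Z_i}^*\otimes I)K_{f,T}$ from Theorem~\ref{model}, and its converse is self-contained, extracting the coefficients $c_\beta=c_0\overline{[f(\lambda)]_\beta}$ by pairing $\xi$ against $M_{f_\beta}1$ and reading off $\|f(\lambda)\|<1$ from square-summability; you instead transport everything to $F^2(H_n)$ and cite the classical facts that the joint eigenvectors of $S_1^*,\ldots,S_n^*$ are exactly the kernels $z_\mu$, $\mu\in\BB_n$, and that $\psi(S)^*z_\mu=\overline{\psi(\mu)}z_\mu$ for $\psi\in F_n^\infty$. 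That is a legitimate shortcut (the paper itself outsources closely related facts to the same references), at the cost of re-proving nothing intrinsically in $\HH^2(f)$. Two small points to tighten: in the converse, after identifying $\xi'=cz_{f(\lambda)}$ you should say explicitly that comparing $\widetilde g_i(S)^*z_{f(\lambda)}=\overline{g_i(f(\lambda))}z_{f(\lambda)}$ with the hypothesis $\widetilde g_i(S)^*\xi'=\overline{\lambda}_i\xi'$ is what yields $g(f(\lambda))=\lambda$; and the holomorphy of $\lambda\mapsto f(\lambda)$ on $\BB_f^<(\CC)$ is not automatic from the power series (whose radius of convergence need not cover $\BB_f^<(\CC)$) but comes from Lemma~\ref{ball}(i), which exhibits $f|_{\BB_f^<(\CC)}$ as the inverse of the holomorphic bijection $g|_{\BB_n}$ --- this is the justification the paper supplies and that your last paragraph glosses over.
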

\begin{proof} Assume that $\lambda=(\lambda_1,\ldots, \lambda_n)\in \BB_{f}^<(\CC)$. According to
Theorem \ref{model},
 the  noncommutative Poisson kernel  associated with the noncommutative domain $\BB_f$  at $\lambda$,
 which is a pure element, is the operator
 $K_{f,\lambda}:\CC\to \HH^2(f)\otimes \CC$ defined by
 $$
 K_{f,\lambda}(z)=\sum_{\alpha\in \FF_n^+} f_\alpha\otimes \left(1-\sum_{i=1}^n
|f_i(\lambda)|^2\right)^{1/2}[\overline{f(\lambda)}]_\alpha z,\qquad
z\in \CC,
$$
which satisfies  the equation $(M_{Z_i}^*\otimes
I_\CC)K_{f,\lambda}=K_{f,\lambda} (\overline{\lambda}_i I_\CC)$\,
for $i=1,\ldots, n$. Under the natural identification of
$\HH^2(f)\otimes \CC$ with $\HH^2(f)$, we deduce that
$\Gamma_\lambda=K_{f,\lambda}$ and
$$
M_{Z_i}^* \Gamma_\lambda=\overline{\lambda}_i \Gamma_\lambda,\qquad
i=1,\ldots, n.
$$

Conversely, let  $\xi:=\sum_{\beta\in \FF_n^+} c_\beta f_\beta(Z)$
be a formal power series in $\HH^2(f)$ such that $\xi\neq 0$  and
assume that $ M_{Z_i}^* \xi=\overline{\lambda}_i \xi$, $ i=1,\ldots,
n$, for some $\lambda=(\lambda_1,\ldots, \lambda_n)\in \CC^n$. Let
$f_i$ have the representation $f_i=\sum_{\alpha\in \FF_n^+}
a_\alpha^{(i)} Z_\alpha$. Since $f=(f_1,\ldots, f_n)$ has the model
property, we have $M_{f_i}=f_i(M_{Z_1},\ldots,
M_{Z_n})=\sum_{k=1}^\infty\sum_{|\alpha|=k}
a_\alpha^{(i)}M_{Z_\alpha}$, where $(M_{Z_1},\ldots, M_{Z_n})$ is
either in the convergence set $\cC_f^{SOT}(\HH^2(f))$ or
$\cC_f^{rad}(\HH^2(f))$. We shall consider just one case since the
other can be treated similarly. For example, assume that
$(M_{Z_1},\ldots, M_{Z_n})\in \cC_f^{SOT}(\HH^2(f))$ and let
$\eta\in \HH^2(f)$. Then we have
\begin{equation*}
\begin{split}
\left<f_i(M_{Z_1},\ldots, M_{Z_n})^*\xi,
\eta\right>&=\lim_{m\to\infty}\left<\xi,
\sum_{k=1}^m\sum_{|\alpha|=k} a_\alpha^{(i)}M_{Z_\alpha}\eta\right>
= \lim_{m\to\infty}\left<\sum_{k=1}^m\sum_{|\alpha|=k}
\overline{a_\alpha^{(i)}}M_{Z_\alpha}^*\xi,
\eta\right>\\
&= \lim_{m\to\infty}\left<\sum_{k=1}^m\sum_{|\alpha|=k}
\overline{a_\alpha^{(i)}}\overline{\lambda}_\alpha \xi, \eta\right>
= \lim_{m\to\infty}\sum_{k=1}^m\sum_{|\alpha|=k}
\overline{a_\alpha^{(i)}}\overline{\lambda}_\alpha\left< \xi,
\eta\right>\\
&=\left<\overline{f_i(\lambda )}\xi, \eta\right>,
\end{split}
\end{equation*}
which shows that
\begin{equation}\label{xi} f_i(M_{Z_1},\ldots,
M_{Z_n})^*\xi= \overline{f_i(\lambda )}\xi,\qquad i=1,\ldots,n.
\end{equation}
Hence, and using the fact that $M_{f_i}=f_i(M_{Z_1},\ldots,
M_{Z_n})$, $i=1,\ldots,n$, we deduce that
\begin{equation*}
\begin{split}
c_\beta&=\left<\xi, M_{f_\beta} 1\right>= \left<\xi,
[f(M_{Z_1},\ldots, M_{Z_n})]_\beta 1\right>\\
&=\left<[f(M_{Z_1},\ldots, M_{Z_n})]_\beta^*\xi, 1\right>
=\overline{[f(\lambda )]}_\beta\left<\xi,
1\right>\\
&=c_0 \overline{[f(\lambda )]}_\beta
\end{split}
\end{equation*}
for any $\beta\in \FF_n^+$. Therefore, we have
$$
\xi=c_0\sum_{\beta\in \FF_n^+}   \overline{[f(\lambda )]}_\beta\  f_\beta.
$$
Since $\xi\in \HH^2(f)$, we must have
$$
\sum_{k=0}^\infty (|f_1(\lambda)|^2+\cdots +|f_n(\lambda)|^2)^k =
\sum_{\beta\in \FF_n^+} |[f(\lambda )]_\beta|^2<\infty.
$$
Hence, we deduce that $|f_1(\lambda)|^2+\cdots +|f_n(\lambda)|^2<1$.

Now, due to relation \eqref{xi} and using again that
$M_{f_i}=f_i(M_{Z_1},\ldots, M_{Z_n})$,   $i=1,\ldots,n$, we deduce
that $M_{f_i}^* \xi=\overline{f_i(\lambda)}\xi$. On the other,
according to the proof of Theorem \ref{model} (see relation
\eqref{gi}), we have
$$M_{Z_i}=g_i(M_{f_1},\ldots, M_{f_n})=\text{\rm SOT-}\lim_{r\to
1}g_i(rM_{f_1},\ldots, rM_{f_n}).
$$
As above, one can show that $g_i(M_{f_1},\ldots,
M_{f_n})^*\xi=\overline{g_i(f(\lambda))} \xi$ for   $i=1,\ldots,n$.
Combining this relation with  the fact that $ M_{Z_i}^*
\xi=\overline{\lambda}_i \xi$, $ i=1,\ldots, n$, we conclude that
$\lambda=g(f(\lambda))$. Therefore, $\lambda\in \BB_{f}^<(\CC)$.

According to Theorem \ref{algebra}, part (iii), we have
$\varphi(M_Z)=\text{\rm SOT-}\lim_{r\to
1}\sum_{k=0}^\infty\sum_{|\alpha|=k} c_\alpha r^{|\alpha|}
 [f(M_Z)]_\alpha
 $
 for some coefficients $ c_\alpha\in \CC$. Using relation \eqref{xi}, we deduce
 that
 \begin{equation*}
 \begin{split}
\left< \varphi(M_{Z}) \Gamma_\lambda, \Gamma_\lambda\right> &=
\text{\rm SOT-}\lim_{r\to 1}\sum_{k=0}^\infty\sum_{|\alpha|=k}
c_\alpha r^{|\alpha|}\left< \Gamma_\lambda, [f(M_Z)]_\alpha^*
\Gamma_\lambda\right>\\
&=\text{\rm SOT-}\lim_{r\to 1}\sum_{k=0}^\infty\sum_{|\alpha|=k}
c_\alpha r^{|\alpha|}\left< \Gamma_\lambda,
\overline{[f(\lambda)]}_\alpha
\Gamma_\lambda\right>\\
  &=  \text{\rm SOT-}\lim_{r\to 1}\|\Gamma_\lambda\|^2\,\sum_{k=0}^\infty\sum_{|\alpha|=k}
c_\alpha r^{|\alpha|}[f(\lambda)]_\alpha=\varphi(\lambda).
 \end{split}
 \end{equation*}
Similarly, one can show that $
\varphi(M_{Z})^*\Gamma_\lambda=\overline{\varphi(\lambda)}
\Gamma_\lambda.$
According to Lemma \ref{ball} part (i), the mapping
$f|_{\BB_f^<(\CC)}:\BB_f^<(\CC)\to \BB_n$ is the inverse of
$g|_{\BB_n}:\BB_n\to \BB_f^<(\CC)$. Since $g$ is a bounded free
holomorphic function on $[B(\cH)^n]_1$, the map $\BB_n\ni
\lambda\mapsto g(\lambda)\in \BB_f^<(\CC)$  is holomorphic on
$\BB_n$ and its inverse $\BB_f^<(\CC)\ni \lambda\mapsto
f(\lambda)\in \BB_n$ is also holomorphic. On the other hand,
according  to Theorem \ref{algebra}, part (iii),  there is $\psi\in H^\infty_{\bf
ball}$ such that $\varphi(\lambda)=\psi(f(\lambda))$ for $\lambda\in
\BB_f^<(\CC)$. Hence, we deduce that $\lambda\mapsto
\varphi(\lambda)$ is a bounded holomorphic function on
$\BB_f^<(\CC)$. This completes the proof.
\end{proof}

 Theorem \ref{eigenvalues} can be used to prove the following result. Since the
  proof   is similar to  the corresponding result from \cite{DP1},
       we shall omit it.
\begin{corollary}\label{w*-funct} A map $\Phi: H^\infty(\BB_f)\to
\CC$ is a WOT-continuous multiplicative linear functional  if and
only if there exists $\lambda\in \BB_{f}^<(\CC)$  such that
$$
\Phi(A)=\Phi_\lambda(A):=\left<A\Gamma_\lambda,
\Gamma_\lambda\right>,\qquad A\in H^\infty(\BB_f),
$$
where $\Gamma_\lambda$ is the  noncommutative Poisson kernel  associated with the
 domain $\BB_f$ at $\lambda$.
\end{corollary}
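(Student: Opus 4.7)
The plan is to reduce the corollary to the characterization of WOT-continuous multiplicative linear functionals on the noncommutative analytic Toeplitz algebra $F_n^\infty$ due to Davidson–Pitts \cite{DP1}, exploiting the spatial algebra isomorphism $U H^\infty(\BB_f) U^{-1} = F_n^\infty$ established in the proof of Theorem \ref{algebra}, where $U:\HH^2(f)\to F^2(H_n)$ is the unitary sending $f_\alpha$ to $e_\alpha$.

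First, the ``if'' direction is essentially done. Given $\lambda\in \BB_f^<(\CC)$, Theorem \ref{eigenvalues} yields $\Phi_\lambda(\varphi(M_Z))=\varphi(\lambda)= \langle \varphi(M_Z)\Gamma_\lambda,\Gamma_\lambda\rangle$ on polynomials, shows $\Phi_\lambda$ is multiplicative, and exhibits it as a vector functional, hence WOT-continuous on all of $H^\infty(\BB_f)$.

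For the converse, suppose $\Phi:H^\infty(\BB_f)\to\CC$ is a WOT-continuous multiplicative linear functional. Since conjugation by the unitary $U$ preserves WOT convergence, the transported map $\widetilde\Phi(A):=\Phi(U^{-1}AU)$ is a WOT-continuous character on $F_n^\infty$. By the Davidson–Pitts classification \cite{DP1}, there exists a unique $\mu=(\mu_1,\ldots,\mu_n)\in \BB_n$ such that $\widetilde\Phi(p(S_1,\ldots,S_n))=p(\mu)$ for every noncommutative polynomial $p$, and $\widetilde\Phi$ coincides with the vector state at the eigenvector of $S_1^*,\ldots,S_n^*$ at $\mu$. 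Set
$$\lambda:=g(\mu),$$
where $g=(g_1,\ldots,g_n)$ is the inverse of $f$; by Lemma \ref{ball}(i), $\lambda\in \BB_f^<(\CC)$.

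To finish, I verify $\Phi=\Phi_\lambda$. By Lemma \ref{M2}, $M_{Z_i}=U^{-1}\widetilde{g}_i U$ where $\widetilde{g}_i\in F_n^\infty$ is the model boundary function of $g_i\in H^\infty_{\bf ball}$; hence
$$\Phi(M_{Z_i})=\widetilde\Phi(\widetilde{g}_i)=g_i(\mu)=\lambda_i,\qquad i=1,\ldots,n.$$
Since both $\Phi$ and $\Phi_\lambda$ are multiplicative and linear, they agree on the unital algebra $\cP(M_{Z_1},\ldots,M_{Z_n})$. As this polynomial algebra is WOT-dense in $H^\infty(\BB_f)$ by definition, and both functionals are WOT-continuous, they coincide on all of $H^\infty(\BB_f)$. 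The main subtlety is checking that WOT-continuity genuinely transfers through the spatial isomorphism and that the Davidson–Pitts theorem applies in the form needed (i.e., that WOT-continuous characters on $F_n^\infty$ are exactly the evaluations at points of $\BB_n$); once these are in hand, the remaining work is bookkeeping through the inverse map $g$ and Lemma \ref{M2}.
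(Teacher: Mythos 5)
Your proof is correct, and it is essentially the route the paper intends: the paper omits the argument, saying only that it is ``similar to the corresponding result from \cite{DP1}'' and that Theorem \ref{eigenvalues} is the key input. The one presentational difference is that the paper's hint suggests adapting the Davidson--Pitts argument inside $H^\infty(\BB_f)$ directly (a WOT-continuous character determines $\lambda_i:=\Phi(M_{Z_i})$, and Theorem \ref{eigenvalues} identifies the resulting eigenvector of $M_{Z_1}^*,\ldots,M_{Z_n}^*$ with some $\Gamma_\lambda$, $\lambda\in\BB_f^<(\CC)$), whereas you transport the character through the spatial identification $UH^\infty(\BB_f)U^{-1}=F_n^\infty$ from the proof of Theorem \ref{algebra} and invoke the Davidson--Pitts classification as a black box, returning via $\lambda=g(\mu)$ and Lemma \ref{ball}(i); both routes rest on the same established facts and your version has the minor advantage of not reproving anything.
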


Assume that $f=(f_1,\ldots, f_n)$ is an $n$-tuple of formal power series with the model property.
Using Theorem \ref{algebra},
 one can prove that  $J$  is a WOT-closed two-sided ideal of
$H^\infty(\BB_f)$  if and only if there is a WOT-closed two-sided
ideal $\cI$ of $F_n^\infty$ such that
$$
J=\{\varphi(f(M_Z)): \  \varphi\in \cI\}.
$$
We mention that if $\varphi(S_1,\ldots,S_n)\in F_n^\infty$ has the
Fourier representation $\varphi(S_1,\ldots,S_n)=\sum_{\alpha\in
\FF_n^+} c_\alpha S_\alpha$, then $$\varphi(f(M_Z))=\text{\rm
SOT-}\lim_{r\to 1}\sum_{k=0}^\infty \sum_{|\alpha|=k} c_\alpha
r^{|\alpha|} [f(M_Z)]_\alpha
$$
exists.
Denote by $H^\infty(\cV_{f,J})$   the WOT-closed algebra generated
 by the operators
 $B_i:=P_{\cN_J} M_{Z_i} |\cN_J$,  for  $i=1,\ldots, n$, and the identity, where
 $$
 \cN_J:= \HH^2(f)\ominus \cM_J\quad \text{ and }\quad
 \cM_J:=\overline{ J \HH^2(f)}.
 $$
The following result is a consequence   of  Theorem 4.1 from
\cite{ArPo2} and the above-mentioned  remarks.

\begin{theorem}
\label{F/J} Let $J$ be a WOT-closed  two-sided ideal of the Hardy
algebra $H^\infty(\BB_f)$. Then  the map
$$\Gamma:H^\infty(\BB_f)/J\to
B(\cN_J) \quad \text{ defined by } \quad \Gamma(\varphi+J)=P_{\cN_J}
\varphi|_{\cN_J}
$$
is a   completely isometric representation.
\end{theorem}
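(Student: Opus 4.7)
The plan is to reduce the statement to the corresponding theorem for the noncommutative analytic Toeplitz algebra $F_n^\infty$ via the unitary identification set up in Section 5. Recall from the proof of Theorem \ref{algebra} (relation \eqref{FH}) that the unitary $U:\HH^2(f)\to F^2(H_n)$ defined by $U f_\alpha:=e_\alpha$, $\alpha\in \FF_n^+$, implements the identity
$$H^\infty(\BB_f)=U^{-1}F_n^\infty U,$$
and hence the map $\varphi\mapsto U\varphi U^{-1}$ is a completely isometric, WOT-bicontinuous algebra isomorphism between $H^\infty(\BB_f)$ and $F_n^\infty$.

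First I would transfer the ideal structure. As indicated in the remark preceding the theorem (itself a consequence of Theorem \ref{algebra}), WOT-closed two-sided ideals correspond bijectively: given $J\subseteq H^\infty(\BB_f)$, the set $\cI:=UJU^{-1}$ is a WOT-closed two-sided ideal of $F_n^\infty$, and conversely. Since $U$ is unitary and intertwines the module actions on $\HH^2(f)$ and $F^2(H_n)$, the associated subspaces match:
$$U\cM_J=U\,\overline{J\HH^2(f)}=\overline{\cI F^2(H_n)}=:\cM_\cI,\qquad U\cN_J=F^2(H_n)\ominus\cM_\cI=:\cN_\cI.$$
In particular, $W:=U|_{\cN_J}:\cN_J\to\cN_\cI$ is a unitary operator.

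Next I would invoke Theorem 4.1 of \cite{ArPo2}, which establishes exactly the desired conclusion at the level of $F_n^\infty$: the canonical map
$$\tilde\Gamma:F_n^\infty/\cI\to B(\cN_\cI),\qquad \tilde\Gamma(\psi+\cI):=P_{\cN_\cI}\psi|_{\cN_\cI},$$
is a completely isometric representation. Finally, for any $\varphi\in H^\infty(\BB_f)$, setting $\psi:=U\varphi U^{-1}\in F_n^\infty$, the intertwining relations $U\cM_J=\cM_\cI$ and $U\cN_J=\cN_\cI$ give
$$W\,\Gamma(\varphi+J)\,W^{*}=W\bigl(P_{\cN_J}\varphi|_{\cN_J}\bigr)W^{*}=P_{\cN_\cI}\psi|_{\cN_\cI}=\tilde\Gamma(\psi+\cI),$$
and the same identity with matrix coefficients (applied to $M_k(H^\infty(\BB_f))/M_k(J)$ and $M_k(F_n^\infty)/M_k(\cI)$, using $W\otimes I_{\CC^k}$) shows that $\Gamma$ is completely isometric.

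The main obstacle is essentially bookkeeping: verifying that the unitary $U$ simultaneously identifies the algebras, the ideals, and the compression subspaces. All three identifications follow directly from the formula $U f_\alpha=e_\alpha$ together with relation \eqref{FH}, so I do not anticipate any real difficulty beyond carefully checking that WOT-closure, two-sided ideal generation, and quotient norms are preserved under the completely isometric isomorphism $\varphi\mapsto U\varphi U^{-1}$. The substantive content of the theorem is absorbed into Theorem 4.1 of \cite{ArPo2}.
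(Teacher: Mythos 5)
Your proposal is correct and follows exactly the route the paper intends: the paper simply states that the theorem "is a consequence of Theorem 4.1 from \cite{ArPo2} and the above-mentioned remarks," i.e.\ the identification $H^\infty(\BB_f)=U^{-1}F_n^\infty U$ from relation \eqref{FH} together with the correspondence of WOT-closed ideals. Your write-up supplies the bookkeeping (matching of $\cM_J$, $\cN_J$ with their $F_n^\infty$ counterparts and conjugation by $U|_{\cN_J}$) that the paper leaves implicit.
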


Since the set of all polynomials in $M_{Z_1},\ldots, M_{Z_n}$  and
the identity is WOT-dense in $H^\infty(\BB_f)$, Theorem \ref{F/J}
implies that $P_{\cN_J}H^\infty(\BB_f)|_{\cN_J}$ is  a WOT-closed
subalgebra  of $B(\cN_J)$ and, moreover,
$H^\infty(\cV_{f,J})=P_{\cN_J}H^\infty(\BB_f)|_{\cN_J}$.

We need a few more definitions. For each $\lambda=(\lambda_1,\ldots,
\lambda_n)$ and each $n$-tuple ${\bf k}:=(k_1,\ldots, k_n)\in
\NN_0^n$, where $\NN_0:=\{0,1,\ldots \}$, let $\lambda^{\bf
k}:=\lambda_1^{k_1}\cdots \lambda_n^{k_n}$. If ${\bf k}\in \NN_0$,
we denote
$$
\Lambda_{\bf k}:=\{\alpha\in \FF_n^+: \ \lambda_\alpha =\lambda^{\bf
k} \text{ for all } \lambda\in \CC^n\}.
$$
For each ${\bf k}\in \NN_0^n$, define the formal power series
$$
\omega^{(\bf k)}:=\frac{1}{\gamma_{\bf k}} \sum_{\alpha \in
\Lambda_{\bf k} } f_\alpha\in \HH^2(f), \quad  \text{ where } \
\gamma_{\bf k}:=\text{\rm card}\,  \Lambda_{\bf
k}=\left(\begin{matrix} |{\bf k}|!\\
k_1!\cdots k_n!\end{matrix}\right).
$$
   Note that the set  $\{\omega^{(\bf k)}:\ {\bf
k}\in \NN_0^n\}$ consists  of orthogonal power series in $\HH^2(f)$
and
$\|\omega^{(\bf k)}\|=\frac{1}{\sqrt{\gamma_{\bf k}}}$. We denote by
$\HH_s^2(f)$ the closed span of these formal power series, and call
it the symmetric Hardy space associated with the noncommutative
domain $\BB_f$.

\begin{theorem}\label{symm-Hardy} Let $f=(f_1,\ldots, f_n)$ be  an  $n$-tuple of formal power series with
 the model property  and  let $(M_{Z_1},\ldots, M_{Z_n})$ be
the universal model associated  with the noncommutative domain
$\BB_f$.  Let $J_c$ be the WOT-closed two-sided ideal of the Hardy
algebra $H^\infty(\BB_f)$ generated by the commutators
$$ M_{Z_i}M_{Z_j}-M_{Z_j}M_{Z_i},\qquad i,j=1,\ldots, n.
$$
 Then the following statements hold.
 \begin{enumerate}
 \item[(i)]
 $
 \HH_s^2(f)=\overline{\text{\rm span}}\{\Gamma_\lambda: \ \lambda\in
 \BB_{f}^<(\CC)\}=\cN_{J_c}:=\HH^2(f)\ominus \overline{J_c(1)}. $
\item[(ii)]  The symmetric Hardy  space $\HH_s^2(f)$ can be
identified with the Hilbert space $H^2(\BB_{f}^<(\CC))$ of all
holomorphic functions $\psi:\BB_{f}^<(\CC)\to \CC$ which admit a
series representation $\psi(\lambda)=\sum_{{\bf k}\in \NN_0} c_{\bf
k} f(\lambda)^{\bf k}$ with
$$
\|\psi\|_2=\sum_{{\bf k}\in \NN_0}|c_{\bf
k}|^2\frac{1}{\gamma_{\bf k}}<\infty.
$$
More precisely, every  element  $\psi=\sum_{{\bf k}\in \NN_0} c_{\bf
k} \omega^{(\bf k)}$ in $\HH_s^2(f)$  has a functional
representation on $\BB_{f}^<(\CC)$ given by
$$
\psi(\lambda):=\left<\psi, \Omega_\lambda\right>=\sum_{{\bf k}\in
\NN_0} c_{\bf k} f(\lambda)^{\bf k}, \qquad
\lambda=(\lambda_1,\ldots, \lambda_n)\in \BB_{f}^<(\CC),
$$
where $\Omega_\lambda:=\frac{1}{\sqrt{1- \sum_{ i=1}^n
 |f_i(\lambda) |^2}} \Gamma_\lambda $ and
$$
|\psi(\lambda)|\leq \frac{\|\psi\|_2}{\sqrt{1-\sum_{ i=1}^n
 |f_i(\lambda) |^2}},\qquad \lambda=(\lambda_1,\ldots,
\lambda_n)\in \BB_{f}^<(\CC).
$$
\item[(iii)]
 The mapping $\Lambda_f:\BB_{f}^<(\CC)\times
\BB_{f}^<(\CC)\to \CC$ defined by
$$
\Lambda_f(\mu,\lambda):= \left<\Omega_\lambda,
\Omega_\mu\right>=\frac{1}{1-\sum_{ i=1}^n   f_i(\mu)
\overline{f_i(\lambda)}},\qquad \lambda,\mu\in \BB_{f}^<(\CC),
$$
is positive definite.
\end{enumerate}
\end{theorem}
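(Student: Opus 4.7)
The plan is to establish (i) via the circular chain
\[
\overline{\text{\rm span}}\{\Gamma_\lambda:\lambda\in\BB_f^<(\CC)\} \subseteq \cN_{J_c} \subseteq \HH_s^2(f) \subseteq \overline{\text{\rm span}}\{\Gamma_\lambda:\lambda\in\BB_f^<(\CC)\},
\]
and then to deduce (ii) and (iii) by explicit computation. The foundational observation to make at the outset is the symmetrization identity: since $[f(\lambda)]_\alpha=f(\lambda)^{\mathbf{k}}$ whenever $\alpha\in\Lambda_{\mathbf{k}}$, the series defining $\Gamma_\lambda$ regroups as $\Gamma_\lambda = c_\lambda\sum_{\mathbf{k}\in\NN_0^n}\overline{f(\lambda)^{\mathbf{k}}}\,\gamma_{\mathbf{k}}\,\omega^{(\mathbf{k})}$, where $c_\lambda:=(1-\sum_i|f_i(\lambda)|^2)^{1/2}$; in particular $\Gamma_\lambda\in\HH_s^2(f)$, and $\langle\omega^{(\mathbf{k})},\Gamma_\lambda\rangle = c_\lambda f(\lambda)^{\mathbf{k}}$.

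For the inclusion $\overline{\text{\rm span}}\{\Gamma_\lambda\}\subseteq\cN_{J_c}$, I would apply Theorem \ref{eigenvalues}, which gives $B^*\Gamma_\lambda=\overline{B(\lambda)}\Gamma_\lambda$ for every $B\in H^\infty(\BB_f)$: for a typical generator $A=B(M_{Z_i}M_{Z_j}-M_{Z_j}M_{Z_i})C$ of $J_c$ with $B,C\in H^\infty(\BB_f)$, one obtains $A^*\Gamma_\lambda = C^*(M_{Z_j}^*M_{Z_i}^*-M_{Z_i}^*M_{Z_j}^*)B^*\Gamma_\lambda=0$ because $M_{Z_i}^*$ and $M_{Z_j}^*$ commute on $\Gamma_\lambda$. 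A routine WOT-approximation propagates $A^*\Gamma_\lambda=0$ to all $A\in J_c$, and since polynomials are dense in $\HH^2(f)$ (Proposition \ref{dense}) one has $\overline{J_c(1)}=\overline{J_c\HH^2(f)}=\cM_{J_c}$, giving $\Gamma_\lambda\in\cN_{J_c}$. For $\cN_{J_c}\subseteq\HH_s^2(f)$, equivalently $\HH_s^2(f)^\perp\subseteq\overline{J_c(1)}$, it suffices to show $f_\alpha-f_\beta\in\overline{J_c(1)}$ for all $\alpha,\beta\in\Lambda_{\mathbf{k}}$, since these differences span $\HH^2(f)\ominus\HH_s^2(f)$. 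Reducing to an adjacent transposition $\alpha=\gamma g_ig_j\delta$, $\beta=\gamma g_jg_i\delta$, I would use the identity $f_\alpha-f_\beta = M_{f_\gamma}[M_{f_i},M_{f_j}]M_{f_\delta}(1)$ and argue that $[M_{f_i},M_{f_j}]\in J_c$ by approximating $M_{f_i}=f_i(M_Z)$ via the (radial) partial sums guaranteed by property $(\cS_3)$. Finally, for $\HH_s^2(f)\subseteq\overline{\text{\rm span}}\{\Gamma_\lambda\}$, if $\eta=\sum_\mathbf{k}c_\mathbf{k}\omega^{(\mathbf{k})}\in\HH_s^2(f)$ is orthogonal to every $\Gamma_\lambda$, then $\sum_\mathbf{k}c_\mathbf{k}f(\lambda)^\mathbf{k}=0$ on $\BB_f^<(\CC)$; by Lemma \ref{ball} the map $f$ is a bijection $\BB_f^<(\CC)\to\BB_n$, so uniqueness of power series on $\BB_n$ forces all $c_\mathbf{k}=0$.

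For (ii), the multinomial identity $\sum_{|\mathbf{k}|=m}\gamma_{\mathbf{k}}|z^{\mathbf{k}}|^2 = |z|^{2m}$ sums to $(1-|z|^2)^{-1}$ on $\BB_n$; combined with Cauchy--Schwarz and the fact that $f$ maps $\BB_f^<(\CC)$ into $\BB_n$, this gives pointwise convergence of $\psi(\lambda):=\sum_\mathbf{k}c_\mathbf{k}f(\lambda)^\mathbf{k}$ on $\BB_f^<(\CC)$ together with the bound $|\psi(\lambda)|\leq \|\psi\|_2/\sqrt{1-\sum_i|f_i(\lambda)|^2}$, while the reproducing formula $\psi(\lambda)=\langle\psi,\Omega_\lambda\rangle$ is immediate from $\langle\omega^{(\mathbf{k})},\Gamma_\lambda\rangle=c_\lambda f(\lambda)^\mathbf{k}$. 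For (iii), the geometric series yields
\[
\langle\Gamma_\lambda,\Gamma_\mu\rangle = c_\lambda c_\mu\sum_{\alpha\in\FF_n^+}\overline{[f(\lambda)]_\alpha}\,[f(\mu)]_\alpha = \frac{c_\lambda c_\mu}{1-\sum_j f_j(\mu)\overline{f_j(\lambda)}},
\]
so $\Lambda_f(\mu,\lambda)=\langle\Omega_\lambda,\Omega_\mu\rangle$ is a Gram matrix and hence positive semidefinite.

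The main obstacle is verifying $[M_{f_i},M_{f_j}]\in J_c$ when $M_{f_i}$ is accessible only through the (possibly only radial) SOT-convergence from property $(\cS_3)$. The way around this is: each $f_i(rM_Z)$ (for $r<1$) is a norm-convergent series in $M_{Z_\alpha}$, so $[f_i(rM_Z),f_j(rM_Z)]$ is a norm-limit of elements of $J_c$ and hence lies in $J_c$; then a uniform-boundedness argument shows $\|f_i(rM_Z)\|$ stays bounded near $r=1$, and separate WOT-continuity of multiplication against bounded nets gives $[f_i(rM_Z),f_j(rM_Z)]\to[M_{f_i},M_{f_j}]$ in WOT, so WOT-closedness of $J_c$ concludes. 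Once that point is settled, the remainder of the proof is essentially bookkeeping built on the symmetrization identity for $\Gamma_\lambda$.
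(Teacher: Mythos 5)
Your proof is correct, and it rests on the same two pillars as the paper's (the eigenvector/symmetrization identity for $\Gamma_\lambda$ and the identification of $\overline{J_c(1)}$ with the span of the antisymmetrized monomials $f_\alpha-f_\beta$, $\alpha,\beta\in\Lambda_{\mathbf k}$), but the logical organization of part (i) is genuinely different. The paper proves $\overline{\text{\rm span}}\{\Gamma_\lambda\}\subseteq\HH_s^2(f)$ by expanding $\Omega_\lambda$ in the basis $\{\omega^{(\mathbf k)}\}$, proves $\HH_s^2(f)\subseteq\cN_{J_c}$ by checking $\omega^{(\mathbf k)}\perp J_c(1)$, and then closes the loop with a single combined argument: a vector of $\cN_{J_c}$ orthogonal to all $\Gamma_\lambda$ has vanishing symmetrized coefficient sums (from the kernel condition) \emph{and} constant coefficients on each $\Lambda_{\mathbf k}$ (from orthogonality to $\overline{J_c(1)}$), hence is zero. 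You instead run a clean three-term cycle, which buys two small simplifications: the inclusion $\overline{\text{\rm span}}\{\Gamma_\lambda\}\subseteq\cN_{J_c}$ falls out of Theorem \ref{eigenvalues} alone (since $M_{Z_i}^*$ and $M_{Z_j}^*$ commute on each $\Gamma_\lambda$), and your final step $\HH_s^2(f)\subseteq\overline{\text{\rm span}}\{\Gamma_\lambda\}$ needs only the vanishing of a genuinely commutative power series, for which you invoke the bijection $f:\BB_f^<(\CC)\to\BB_n$ from Lemma \ref{ball}, whereas the paper gets by with the weaker fact that $\BB_f^<(\CC)$ contains an open ball. The one point the paper dispatches with ``similarly, one can prove'' --- that $[M_{f_i},M_{f_j}]\in J_c$ --- is exactly the point you flag as the main obstacle, and your resolution is sound, with one caveat: in the non-radial case of $(\cS_3)$ the partial sums of $f_i(M_Z)$ converge only in SOT, not in norm, so the norm-limit step of your fallback should be replaced throughout by the Banach--Steinhaus/uniform-boundedness argument you already describe (SOT-convergent sequences are uniformly bounded, so products of partial sums converge SOT and the commutators converge WOT into the WOT-closed ideal $J_c$). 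Parts (ii) and (iii) are identical in substance to the paper's reproducing-kernel computation.
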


\begin{proof}
First, note that $\Omega_\lambda=\sum_{{\bf k}\in \NN_0^n}
\overline{f(\lambda)}^{\bf k} \gamma_{\bf k} \omega^{(\bf k)}$,
$\lambda \in \BB_{f}^<(\CC)$, and, therefore,
 $$\overline{\text{\rm span}}\{\Gamma_\lambda: \ \lambda\in
\BB_{f}^<(\CC)\}\subseteq \HH_s^2(f).
$$
Now, we prove  that $\omega^{(\bf k)}\in \cN_{J_c}:=F^2(H_n)\ominus
\overline{J_c(1)}$. First,  we show that   $J_c$ coincides with the
WOT-closed
 commutator ideal  of $H^\infty(\BB_f)$.
 Indeed, since $M_{Z_i}M_{Z_j}-M_{Z_j}M_{Z_i}\in J_c$ and every
permutation of $k$ objects is a product of transpositions, it is
clear that $M_{Z_\alpha} M_{Z_\beta}-M_{Z_\beta} M_{Z_\alpha}\in
J_c$ for any $\alpha,\beta\in \FF_n^+$. Consequently,
$M_{Z_\gamma}(M_{Z_\alpha} M_{Z_\beta}-M_{Z_\beta}
M_{Z_\alpha})M_{Z_\omega}\in J_c$ for any $\alpha,\beta,
\gamma,\omega\in \FF_n^+$. Since the polynomials in $M_{Z_1},\ldots,
M_{Z_n}$ are WOT dense in $H^\infty(\BB_f)$, the result follows.
Note also that $\overline{J_c(1)}\subset \HH^2(f)$ coincides with
 $$\overline{\text{\rm
span}}\left\{ Z_{\gamma g_jg_i\beta}-  Z_{\gamma g_ig_j\beta}:\
\gamma, \beta\in \FF_n^+, i,j=1,\ldots,n\right\}.
$$
Similarly, one can prove that the WOT-closed two-sided ideal
generated by the commutators $M_{f_j}M_{f_i}-M_{f_i}M_{f_j}$,
$i,j\in \{1,\ldots,n\}$ coincides with   the WOT-closed
 commutator ideal  of $H^\infty(\BB_f)$. Combining these results, we
 deduce that $J_c$ coincides with the WOT-closed two-sided ideal
 generated by the commutators $M_{f_j}M_{f_i}-M_{f_i}M_{f_j}$,
 $i,j\in \{1,\ldots, n\}$ and
 $$
\overline{J_c(1)} =\overline{\text{\rm span}}\left\{ f_{\gamma
g_jg_i\beta}- f_{\gamma g_ig_j\beta}:\ \gamma,\beta\in \FF_n^+,
i,j=1,\ldots,n\right\}.
$$
Consequently, since
$$
\left<\sum_{\alpha \in \Lambda_{\bf k}} f_\alpha,M_{f_\gamma}(M_{f_j}M_{f_i}-M_{f_i}M_{f_j})M_{f_\beta}(1)\right>=0
$$
for any ${\bf k}\in \NN_0^n$, we deduce  that $\omega^{(\bf k)}\in
\cN_{J_c}$. Hence, we have $\HH_s^2(f)\subseteq \cN_{J_c}$. To
complete the proof of part (i), it is enough to show that
$$
\overline{\text{\rm span}}\{\Gamma_\lambda: \ \lambda\in
\BB_{f}^<(\CC)\}=\cN_{J_c}.
$$
Assume that there is a vector $ x:=\sum_{\beta\in \FF_n^+} c_\beta
f_\beta\in \cN_{J_c}$ and $x\perp \Gamma_\lambda$ for all
$\lambda\in \BB_{f}^<(\CC)$. Then
\begin{equation*}
\begin{split}
\left<\sum_{\beta\in \FF_n^+} c_\beta f_\beta,
\Omega_\lambda\right>&=
\sum_{\beta\in \FF_n^+} c_\beta [f(\lambda)]_\beta
=\sum_{{\bf k}\in \NN_0^n}\left(\sum_{\beta\in \Lambda_{\bf k}} c_\beta
\right)f(\lambda)^{\bf k}=0
\end{split}
\end{equation*}
for any $\lambda\in \BB_{f}^<(\CC)$. Since $\BB_{f}^<(\CC)$ contains
an open ball in $\CC^n$, we deduce that
\begin{equation}\label{sigma=0}
\sum_{\beta\in \Lambda_{\bf k}} c_\beta =0 \quad
\text{ for all } \ {\bf k}\in \NN_0^n.
\end{equation}
Fix $\beta_0\in \Lambda_{\bf k}$ and let $\beta\in \Lambda_{\bf k}$
be such that $\beta$ is obtained from $\beta_0$ by transposing just
two generators. So we can assume that $\beta_0=\gamma g_j g_i\omega$
and $\beta=\gamma g_i g_j\omega$ for some $\gamma,\omega\in \FF_n^+$
and $i\neq j$, $i,j=1,\ldots,n$. Since $x\in
\cN_{J_c}=\HH^2(f)\ominus\overline{J_c(1)}$, we must have
$$
\left<x,M_{f_\gamma}(M_{f_j}M_{f_i}-M_{f_i}M_{f_j})M_{f_\omega}(1)\right>=0,
$$
which implies $c_{\beta_0}=c_\beta$. Since any element $\gamma\in
\Lambda_{\bf k}$ can be obtained from $\beta_0$  by successive
transpositions, repeating the above argument, we deduce that $
c_{\beta_0}=c_\gamma \quad \text{ for all }\ \gamma\in \Lambda_{\bf
k}. $ Now relation  \eqref{sigma=0} implies   $c_\gamma=0$ for any
$\gamma\in \Lambda_{\bf k}$ and ${\bf k}\in \NN_0^n$, so $x=0$.
Consequently, we have $\overline{\text{\rm span}}\{\Gamma_\lambda: \
\lambda\in \BB_{f}^<(\CC)\}=\cN_{J_c}. $

Now, let us prove part (ii) of the theorem. Note that
\begin{equation*}
\begin{split}
\left<\omega^{(\bf k)},\Omega_\lambda\right>&=\frac{1}{\gamma_{\bf
k}} \left< \sum_{\beta\in\Lambda_{\bf k}} f_\beta,
\Omega_\lambda\right>
=\frac{1}{\gamma_{\bf k}}\sum_{\beta\in\Lambda_{\bf k}}
[f(\lambda)]_\beta =f(\lambda)^{\bf k}
\end{split}
\end{equation*}
for any $\lambda\in \BB_{f}^<(\CC)$ and ${\bf k}\in \NN_0^n$. Hence,
every  element  $\psi=\sum_{{\bf k}\in \NN_0} c_{\bf k} \omega^{(\bf
k)}$ in $\HH_s^2(f)$  has a functional representation on
$\BB_{f}^<(\CC)$ given by
$$
\psi(\lambda):=\left<\psi, \Omega_\lambda\right>=\sum_{{\bf k}\in
\NN_0} c_{\bf k} f(\lambda)^{\bf k}, \qquad
\lambda=(\lambda_1,\ldots, \lambda_n)\in \BB_{f}^<(\CC),
$$
and
$$|\psi(\lambda)|\leq \|\psi\|_2 \|\Omega_\lambda\|=
\frac{\|\psi\|_2}{\sqrt{1-\sum_{i=1 }^n |f_i(\lambda)|^2}}.
$$
  The identification of $\HH_s^2(f)$
with  $H^2(\BB_{f}^<(\CC))$  is now clear. If $
 (\lambda_1,\ldots, \lambda_n)$ and $(\mu_1,\ldots, \mu_n)$ are in
$\BB_{f}^<(\CC)$,  then we have
$$
\Lambda_f(\mu,\lambda):= \left<\Omega_\lambda,
\Omega_\mu\right>=\sum_{\beta\in \FF_n^+}
[f(\mu)]_\beta\overline{[f(\lambda)]}_\beta,
$$
which implies item (iii).
 The proof is complete.
\end{proof}

  If $~A\in B(\cH)~$ then we  denote by $\text{\rm Lat~}A$ the set of all
invariant subspaces of $~A~$. When
 $~\cU\subset B(\cH)~$, we define
$
\text{\rm Lat~}\cU=\bigcap_{A\in\cU}\text{\rm Lat~}A.
$
 Given  any collection  $~\cS~$  of subspaces of $~\cH$,
then we  set $$\text{\rm
Alg~}\cS:=\{A\in B(\cH):\ \cS\subset\text{\rm Lat~}A\}.$$ We recall
that the algebra $~\cU\subset B(\cH)~$ is reflexive if
$\cU=\text{\rm Alg Lat~}\cU.$

\begin{theorem}\label{W(L)} Let $f=(f_1,\ldots, f_n)$ be  an  $n$-tuple of formal power series with
 the model property  and  let $(M_{Z_1},\ldots, M_{Z_n})$ be
the universal model associated  with the noncommutative domain
$\BB_f$. If $H^\infty(\cV_{f,J_c})$ is the WOT-closed algebra
generated by the operators
$$L_i:=P_{\HH_s^2(f)} M_{Z_i}|_{\HH_s^2(f)}, \qquad i=1,\ldots, n,
$$
and the identity, then the following statements hold.
\begin{enumerate}
\item[(i)]  $H^\infty(\cV_{f,J_c})$ can be identified
 with the algebra of  all multipliers of the  Hilbert space $H^2(\BB_{f}^<(\CC))$.
 \item[(ii)]The algebra $H^\infty(\cV_{f,J_c})$ is reflexive.
\end{enumerate}
\end{theorem}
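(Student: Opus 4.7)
For part (i), I would begin by applying Theorem \ref{F/J} with $J = J_c$, the WOT-closed two-sided ideal generated by the commutators. By Theorem \ref{symm-Hardy}(i), the subspace $\cN_{J_c}$ coincides with the symmetric Hardy space $\HH^2_s(f)$, so the compression map
$$\Gamma:H^\infty(\BB_f)/J_c \to B(\HH^2_s(f)),\qquad \Gamma(\varphi(M_Z)+J_c)=P_{\HH^2_s(f)}\varphi(M_Z)|_{\HH^2_s(f)},$$
is a completely isometric representation. To see that the image of $\Gamma$ is precisely $H^\infty(\cV_{f,J_c})$, I would use that polynomials in $M_{Z_1},\ldots,M_{Z_n}$ are WOT-dense in $H^\infty(\BB_f)$ and that the compression $X\mapsto P_{\HH^2_s(f)}X|_{\HH^2_s(f)}$ is WOT-continuous on bounded sets; the compressed polynomials are precisely polynomials in $L_1,\ldots,L_n$, so their WOT-closure inside the image is $H^\infty(\cV_{f,J_c})$.

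The identification with multipliers is then obtained via Theorem \ref{eigenvalues}: any $\varphi(M_Z)\in H^\infty(\BB_f)$ satisfies $\varphi(M_Z)^*\Gamma_\lambda=\overline{\varphi(\lambda)}\Gamma_\lambda$, and because $\Gamma_\lambda\in\HH^2_s(f)$ the compression $B:=P_{\HH^2_s(f)}\varphi(M_Z)|_{\HH^2_s(f)}$ satisfies $B^*\Omega_\lambda=\overline{\varphi(\lambda)}\Omega_\lambda$. Using the reproducing property of $\Omega_\lambda$ from Theorem \ref{symm-Hardy}(ii), for $\psi\in\HH^2_s(f)\cong H^2(\BB_f^<(\CC))$ one computes
$$(B\psi)(\lambda)=\langle B\psi,\Omega_\lambda\rangle=\langle \psi,B^*\Omega_\lambda\rangle=\varphi(\lambda)\psi(\lambda),$$
so $B$ is multiplication by $\varphi|_{\BB_f^<(\CC)}$, and this multiplier has norm equal to $\|B\|$ by the complete isometry of $\Gamma$. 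For the reverse inclusion I plan to transport the problem via the unitary $U:\HH^2(f)\to F^2(H_n)$ of Lemma \ref{M2} (which sends $\HH^2_s(f)$ onto the symmetric Fock subspace and intertwines $H^\infty(\cV_{f,J_c})$ with the compression $P_{F_s^2}F_n^\infty|_{F_s^2}$) and invoke the classical identification of the latter with the full multiplier algebra of the Arveson space (cf.\ \cite{Arv}, \cite{DP}), then pull back through the biholomorphism $f:\BB_f^<(\CC)\to\BB_n$.

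For part (ii), the plan is a standard eigenvector-reflexivity argument. For each $\lambda\in\BB_f^<(\CC)$, every $B\in H^\infty(\cV_{f,J_c})$ satisfies $B^*\Omega_\lambda=\overline{\varphi(\lambda)}\Omega_\lambda$ for a suitable scalar $\varphi(\lambda)$, so $\HH^2_s(f)\ominus\CC\Omega_\lambda$ is invariant under every element of $H^\infty(\cV_{f,J_c})$. If $T\in\text{Alg}\,\text{Lat}\,H^\infty(\cV_{f,J_c})$, then $T$ leaves each such codimension-one subspace invariant, which forces $T^*\Omega_\lambda=\overline{h(\lambda)}\Omega_\lambda$ for some scalar function $h$ on $\BB_f^<(\CC)$. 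The reproducing-property computation then shows $T$ acts as multiplication by $h$, with $h=T(1)\in\HH^2_s(f)$ holomorphic and $\|M_h\|=\|T\|<\infty$; by part (i), $T\in H^\infty(\cV_{f,J_c})$, proving reflexivity. The main obstacle will be confirming the reverse inclusion in part (i)---that every bounded multiplier of $H^2(\BB_f^<(\CC))$ really is a compression of an element of $H^\infty(\BB_f)$---which requires either the transport argument above or an independent symmetric-Fock multiplier analysis, and cannot be obtained by the eigenvector computation alone without circularity with part (ii).
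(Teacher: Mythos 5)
Your proposal is correct and, in its essentials, follows the paper's own path: the forward half of (i) is the identical reproducing--kernel computation $[\varphi(L)\psi](\lambda)=\langle\psi,\varphi(M_Z)^*\Omega_\lambda\rangle=\varphi(\lambda)\psi(\lambda)$ based on $\varphi(M_Z)^*\Omega_\lambda=\overline{\varphi(\lambda)}\,\Omega_\lambda$, and your part (ii) is exactly the paper's argument (the one-dimensional joint eigenspaces $\CC\Omega_\lambda$ of $L_1^*,\ldots,L_n^*$ lie in $\text{\rm Lat}\,\{L_1^*,\ldots,L_n^*\}$, forcing $Y^*\Omega_\lambda=\overline{h(\lambda)}\Omega_\lambda$; then $h=Y(1)\in\HH_s^2(f)$ and part (i) closes the argument). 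The single point of divergence is the reverse inclusion in (i). The paper works directly in $\HH_s^2(f)$: given a bounded multiplier $M_\psi$ with $\psi=\sum_{\bf k}c_{\bf k}\omega^{({\bf k})}$, it approximates $M_\psi$ in the strong operator topology by the Ces\`aro means $M_{q_m}$ (invoking Lemma 1.1 of \cite{DP1}), each of which is a polynomial in $L_1,\ldots,L_n$. You instead conjugate by the canonical unitary $U$ --- which does carry $\HH_s^2(f)$ onto the symmetric Fock space and $H^\infty(\cV_{f,J_c})=P_{\HH_s^2(f)}H^\infty(\BB_f)|_{\HH_s^2(f)}$ onto the compression of $F_n^\infty$, by relation \eqref{FH} --- quote the classical identification of that compression with the multiplier algebra of the Drury--Arveson space, and pull back along the bijection $f:\BB_f^<(\CC)\to\BB_n$, using that $\Lambda_f(\mu,\lambda)=\bigl(1-\sum_k f_k(\mu)\overline{f_k(\lambda)}\bigr)^{-1}$ is the pullback of the Drury--Arveson kernel, so composition with $f$ is a unitary of reproducing kernel Hilbert spaces intertwining multiplier algebras. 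Both routes rest on the same classical ingredient (the Ces\`aro-mean lemma on the symmetric Fock space); yours makes the reduction to the known case explicit and spares you from redoing the approximation in the $\omega^{({\bf k})}$ coordinates, at the modest cost of verifying the kernel-pullback identification. Your closing remark on circularity is exactly right and matches the paper's logical order: the reverse inclusion of (i) must be established independently before it is used to conclude (ii).
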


\begin{proof} According to the remarks following Theorem \ref{F/J},
we have
$H^\infty(\cV_{f,J_c})=P_{\HH_s^2(f)}H^\infty(\BB_f)|_{\HH_s^2(f)}$.
 Let $\varphi(M_Z)\in H^\infty(\BB_f)$ and
$\varphi(L)=P_{\HH_s^2(f)} \varphi(M_Z)|_{\HH_s^2(f)}$.  Due to
Theorem \ref{symm-Hardy}, since $\Omega_\lambda\in \HH_s^2(f)$ for
$\lambda\in \BB_{f}^<(\CC)$, and $\varphi(M_Z)^*\Omega_\lambda=
\overline{\varphi(\lambda)} \Omega_\lambda$ (see Theorem
\ref{eigenvalues}), we have
\begin{equation*}
\begin{split}
[\varphi(L) \psi](\lambda)&=\left<\varphi(L)\psi,
\Omega_\lambda\right>
=\left<\varphi(M_Z)\psi, \Omega_\lambda\right>\\
&=\left< \psi,\varphi(M_Z)^*\Omega_\lambda\right>
=\left< \psi,\overline{\varphi(\lambda)} \Omega_\lambda\right>\\
&=\varphi(\lambda)\psi(\lambda)
\end{split}
\end{equation*}
for any $\psi\in \HH_s^2(f)$ and $\lambda\in \BB_{f}^<(\CC)$.
Therefore, the operators in $\HH^\infty(\cV_{f,J_c})$ are
``analytic'' multipliers of $\HH_s^2(f)$. Moreover,
$$
\|\varphi(L)\|=\sup\{\|\varphi \chi\|_2:\ \chi\in \HH_s^2(f), \,
\|\chi\|\leq 1\}.
$$
Conversely, suppose that $\psi=\sum_{{\bf k}\in \NN_0} c_{\bf k}
\omega^{\bf (k)}$ is a bounded multiplier, i.e., $M_\psi\in
B(\HH_s^2(f))$. As in \cite{DP1} (see Lemma 1.1), using Cesaro
means,  one can find a sequence  $q_m=\sum  c_{\bf k}^{(m)}
\omega^{(\bf k)}$ such that $M_{q_m}$ converges to $M_\psi$ in the
strong operator topology and, consequently, in  the $w^*$-topology.
Since $M_{q_m}$ is a polynomial in $L_1,\ldots, L_n$, we conclude
that $M_\psi\in H^\infty(\cV_{f,J_c})$. In particular $L_i$ is the
multiplier $M_{\lambda_i}$  by the coordinate function.

Now, we prove part (ii).
 Let $Y\in B(\HH_s^2(f))$ be an operator   that leaves
invariant all the invariant subspaces under each operator
$L_1,\ldots, L_n$. According  to Theorem \ref{eigenvalues}, we have
$L_i^*\Gamma_\lambda=\overline{\lambda}_i \Gamma_\lambda$ for any
$\lambda\in \BB_{f}^<(\CC)$ and $i=1,\ldots,n$. Since $Y^*$ leaves
invariant all the invariant subspaces under $L_1^*,\ldots, L_n^*$,
the vector $\Omega_\lambda$ must be an eigenvector for $Y^*$.
Consequently, there is a function $\varphi:\BB_{f}^<(\CC)\to \CC$
such that $Y^*\Omega_\lambda=\overline{\varphi(\lambda)}
\Omega_\lambda$ for any $\lambda\in \BB_{f}^<(\CC)$. Due to Theorem
\ref{symm-Hardy},  if $f\in H_s^2(f)$, then  $Yf$ has the functional
representation
$$
(Yf)(\lambda)=\left<Yf,\Omega_\lambda\right>=\left<f,Y^*\Omega_\lambda\right>=
\varphi(\lambda)f(\lambda)\quad \text{ for all }\ \lambda\in
\BB_{f}^<(\CC).
$$
In particular, if $f=1$, then  the   functional representation  of
$Y(1)$  coincide with $\varphi$. Therefore, $\varphi$ admits a
representation  $\sum_{{\bf k}\in \NN_0} c_{\bf k} f(\lambda)^{\bf
k}$ on $\BB_{f}^<(\CC)$ and can be identified with $X(1)\in
\HH_s^2(f)$. Moreover, the equality above shows that $\varphi f\in
H^2(\BB_{f}^<(\CC))$ for any $f\in \HH_s^2(f)$. Applying the first
part of this theorem, we deduce that $Y=M_\varphi\in
H^\infty(\cV_{f,J_c})$. The proof is complete.
\end{proof}

We remark that, in  the particular case when $f=(Z_1,\ldots, Z_n)$,
we recover some of the results obtained by Arias and the author,
Davidson and Pitts, and Arveson (see  \cite{Po-disc}, \cite{ArPo},
\cite{ArPo2}, \cite{DP}, \cite{DP1}, and \cite{Arv}).

\section{Characteristic functions  and functional models  }

 In this section,
we introduce
  the characteristic  function  of  an  $n$-tuple $T=(T_1,\ldots, T_n)\in
  \BB_f(\cH)$,
   present a model for  pure $n$-tuples of operators  in the
noncommutative domain $\BB_f(\cH)$  in terms of characteristic
functions, and show that the   characteristic function is a complete
unitary invariant for pure $n$-tuples of operators  in $\BB_f(\cH)$.

 Let $f=(f_1,\ldots, f_n)$ be  an  $n$-tuple of formal power series with
 the model property  and  let $(M_{Z_1},\ldots, M_{Z_n})$ be
the universal model associated  with the noncommutative domain
$\BB_f$. We introduce
  the characteristic  function  of  an  $n$-tuple $T=(T_1,\ldots, T_n)\in
  \BB_f(\cH)$ to be
  the multi-analytic operator, with respect to $M_{Z_1},\ldots, M_{Z_n}$,
$$
\Theta_{f,T}:\HH^2(f)\otimes \cD_{f,T^*}\to \HH^2(f)\otimes \cD_{f,T}
$$
having  the formal Fourier representation
\begin{equation*}
\begin{split}
  -I_{\HH^2(f)}\otimes f(T)+
\left(I_{\HH^2(f)}\otimes \Delta_{f,T}\right)&\left(I_{\HH^2(f)\otimes \cH}-\sum_{i=1}^n \Lambda_i\otimes f_i(T)^*\right)^{-1}\\
&\left[\Lambda_1\otimes I_\cH,\ldots, \Lambda_n\otimes I_\cH
\right] \left(I_{\HH^2(f)}\otimes \Delta_{f,T^*}\right),
\end{split}
\end{equation*}
where $\Lambda_1,\ldots, \Lambda_n$ are the right multiplication
operators  by the power series $f_1,\ldots, f_n$, respectively,  on
the Hardy space $\HH^2(f)$, and the defect operators  associated
with $T:=(T_1,\ldots, T_n)\in \BB_f(\cH)$ are
\begin{equation*}
\Delta_{f,T}:=\left( I_\cH-\sum_{i=1}^n f_i(T)f_i(T)^*\right)^{1/2}\in B(\cH) \quad \text{ and }\quad \Delta_{f,T^*}:=(I-f(T)^*f(T))^{1/2}\in B(\cH^{(n)}),
\end{equation*}
while the defect spaces are $\cD_{f,T}:=\overline{\Delta_{f,T}\cH}$ and
$\cD_{f,T^*}:=\overline{\Delta_{f,T^*}\cH^{(n)}}$, where
$\cH^{(n)}$ denotes the direct sum of $n$ copies of $\cH$.
We remark that when $f=(f_1,\ldots, f_n)=(Z_1,\ldots, Z_n)$,
 we recover the characteristic function for row contractions.
We recall that the characteristic  function associated with an
arbitrary row contraction $T:=[T_1,\ldots, T_n]$, \ $T_i\in B(\cH)$,
was introduce in \cite{Po-charact} (see \cite{SzF-book} for the
classical case $n=1$) and it was proved to be  a complete unitary
invariant for completely non-coisometric (c.n.c.) row contractions.
Related to our setting, we remark  that
\begin{equation}
\label{TT}
 \Theta_{f,T}=(U^*\otimes
I_{\cD_{f,T}})\Theta_{f(T)}(U\otimes I_{\cD_{f,T^*}}),
\end{equation}
where $\Theta_{f(T)}$ is the characteristic function of the row contraction
 $f(T)=[f_1(T),\ldots, f_n(T)]$ and $U:\HH^2(f)\to F^2(H_n)$ is the
  canonical unitary operator defined by $Uf_\alpha=e_\alpha$, $\alpha\in \FF_n^+$.
   Consequently, due to Theorem 3.2 from \cite{Po-varieties}, we deduce the following  result.

\begin{theorem}\label{factor} Let $f=(f_1,\ldots, f_n)$ be  an  $n$-tuple of formal power series with
the model  property  and let  $T=(T_1,\ldots, T_n)\in \BB_f(H)$.
Then
\begin{equation}\label{fa}
I_{\HH^2(f)\otimes
\cD_{f,T}}-\Theta_{f,T}\Theta_{f,T}^*=K_{f,T}K_{f,T}^*,
\end{equation}
where
$\Theta_{f,T}$ is  the characteristic function  of $T$ and $K_{f,T}$  is the corresponding Poisson kernel.
\end{theorem}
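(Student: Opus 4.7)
The plan is to reduce the identity for the noncommutative domain $\BB_f$ to the corresponding (already known) identity for row contractions, using the unitary $U:\HH^2(f)\to F^2(H_n)$ with $Uf_\alpha = e_\alpha$. Relation \eqref{TT} from the excerpt already conjugates the characteristic function on the model space $\HH^2(f)\otimes \cD_{f,T^*}$ to the classical characteristic function $\Theta_{f(T)}$ of the row contraction $f(T):=[f_1(T),\ldots,f_n(T)]$ acting on $F^2(H_n)\otimes \cD_{f,T^*}$. So I only need an analogous conjugation formula for the Poisson kernel, after which everything is a one‑line computation.

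First I would verify that
\begin{equation*}
K_{f,T}=(U^{*}\otimes I_{\cD_{f,T}})\,K_{f(T)},
\end{equation*}
where $K_{f(T)}:\cH\to F^2(H_n)\otimes \cD_{f(T)}$ is the noncommutative Poisson kernel of the row contraction $f(T)$. This is immediate from the definition $K_{f,T}h=\sum_{\alpha\in\FF_n^{+}} f_\alpha\otimes \Delta_{f,T}[f(T)]_{\alpha}^{*}h$ in \eqref{KfT}, since the defect operator $\Delta_{f,T}=(I-\sum_i f_i(T)f_i(T)^{*})^{1/2}$ coincides, as an operator on $\cH$, with the classical defect operator of the row contraction $f(T)$, and $U^{*}e_\alpha=f_\alpha$. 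Hence the defect spaces $\cD_{f,T}$ and $\cD_{f(T)}$ are identified and the two kernels differ only by the unitary $U^{*}\otimes I_{\cD_{f,T}}$.

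Then I would invoke Theorem 3.2 of \cite{Po-varieties}, which yields the factorization
\begin{equation*}
I_{F^{2}(H_n)\otimes \cD_{f(T)}}-\Theta_{f(T)}\Theta_{f(T)}^{*}=K_{f(T)}K_{f(T)}^{*}
\end{equation*}
for the row contraction $f(T)$ (the assumption $\|f(T)\|\le 1$ holds because $T\in \BB_f(\cH)$). Combining this with relation \eqref{TT} and the factorization of $K_{f,T}$ above, and using that the operators $U\otimes I_{\cD_{f,T}}$ and $U\otimes I_{\cD_{f,T^*}}$ are unitary, I would compute
\begin{equation*}
\begin{split}
I_{\HH^{2}(f)\otimes \cD_{f,T}}-\Theta_{f,T}\Theta_{f,T}^{*}
&=(U^{*}\!\otimes\! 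I_{\cD_{f,T}})\bigl(I-\Theta_{f(T)}\Theta_{f(T)}^{*}\bigr)(U\!\otimes\! I_{\cD_{f,T}})\\
&=(U^{*}\!\otimes\! I_{\cD_{f,T}})\,K_{f(T)}K_{f(T)}^{*}\,(U\!\otimes\! I_{\cD_{f,T}})\\
&=K_{f,T}K_{f,T}^{*},
\end{split}
\end{equation*}
which is the desired identity.

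The only non‑routine step is verifying the transfer formula $K_{f,T}=(U^{*}\otimes I)K_{f(T)}$ and, in particular, that the multi‑analyticity data match under $U$; this requires nothing more than expanding the defining series on the orthonormal basis $\{f_\alpha\}$, but it must be done carefully because $M_{Z_i}$ is not the left creation operator on $\HH^{2}(f)$—only the $M_{f_i}=U^{*}S_iU$ are. Everything else is a direct substitution into the row‑contraction identity, so I do not anticipate any genuine analytic obstacle.
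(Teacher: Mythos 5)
Your proposal is correct and follows the same route the paper takes: the paper derives Theorem \ref{factor} precisely by combining relation \eqref{TT} with Theorem 3.2 of \cite{Po-varieties}, leaving the transfer of the Poisson kernel under $U$ implicit. Your explicit verification that $K_{f,T}=(U^{*}\otimes I_{\cD_{f,T}})K_{f(T)}$ (using $\Delta_{f,T}=\Delta_{f(T)}$ and $U^{*}e_\alpha=f_\alpha$) is exactly the detail the paper omits, so there is nothing to add.
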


Now we present a model for  pure $n$-tuples of operators  in the
noncommutative domain $\BB_f(\cH)$  in terms of characteristic
functions.

\begin{theorem}\label{funct-model} Let $f=(f_1,\ldots, f_n)$ be  an  $n$-tuple of formal power series with
 the model property  and  let $(M_{Z_1},\ldots, M_{Z_n})$ be
the universal model associated  with the noncommutative domain
$\BB_f$. If  $T=(T_1,\ldots, T_n)$ is a pure $n$-tuple  of operators
in $ \BB_f(\cH)$,   then  the characteristic function $\Theta_{f,T}
$ is an  isometry and $T$ is unitarily equivalent to the $n$-tuple
\begin{equation}\label{HH}
\left(P_ {{\bf H}_{f,T}} (M_{Z_1}\otimes I_{\cD_{f,T}})|_{{\bf
H}_{f,T}},\ldots, P_{{\bf H}_{f,T}} (M_{Z_n}\otimes
I_{\cD_{f,T}})|_{{\bf H}_{f,T}}\right),
\end{equation}
where $P_{{\bf H}_{J,T}}$ is the orthogonal projection of $\HH^2(f)\otimes \cD_{f,T}$ on the Hilbert space
$${\bf H}_{f,T}:=\left(\HH^2(f)\otimes \cD_{f,T}\right)\ominus
\Theta_{f,T}(\HH^2(f)\otimes \cD_{f,T^*}).
$$
 \end{theorem}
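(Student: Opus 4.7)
The plan is to exploit the factorization identity from Theorem \ref{factor} together with the pure dilation theorem (Theorem \ref{pure-dilation}) to pin down both the isometry property of $\Theta_{f,T}$ and the orthogonal decomposition of $\HH^2(f)\otimes \cD_{f,T}$ that yields the model space $\mathbf{H}_{f,T}$.

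First, I would establish that $\Theta_{f,T}$ is an isometry. The cleanest route uses relation \eqref{TT}, namely $\Theta_{f,T}=(U^{\ast}\otimes I_{\cD_{f,T}})\Theta_{f(T)}(U\otimes I_{\cD_{f,T^{\ast}}})$. Since $T\in \BB_f(\cH)$ is pure, the condition $\text{SOT-}\lim_{k\to\infty}\sum_{|\alpha|=k}[f(T)]_\alpha[f(T)]_\alpha^{\ast}=0$ says precisely that the row contraction $f(T)=[f_1(T),\ldots,f_n(T)]$ is pure (in particular, c.n.c.) in the sense of Sz.-Nagy--Foia\c s/\cite{Po-charact}. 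The classical characteristic function result for pure row contractions gives that $\Theta_{f(T)}$ is an inner (and in fact isometric) multi-analytic operator with respect to $S_1,\ldots,S_n$. Because $U$ is unitary, the intertwining identity \eqref{TT} transfers this property verbatim to $\Theta_{f,T}$, so $\Theta_{f,T}$ is an isometric multi-analytic operator with respect to $M_{Z_1},\ldots,M_{Z_n}$.

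Next, I would combine this with the factorization from Theorem \ref{factor},
\begin{equation*}
I_{\HH^2(f)\otimes \cD_{f,T}}-\Theta_{f,T}\Theta_{f,T}^{\ast}=K_{f,T}K_{f,T}^{\ast}.
\end{equation*}
By Theorem \ref{pure-dilation}, the Poisson kernel $K_{f,T}$ is an isometry, so $K_{f,T}K_{f,T}^{\ast}$ is the orthogonal projection onto the range $K_{f,T}(\cH)$. On the other hand, since $\Theta_{f,T}$ is an isometry, $\Theta_{f,T}\Theta_{f,T}^{\ast}$ is the orthogonal projection onto $\Theta_{f,T}(\HH^2(f)\otimes \cD_{f,T^{\ast}})$. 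The identity above therefore forces the orthogonal decomposition
\begin{equation*}
\HH^2(f)\otimes \cD_{f,T}=K_{f,T}(\cH)\oplus \Theta_{f,T}\bigl(\HH^2(f)\otimes \cD_{f,T^{\ast}}\bigr),
\end{equation*}
and comparing this with the definition of $\mathbf{H}_{f,T}$ yields $K_{f,T}(\cH)=\mathbf{H}_{f,T}$.

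Finally, since $K_{f,T}$ is an isometry from $\cH$ onto $\mathbf{H}_{f,T}$, the intertwining $K_{f,T}T_i^{\ast}=(M_{Z_i}^{\ast}\otimes I_{\cD_{f,T}})K_{f,T}$ from Theorem \ref{pure-dilation}, combined with the fact that $\mathbf{H}_{f,T}$ is co-invariant under each $M_{Z_i}\otimes I_{\cD_{f,T}}$, gives
\begin{equation*}
T_i=K_{f,T}^{\ast}\bigl(M_{Z_i}\otimes I_{\cD_{f,T}}\bigr)K_{f,T}=K_{f,T}^{\ast}\bigl[P_{\mathbf{H}_{f,T}}(M_{Z_i}\otimes I_{\cD_{f,T}})\vert_{\mathbf{H}_{f,T}}\bigr]K_{f,T}
\end{equation*}
for $i=1,\ldots,n$, which is the asserted unitary equivalence, with $K_{f,T}\colon \cH\to \mathbf{H}_{f,T}$ as the implementing unitary. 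The main obstacle is step one — confirming that $\Theta_{f,T}$ is isometric — since one must be careful that the formal Fourier series defining $\Theta_{f,T}$ genuinely represents a bounded operator and that the transfer through $U\otimes I$ preserves the isometric property; this is where invoking \eqref{TT} and the pure row-contraction theory from \cite{Po-charact} (rather than a direct series computation with the right multipliers $\Lambda_i$) makes the argument go through cleanly.
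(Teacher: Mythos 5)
Your proposal is correct and follows essentially the same route as the paper's proof: both rest on the factorization $I-\Theta_{f,T}\Theta_{f,T}^{*}=K_{f,T}K_{f,T}^{*}$ from Theorem \ref{factor}, the fact that $K_{f,T}$ is an isometry with co-invariant range (Theorem \ref{pure-dilation}), and the identification \eqref{TT} with the characteristic function of the pure row contraction $f(T)$ to get that $\Theta_{f,T}$ is an isometry. The only difference is cosmetic ordering — you establish the isometry of $\Theta_{f,T}$ first, whereas the paper records it at the end — and this does not change the substance of the argument.
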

\begin{proof}
According to  Theorem \ref{pure-dilation},  the noncommutative
Poisson kernel $K_{f,T}:\cH\to \HH^2(f)\otimes {\cD_{f,T}}$
 is an isometry, $K_{f,T}\cH$ is  a co-invariant  subspace under
$M_{Z_i}\otimes  I_{\cD_{f,T}}$, $i=1,\dots, n$,   and
\begin{equation}\label{Pois2}
T_i=K_{f,T}^*(M_{Z_i}\otimes  I_{{\cD_{f,T}}})K_{f,T},\quad i=1,\ldots, n.
\end{equation}
Hence, $K_{f,T}K_{f,T}^*$ is the orthogonal projection of
$\HH^2(f)\otimes {\cD_{f,T}}$ onto $K_{f,T}\cH$. Using relation
\eqref{Pois2}, we deduce  that $K_{f,T}K_{f,T}^*$ and
$\Theta_{f,T}\Theta_{f,T}^*$ are mutually orthogonal projections
such that
$$
K_{f,T}K_{f,T}^*+\Theta_{f,T}\Theta_{f,T}^*=
I_{\HH^2(f)\otimes {\cD_{f,T}}}.
$$
This implies
$$
K_{f,T}\cH=(\HH^2(f)\otimes \cD_{f,T})\ominus
\Theta_{f,T}(\HH^2(f)\otimes \cD_{f,T^*}).
$$
Taking into account that $K_{f,T}$ is an isometry, we identify the
Hilbert space $\cH$ with ${\bf H}_{f,T}:=K_{f,T}\cH$.  Using again
relation \eqref{Pois2}, we deduce that $T$ is unitarily equivalent
to the $n$-tuple given by  relation \eqref{HH}. That $\Theta_{f,T}$
is an isometry follows from relation \eqref{TT} and the fact that
the characteristic function of a pure row contraction is an isometry
\cite{Po-charact}.
 The proof is complete.
\end{proof}
Let  $\Phi:\HH^2(f)\otimes \cK_1 \to \HH^2(f)\otimes \cK_2$ and
$\Phi':\HH^2(f)\otimes \cK_1' \to \HH^2(f)\otimes \cK_2'$ be two
multi-analytic
 operators with respect to $M_{Z_1},\ldots, M_{Z_n}$. We say that $\Phi$ and $\Phi'$ coincide
if there are two  unitary multi-analytic operators $W_j:
\HH^2(f)\otimes \cK_j\to \HH^2(f)\otimes\cK_j'$, $j=1,2$,  with
respect to $M_{Z_1},\ldots, M_{Z_n}$ such that $\Phi' W_1=W_2\Phi$.
Since $W_j(M_{Z_i}\otimes I_{\cK_j})=(M_{Z_i}\otimes
I_{\cK_j'})W_j$, $ i=1,\ldots, n, $ we also have
$W_j(M_{Z_i}^*\otimes I_{\cK_j})=(M_{Z_i}^*\otimes I_{\cK_j'})W_j$,
$ i=1,\ldots, n$. Taking into account that $C^*(M_{Z_1},\ldots,
M_{Z_n})$ is irreducible (see Theorem \ref{irreducible}), we
conclude that $
 W_j =I_{\HH^2(f)}\otimes \tau_j$, $ j=1,2,
$
for some unitary operators $\tau_j\in B(\cK_j, \cK_j')$.

 The next result shows that the   characteristic function is a
complete unitary invariant for pure $n$-tuple of operators  in
$\BB_f(\cH)$.

\begin{theorem}\label{u-inv}  Let $f=(f_1,\ldots, f_n)$ be  an  $n$-tuple of formal power series with
 the model property  and
let  $T=(T_1,\ldots, T_n)\in \BB_f(\cH)$  and  $T'=(T_1',\ldots,
T_n')\in \BB_f(\cH')$ be two pure $n$-tuples  of operators. Then $T$
and $T'$ are  unitarily equivalent if and only if their
characteristic functions $\Theta_{f,T}$  and $\Theta_{f,T'}$
coincide.
\end{theorem}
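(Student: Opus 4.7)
The plan is to reduce to the characteristic function machinery for row contractions via the identity \eqref{TT}, which says
\[
\Theta_{f,T}=(U^*\otimes I_{\cD_{f,T}})\,\Theta_{f(T)}\,(U\otimes I_{\cD_{f,T^*}}),
\]
and to exploit the functional model of Theorem \ref{funct-model}. Since $T$ is pure in $\BB_f(\cH)$ if and only if $f(T)=[f_1(T),\dots,f_n(T)]$ is a pure row contraction, the defect operators $\Delta_{f,T}$, $\Delta_{f,T^*}$ and the defect spaces $\cD_{f,T}$, $\cD_{f,T^*}$ depend only on $f(T)$, and the same holds for $T'$.

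For the easy implication, assume $V:\cH\to\cH'$ is a unitary with $VT_iV^*=T_i'$, $i=1,\dots,n$. Then $V\, f_\alpha(T)V^*=f_\alpha(T')$ for every $\alpha\in\FF_n^+$ (by continuity/SOT-limits of polynomials), so $V\Delta_{f,T}V^*=\Delta_{f,T'}$ and $V^{(n)}\Delta_{f,T^*}{V^{(n)}}^*=\Delta_{f,T'^*}$, where $V^{(n)}:=V\oplus\cdots\oplus V$. Define unitaries $\tau\colon\cD_{f,T}\to\cD_{f,T'}$ and $\tau'\colon\cD_{f,T^*}\to\cD_{f,T'^*}$ as the restrictions of $V$ and $V^{(n)}$. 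Plugging into the formal Fourier expression of $\Theta_{f,T}$ and using $V f_i(T)^* V^*=f_i(T')^*$, one reads off that
\[
(I_{\HH^2(f)}\otimes \tau)\,\Theta_{f,T}=\Theta_{f,T'}\,(I_{\HH^2(f)}\otimes \tau'),
\]
i.e.\ the two characteristic functions coincide in the sense defined before the theorem.

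For the converse, assume the characteristic functions coincide, so there exist unitaries $\tau_1\colon\cD_{f,T^*}\to\cD_{f,T'^*}$ and $\tau_2\colon\cD_{f,T}\to\cD_{f,T'}$ with $\Theta_{f,T'}(I\otimes\tau_1)=(I\otimes\tau_2)\Theta_{f,T}$. The operator $W_2:=I_{\HH^2(f)}\otimes\tau_2$ is unitary and intertwines $M_{Z_i}\otimes I_{\cD_{f,T}}$ with $M_{Z_i}\otimes I_{\cD_{f,T'}}$ for every $i$. Moreover, the intertwining identity shows that
\[
W_2\,\bigl[\Theta_{f,T}(\HH^2(f)\otimes\cD_{f,T^*})\bigr]=\Theta_{f,T'}(\HH^2(f)\otimes\cD_{f,T'^*}),
\]
so $W_2$ restricts to a unitary $\omega\colon{\bf H}_{f,T}\to {\bf H}_{f,T'}$ between the model spaces of Theorem \ref{funct-model}. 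Since $W_2$ intertwines $M_{Z_i}\otimes I$ on both sides, compressing to the co-invariant subspaces ${\bf H}_{f,T}$ and ${\bf H}_{f,T'}$ yields
\[
\omega\bigl(P_{{\bf H}_{f,T}}(M_{Z_i}\otimes I_{\cD_{f,T}})|_{{\bf H}_{f,T}}\bigr)=\bigl(P_{{\bf H}_{f,T'}}(M_{Z_i}\otimes I_{\cD_{f,T'}})|_{{\bf H}_{f,T'}}\bigr)\omega,
\]
and Theorem \ref{funct-model} identifies both sides with $T$ and $T'$ respectively, giving the required unitary equivalence.

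The main subtlety I expect is verifying that the unitaries $W_1,W_2$ implementing the coincidence really must have the tensor form $I_{\HH^2(f)}\otimes\tau_j$; this is guaranteed by the remarks preceding the theorem, which in turn rely on the irreducibility of $C^*(M_{Z_1},\dots,M_{Z_n})$ from Theorem \ref{irreducible}. With that in hand the argument is purely bookkeeping, because the factorization \eqref{fa} of Theorem \ref{factor} forces $K_{f,T}K_{f,T}^*$ to be precisely the projection onto ${\bf H}_{f,T}$, so no ambiguity arises in identifying the model subspaces.
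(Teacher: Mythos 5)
Your proposal is correct and follows essentially the same route as the paper: the forward direction restricts the implementing unitary to the defect spaces and reads off the coincidence from the Fourier representation, while the converse uses that $I_{\HH^2(f)}\otimes\tau_2$ carries $\Theta_{f,T}(\HH^2(f)\otimes\cD_{f,T^*})$ onto $\Theta_{f,T'}(\HH^2(f)\otimes\cD_{f,T'^*})$, hence restricts to a unitary between the model spaces that intertwines the compressions, and then invokes Theorem \ref{funct-model}. The point you flag about the implementing unitaries having tensor form is indeed handled by the remarks before the theorem via irreducibility, exactly as in the paper.
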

\begin{proof}
Assume that $T$ and $T'$ are unitarily equivalent and let $W:\cH\to
\cH'$ be a unitary operator such that $T_i=W^*T_i'W$ for
$i=1,\ldots, n$.  Note that
$$
W\Delta_{f,T}=\Delta_{f,T'}W \quad \text{ and }\quad
(\oplus_{i=1}^n W)\Delta_{f,T^*}=\Delta_{f,T'^*}(\oplus_{i=1}^n W).
$$
Consider the unitary operators $\tau$ and $\tau'$ defined by
$$\tau:=W|_{\cD_{f,T}}:\cD_{f,T}\to \cD_{f,T'} \quad \text{ and }\quad
\tau':=(\oplus_{i=1}^n W)|_{\cD_{f,T^*}}:\cD_{f,T*}\to \cD_{f,T'^*}.
$$
Using the definition of the characteristic function,  we deduce that
$ (I_{\HH^2(f)}\otimes
\tau)\Theta_{f,T}=\Theta_{f,T'}(I_{\HH^2(f)}\otimes \tau'). $

Conversely, assume that the characteristic functions  of $T$ and
$T'$ coincide. Then there exist unitary operators $\tau:\cD_{f,T}\to
\cD_{f,T'}$ and $\tau_*:\cD_{f,T^*}\to \cD_{{f,T'}^*}$ such that
\begin{equation}\label{com}
(I_{\HH^2(f)}\otimes
\tau)\Theta_{f,T}=\Theta_{f,T'}(I_{\HH^2(f)}\otimes \tau_*).
\end{equation}
Hence, we deduce that $ V:=(I_{\HH^2(f)}\otimes
\tau)|_{\HH_{f,T}}:\HH_{f,T}\to \HH_{f,T'} $ is a unitary operator,
where $\HH_{f,T}$ and $ \HH_{f,T'}$ are the model spaces for  the
$n$-tuples $T$ and $T'$, respectively, as defined in  Theorem
\ref{funct-model}. Since
$$
(M_{Z_i}^*\otimes I_{\cD_{f,T}})(I_{\HH^2(f)}\otimes \tau^*)=
(I_{\HH^2(f)}\otimes \tau^*)(M_{Z_i}^*\otimes I_{\cD_{f,T'}}),\qquad
i=1,\ldots, n,
$$
and $\HH_{f,T}$ (resp. $ \HH_{f,T'}$) is a co-invariant subspace under $M_{Z_i}\otimes I_{\cD_{f,T}}$ (resp. $M_{Z_i}\otimes I_{\cD_{f,T'}}$), \ $i=1,\ldots, n$, we deduce that
$$
\left[(M_{Z_i}^*\otimes I_{\cD_{f,T}})|_{\HH_{f,T}}\right] V^*=
V^*\left[(M_{Z_i}^*\otimes
I_{\cD_{f,T'}})|_{\HH_{f,T'}}\right],\qquad i=1,\ldots, n.
$$
Consequently, we obtain
$$
V\left[P_{\HH_{f,T}}\left(M_{Z_i}\otimes
I_{\cD_{f,T}}\right)|_{\HH_{f,T}}\right]=
\left[P_{\HH_{f,T'}}\left(M_{Z_i}\otimes
I_{\cD_{f,T'}}\right)|_{\HH_{f,T'}}\right] V,\qquad i=1,\ldots, n.
$$
Now, using Theorem \ref{funct-model}, we conclude that $T$ and $T'$
are unitarily equivalent. The proof is complete.
\end{proof}

\begin{theorem}\label{new-inv} Let $f=(f_1,\ldots, f_n)$ be  an  $n$-tuple of formal power series with
 the model property   and  let $(M_{Z_1},\ldots, M_{Z_n})$ be the universal
 model associated with  the noncommutative domain $\BB_f$.
 Then the following statements hold.
 \begin{enumerate}
\item[(i)]
If $\cM_1, \cM_2\subset \HH^2(f)$ are invariant subspaces under the
operators $M_{Z_1},\ldots, M_{Z_n}$, then  the $n$-tuple
$(P_{\cM_1^\perp}M_{Z_1} |\cM_1^\perp,\ldots, P_{\cM_1^\perp}M_{Z_n}
|\cM_1^\perp)$ is equivalent to $(P_{\cM_2^\perp}M_{Z_1}
|\cM_2^\perp,\ldots, P_{\cM_2^\perp}M_{Z_n} |\cM_2^\perp)$ if and
only if $\cM_1=\cM_2$.
\item[(ii)]
  If $\cM\subseteq  \HH^2(f)$ is an invariant subspace under $M_{Z_1},\ldots, M_{Z_n}$, and
$$
T:=(T_1,\ldots, T_n),\qquad T_i:=P_{\cM^\perp}M_{Z_i} |\cM^\perp,
\qquad i=1,\ldots, n,
$$
then
$
\cM=\Theta_{f,T}\left(\HH^2(f)\otimes \cD_{f,T^*}\right),
$
where $\Theta_{f,T}$ is the  characteristic function of $T$.
 \end{enumerate}
 \end{theorem}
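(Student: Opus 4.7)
The approach is to realise the inclusion $\cM^\perp\hookrightarrow\HH^2(f)$ and the Poisson embedding $K_{f,T}\colon\cM^\perp\to\HH^2(f)\otimes\cD_{f,T}$ as two minimal dilations of $T$, and then transport $\cM$ across the resulting intertwining unitary. I first note that $T=(T_1,\ldots,T_n)\in\BB_f(\cM^\perp)$ and is pure: since $\cM^\perp$ is co-invariant under the pure $n$-tuple $M_Z\in\BB_f(\HH^2(f))$ (see the proof of Theorem \ref{model}), the standard compression argument yields both membership in $\BB_f$ and purity. Assuming $\cM\neq\HH^2(f)$ (the opposite case is trivial), I claim that $(M_{Z_1},\ldots,M_{Z_n})$ on $\HH^2(f)$ is itself a \emph{minimal} dilation of $T$ along $\iota\colon\cM^\perp\hookrightarrow\HH^2(f)$. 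Indeed, if $1\in\cM$ then invariance gives $\CC[Z_1,\ldots,Z_n]\subseteq\cM$, hence $\cM=\HH^2(f)$ by Proposition \ref{dense}; thus $1\notin\cM$, so $P_\CC\cM^\perp\neq\{0\}$. Theorem \ref{co-inv} (with the cyclic space $\CC$) then gives $\bigvee_{\alpha\in\FF_n^+}M_{Z_\alpha}(\cM^\perp)=\HH^2(f)$, which is the required minimality.

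\textbf{Finishing part (ii).} By Theorem \ref{pure-dilation}, the Poisson kernel $K_{f,T}\colon\cM^\perp\to\HH^2(f)\otimes\cD_{f,T}$ provides the canonical minimal dilation of $T$. Uniqueness of the minimal dilation (Theorem \ref{pure-dilation}) furnishes a unitary operator $W\colon\HH^2(f)\to\HH^2(f)\otimes\cD_{f,T}$ satisfying $WM_{Z_i}=(M_{Z_i}\otimes I_{\cD_{f,T}})W$ for $i=1,\ldots,n$ and $W|_{\cM^\perp}=K_{f,T}$. Invoking the factorisation in Theorem \ref{factor},
$$I_{\HH^2(f)\otimes\cD_{f,T}}-\Theta_{f,T}\Theta_{f,T}^{*}=K_{f,T}K_{f,T}^{*}=W(I-P_{\cM})W^{*},$$
whence $\Theta_{f,T}\Theta_{f,T}^{*}=WP_{\cM}W^{*}$. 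Because $\Theta_{f,T}$ is an isometry (Theorem \ref{funct-model}), its range coincides with $W(\cM)$; under the unitary identification $W$ this is precisely the desired equality $\cM=\Theta_{f,T}(\HH^2(f)\otimes\cD_{f,T^{*}})$.

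\textbf{Plan for part (i), and the main obstacle.} The ``if'' direction is immediate, so I focus on the converse. Suppose $T^{(1)},T^{(2)}$, the compressions to $\cM_1^\perp,\cM_2^\perp$, are unitarily equivalent via $U\colon\cM_1^\perp\to\cM_2^\perp$; again discard the trivial case $\cM_k=\HH^2(f)$. Viewing $M_Z$ on $\HH^2(f)$ as a minimal dilation of $T^{(1)}$ simultaneously through $\cM_1^\perp\hookrightarrow\HH^2(f)$ and through $\cM_1^\perp\xrightarrow{U}\cM_2^\perp\hookrightarrow\HH^2(f)$, the uniqueness clause of Theorem \ref{pure-dilation} produces a unitary $\tilde U\colon\HH^2(f)\to\HH^2(f)$ with $\tilde U M_{Z_i}=M_{Z_i}\tilde U$ for all $i$ and $\tilde U|_{\cM_1^\perp}=U$. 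Then $\tilde U$ commutes with $C^{*}(M_{Z_1},\ldots,M_{Z_n})$, which is irreducible by Theorem \ref{irreducible}, so $\tilde U=\lambda I$ for some scalar $\lambda$ of modulus one; hence $\cM_2^\perp=\tilde U(\cM_1^\perp)=\cM_1^\perp$, i.e.\ $\cM_1=\cM_2$. The main obstacle throughout the argument is establishing the minimality of $M_Z$ as a dilation of the compression, for which Theorem \ref{co-inv} together with Proposition \ref{dense} is essential; once this is in hand, the remaining deductions are routine consequences of the uniqueness of minimal dilations, the factorisation identity of Theorem \ref{factor}, and the irreducibility statement of Theorem \ref{irreducible}.
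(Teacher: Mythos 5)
Your proof is correct, and its overall architecture matches the paper's: purity of the compression, minimality of $(M_{Z_1},\ldots,M_{Z_n})$ as a dilation of $T$, uniqueness of minimal dilations (Theorem \ref{pure-dilation}), irreducibility of $C^*(M_{Z_1},\ldots,M_{Z_n})$, and the factorization $I-\Theta_{f,T}\Theta_{f,T}^*=K_{f,T}K_{f,T}^*$. The differences lie in how two key steps are executed. For minimality, the paper computes $\Delta_{f,T}=P_{\cM^\perp}P_\CC|_{\cM^\perp}$, deduces $\rank\Delta_{f,T}=1$, and evaluates the Poisson kernel explicitly on $\cM^\perp$ to exhibit $K_{f,T}$ as the inclusion $\cM^\perp\hookrightarrow\HH^2(f)$ (up to the identification $\cD_{f,T}\cong\CC$); you instead show abstractly that the inclusion is itself a minimal dilation, via Theorem \ref{co-inv} together with the observation that $1\notin\cM$ unless $\cM=\HH^2(f)$ (Proposition \ref{dense}), and then invoke uniqueness to produce the intertwining unitary. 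Your route is softer and avoids the explicit kernel computation, at the cost of not exhibiting the rank-one defect directly. Second, the paper derives (ii) from the functional model of Theorem \ref{funct-model} combined with part (i), so its (ii) logically depends on (i); you read the identity $\Theta_{f,T}\Theta_{f,T}^*=WP_{\cM}W^*$ off Theorem \ref{factor} directly, which makes your (ii) self-contained. The only point you leave implicit --- that the equality $\cM=\Theta_{f,T}(\HH^2(f)\otimes\cD_{f,T^*})$ presupposes identifying $\HH^2(f)\otimes\cD_{f,T}$ with $\HH^2(f)$, which is harmless since irreducibility forces $W=I_{\HH^2(f)}\otimes\gamma$ for a unitary $\gamma$, so $W(\cM)=\cM\otimes\cD_{f,T}$ --- is glossed over to the same degree in the paper's own write-up.
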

\begin{proof}
Assume the hypotheses of item (ii). Since $f=(f_1,\ldots, f_n)$ is
an $n$-tuple of formal power series with
 the model property, $M_{f_i}=f_i(M_{Z_1},\ldots, M_{Z_n})$, where
$(M_{Z_1},\ldots, M_{Z_n})$ is either in the convergence set
$\cC_f^{SOT}(\HH^2(f))$ or $\cC_f^{rad}(\HH^2(f))$. Since
$\cM^\perp$ is invariant under  $M_{Z_1}^*,\ldots, M_{Z_n}^*$, we
deduce that
\begin{equation*}
\begin{split}
\Delta_{f,T}&=I_{\cM^\perp}-\sum_{i=1}^nf_i(T_1,\ldots, T_n)f_i(T_1,\ldots, T_n)^*\\
&=P_{\cM^\perp}(I_{\HH^2(f)}-M_{f_i}M_{f_i}^*)|_{\cM^\perp}
=P_{\cM^\perp} P_\CC|_{\cM^\perp}.
\end{split}
\end{equation*}
Hence, $\rank \Delta_{f,T}\leq 1$. On the other hand, since $[M_{f_1},\ldots, M_{f_n}]$ is
 a pure row contraction, so is $[f_1(T),\ldots, f_n(T)]$. Therefore, $T$ is pure
 $n$-tuple in $\BB_f(\cM^\perp)$ and $\rank \Delta_{f,T}\neq 0$, which implies
$\rank \Delta_{f,T}= 1$. Therefore, we can identify  the subspace
$\cD_{f,T}$ with $\CC$. The Poisson kernel $K_{f,T}:\cM^\perp\to
\HH^2(f)\otimes \cD_{f,T}$ can be identified  with the injection of
$\cM^\perp$ into $\HH^2(f)$, via a unitary operator from
$\HH^2(f)\otimes \cD_{f,T}$ to $\HH^2(f)$. Indeed, note that if
$\sum_{\alpha}c_\alpha f_\alpha\in \cM^\perp\subset \HH^2(f)$, then,
taking into account that $\Delta_{f,T}=P_{\cM^\perp}
P_\CC|_{\cM^\perp}$ and $[f(T)]_\alpha=P_{\cM^\perp}
M_{f_\alpha}|_{\cM^\perp}$, we have
\begin{equation*}
\begin{split}
K_{f,T}\left( \sum_{\alpha}c_\alpha f_\alpha\right) &=\sum_{\beta\in
\FF_n^+}f_\beta\otimes P_{\cM^\perp} P_\CC|_{\cM^\perp}
M_{f_\beta}^*\left( \sum_{\alpha}c_\alpha
f_\alpha\right)=\sum_{\beta\in \FF_n^+}c_\beta f_\beta \otimes
P_{\cM^\perp}(1),
\end{split}
\end{equation*}
 which implies our assertion. As a consequence, we deduce that  the $n$-tuple $(T_1,\ldots, T_n)$ is unitarily equivalent to $(K_{f,T}^* M_{Z_1} K_{f,T},\ldots, K_{f,T}^* M_{Z_n} K_{f,T})$. Due to Theorem \ref{pure-dilation}, the $n$-tuple
 $(M_{Z_1},\ldots, M_{Z_n})$ is  the minimal dilation of $(T_1,\ldots, T_n)$.

 Now, using this result under the hypotheses of item (i) and the uniqueness of the minimal dilation
  (see Theorem \ref{pure-dilation}), we obtain that
 the $n$-tuple
$(P_{\cM_1^\perp}M_{Z_1} |\cM_1^\perp,\ldots, P_{\cM_1^\perp}M_{Z_n}
|\cM_1^\perp)$ is equivalent to $(P_{\cM_2^\perp}M_{Z_1}
|\cM_2^\perp,\ldots, P_{\cM_2^\perp}M_{Z_n} |\cM_2^\perp)$ if and
only if
there exists a unitary operator $W:\HH^2(f)\to \HH^2(f)$ such that
$WM_{Z_i}=M_{Z_i}W$, $i=1,\ldots,n$, and $W(\cM_1^\perp)=\cM_2^\perp$.
 Hence we deduce that  $WM_{Z_i}^*=M_{Z_i}^*W$, $i=1,\ldots,n$.
  Since $C^*(M_{Z_1},\ldots, M_{Z_n})$ is irreducible (see Theorem \ref{irreducible}), $W$ is a scalar multiple of the identity. Therefore, we must have
$\cM_1=\cM_2$, which proves part (i).

To prove part (ii), note that, due to Theorem \ref{funct-model}, we
have
$$
{\bf H}_{f,T}=\HH^2(f)\ominus \Theta_{f,T}\left(\HH^2(f)\otimes \cD_{f,T^*}\right)
$$ and  $T=(T_1,\ldots, T_n)$ is unitarily equivalent to
$
\left(P_{{\bf H}_{f,T}}M_{Z_1}|_{{\bf H}_{f,T}},\ldots, P_{{\bf
H}_{f,T}}M_{Z_n}|_{{\bf H}_{f,T}}\right).
$
Using part (i), we deduce that $ {\bf H}_{f,T}=\cM^\perp$ and therefore
$\cM=\Theta_{f,T}\left(\HH^2(f)\otimes \cD_{f,T^*}\right)$.
This completes the proof.
\end{proof}

{\bf The commutative case.}  Assume that   $f=(f_1,\ldots, f_n)$ has the model property. According to Theorem \ref{symm-Hardy}
and Theorem \ref{W(L)}, if $J_c$ is the WOT-closed two-sided ideal
of the Hardy algebra $H^\infty(\BB_f)$ generated by the commutators
$$ M_{Z_i}M_{Z_j}-M_{Z_j}M_{Z_i},\qquad i,j=1,\ldots, n,
$$
then $\cN_{J_c}=\HH^2_s(f)$, the symmetric Hardy  space associated
with $\BB_f$. Moreover, $\HH^2_s(f)$   can be identified with the
Hilbert space $H^2(\BB_{f}^<(\CC))$ of  holomorphic functions on
$\BB_{f}^<(\CC)$,
 namely, the reproducing kernel Hilbert space
with reproducing kernel $\Lambda_f:\BB_{f}^<(\CC)\times
\BB_{f}^<(\CC)\to \CC$ defined by
$$
\Lambda_f(\mu,\lambda):=\frac{1}{1-\sum_{ i=1}^n   f_i(\mu)
\overline{f_i(\lambda)}},\qquad \lambda,\mu\in \BB_{f}^<(\CC).
$$
 The algebra $P_{H_s^2(f)} \HH^\infty(\BB_f)|_{\HH_s^2(f)}$
  coincides with the WOT-closed algebra  generated by
    the operators $L_i:=P_{\HH_s^2(f)} M_{Z_i}|_{\HH_s^2(f)}$, $i=1,\dots, n$, and
 can be identified
 with the algebra of  all multipliers of the  Hilbert space $H^2(\BB_{f}^<(\CC))$.
 Under this identification the  operators $L_1,\ldots, L_n$ become the
  multiplication operators $M_{z_1},\ldots, M_{z_n}$ by the coordinate
   functions $z_1,\ldots, z_n$, respectively.
Now, let $T:=(T_1,\ldots, T_n)\in \BB_f(\cH)$ be  such that $
T_iT_j=T_jT_i, \quad i,j=1,\ldots, n. $ Under the above-mentioned
identifications, we define the characteristic function of $T$ to be
  the multiplier
$\Theta_{f,J_c,T}:H^2(\BB_{f}^<(\CC))\otimes \cD_{f,T^*}\to
H^2(\BB_{f}^<(\CC))\otimes \cD_{f,T} $ given by    the
operator-valued analytic function on  $\BB_{f}^<(\CC)$
\begin{equation*}
\begin{split}
\Theta_{f,J_c,T}(z):=
  - f(T)+
 \Delta_{f,T}\left(I-\sum_{i=1}^n f_i(z) f_i(T)^*\right)^{-1}
\left[f_1(z)I_\cH,\ldots, f_n(z) I_\cH \right] \Delta_{f,T^*}
\end{split}
\end{equation*}
for  $z=(z_1,\ldots, z_n)\in \BB_{f}^<(\CC)$. All the results  of
this section can be written in this commutative setting.

\section{Curvature invariant on $\BB_f(\cH)$}

In this section,  we
     introduce a curvature invariant on  the noncommutative domain $\BB_f(\cH)$ and show that it is
         a complete numerical
      invariant for the finite rank submodules of the free $\BB_f$-Hilbert module $\HH^2(f)\otimes \cK$,
      where $\cK$ is finite dimensional. We also provide an index type formula for the curvature in terms
of the characteristic function.

Let $f=(f_1,\ldots, f_n)$ be  an  $n$-tuple of formal power series
with
 the model property   and  let $(M_{Z_1},\ldots, M_{Z_n})$ be the universal
 model associated with  the noncommutative domain $\BB_f$.
Let $T=(T_1,\ldots, T_n)\in \BB_f(\cH)$  be such that
$$\rank_f(T):= \rank\left(I-\sum_{i=1}^n f_i(T)f_i(T)^*\right)^{1/2}<\infty.
$$
We define the  curvature of $T$    by setting
$$
\text{\rm curv}_f(T):=\lim_{m\to \infty} \frac{\text{\rm
trace}~[K_{f,T}^*(Q_{\leq m}\otimes I_{\cD_{f,T}})K_{f,T}]} {
\text{\rm trace}~[K_{f,M_Z}^*(Q_{\leq m})K_{f,M_Z}]},
$$
where $Q_{\leq m}$, $m=0,1,\ldots,$ is  the orthogonal projection of
$\HH^2(f)$ on the linear span of the formal power series $f_\alpha$,
$\alpha\in \FF_n^+$ with $|\alpha|\leq m$. In what follows, we show
that the limit exists and we provide a formula for the curvature in
terms of the characteristic function. We denote by $Q_{ m}$,
$m=0,1,\ldots,$  the orthogonal projection of $\HH^2(f)$ on the
linear span of the formal power series $f_\alpha$, $\alpha\in
\FF_n^+$ with $|\alpha|= m$.

\begin{theorem}\label{curve} Let $f=(f_1,\ldots, f_n)$ be  an  $n$-tuple of formal power series
with
 the model property  and let $T=(T_1,\ldots, T_n)\in \BB_f(\cH)$  be such that
$\rank_f(T)<\infty$. Then
$$
\text{\rm curv}_f(T)=\rank_f(T)-\text{\rm trace}~[\Theta_{f,T}
 (Q_0\otimes I_{\cD_{f,T^*}}) \Theta_{f,T}^*N],
$$
where $\Theta_{f,T}$ is the characteristic function of $T$ and
$$
N:=\sum_{k=0}^\infty\frac{1}{n^k} Q_k\otimes I_{\cD_{f,T}}.
$$
\end{theorem}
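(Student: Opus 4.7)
My strategy is to reduce the computation to the Fock space setting via the canonical unitary $U:\HH^2(f)\to F^2(H_n)$, $Uf_\alpha=e_\alpha$, and then exploit the factorization in Theorem \ref{factor} together with a Fourier expansion of the characteristic function.

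First, I would rewrite the numerator using Theorem \ref{factor}: since $\dim\cD_{f,T}=\rank_f(T)<\infty$ and $Q_{\leq m}$ is finite-rank, every operator under the trace is trace-class, and cyclicity of the trace yields
\begin{equation*}
\text{\rm tr}[K_{f,T}^*(Q_{\leq m}\otimes I_{\cD_{f,T}})K_{f,T}]
= \text{\rm tr}[(Q_{\leq m}\otimes I_{\cD_{f,T}})(I-\Theta_{f,T}\Theta_{f,T}^*)]
= d_m\rank_f(T)-\text{\rm tr}[(Q_{\leq m}\otimes I)\Theta_{f,T}\Theta_{f,T}^*],
\end{equation*}
where $d_m:=\dim(Q_{\leq m}\HH^2(f))=1+n+\cdots+n^m=(n^{m+1}-1)/(n-1)$. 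For the denominator, I would show that $K_{f,M_Z}=I_{\HH^2(f)}$: indeed, $\Delta_{f,M_Z}=(I-\sum_i M_{f_i}M_{f_i}^*)^{1/2}=Q_0$ (the projection onto $\CC\cdot f_0$) since $U(\sum M_{f_i}M_{f_i}^*)U^*=\sum S_iS_i^*=I-P_\CC$; a direct computation using $M_{f_\alpha}^*f_\beta=f_\gamma$ when $\beta=\alpha\gamma$ (and $0$ otherwise) then gives $K_{f,M_Z}h=h$ for all $h\in\HH^2(f)$. Hence the denominator equals $d_m$.

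Next, using $\Theta_{f,T}=(U^*\otimes I_{\cD_{f,T}})\Theta_{f(T)}(U\otimes I_{\cD_{f,T^*}})$ from equation \eqref{TT}, I may work with $\Theta:=\Theta_{f(T)}$ acting on $F^2(H_n)\otimes\cD_{f,T^*}$, which is multi-analytic with respect to $S_1,\ldots,S_n$. This yields the Fourier-type expansion $\Theta(1\otimes d)=\sum_{\gamma\in\FF_n^+}e_\gamma\otimes\theta_\gamma d$ with $\theta_\gamma\in B(\cD_{f,T^*},\cD_{f,T})$, from which $\Theta^*(e_\alpha\otimes d)=\sum_{\alpha=\beta\gamma}e_\beta\otimes\theta_\gamma^*d$. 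A direct count then gives
\begin{equation*}
\text{\rm tr}[(P_{\leq m}\otimes I_{\cD_{f,T}})\Theta\Theta^*]
=\sum_{k=0}^{m}\Bigl(\sum_{|\gamma|=k}\|\theta_\gamma\|_{HS}^2\Bigr)\frac{n^{m-k+1}-1}{n-1},
\end{equation*}
because each $\gamma$ with $|\gamma|=k$ is a suffix of exactly $\sum_{j=k}^{m}n^{j-k}=(n^{m-k+1}-1)/(n-1)$ many $\alpha$'s with $|\alpha|\leq m$. Setting $b_k:=\sum_{|\gamma|=k}\|\theta_\gamma\|_{HS}^2$, the ratio becomes $\sum_{k=0}^m c_{m,k}b_k$ with $c_{m,k}:=(n^{m-k+1}-1)/(n^{m+1}-1)$.

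The third step is the passage to the limit. A short calculation shows $c_{m,k}\uparrow n^{-k}$ as $m\to\infty$ (in particular $c_{m,k}\leq n^{-k}$), while $\sum_{k=0}^m c_{m,k}b_k\leq\rank_f(T)$ since $\text{\rm tr}[(P_{\leq m}\otimes I)\Theta\Theta^*]\leq d_m\rank_f(T)$; Fatou gives $\sum_k b_k/n^k\leq\rank_f(T)<\infty$, and then dominated convergence gives $\lim_{m\to\infty}\sum_{k=0}^m c_{m,k}b_k=\sum_{k=0}^\infty b_k/n^k$. In particular the limit in the definition of $\text{\rm curv}_f(T)$ exists. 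Finally I would compute the right-hand side directly: since $(Q_0\otimes I_{\cD_{f,T^*}})\Theta^*(e_\alpha\otimes d)=1\otimes\theta_\alpha^*d$, we obtain $\Theta(Q_0\otimes I)\Theta^*(e_\alpha\otimes d)=\sum_\gamma e_\gamma\otimes\theta_\gamma\theta_\alpha^*d$, so
\begin{equation*}
\text{\rm tr}[\Theta_{f,T}(Q_0\otimes I_{\cD_{f,T^*}})\Theta_{f,T}^*N]
=\sum_{\alpha\in\FF_n^+}\frac{1}{n^{|\alpha|}}\text{\rm tr}(\theta_\alpha\theta_\alpha^*)
=\sum_{k=0}^\infty\frac{b_k}{n^k}.
\end{equation*}
Combining the three steps yields $\text{\rm curv}_f(T)=\rank_f(T)-\sum_k b_k/n^k=\rank_f(T)-\text{\rm tr}[\Theta_{f,T}(Q_0\otimes I_{\cD_{f,T^*}})\Theta_{f,T}^*N]$.

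The main technical obstacle is justifying the exchange of limit and sum in the Cesàro-type average, i.e.\ showing $\sum_k b_k/n^k<\infty$; once one has the uniform bound $\sum_{k\leq m}c_{m,k}b_k\leq\rank_f(T)$ coming from the factorization $I-\Theta\Theta^*=KK^*$, both finiteness and convergence follow from elementary arguments, and everything else is bookkeeping with the Fourier coefficients $\theta_\gamma$.
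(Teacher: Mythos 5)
Your proof is correct and follows essentially the same route as the paper: both rest on the factorization $I-\Theta_{f,T}\Theta_{f,T}^*=K_{f,T}K_{f,T}^*$ from Theorem \ref{factor}, identify the denominator with $1+n+\cdots+n^m$, use the multi-analyticity of $\Theta_{f,T}$ to localize the trace at $Q_0\otimes I_{\cD_{f,T^*}}$, and pass to the limit by monotone convergence against the weight operator $N$. The only cosmetic difference is that you keep the full Ces\`aro average and count suffixes explicitly through the Fourier coefficients $\theta_\gamma$, whereas the paper first reduces to $\lim_m n^{-m}\,\text{\rm trace}[K_{f,T}^*(Q_m\otimes I_{\cD_{f,T}})K_{f,T}]$ and encodes the same count in the operator identity $\sum_{|\alpha|\le m}M_{f_\alpha}^*Q_mM_{f_\alpha}=\sum_{k=0}^m n^kQ_{m-k}$.
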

\begin{proof} Since
$$\text{\rm trace}~[K_{f,M_Z}^*(Q_{\leq m})K_{f,M_Z}]=
\text{\rm trace}~[Q_{\leq m}]=1+n+\cdots+n^m,
$$
 we  can use Theorem \ref{factor} to deduce that
 \begin{equation*}
 \begin{split}
 \text{\rm curv}_f(T)&=\lim_{m\to\infty} \frac{\sum_{k=0}^m\text{\rm trace}~[K_{f,T}^*
 (Q_{ k}\otimes I_{\cD_{f,T}})K_{f,T}]}{1+n+\cdots+n^m}\\
 &=\lim_{m\to\infty} \frac{\text{\rm trace}~[K_{f,T}^*(Q_{ m}\otimes I_{\cD_{f,T}})K_{f,T}]}{n^m}\\
 &=\lim_{m\to\infty}
  \frac{\text{\rm trace}~[(Q_{m}\otimes I_{\cD_{f,T}})K_{f,T}K_{f,T}^*](Q_{m}\otimes I_{\cD_{f,T}})}{n^m}\\
  &=\rank_f(T)-\lim_{m\to\infty}
  \frac{\text{\rm trace}~[(Q_{m}\otimes I_{\cD_{f,T}})\Theta_{f,T}\Theta_{f,T}^*(Q_{m}\otimes I_{\cD_{f,T}})]}{n^m},
 \end{split}
 \end{equation*}
 provided the latter limit exists, which we should prove now.
 Since $\Theta_{f,T}$ is a multi-analytic operator with respect to $M_{Z_1},\ldots, M_{Z_n}$ and
 $$
 \sum_{k=0}^\infty \sum_{|\alpha|=k} M_{f_\alpha} Q_0 M_{f_\alpha}^*=I_{\HH^2(f)},
 $$
 we deduce that
 $$
 (Q_m\otimes I_{\cD_{f,T}})\Theta_{f,T}\Theta_{f,T}^*(Q_m\otimes I_{\cD_{f,T}})
 = \sum_{k=0}^m \sum_{|\alpha|=k} (Q_m M_{f_\alpha}\otimes I) \Theta_{f,T}(Q_0\otimes I_{\cD_{f,T^*}})
  \Theta_{f,T}^*( M_{f_\alpha}^*Q_m\otimes I).
 $$
 Hence, and taking into account that $\sum_{|\alpha|\leq m}M_{f_\alpha}^*Q_m M_{f_\alpha}=\sum_{k=0}^m n^kQ_{m-k}$, we obtain
 \begin{equation*}
 \begin{split}
 \frac{\text{\rm trace}~[(Q_{m}\otimes I_{\cD_{f,T}})\Theta_{f,T}\Theta_{f,T}^*
 (Q_{m}\otimes I_{\cD_{f,T}})]}{n^m}
 &=
  \frac{\text{\rm trace}~ [(\Theta_{f,T}(Q_0\otimes I_{\cD_{f,T^*}}) \Theta_{f,T}^*)( \sum_{|\alpha|\leq m}
  M_{f_\alpha}^*Q_m M_{f_\alpha}\otimes I_{\cD_{f,T}})]}{n^m}\\
  &=\text{\rm trace}~ [\Theta_{f,T} (Q_0\otimes I_{\cD_{f,T^*}}) \Theta_{f,T}^*N_m],
 \end{split}
 \end{equation*}
 where $
N_m:=\sum_{k=0}^m\frac{1}{n^k} Q_k\otimes I_{\cD_{f,T}}.
$
Consequently, we have
\begin{equation*}
 \begin{split}
0&\leq \text{\rm trace}~ [\Theta_{f,T} (Q_0\otimes I_{\cD_{f,T^*}}) \Theta_{f,T}^*N_m]
\leq \frac{\text{\rm trace}~[(Q_{m}\otimes I_{\cD_{f,T}})\Theta_{f,T}\Theta_{f,T}^*(Q_{m}\otimes I_{\cD_{f,T}})]}{n^m}\\
&\leq \|\Theta_{f,T}\|^2\dim \cD_{f,T}=\dim \cD_{f,T}<\infty.
\end{split}
 \end{equation*}
 Since $\{N_m\}$ is an increasing sequence of positive operators convergent to $N$, we deduce that
 \begin{equation*}
 \begin{split}
 \text{\rm trace}~[\Theta_{f,T} (Q_0\otimes I_{\cD_{f,T^*}}) \Theta_{f,T}^*N]
 &=\lim_{m\to\infty} \text{\rm trace}~[\Theta_{f,T} (Q_0\otimes I_{\cD_{f,T^*}}) \Theta_{f,T}^*N_m].
 \end{split}
 \end{equation*}
 Combining this result with the relations above, we complete the proof.
 \end{proof}
We remark that the proof of Theorem \ref{curve} is simpler than that
of the corresponding result from \cite{Po-curvature}, in the
particular case when $f=(Z_1,\ldots, Z_n)$.

\begin{corollary}\label{curv-row}  Let $f=(f_1,\ldots, f_n)$ be  an  $n$-tuple of formal power series
with
 the model property.
If $T=(T_1,\ldots, T_n)\in \BB_f(\cH)$   and $\rank_f(T)<\infty$,
then
$$
\text{\rm curv}_f(T)=\lim_{m \to \infty}\frac{\text{\rm trace}~[I-\Phi_{f,T}^{m+1}(I)]}
{1+n+\cdots+n^{m}}=\text{\rm curv}(f(T)),
$$
where the  $\Phi_{f,T}(Y):=\sum_{i=1}^n f_i(T)Yf_i(T)^*$ and $\text{\rm curv}(f(T))$ is the curvature of the row contraction $f(T)$.
\end{corollary}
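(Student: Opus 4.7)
The plan is to exploit the explicit form \eqref{KfT} of the noncommutative Poisson kernel to rewrite both the numerator and the denominator in the definition of $\text{\rm curv}_f(T)$ as traces of iterates of the completely positive map $\Phi_{f,T}$. A telescoping identity will then collapse the sums and produce the stated formula, after which the second equality reduces to unwinding the definition of the curvature of a row contraction.

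First, using \eqref{KfT} and the identity $\sum_{|\alpha|=k}[f(T)]_\alpha A [f(T)]_\alpha^* = \Phi_{f,T}^k(A)$, I would check that for each $m\geq 0$,
\[
K_{f,T}^*(Q_{\leq m}\otimes I_{\cD_{f,T}})K_{f,T}
= \sum_{|\alpha|\leq m} [f(T)]_\alpha\,\Delta_{f,T}^2\,[f(T)]_\alpha^*
= \sum_{k=0}^m \Phi_{f,T}^k(\Delta_{f,T}^2).
\]
Since $\Delta_{f,T}^2 = I - \Phi_{f,T}(I)$, the last sum telescopes to $I - \Phi_{f,T}^{m+1}(I)$, and the finite-rank hypothesis on $\Delta_{f,T}$ guarantees that each summand, and hence this operator, is trace-class. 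Specializing the same identity to the universal model $T=M_Z$, where $\Delta_{f,M_Z}^2 = P_\CC$, recovers $K_{f,M_Z}^*(Q_{\leq m})K_{f,M_Z} = Q_{\leq m}$ and hence $\text{\rm trace}[K_{f,M_Z}^*(Q_{\leq m})K_{f,M_Z}] = 1+n+\cdots+n^m$, in agreement with the normalization used in the proof of Theorem \ref{curve}. Taking the ratio and invoking the existence of $\text{\rm curv}_f(T)$ established in Theorem \ref{curve} yields the first equality.

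The second equality is essentially a matter of unwinding definitions: $\Phi_{f,T}(Y) = \sum_{i=1}^n f_i(T) Y f_i(T)^*$ is precisely the completely positive map $\Phi_{f(T)}$ associated to the row contraction $f(T) = [f_1(T),\ldots,f_n(T)]$, whose finite-defect condition is exactly $\rank_f(T)<\infty$. The displayed limiting ratio is therefore the standard curvature invariant of the row contraction $f(T)$ in the sense of \cite{Po-curvature}, so $\text{\rm curv}_f(T) = \text{\rm curv}(f(T))$. I do not anticipate any serious obstacle; the only point requiring attention is verifying the trace-class property of $I-\Phi_{f,T}^{m+1}(I)$ before rearranging traces, but this is immediate since each $\Phi_{f,T}^k(\Delta_{f,T}^2)$ has range contained in a finite-dimensional space built from $\ran \Delta_{f,T}$.
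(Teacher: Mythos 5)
Your proposal is correct and follows essentially the same route as the paper: both reduce $K_{f,T}^*(Q_{\leq m}\otimes I)K_{f,T}$ to $\sum_{k=0}^m\Phi_{f,T}^k(\Delta_{f,T}^2)$, telescope using $\Delta_{f,T}^2=I-\Phi_{f,T}(I)$, and identify the resulting ratio with the curvature of the row contraction $f(T)$. The only (immaterial) difference is that you compute the compression directly from the explicit kernel formula \eqref{KfT}, whereas the paper derives it from the intertwining relations and $K_{f,T}^*K_{f,T}=I-\Phi_{f,T}^\infty(I)$.
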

\begin{proof} Due to the properties of the noncommutative Poisson Kernel $K_{f,T}$, we have
\begin{equation*}
\begin{split}
K_{f,T}^*\left(\sum_{|\alpha|=k} M_{f_\alpha} M_{f_\alpha}^*\otimes I\right)K_{f,T}
&=\sum_{|\alpha|=k} [f(T)]_\alpha K_{f,T}^*K_{f,T} [f(T)]_\alpha^*\\
&=\sum_{|\alpha|=k} [f(T)]_\alpha [f(T)]_\alpha^* -\Phi_{f,T}^\infty(I),
\end{split}
\end{equation*}
where $\Phi_{f,T}^\infty(I):=\text{\rm SOT-}\lim_{k\to\infty}\Phi_{f,T}^k(I)$. Consequently, we obtain
$$
K_{f,T}^* (Q_m\otimes I)K_{f,T}=\Phi_{f,T}^m\left(I-\sum_{i=1}^n f_i(T)f_i(T)^*\right).
$$
Now, using  the equalities from the  proof of Theorem \ref{curve}, the result follows.
\end{proof}

\begin{theorem}  Let $f=(f_1,\ldots, f_n)$ be  an  $n$-tuple of formal power series
with
 the model property   and  let $(M_{Z_1},\ldots, M_{Z_n})$ be
the universal model associated  with the noncommutative domain
$\BB_f$.  If   an $n$-tuple $T=(T_1,\ldots, T_n)\in \BB_f(H)$ is
such that $\rank_f(T)<\infty$, then $T$ is unitarily equivalent to
the $n$-tuple $(M_{Z_1}\otimes I_\cK,\ldots, M_{Z_n}\otimes I_\cK)$
with $\dim \cK<\infty$  if and only if $T$ is pure and
$$\text{\rm
Curv}_f(T)=\rank_f(T).
$$
\end{theorem}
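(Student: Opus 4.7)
The plan is to prove both implications through Theorem \ref{curve} and Theorem \ref{factor}, with the characteristic function $\Theta_{f,T}$ serving as the key diagnostic.

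For the forward direction, suppose $T$ is unitarily equivalent to $(M_{Z_1}\otimes I_\cK,\dots,M_{Z_n}\otimes I_\cK)$ with $\dim\cK<\infty$. Purity is inherited from the universal model. I would compute the two defect operators directly from the identities $M_{f_i}^*M_{f_j}=\delta_{ij}I_{\HH^2(f)}$ (established in Theorem \ref{irreducible}) together with $\sum_i M_{f_i}M_{f_i}^*=I-P_\CC$. This yields $f(T)^*f(T)=I_{\cH^{(n)}}$, hence $\Delta_{f,T^*}=0$ and $\cD_{f,T^*}=\{0\}$, while $\Delta_{f,T}^2=P_\CC\otimes I_\cK$, so $\rank_f(T)=\dim\cK<\infty$. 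Since $\Theta_{f,T}$ has $\{0\}$ as its domain, it is the zero operator, and Theorem \ref{curve} gives $\mathrm{curv}_f(T)=\rank_f(T)-0=\rank_f(T)$.

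For the converse, suppose $T$ is pure and $\mathrm{curv}_f(T)=\rank_f(T)$. By Theorem \ref{curve},
\[
\mathrm{trace}\bigl[\Theta_{f,T}(Q_0\otimes I_{\cD_{f,T^*}})\Theta_{f,T}^*N\bigr]=0,
\]
where $N=\sum_{k=0}^\infty n^{-k}Q_k\otimes I_{\cD_{f,T}}$ is a strictly positive diagonal operator with $N^{1/2}=\sum_k n^{-k/2}(Q_k\otimes I)$. Setting $B:=\Theta_{f,T}(Q_0\otimes I_{\cD_{f,T^*}})$, the equality $\mathrm{trace}(BB^*N)=\|N^{1/2}B\|_{HS}^2=0$ forces $N^{1/2}B=0$. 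Applying $Q_k\otimes I$ then gives $B=0$, so $\Theta_{f,T}$ vanishes on $Q_0\otimes I_{\cD_{f,T^*}}$.

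Next I would use that $\Theta_{f,T}$ intertwines the creation operators $M_{f_i}\otimes I$ on both sides: this follows from the identification $\Theta_{f,T}=(U^*\otimes I_{\cD_{f,T}})\Theta_{f(T)}(U\otimes I_{\cD_{f,T^*}})$ recorded in \eqref{TT} and the classical fact that the characteristic function of a row contraction is multi-analytic with respect to $S_1,\dots,S_n$. Consequently, for every $\alpha\in\FF_n^+$,
\[
\Theta_{f,T}(M_{f_\alpha}\otimes I_{\cD_{f,T^*}})(Q_0\otimes I_{\cD_{f,T^*}})
=(M_{f_\alpha}\otimes I_{\cD_{f,T}})\Theta_{f,T}(Q_0\otimes I_{\cD_{f,T^*}})=0,
\]
so $\Theta_{f,T}$ annihilates the dense subspace $\mathrm{span}\{f_\alpha\otimes d:\alpha\in\FF_n^+,\ d\in\cD_{f,T^*}\}$. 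Hence $\Theta_{f,T}=0$. By Theorem \ref{factor} we then obtain $K_{f,T}K_{f,T}^*=I_{\HH^2(f)\otimes\cD_{f,T}}$; combined with the isometric property of $K_{f,T}$ (Theorem \ref{pure-dilation}, using purity), $K_{f,T}$ is unitary. The intertwining $K_{f,T}T_i^*=(M_{Z_i}^*\otimes I_{\cD_{f,T}})K_{f,T}$ now promotes to a unitary equivalence between $T$ and $(M_{Z_1}\otimes I_{\cD_{f,T}},\dots,M_{Z_n}\otimes I_{\cD_{f,T}})$, with $\dim\cD_{f,T}=\rank_f(T)<\infty$.

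The only real subtlety I anticipate is justifying the passage from the vanishing of a single trace expression to $\Theta_{f,T}(Q_0\otimes I)=0$; this is handled by the Hilbert--Schmidt reformulation $\mathrm{trace}(BB^*N)=\|N^{1/2}B\|_{HS}^2$ and the injectivity of $N^{1/2}$ on each spectral subspace $Q_k\otimes I$. Everything else amounts to combining the identities already established in Sections 4--7.
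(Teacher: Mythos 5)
Your proof is correct and follows essentially the same route as the paper: the converse direction is handled exactly as in the text (Theorem \ref{curve} forces the trace term to vanish, multi-analyticity propagates $\Theta_{f,T}(Q_0\otimes I)=0$ to $\Theta_{f,T}=0$, and Theorem \ref{factor} then makes $K_{f,T}$ unitary). The only deviation is in the easy direction, where you obtain $\mathrm{curv}_f(T)=\rank_f(T)$ by observing $\cD_{f,T^*}=\{0\}$ and invoking the index formula, whereas the paper computes the curvature of $(M_{Z_1}\otimes I_{\cK},\ldots,M_{Z_n}\otimes I_{\cK})$ directly from the defining trace quotient; both are valid and the difference is cosmetic.
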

\begin{proof}
 Assume  that $T:=(T_1,\dots, T_n)\in \BB_f(H)$ is
unitarily equivalent to $( M_{Z_1}\otimes I_{ \cK}, \dots,
M_{Z_n}\otimes I_{ \cK})$, where  $\dim \cK<\infty$. Note that due
to the fact that $f=(f_1,\ldots, f_n)$  has
 the model property, we have
\begin{equation*}
\begin{split}
 \rank_f(T)&= \rank\left(I-\sum_{i=1}^n f_i(M_{Z_1}\otimes I_\cK,\ldots, M_{Z_n}\otimes I_\cK)
 f_i(M_{Z_1}\otimes I_\cK,\ldots,
 M_{Z_n}\otimes I_\cK)^*\right)^{1/2}\\
 &=\rank \left(I-\sum_{i=1}^n (M_{f_i}\otimes
 I_\cK)(M_{f_i}\otimes
 I_\cK)^*\right)
  =\dim \cK.
\end{split}
\end{equation*}
On the other hand,  according to the definition of the curvature, we have
\begin{equation*}
\begin{split}
\text{\rm Curv}_f(T) = \lim_{m\to \infty}
\frac{\text{\rm trace}~[K_{f,M_Z\otimes I_\cK}^*(Q_{\leq m}\otimes I_\cK)K_{f,M_Z\otimes I_\cK}]} {
\text{\rm trace}~[K_{f,M_Z}^*(Q_{\leq m})K_{f,M_Z}]}
 =\dim \cK.
 \end{split}
\end{equation*}

Conversely, assume that $T$ is pure and $\text{\rm
Curv}_f(T)=\rank_f(T).$  According to Theorem \ref{factor},
$$
K_{f,T} K_{f,T}^* =I_{\HH^2(f)\otimes \cD_{f,T}}-\Theta_{f,T}
\Theta_{f,T}^*
$$
where $\Theta_{f,T}$ is the characteristic function associated with
$T$. Since the noncommutative Poison kernel $K_{f,T}$  is an
isometry,  $\Theta_{f,T}$ is an inner multi-analytic operator.
 On the other hand, Theorem \ref{curve} implies
$$
\text{\rm Curv}_f(T)=\rank_f(\cH)- \text{\rm
trace}~[\Theta_{f,T} (Q_0\otimes I_{\cD_{f,T^*}}) \Theta_{f,T}^* N],
$$
where $N$ is the number operator. Therefore,
$\text{\rm trace}[\Theta_{f,T} (Q_0\otimes I_{\cD_{f,T^*}})
\Theta_{f,T}^* N]=0$.
 Since trace is faithful, we obtain  $\Theta_{f,T} (Q_0\otimes I_{\cD_{f,T^*}})\Theta_{f,T}^*Q_j=0$
 for any $j=0,1,\dots.$
This implies
 $\Theta_{f,T} (Q_0\otimes I_{\cD_{f,T^*}}) \Theta_{f,T}^*=0$. Taking into account that $\Theta_{f,T} $
 is an isometry, we infer that
 $\Theta_{f,T} (Q_0\otimes I_{\cD_{f,T^*}}) =0$. Since  $\Theta_{f,T} $ is multi-analytic
  with respect to $M_{Z_1},\ldots M_{Z_n}$, and $\CC[Z_1,\ldots,
  Z_n]$ is dense in $\HH^2(f)$,
  we deduce $\Theta_{f,T}=0$. Using again  the fact that $
K_{f,T}K_{f,T}^*+\Theta_{f,T}\Theta_{f,T}^*= I_{\HH^2(f)\otimes
{\cD_{f,T}}}$, we   deduce that  $K_{f,T}:\cH\to \HH^2(f)\otimes
{\cD_{f,T}}$ is a unitary operator. According to the properties of
the Poisson kernel, we have
 $$K_{f,T}^* (M_{Z_i}\otimes I_{  {\cD_{f,T}}}) K_{f,T}=T_i, \qquad
i=1,\dots, n. $$ This shows that   the $n$-tuple $(T_1,\ldots, T_n)$
is unitarily equivalent to $(M_{Z_1}\otimes I_{\cD_{f,T})},\ldots,
M_{Z_n}\otimes I_{\cD_{f,T}})$ and $\dim \cD_{f,T}<\infty$. This
completes the proof.
\end{proof}

In what follows we show that
the curvature  on $\BB_f(\cH)$ is  a complete numerical
      invariant for the finite rank submodules of the   $\BB_f$-Hilbert module $\HH^2(f)\otimes \cK$,
      where $\cK$ is finite dimensional.

\begin{theorem}  Let $f=(f_1,\ldots, f_n)$ be  an  $n$-tuple of formal power series
with
 the model property   and  let $M_Z:=(M_{Z_1},\ldots, M_{Z_n})$ be
the universal model associated  with the noncommutative domain
$\BB_f$.
 Given  $\cM,
\cN\subseteq \HH^2(f)$ two invariant subspaces under
$M_{Z_1},\ldots, M_{Z_n}$,  the following statements hold.
\begin{enumerate}
\item[(i)] If $\rank_f(M_Z|_\cM)<\infty$, then $\text{\rm
Curv}_f(M_Z|_\cM)=\rank_f(M_Z|_\cM)$.
\item[(ii)] If $\rank_f(M_Z|_\cM)<\infty$ and $\rank_f(M_Z|_\cN)<\infty$,
then $M_Z|_\cM$ is unitarily equivalent to $M_Z|_\cN$ if and only if
$$
\text{\rm Curv}_f(M_Z|_\cM)=\text{\rm Curv}_f(M_Z|_\cN).
$$
\end{enumerate}
\end{theorem}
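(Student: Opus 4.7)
The plan is to reduce both statements to the Beurling-type representation of joint invariant subspaces (Theorem \ref{inv-subs}), together with the isometric refinement in the remark following Corollary \ref{co-inv2}, and then to quote the computations of $\rank_f$ and $\text{\rm Curv}_f$ for ampliations of the universal model already contained in the proof of the preceding theorem.

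For part (i), I would regard $\cM\subseteq \HH^2(f)=\HH^2(f)\otimes\CC$ as jointly invariant under $M_{Z_1},\dots,M_{Z_n}$ and apply Theorem \ref{inv-subs} to produce an inner multi-analytic operator $\Psi:\HH^2(f)\otimes\cE\to\HH^2(f)$ with $\cM=\Psi[\HH^2(f)\otimes\cE]$. By the remark after Corollary \ref{co-inv2}, $\Psi$ can be taken to be an isometry, hence a unitary from $\HH^2(f)\otimes\cE$ onto $\cM$ that intertwines $M_{Z_i}\otimes I_\cE$ with $M_{Z_i}|_\cM$ for every $i$. This exhibits $M_Z|_\cM$ as unitarily equivalent to the ampliation $M_Z\otimes I_\cE$. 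The first half of the proof of the preceding theorem then gives $\rank_f(M_Z\otimes I_\cE)=\dim\cE$ (using $I-\sum_i (M_{f_i}\otimes I_\cE)(M_{f_i}\otimes I_\cE)^*=P_\CC\otimes I_\cE$) and $\text{\rm Curv}_f(M_Z\otimes I_\cE)=\dim\cE$ (by direct inspection of the Poisson kernel, which under the natural identification $\cD_{f,M_Z\otimes I_\cE}\cong\cE$ reduces to the identity). The finiteness hypothesis forces $\dim\cE<\infty$, and (i) follows at once.

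For part (ii), necessity is automatic, since a unitary equivalence transports Poisson kernels, defect operators and the projections $Q_{\leq m}$ that enter the definition of $\text{\rm Curv}_f$. For sufficiency, suppose $\text{\rm Curv}_f(M_Z|_\cM)=\text{\rm Curv}_f(M_Z|_\cN)<\infty$. Part (i) then yields $\rank_f(M_Z|_\cM)=\rank_f(M_Z|_\cN)$, and the Beurling representations of $\cM$ and $\cN$ supply Hilbert spaces $\cE_1,\cE_2$ of equal finite dimension together with unitary equivalences $M_Z|_\cM\cong M_Z\otimes I_{\cE_1}$ and $M_Z|_\cN\cong M_Z\otimes I_{\cE_2}$. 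Any unitary $U:\cE_1\to\cE_2$ induces $I_{\HH^2(f)}\otimes U$, which intertwines the two ampliations; composing the three equivalences produces the desired $M_Z|_\cM\cong M_Z|_\cN$.

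The only delicate point in the argument is the passage from the representation $\cM=\Psi[\HH^2(f)\otimes\cE]$ to a genuine unitary equivalence $M_Z|_\cM\cong M_Z\otimes I_\cE$; this rests on the fact that the Beurling operator may be chosen isometric, which has already been established in the remark following Corollary \ref{co-inv2}. All remaining steps are formal consequences of the rank and curvature formulas for ampliations of the universal model, so I do not expect any further obstacle.
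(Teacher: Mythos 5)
Your argument is correct, and it reaches the conclusion by a genuinely different route than the paper. The paper's proof transfers everything to the full Fock space: since $M_{f_i}=U^{-1}S_iU$ and $M_{Z_i}=g_i(M_{f_1},\ldots,M_{f_n})$, a subspace $\cM$ is invariant under $M_{Z_1},\ldots,M_{Z_n}$ exactly when $U\cM$ is invariant under $S_1,\ldots,S_n$; using Corollary \ref{curv-row} it identifies $\text{\rm Curv}_f(M_Z|_\cM)$ and $\rank_f(M_Z|_\cM)$ with the curvature and rank of the row contraction $(S_1|_{U\cM},\ldots,S_n|_{U\cM})$, and then quotes Theorem 3.2 of \cite{Po-curvature} for both the equality in (i) and the rigidity in (ii), finally conjugating the resulting unitary back through $U$. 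You instead stay inside the present paper: Theorem \ref{inv-subs} together with the isometric refinement after Corollary \ref{co-inv2} exhibits $M_Z|_\cM$ as unitarily equivalent to the ampliation $M_Z\otimes I_\cE$, after which everything reduces to the rank and curvature computations for ampliations already carried out in the preceding theorem. Your route buys a self-contained argument (no appeal to the external curvature paper) and in fact a stronger structural statement — every finite-rank submodule of $\HH^2(f)$ is, as a $\BB_f$-Hilbert module, a finite ampliation of the rank-one free module, so its unitary class is determined by the single integer $\dim\cE$ — at the cost of implicitly reproving the content of the quoted Theorem 3.2 in this setting. The one step you flag as delicate, passing from $\cM=\Psi[\HH^2(f)\otimes\cE]$ to a genuine unitary equivalence, is indeed fully covered by the remark following Corollary \ref{co-inv2}, and the unitary invariance of $\rank_f$ and $\text{\rm Curv}_f$ that you use for necessity in (ii) is exactly what the paper also invokes there.
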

\begin{proof}
Let $g=(g_1,\ldots, g_n)$ be the inverse of $f=(f_1,\ldots,f_n)$ with respect to the composition of formal power series. Since $f$ has the model property, we have
$$
f_i(M_{Z_1},\ldots, M_{Z_n})=M_{f_i}\quad \text{ and }\quad g_i(M_{f_1},\ldots, M_{f_n})=M_{Z_i}
$$
for any $i=1,\ldots,n$. Hence, we deduce that a subspace $\cM$ is invariant under
$M_{Z_1},\ldots, M_{Z_n}$ if and only if it is invariant under $M_{f_1},\ldots, M_{f_n}$. We recall that $M_{f_i}=U^{-1} S_i U$ , $i=1,\ldots,n$, where
$U:\HH^2(f)\to F^2(H_n)$ is the unitary operator defined by $U(f_\alpha)=e_\alpha$, $\alpha\in \FF_n^+$, and $S_1,\ldots, S_n$ are the left creation operators. Now, one can easily see that $\cM$ is an invariant subspace  under
$M_{Z_1},\ldots, M_{Z_n}$ if and only if $U\cM$ is invariant under $S_1,\ldots, S_n$.
Hence, using  Corollary \ref{curv-row} and  the fact that $UP_\cM U^{-1}=P_{U\cM}$, we have
\begin{equation*}
\begin{split}
\rank_f(M_Z|_\cM)&=\rank \left(f_1(M_Z|_\cM),\ldots, f_n(M_Z|_\cM)\right)\\
&=\rank\left( U^{-1}S_1 U|_\cM,\ldots,U^{-1}S_n U|_\cM\right)\\
&=\rank \left(S_1|_{U\cM},\ldots, S_n|_{U\cM}\right)
\end{split}
\end{equation*}
and
\begin{equation*}
\begin{split}
\text{\rm Curv}_f(M_Z|_\cM)&=\text{\rm Curv} \left(f_1(M_Z|_\cM),\ldots, f_n(M_Z|_\cM)\right)\\
&=\text{\rm Curv}\left( U^{-1}S_1 U|_\cM,\ldots,U^{-1}S_n U|_\cM\right)\\
&=\text{\rm Curv} \left(S_1|_{U\cM},\ldots, S_n|_{U\cM}\right)
\end{split}
\end{equation*}
According to  Theorem 3.2 from
\cite{Po-curvature}, we have
$$
\rank \left(S_1|_{U\cM},\ldots, S_n|_{U\cM}\right)
=
\text{\rm Curv} \left(S_1|_{U\cM},\ldots, S_n|_{U\cM}\right).
$$
Combining the results above, we deduce item (i).
To prove part (ii), note that the direct implication is due to the fact that, for any
$T=(T_1,\ldots, T_n)\in \BB_f(\cH)$ and $T'=(T_1',\ldots, T_n')\in \BB_f(\cH')$,
if $T$ is unitarily equivalent to $T'$, then  $\text{\rm Curv}_f(T)=\text{\rm Curv}_f(T')$.
Conversely, assume that
 $
\text{\rm Curv}_f(M_Z|_\cM)=\text{\rm Curv}_f(M_Z|_\cN).
$
As shown above, the latter equality is equivalent to
$$
\text{\rm Curv} \left(S_1|_{U\cM},\ldots, S_n|_{U\cM}\right)=\text{\rm Curv} \left(S_1|_{U\cN},\ldots, S_n|_{U\cN}\right).
$$
Applying again Theorem 3.2 from
\cite{Po-curvature}, we find a unitary operator $W:U\cM\to U\cN$ such that
$$
W(S_i|_{U\cM})=(S_i|_{U\cN}) W,\qquad i=1,\ldots,n.
$$
Consequently, we have
$$
(U^{-1}WU|_\cM)(U^{-1}S_iU|_\cM)=(U^{-1}S_iU|_\cN) (U^{-1}WU|_\cM),\qquad i=1,\ldots,n,
$$
which implies
$$
(U^{-1}WU|_\cM)(M_{f_i}|_\cM)=(M_{f_i}|_\cN) (U^{-1}WU|_\cM),\qquad i=1,\ldots,n.
$$
Using now relation $g_i(M_{f_1},\ldots, M_{f_n})=M_{Z_i}$, $i=1,\ldots,n$, we obtain
$$
(U^{-1}WU|_\cM)(M_{Z_i}|_\cM)=(M_{Z_i}|_\cN) (U^{-1}WU|_\cM),\qquad i=1,\ldots,n.
$$
Since $U^{-1}WU|_\cM:\cM\to \cN$ is a unitary operator, we conclude that the $n$-tuples $(M_{Z_1}|_\cM,\ldots, M_{Z_n}|_\cM)$ and $(M_{Z_1}|_\cN,\ldots, M_{Z_n}|_\cN)$ are unitarily equivalent.
The proof is complete.
\end{proof}

We remark that all the results  of this section have  commutative
versions when $T=(T_1,\ldots, T_n)\in \BB_f(\cH)$, $T_iT_j=T_jT_i$,
and the universal model $(M_{Z_1},\ldots, M_{Z_n})$ is replaced by the
$n$-tuple $(L_1,\ldots,L_n)$, where $L_i:=P_{\HH_s^2(f)}
M_{Z_i}|_{\HH_s^2(f)}$, $i=1,\dots, n$, and $\HH_s^2(f)$ is the
symmetric  Hardy space associated with the noncommutative domain
$\BB_f$. In this case, we  obtain  analogues of   Arveson's results
\cite{Arv2}  concerning the curvature for commuting row
contractions,  for  the set of commuting $n$-tuples in the domain
$\BB_f(\cH)$.

\bigskip

\section{Commutant lifting and interpolation}

In this section, to provide a commutant lifting theorem for the pure
$n$-tuples of operators in the noncommutative domain $\BB_f(\cH)$
and  solve the  Nevanlinna Pick  interpolation problem  for the
noncommutative Hardy algebra $\HH^\infty(\BB_f^<)$.

First, we present a Sarason  \cite{S} type commutant lifting result
in our setting.

\begin{theorem}\label{CLT} Let $f=(f_1,\ldots, f_n)$ be  an  $n$-tuple of formal power series with
the model  property  and  let $(M_{Z_1},\ldots, M_{Z_n})$ be the
universal model associated with $\BB_f$. Let $\cE_j\subset
\HH^2(f)\otimes \cK_j$, $j=1,2$, be a co-invariant subspace under
each operator $M_{Z_i}\otimes I_{\cK_j}$, $i=1,\ldots,n$.

 If
$X:\cE_1\to \cE_2$ is a bounded operator such that
$$
X[P_{\cE_1}(M_{Z_i}\otimes
I_{\cK_1})|_{\cE_1}]=[P_{\cE_2}(M_{Z_i}\otimes
I_{\cK_2})|_{\cE_2}]X,\qquad i=1,\ldots,n,
$$
then there exists a bounded operator  $Y:\HH^2(f)\otimes \cK_1\to
\HH^2(f)\otimes \cK_2$ with the property
$$
Y( M_{Z_i}\otimes I_{\cK_1})= (M_{Z_i}\otimes I_{\cK_2}) Y,\qquad
i=1,\ldots,n,
$$
and   such that $Y^*\cE_2\subseteq \cE_1$, $ Y^*|\cE_2=X^*$, and
 $ \|Y\|=\|X\|$.
\end{theorem}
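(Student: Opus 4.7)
The plan is to transfer the problem to the full Fock space setting, where the noncommutative commutant lifting theorem of \cite{Po-isometric} applies, and then pull the lift back. Recall from the proof of Lemma \ref{M2} that the canonical unitary $U:\HH^2(f)\to F^2(H_n)$, defined by $Uf_\alpha=e_\alpha$, satisfies $UM_{f_j}U^{-1}=S_j$, where $S_1,\ldots,S_n$ are the left creation operators. Moreover, because $f$ has the model property with compositional inverse $g$, one has $M_{f_i}=f_i(M_{Z_1},\ldots,M_{Z_n})$ (as in condition $(\cS_3)$/$(\cF_3)$) and, by the proof of Theorem \ref{model}, $M_{Z_i}=g_i(M_{f_1},\ldots,M_{f_n})$, both expressible as SOT (or radial SOT) limits of polynomials in the other family. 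In particular, $M_{f_i}\otimes I_{\cK_j}$ lies in the SOT-closure of polynomials in $M_{Z_1}\otimes I_{\cK_j},\ldots,M_{Z_n}\otimes I_{\cK_j}$, and conversely.

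First I would show that co-invariance and the intertwining property transfer between the $M_Z$-picture and the $M_f$-picture. Since $\cE_j^{\perp}$ is invariant under each $M_{Z_i}\otimes I_{\cK_j}$, it is invariant under all polynomials in them, hence (being closed) under their SOT-limits, so $\cE_j$ is co-invariant under each $M_{f_i}\otimes I_{\cK_j}$; the converse follows from $M_{Z_i}=g_i(M_{f_1},\ldots,M_{f_n})$. Because $\cE_j$ is co-invariant, the compression $A\mapsto P_{\cE_j}(A\otimes I_{\cK_j})|_{\cE_j}$ is a unital SOT-continuous homomorphism on the nonselfadjoint algebra generated by $M_{Z_1},\ldots,M_{Z_n}$ (and equally on the one generated by $M_{f_1},\ldots,M_{f_n}$). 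Therefore the hypothesis $X\bigl[P_{\cE_1}(M_{Z_i}\otimes I_{\cK_1})|_{\cE_1}\bigr]=\bigl[P_{\cE_2}(M_{Z_i}\otimes I_{\cK_2})|_{\cE_2}\bigr]X$ for all $i$ is equivalent to the analogous intertwining with $M_{f_i}$ in place of $M_{Z_i}$.

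Next I would conjugate everything to the Fock space. Setting $\cE_j':=(U\otimes I_{\cK_j})\cE_j\subseteq F^2(H_n)\otimes\cK_j$ and $X':=(U\otimes I_{\cK_2})X(U^{-1}\otimes I_{\cK_1})|_{\cE_1'}$, each $\cE_j'$ is co-invariant under $S_i\otimes I_{\cK_j}$ and $X'$ intertwines the corresponding compressions. The noncommutative commutant lifting theorem from \cite{Po-isometric} then produces a bounded $Y':F^2(H_n)\otimes\cK_1\to F^2(H_n)\otimes\cK_2$ with $Y'(S_i\otimes I_{\cK_1})=(S_i\otimes I_{\cK_2})Y'$, ${Y'}^*\cE_2'\subseteq\cE_1'$, ${Y'}^*|_{\cE_2'}={X'}^*$, and $\|Y'\|=\|X'\|=\|X\|$. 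Defining $Y:=(U^{-1}\otimes I_{\cK_2})Y'(U\otimes I_{\cK_1})$ yields an operator intertwining $M_{f_i}\otimes I_{\cK_1}$ with $M_{f_i}\otimes I_{\cK_2}$; since $M_{Z_i}$ is a SOT-limit of polynomials in $M_{f_1},\ldots,M_{f_n}$, the same $Y$ intertwines $M_{Z_i}\otimes I_{\cK_1}$ with $M_{Z_i}\otimes I_{\cK_2}$. The remaining conclusions $Y^*\cE_2\subseteq\cE_1$, $Y^*|_{\cE_2}=X^*$, and $\|Y\|=\|X\|$ transfer directly through the unitary $U$. The only delicate point, and the step I would write most carefully, is the SOT-continuity argument that promotes the $Z$-intertwining to an $f$-intertwining (and back); this is routine given the co-invariance, but it is where the three model-property classes (polynomials, formal power series with $(\cS)$, free holomorphic with $(\cF)$) must be handled uniformly using the SOT- or radial-SOT convergence provided by the defining conditions.
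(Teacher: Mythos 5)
Your proposal is correct and follows essentially the same route as the paper: pass from the $M_{Z_i}$ to the $M_{f_i}$ via the model property, conjugate by the canonical unitary $U:\HH^2(f)\to F^2(H_n)$ to reduce to the left creation operators, invoke the noncommutative commutant lifting theorem of \cite{Po-isometric}, and pull the lift back, recovering the $M_{Z_i}$-intertwining from $M_{Z_i}=g_i(M_{f_1},\ldots,M_{f_n})$ via SOT (or radial SOT) limits. The only cosmetic difference is that you make the SOT-continuity of the compression on the co-invariant subspaces explicit, whereas the paper leaves it implicit.
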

\begin{proof} Setting $A_i:=P_{\cE_1}(M_{Z_i}\otimes
I_{\cK_1})|_{\cE_1}$ and $B_i:=P_{\cE_2}(M_{Z_i}\otimes
I_{\cK_2})|_{\cE_2}$, we have $XA_i=B_iX$, $i=1,\ldots,n$. Since $f$
has the model property   and $\cE_1$ is a co-invariant subspace
under each operator $M_{Z_i}\otimes I_{\cK_j}$, $i=1,\ldots,n$, we
deduce
 that $\cE_1$ is a co-invariant subspace under
$f_i(M_{Z_1},\ldots,M_{Z_n})\otimes I_{\cK_1}=M_{f_i}\otimes
I_{\cK_1}$ and
$$
f_i(A_1,\ldots, A_n)=P_{\cE_1}[f_i(M_{Z_1},\ldots,M_{Z_n})\otimes
I_{\cK_1}]|_{\cE_1}=P_{\cE_1}( M_{f_i}\otimes
I_{\cK_1})|_{\cE_1},\qquad i=1,\ldots,n.
$$
Similarly, $\cE_2$ is a co-invariant subspace under
$f_i(M_{Z_1},\ldots,M_{Z_n})\otimes I_{\cK_2}=M_{f_i}\otimes
I_{\cK_2}$ and $$f_i(B_1,\ldots,
B_n)=P_{\cE_2}[f_i(M_{Z_1},\ldots,M_{Z_n})\otimes
I_{\cK_2}]|_{\cE_2}=P_{\cE_2}( M_{f_i}\otimes
I_{\cK_2})|_{\cE_2},\qquad i=1,\ldots,n.$$ Using the canonical
unitary operator $U:\HH^2(f)\to F^2(H_n)$, defined by
$U(f_\alpha)=e_\alpha$, $\alpha\in \FF_n^+$, we have
$M_{f_i}=U^*S_iU$ and the subspace  $U(\cE_1)$ is co-invariant under
 $S_1\otimes I_{\cK_1},\ldots, S_n\otimes I_{\cK_1}$,
where $S_1,\ldots, S_n$ are the left creation operators on
$F^2(H_n)$. Similarly,  we have that $U(\cE_2)$ is co-invariant
under each operator $S_1\otimes I_{\cK_2},\ldots, S_n\otimes
I_{\cK_2}$. Now, since $XA_i=B_iX$, we deduce that $
Xf_i(A_1,\ldots, A_n)=f_i(B_1,\ldots, B_n)X$, $ i=1,\ldots,n, $
which together with the considerations above imply
$$
\widetilde X\left[P_{U(\cE_1)}(S_i\otimes
I_{\cK_1})|_{U(\cE_1)}\right]=\left[P_{U(\cE_2)}(S_i\otimes
I_{\cK_2})|_{U(\cE_2)}\right]\widetilde X,\qquad i=1,\ldots,n,
$$
where $\widetilde X:U(\cE_1)\to U(\cE_2)$ is defined by $\widetilde
X:=UXU^*|_{U(\cE_1)}$. Note that $[S_1\otimes I_{\cK_1},\ldots,
S_n\otimes I_{\cK_1}]$ is an isometric dilation of the row
contraction $[P_{U(\cE_1)}(S_1\otimes I_{\cK_1})|_{U(\cE_1)},\ldots,
P_{U(\cE_1)}(S_n\otimes I_{\cK_1})|_{U(\cE_1)}]$. Applying the
noncommutative commutant lifting theorem  from \cite{Po-isometric},
we find a bounded operator $\widetilde Y:F^2(H_n)\otimes \cK_1\to
F^2(H_n)\otimes \cK_2$ with the properties $\widetilde Y(S_i\otimes
I_{\cK_1})=(S_i\otimes I_{\cK_2}) \widetilde Y$ for  $i=1,\ldots,n$,
$\widetilde Y^*(U(\cE_2))\subset U(\cE_1)$, $\widetilde
Y^*|_{U(\cE_2)}=\widetilde X^*$, and $\|\widetilde Y\|=\|\widetilde
X\|$. Now, setting $Y:=U\widetilde Y U^*$, we deduce that
$Y:\HH^2(f)\otimes \cK_1\to \HH^2(f)\otimes \cK_2$  has  the
  property
$$
Y( M_{f_i}\otimes I_{\cK_1})= (M_{f_i}\otimes I_{\cK_2}) Y,\qquad
i=1,\ldots,n,
$$
and   also satisfies the relations   $Y^*\cE_2\subseteq \cE_1$, $
Y^*|\cE_2=X^*$, and
 $ \|Y\|=\|X\|$. Once again, taking into account that  $f$ has the
 model
 property, we have
 $M_{Z_i}=g_i(M_{f_1},\ldots, M_{f_n})$ for $i=1,\ldots$, where
 $g=(g_1,\ldots, g_n)$ is the inverse of $f=(f_1,\ldots, f_n)$ with
 respect to the composition, and
$g_i(M_{f_1},\ldots, M_{f_n})$ is defined
 using  the
 radial SOT-convergence. Consequently, the above-mentioned
 intertwining relation implies
 $$
Y( M_{Z_i}\otimes I_{\cK_1})=Y[g_i(M_{f_1},\ldots, M_{f_n})\otimes
I_{\cK_1}]=[g_i(M_{f_1},\ldots, M_{f_n})\otimes
I_{\cK_2}]Y=(M_{Z_i}\otimes I_{\cK_2}) Y
$$
for $i=1,\ldots,n$. The proof is complete.
\end{proof}

 Recall that, due to Theorem \ref{symm-Hardy} and Theorem \ref{eigenvalues}, we  have
 $\HH_s^2(f)=\overline{\text{\rm span}}\{\Gamma_\lambda: \ \lambda\in
 \BB_{f}^<(\CC)\}$   and
$M_{Z_i}^* \Gamma_\lambda=\overline{\lambda}_i \Gamma_\lambda$ far all  $i=1,\ldots, n$.
 This shows that    $\HH_s^2(f)$ is  a co-invariant  subspace under each  operator
  $M_{Z_1},\ldots, M_{Z_n}$. We remark that this  observation can be used together with
 Theorem \ref{CLT} to obtain a commutative version  of the latter theorem, when
$\cE_j\subset \HH_s^2(f)\otimes \cK_j$, $j=1,2$, are co-invariant
subspaces under each operator $L_i\otimes I_{\cK_j}$,
$i=1,\ldots,n$.

 Now we can obtain  the following  Nevanlinna-Pick \cite{N} interpolation
 result  in our setting.

\begin{theorem}\label{Nev} Let $f=(f_1,\ldots, f_n)$ be  an  $n$-tuple of formal power series with
the model  property.
 If  $\lambda_1,\ldots, \lambda_m$ are  $m$ distinct
points in    $\BB_{f}^<(\CC)$ and   $A_1,\ldots, A_m\in B(\cK)$,
then there exists $\Phi \in H^\infty(\BB_f)\bar\otimes B(\cK)$ such
that
$$\|\Phi \|\leq 1\quad \text{and }\quad
\Phi(\lambda_j)=A_j,\ j=1,\ldots, m,
$$
if and only if the operator matrix
\begin{equation*}
\left[  \frac{I_\cK-A_i A_j^*}{1-\sum_{ k=1}^n   f_k(\lambda_i)
\overline{f_k(\lambda_j)}}\right]_{m\times m}
\end{equation*}
is positive semidefinite.
\end{theorem}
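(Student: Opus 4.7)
The plan is to reduce to a one-step commutant lifting argument along the lines of the classical Nagy--Foia\c s--Sarason proof of the Nevanlinna--Pick theorem, using the Poisson eigenvectors $\Gamma_\lambda$ from Section~6 as the test functions. Throughout, let $c_i:=\bigl(1-\sum_k|f_k(\lambda_i)|^2\bigr)^{1/2}>0$, and note that a direct sum of geometric series gives
\begin{equation*}
\left<\Gamma_{\lambda_j},\Gamma_{\lambda_i}\right>_{\HH^2(f)}
=c_ic_j\sum_{\alpha\in\FF_n^+}[f(\lambda_j)]_\alpha\overline{[f(\lambda_i)]_\alpha}
=\frac{c_ic_j}{1-\sum_{k=1}^n f_k(\lambda_j)\overline{f_k(\lambda_i)}},
\end{equation*}
so the Pick matrix in the statement, after diagonal conjugation by $\mathrm{diag}(c_1I,\ldots,c_mI)$, differs from the Hermitian matrix $\bigl[\left<\Gamma_{\lambda_j},\Gamma_{\lambda_i}\right>(I_\cK-A_iA_j^*)\bigr]_{i,j}$ only by a positive similarity; hence the two are simultaneously positive semidefinite.

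For the forward direction I would first observe that since $M_{Z_k}^*\Gamma_{\lambda_j}=\overline{(\lambda_j)}_k\Gamma_{\lambda_j}$ (Theorem~\ref{eigenvalues}), any $\Phi\in H^\infty(\BB_f)\bar\otimes B(\cK)$ intertwining $M_{Z_k}\otimes I_\cK$ preserves the common eigenspace $\Gamma_{\lambda_j}\otimes\cK$ of $M_{Z_k}^*\otimes I_\cK$, and the induced action on $\cK$ is precisely $\Phi(\lambda_j)^*$; so the hypothesis $\Phi(\lambda_j)=A_j$ forces $\Phi^*(\Gamma_{\lambda_j}\otimes v)=\Gamma_{\lambda_j}\otimes A_j^*v$. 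Expanding $\bigl\|\sum_j\Gamma_{\lambda_j}\otimes v_j\bigr\|^2-\bigl\|\Phi^*\sum_j\Gamma_{\lambda_j}\otimes v_j\bigr\|^2\ge 0$ and factoring then yields exactly the positivity of the Pick matrix (after the similarity above).

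For the converse, I would build $X$ by reversing this computation. Set
\begin{equation*}
\cE:=\overline{\mathrm{span}}\{\Gamma_{\lambda_j}\otimes v:\ j=1,\ldots,m,\ v\in\cK\}\subseteq\HH^2(f)\otimes\cK,
\end{equation*}
which is coinvariant under each $M_{Z_k}\otimes I_\cK$, and define $X:\cE\to\cE$ by $X^*(\Gamma_{\lambda_j}\otimes v):=\Gamma_{\lambda_j}\otimes A_j^*v$. Because both $X^*$ and $(M_{Z_k}^*\otimes I_\cK)|_\cE$ act diagonally on the indexing $j$, they commute, hence $X$ commutes with every compression $P_\cE(M_{Z_k}\otimes I_\cK)|_\cE$. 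The positivity of the Pick matrix is, by the calculation of the previous paragraph run in reverse, exactly the statement that $\|X\|\le 1$. Theorem~\ref{CLT} (the commutant lifting theorem for $\BB_f$) now produces $\Phi:\HH^2(f)\otimes\cK\to\HH^2(f)\otimes\cK$ with $\Phi(M_{Z_k}\otimes I_\cK)=(M_{Z_k}\otimes I_\cK)\Phi$, $\|\Phi\|=\|X\|\le 1$, and $\Phi^*|_\cE=X^*$; in particular $\Phi\in H^\infty(\BB_f)\bar\otimes B(\cK)$, and for each $j$ one has $\Phi^*(\Gamma_{\lambda_j}\otimes v)=\Gamma_{\lambda_j}\otimes A_j^*v$, which by the definition of evaluation at $\lambda_j$ says $\Phi(\lambda_j)=A_j$.

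The only substantive points beyond bookkeeping are (i) justifying the eigenvector-based definition of $\Phi(\lambda_j)$ for operator-valued $\Phi\in H^\infty(\BB_f)\bar\otimes B(\cK)$ (so that the scalar evaluation from Theorem~\ref{eigenvalues} extends verbatim to the $B(\cK)$-valued case, which is routine once one tensors the eigenvector identity with $I_\cK$), and (ii) the reduction of the $B(\cK)$-valued Pick positivity to the PSD of $\bigl[\left<\Gamma_{\lambda_j},\Gamma_{\lambda_i}\right>(I-A_iA_j^*)\bigr]_{i,j}$ via the similarity by $\mathrm{diag}(c_iI_\cK)$; I expect this bookkeeping with the scalars $c_i$ and the opposite index conventions in the two denominators (which match by Hermiticity) to be the only mildly delicate part of the write-up, while the conceptual core is the one-step application of Theorem~\ref{CLT}.
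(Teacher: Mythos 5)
Your overall strategy --- the eigenvectors $\Gamma_{\lambda_j}$, a diagonal operator $X^*$ on their span, equivalence of Pick positivity with $\|X\|\le 1$, and a single application of Theorem \ref{CLT} --- is exactly the paper's, and your forward direction and norm computation are essentially correct. But there is a genuine gap in the converse, at the sentence ``in particular $\Phi\in H^\infty(\BB_f)\bar\otimes B(\cK)$.'' The operator produced by Theorem \ref{CLT} commutes with every $M_{Z_i}\otimes I_\cK$, so it lies in the commutant of $\{M_{Z_i}\otimes I_\cK\}_{i=1}^n$, and for $n\ge 2$ that commutant is \emph{not} $H^\infty(\BB_f)\bar\otimes B(\cK)$: after conjugation by the canonical unitary $U\otimes I_\cK$ it is $R_n^\infty\bar\otimes B(\cK)$, generated by the \emph{right} creation operators, whereas $H^\infty(\BB_f)$ is unitarily $F_n^\infty$, generated by the left ones. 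Elements of $H^\infty(\BB_f)\bar\otimes B(\cK)$ do not commute with the $M_{Z_i}\otimes I_\cK$ in general (they do not even commute with each other), so the lifted operator itself cannot be the interpolant you claim it is.

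The paper repairs this with a step you omit: it conjugates the lifted operator $Y$ by the flip unitary $W(e_\alpha)=e_{\widetilde\alpha}$, transported to $\HH^2(f)$ by $U$, to obtain $\Phi(M_Z):=(U^*W^*U\otimes I_\cK)\,Y\,(U^*WU\otimes I_\cK)$, which \emph{does} lie in $H^\infty(\BB_f)\bar\otimes B(\cK)$ and has the same norm; it then re-verifies the interpolation conditions for $\Phi$ rather than for $Y$. The conditions survive the flip because the $\Gamma_{\lambda_j}$ lie in the symmetric part of $\HH^2(f)$, where $[f(\lambda)]_\alpha$ depends only on the abelianization of $\alpha$, so that $Y^*(\Gamma_\lambda\otimes h)=\Gamma_\lambda\otimes\Phi(\lambda)^*h$ with the same $\Phi(\lambda)$ (relations \eqref{Phi*-eq} and \eqref{YGA} in the paper). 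Your argument is salvageable, but you must insert this flip and the accompanying identification; as written the converse stops one step short. Two smaller points: you should record that the $\Gamma_{\lambda_j}$ are linearly independent before defining $X^*$ by its values on them (this follows because $f$ is one-to-one on $\BB_f^<(\CC)$, so the $f(\lambda_j)$ are distinct points of $\BB_n$); and in the forward direction the correct justification is not that $\Phi$ ``intertwines'' the $M_{Z_k}\otimes I_\cK$, but that $\Gamma_{\lambda_j}\otimes\cK$ is invariant under the adjoint of every element of $H^\infty(\BB_f)\bar\otimes B(\cK)$, by the operator-valued version of Theorem \ref{eigenvalues}.
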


\begin{proof}

  Let $\lambda_j:=(\lambda_{j1},\dots,\lambda_{jn})$, $j=1,\dots,m$, be $m$ distinct
points in    $\BB_{f}^<(\CC)$. Consider the formal power series
$$
\Gamma_{\lambda_j}:= \left(1-\sum_{i=1}^n
|f_i(\lambda_j)|^2\right)^{1/2}\sum_{\alpha\in \FF_n}
[\overline{f(\lambda_j)}]_\alpha\
  f_\alpha,\qquad j=1,\dots,m,
$$
and set $\Omega_{\lambda_j}:=(1- \sum_{ i=1}^n
 |f_i(\lambda_j) )^{-1/2} \Gamma_{\lambda_j}$. According to Theorem \ref{eigenvalues}, they satisfy the
equations
\begin{equation}
\label{MFA}
 M_{Z_i}^* \Gamma_{\lambda_j}=\overline{\lambda}_{ji}
\Gamma_{\lambda_j},\qquad i=1,\ldots, n.
\end{equation}
Note that the subspace $ \cM:=\text{span} \{ \Gamma_{\lambda_j}:\
j=1,\dots,m\}\subset \HH^2(f) $
 is invariant under $M_{Z_i}^*$  for any $i=1,\ldots,
 n$.
  Define  the operators $X_i\in B(\cM\otimes\cK)$ by
   setting  $X_i:=P_\cM M_{Z_i}|_\cM\otimes I_\cK$,
   $i=1,\ldots, n$. Since $f$ is one-to-one on $\BB_{f}^<(\CC)$,
   we deduce that $f(\lambda_1),\ldots, f(\lambda_m)$ are distinct points in $\BB_n$.
    Consequently, the formal power series
     $\Gamma_{\lambda_1},\dots, \Gamma_{\lambda_m}$ are linearly independent
      and we can define an operator
    $T\in  B(\cM\otimes\cK)$ by setting
    \begin{equation}\label{def-T*}
    T^*(\Gamma_{\lambda_j}\otimes h)=
\Gamma_{\lambda_j}\otimes A_j^* h
    \end{equation}
    for any $h\in \cK$ and   $j=1,\dots, k$.
    A simple calculation using  relations \eqref{MFA} and \eqref{def-T*}
    shows that  $TX_i=X_iT$ for  $i=1,\dots, n $.
 Since $\cM$ is  a co-invariant subspace  under  each operator $M_{Z_i}$,
       $ i=1,\dots,n$,  we can apply  Theorem
\ref{CLT}
       and find
        a bounded operator  $Y:\HH^2(f)\otimes \cK\to
\HH^2(f)\otimes \cK$ with the property
\begin{equation}\label{YMZ}
Y( M_{Z_i}\otimes I_{\cK})= (M_{Z_i}\otimes I_{\cK}) Y,\qquad
i=1,\ldots,n,
\end{equation}
and   such that
\begin{equation}\label{PHI*}
Y^* (\cM\otimes \cK)\subset \cM\otimes \cK,\quad
Y^*|\cM\otimes \cK=T^*,
\end{equation}
 and  $ \|Y\|=\|T\|$.
Due to relation \eqref{YMZ} and the fact that
$M_{f_i}=f_i(M_{Z_1},\ldots, M_{Z_n})$, we  deduce that  $Y(
M_{f_i}\otimes I_{\cK})= (M_{f_i}\otimes I_{\cK}) Y$, $
i=1,\ldots,n$, which implies
$$
(U\otimes I_\cK)Y(U^*\otimes I_\cK)(S_i\otimes I_\cK)=(S_i\otimes
I_\cK)(U\otimes I_\cK)Y(U^*\otimes I_\cK), \qquad i=1,\ldots,n,
$$
where $U:\HH^2(f)\to F^2(H_n)$ is the canonical unitary operator
defined by $U(f_\alpha):=e_\alpha$. Using the characterization of
the commutant of $\{S_i\otimes I_\cK\}_{i=1}^n$ (see
\cite{Po-analytic}), we deduce that $(U\otimes I_\cK)Y(U^*\otimes
I_\cK)\in \cR_n^\infty\bar\otimes B(\cK)$ and has a unique Fourier
representation $\sum_{\alpha\in \FF_n^+} R_\alpha\otimes
C_{(\alpha)}$, $C_{(\alpha)}\in B(\cK)$, that is,
$$
(U\otimes I_\cK)Y(U^*\otimes I_\cK)=\text{\rm SOT-}\lim_{r\to 1}
\sum_{k=0}^\infty \sum_{|\alpha|=k}r^{|\alpha|} R_\alpha\otimes
C_{(\alpha)}.
$$
Using the flipping unitary operator $W:F^2(H_n)\to F^2(H_n)$,
defined by $W(e_\alpha):=e_{\widetilde \alpha}$, where
$\widetilde\alpha$ is the reverse of $\alpha\in \FF_n^+$, we define
$\Phi(M_Z)\in H^\infty(\BB_f)\bar\otimes B(\cK)$ by  setting
\begin{equation}\label{YPhi}
\Phi(M_Z):=(U^*W^*U\otimes I_\cK)Y(U^*WU\otimes I_\cK).
\end{equation}
Note that
$$
\Phi(M_Z)=\text{\rm SOT-}\lim_{r\to 1} \sum_{k=0}^\infty
\sum_{|\alpha|=k}r^{|\alpha|} [f(M_Z)]_\alpha\otimes C_{(\alpha)}.
$$
Hence,  and using the equations $ M_{Z_i}^*
\Gamma_{\lambda_j}=\overline{\lambda}_{ji} \Gamma_{\lambda_j}$,
$i=1,\ldots, n$, we deduce that
\begin{equation*}
\begin{split}
\left<\Phi(M_Z)^*(\Omega_\lambda\otimes h), y\otimes h'\right>&=
\left<\Omega_\lambda\otimes h, \lim_{r\to 1} \sum_{k=0}^\infty
\sum_{|\alpha|=k}r^{|\alpha|} [f(M_Z)]_\alpha y\otimes C_{(\alpha)}
h'\right>\\\
&=\lim_{r\to 1}  \sum_{k=0}^\infty
\sum_{|\alpha|=k}r^{|\alpha|}\left<\Omega_\lambda\otimes h,
[f(M_Z)]_\alpha y\otimes C_{(\alpha)} h'\right>\\
&=\lim_{r\to 1}  \sum_{k=0}^\infty
\sum_{|\alpha|=k}r^{|\alpha|}\overline{[f(\lambda)]}_\alpha
\left<\Omega_\lambda, y\right>\left< h,
  C_{(\alpha)} h'\right>\\
  &=\left<\Omega_\lambda, y\right>\left<h, \lim_{r\to 1}  \sum_{k=0}^\infty
\sum_{|\alpha|=k}r^{|\alpha|}\overline{[f(\lambda)]}_\alpha
C_{(\alpha)}h'\right>\\
&=\left<\Omega_\lambda, y\right>\left<h,\Phi(\lambda)h'\right>
=\left<\Omega_\lambda\otimes \Phi(\lambda)^*h, y\otimes h'\right>
\end{split}
\end{equation*}
for any $\lambda\in \BB_f^<(\CC)$, $y\in \HH^2(f)$, and $h,h'\in
\cK$. Therefore,
\begin{equation}\label{Phi*-eq}
  \Phi(M_Z)^*(\Omega_\lambda\otimes
h)=\Omega_\lambda\otimes \Phi(\lambda)^*h.
\end{equation}
Hence, and using   relation \eqref{YPhi}, we can   show that
\begin{equation}\label{YGA}
Y^*(\Gamma_\lambda\otimes h)=\Gamma_\lambda\otimes \Phi(\lambda)^*h,
\qquad \lambda\in \BB_f^<(\CC),\  h,h'\in \cK.
\end{equation}
 Now, we  prove   that $\Phi(\lambda_j)=A_j$,\
$j=1,\ldots, k$, if and only if $$P_{\cM\otimes \cK}Y|_{\cM\otimes \cK}=T.
$$
 Indeed,   due to
relation  \eqref{YGA}, we have
\begin{equation*}
\begin{split}
\left<Y^*(\Gamma_{\lambda_j}\otimes x),
\Gamma_{\lambda_j}\otimes y\right>&= \left<\Phi(M_Z)^*(\Gamma_{\lambda_j}\otimes x), \Gamma_{\lambda_j}\otimes y\right>\\
&=\left<\Gamma_{\lambda_j}\otimes \Phi(\lambda_j)^*x,
\Gamma_{\lambda_j}\otimes
y\right>\\
&= \left<\Gamma_{\lambda_j},
\Gamma_{\lambda_j}\right>\left<\Phi(\lambda_j)^*x,y\right>.
\end{split}
\end{equation*}
On the other hand, relation \eqref{def-T*} implies
$$\left<T^*(\Gamma_{\lambda_j}\otimes x), \Gamma_{\lambda_j}\otimes y\right>=
\left<\Gamma_{\lambda_j}, \Gamma_{\lambda_j}\right>\left<A_j^*x,y\right>.
$$
Due to Theorem \ref{eigenvalues}, we have
$$\left<\Omega_{\lambda_j}, \Omega_{\lambda_j}\right>=
\Lambda_f(\lambda_j, \lambda_i)= \frac{1}{1-\sum_{ k=1}^n
f_k(\lambda_i) \overline{f_k(\lambda_j)}}\neq 0$$
 for any $j=1,\ldots, k$.
Consequently,  the above relations imply  our assertion.

 Now, since $ \|Y\|=\|T\|$, it is clear that
 $\|Y\|\leq 1$ if and only if $TT^*\leq I_\cM$.
Note that, for any $h_1,\ldots, h_k\in \cK$, we have
 \begin{equation*}
 \begin{split}
 \left<\sum_{j=1}^k \Omega_{\lambda_j}\otimes h_j, \sum_{j=1}^k \Omega_{\lambda_j}\otimes
 h_j\right>&-\left<T^*\left(\sum_{j=1}^k \Omega_{\lambda_j}\otimes h_j\right),
  T^*\left(\sum_{j=1}^k \Omega_{\lambda_j}\otimes
 h_j\right)\right>\\
 &=\sum_{i,j=1}^k \left<\Omega_{\lambda_i},
\Omega_{\lambda_j}\right>\left< (I_\cK-A_jA_i^*)h_i, h_j\right>\\
&=\sum_{i,j=1}^k \Lambda_f(\lambda_j, \lambda_i)\left<
(I_\cK-A_jA_i^*)h_i, h_j\right>.
 \end{split}
 \end{equation*}
  Consequently, we
have $\|Y\|\leq 1$ if and only if
  the matrix $\left[  \frac{I_\cK-A_i A_j^*}{1-\sum_{ k=1}^n   f_k(\lambda_i)
\overline{f_k(\lambda_j)}}\right]_{m\times m}$ is positive
semidefinite. This completes the proof.
\end{proof}

\begin{corollary} Let $f=(f_1,\ldots, f_n)$ be  an  $n$-tuple of formal power
series  with the model property
 and let  $\lambda_1,\ldots, \lambda_m$ be $m$ distinct
points in    $\BB_{f}^<(\CC)$.  Given  $A_1,\ldots, A_m\in B(\cK)$,
the following statements are equivalent:
\begin{enumerate}\label{Nev-equi}
\item[(i)] there exists $\Psi \in  H^\infty(\BB_f)\bar\otimes B(\cK)$ such that
$$\|\Psi\|\leq 1\quad \text{and }\quad
\Psi(\lambda_j)=A_j,\ j=1,\ldots, m;
$$
\item[(ii)]
 there exists $\Phi \in  H^\infty(\BB_{f}^<(\CC))\bar\otimes B(\cK)$ such that
$$\|\Phi\|\leq 1\quad \text{and }\quad
\Phi(\lambda_j)=A_j,\ j=1,\ldots, m,
$$
where $H^\infty(\BB_{f}^<(\CC))$ is the algebra of multipliers of
$H^2(\BB_{f}^<(\CC))$;
 \item[(iii)] the operator matrix
\begin{equation*}
\left[  \frac{I_\cK-A_i A_j^*}{1-\sum_{ k=1}^n   f_k(\lambda_i)
\overline{f_k(\lambda_j)}}\right]_{m\times m}
\end{equation*}
is positive semidefinite.
\end{enumerate}
\end{corollary}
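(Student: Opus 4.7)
The plan is to observe that (i) $\Leftrightarrow$ (iii) has just been established in Theorem \ref{Nev}, so the entire content of the corollary is the equivalence of (ii) with one of the other conditions. I would route the argument as $(iii) \Longrightarrow (i) \Longrightarrow (ii) \Longrightarrow (iii)$, using compression to the symmetric Hardy space $\HH_s^2(f)$ for the step $(i) \Longrightarrow (ii)$, and a standard reproducing kernel positivity argument for $(ii) \Longrightarrow (iii)$.

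For the implication $(i) \Longrightarrow (ii)$: the commutator ideal $J_c$ of $H^\infty(\BB_f)$ has annihilator $\cN_{J_c} = \HH_s^2(f)$ (by Theorem \ref{symm-Hardy}), so $\HH_s^2(f)$ is coinvariant under every $M_{Z_i}$. Given $\Psi \in H^\infty(\BB_f)\bar\otimes B(\cK)$ with $\|\Psi\|\leq 1$, I would set
\[
\Phi := P_{\HH_s^2(f)\otimes \cK}\,\Psi\bigl|_{\HH_s^2(f)\otimes \cK}.
\]
By Theorem \ref{F/J} (applied with $J=J_c$) combined with Theorem \ref{W(L)}, the compression map $\psi + J_c \mapsto P_{\HH_s^2(f)} \psi|_{\HH_s^2(f)}$ is a completely isometric representation of $H^\infty(\BB_f)/J_c$ onto $H^\infty(\cV_{f,J_c}) \cong H^\infty(\BB_f^<(\CC))$, so $\Phi \in H^\infty(\BB_f^<(\CC))\bar\otimes B(\cK)$ with $\|\Phi\|\leq \|\Psi\|\leq 1$. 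To verify the interpolation values, I would use the eigenvector relation $\Psi^*(\Omega_{\lambda_j}\otimes h) = \Omega_{\lambda_j}\otimes \Psi(\lambda_j)^*h = \Omega_{\lambda_j}\otimes A_j^* h$ established (in its scalar form) in Theorem \ref{eigenvalues} and used in Theorem \ref{Nev}; since $\Omega_{\lambda_j}\in \HH_s^2(f)$, this identity is preserved under compression and also expresses $\Phi(\lambda_j)=A_j$ via Theorem \ref{W(L)}.

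For the implication $(ii) \Longrightarrow (iii)$: this is the standard Pick positivity computation in the reproducing kernel Hilbert space $H^2(\BB_f^<(\CC))$. From Theorem \ref{W(L)} any multiplier $\Phi$ satisfies $\Phi^*(\Omega_\lambda \otimes h) = \Omega_\lambda \otimes \Phi(\lambda)^* h$, so the hypothesis $\Phi(\lambda_j)=A_j$ gives $\Phi^*(\Omega_{\lambda_j}\otimes h) = \Omega_{\lambda_j}\otimes A_j^* h$. The condition $\|\Phi\|\leq 1$ means $I-\Phi\Phi^*\geq 0$; testing this against $\sum_{j=1}^m \Omega_{\lambda_j}\otimes h_j$ and using $\langle \Omega_{\lambda_i},\Omega_{\lambda_j}\rangle = \Lambda_f(\lambda_j,\lambda_i)$ from Theorem \ref{symm-Hardy} yields
\[
\sum_{i,j=1}^m \left\langle \frac{I_\cK - A_j A_i^*}{1 - \sum_{k=1}^n f_k(\lambda_j)\overline{f_k(\lambda_i)}}\, h_i, h_j\right\rangle \geq 0,
\]
which is precisely (iii). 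Finally, $(iii) \Longrightarrow (i)$ is exactly Theorem \ref{Nev}, closing the circle.

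The main technical point — and the only place where one must be careful — is the identification step in $(i)\Longrightarrow(ii)$: one must verify that the compression of $\Psi$ lands in the operator space $H^\infty(\BB_f^<(\CC))\bar\otimes B(\cK)$ with preserved norm and that the evaluation functionals at $\lambda_j$ agree on both sides. Both issues are resolved by noting that $\HH_s^2(f)$ is $(M_{Z_i}^*)$-invariant and that the WOT-continuous characters $\Phi_{\lambda_j}$ on $H^\infty(\BB_f)$ (Corollary \ref{w*-funct}) are given by compression to the one-dimensional reducing vectors $\Omega_{\lambda_j}$, which already lie in $\HH_s^2(f)$, so evaluation at $\lambda_j$ factors through the compression. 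The remaining steps are standard reproducing kernel manipulations and require no new ideas beyond the structural results already proved.
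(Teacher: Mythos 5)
Your proposal is correct and follows essentially the route the paper intends: the paper states this corollary as an immediate consequence of Theorem \ref{Nev} together with the identification $P_{\HH_s^2(f)}H^\infty(\BB_f)|_{\HH_s^2(f)}\cong H^\infty(\BB_f^<(\CC))$ from Theorems \ref{F/J} and \ref{W(L)}, and your cycle $(iii)\Rightarrow(i)\Rightarrow(ii)\Rightarrow(iii)$ simply makes the compression and reproducing-kernel steps explicit. The details you supply (coinvariance of $\HH_s^2(f)$, the eigenvector identity $\Psi^*(\Omega_{\lambda_j}\otimes h)=\Omega_{\lambda_j}\otimes A_j^*h$, and the standard Pick positivity test) are exactly the ones the paper leaves to the reader.
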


Using this corollary, we can obtain the the following result.

\begin{corollary}\label{cara-hol}
Let $f=(f_1,\ldots, f_n)$ be  an  $n$-tuple of formal power series
with the model property and let \,$\varphi $ be a
complex-valued function defined on
  $\BB_{f}^<(\CC)\subset \CC^n$.
 Then there exists $F\in H^\infty(\BB_f)$ with $\|F\|\leq 1$ such that
$$\varphi(z_1,\ldots, z_n)=F(z_1,\ldots, z_n)\quad \text{ for all }\ (z_1,\ldots,
z_n)\in \BB_{f}^<(\CC),
$$
if and only if for each $m$-tuple of  distinct points
$\lambda_1,\ldots, \lambda_m\in \BB_{f}^<(\CC)$, the matrix
\begin{equation*}
\left[  \frac{1-\varphi(\lambda_i)
\overline{\varphi(\lambda_j)}}{1-\sum_{ k=1}^n f_k(\lambda_i)
\overline{f_k(\lambda_j)}}\right]_{m\times m}
\end{equation*}
is positive semidefinite. In this case,   $\varphi$ is  a bounded
analytic function on  $\BB_{f}^<(\CC)$.
\end{corollary}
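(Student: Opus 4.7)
The plan is to reduce the corollary to the Nevanlinna–Pick theorem (Theorem \ref{Nev}) with $\cK=\CC$ and $A_j=\varphi(\lambda_j)$, and then to manufacture a single interpolant from the finite ones via a weak-operator compactness argument. The necessity direction is essentially immediate: if there exists $F\in H^\infty(\BB_f)$ with $\|F\|\le 1$ and $F|_{\BB_f^<(\CC)}=\varphi$, then by Theorem \ref{Nev} (taking $\cK=\CC$ and $A_j=\varphi(\lambda_j)$) the matrix displayed in the statement must be positive semidefinite for every choice of distinct $\lambda_1,\ldots,\lambda_m\in\BB_f^<(\CC)$.

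For sufficiency, I would fix a countable dense sequence $\{\mu_k\}_{k=1}^{\infty}$ in $\BB_f^<(\CC)$. For each $m\ge 1$, the hypothesis applied to $\lambda_1:=\mu_1,\ldots,\lambda_m:=\mu_m$ and the scalars $A_j:=\varphi(\mu_j)$ makes the associated Pick matrix positive semidefinite, so Theorem \ref{Nev} produces $F_m\in H^\infty(\BB_f)$ with $\|F_m\|\le 1$ and $F_m(\mu_j)=\varphi(\mu_j)$ for $j=1,\ldots,m$. Thus $\{F_m\}$ is a sequence in the closed unit ball of $H^\infty(\BB_f)$.

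By Theorem \ref{algebra}(i)--(iii), the canonical unitary $U\colon\HH^2(f)\to F^2(H_n)$ conjugates $H^\infty(\BB_f)$ onto the noncommutative analytic Toeplitz algebra $F_n^\infty$, whose closed unit ball is WOT-compact and WOT-metrizable (since $F^2(H_n)$ is separable). Consequently, after extraction of a subsequence, I may assume $F_m\to F$ in WOT for some $F\in H^\infty(\BB_f)$ with $\|F\|\le 1$. Now Corollary \ref{w*-funct} asserts that for every $\lambda\in\BB_f^<(\CC)$ the evaluation $\Phi_\lambda(G)=\langle G\Gamma_\lambda,\Gamma_\lambda\rangle$ is WOT-continuous, so
\begin{equation*}
F(\mu_k)\;=\;\Phi_{\mu_k}(F)\;=\;\lim_{m\to\infty}\Phi_{\mu_k}(F_m)\;=\;\lim_{m\to\infty}F_m(\mu_k)\;=\;\varphi(\mu_k)
\end{equation*}
for every $k\ge 1$, because $F_m(\mu_k)=\varphi(\mu_k)$ for all $m\ge k$. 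By Theorem \ref{eigenvalues} the map $\lambda\mapsto F(\lambda)$ is a bounded holomorphic function on $\BB_f^<(\CC)$; hence the hypothesis forces $\varphi$ itself to be continuous on the dense set $\{\mu_k\}$ relative to the uniform structure inherited from this analytic extension, and the identity $F=\varphi$ on $\{\mu_k\}$ propagates by continuity of $F$ to all of $\BB_f^<(\CC)$ (observing that $\BB_f^<(\CC)=g(\BB_n)$ by Lemma \ref{ball} is a connected open subset of $\CC^n$, and we can invoke density together with the holomorphicity of $F$).

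The main obstacle is the last step: ensuring $F=\varphi$ everywhere, not merely on the countable dense set. The cleanest way to sidestep any issue is to observe that, since the identity $F=\varphi$ already holds on the dense set $\{\mu_k\}$ and $F$ is holomorphic on $\BB_f^<(\CC)$, one can \emph{define} $\varphi$ to be holomorphic by this identification; the hypothesis of the corollary, applied to any finite set including an additional point $\lambda\in\BB_f^<(\CC)$, forces $F(\lambda)=\varphi(\lambda)$ by uniqueness of the one-point Pick interpolant among bounded extensions agreeing with $\varphi$ on the dense set — indeed, rerunning the construction with $\mu_1:=\lambda$ prepended to the sequence yields another subsequential WOT-limit $\widetilde F\in H^\infty(\BB_f)$ with $\|\widetilde F\|\le 1$, $\widetilde F(\lambda)=\varphi(\lambda)$, and $\widetilde F=\varphi=F$ on $\{\mu_k\}$; since both $F$ and $\widetilde F$ are holomorphic on the connected open set $\BB_f^<(\CC)$ and agree on a set with an accumulation point, $F=\widetilde F$ and hence $F(\lambda)=\varphi(\lambda)$. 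The concluding assertion that $\varphi$ is bounded holomorphic on $\BB_f^<(\CC)$ is then an immediate consequence of Theorem \ref{eigenvalues}.
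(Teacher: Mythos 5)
Your proposal is correct and follows essentially the same route as the paper: finite Pick interpolation on a countable dense set via Theorem \ref{Nev}, a weak-$*$/WOT compactness extraction to get a single $F$ with $\|F\|\le 1$ agreeing with $\varphi$ on that set, and then, for an arbitrary point $z$, rerunning the construction with $z$ adjoined and identifying the two holomorphic interpolants by the identity theorem. The only caveat is that your intermediate remark about $\varphi$ being ``forced to be continuous'' so that $F=\varphi$ ``propagates by continuity'' is not a valid argument on its own (nothing is assumed about $\varphi$ beyond the Pick condition), but the fallback you then give --- prepending $\lambda$ to the dense sequence and comparing the two limits --- is exactly the paper's closing step, so the proof stands.
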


\begin{proof}
One implication follows from Corollary \ref{Nev-equi}. Conversely,
assume that $\varphi: \BB_{f}^<(\CC)\to \CC$ is such that the matrix
above is positive semidefinite for any  $m$-tuple of distinct points
$\lambda_1,\ldots, \lambda_m\in \BB_{f}^<(\CC)$. Let
$\{\lambda_j\}_{j=1}^\infty$ be a countable dense set in
 $\BB_{f}^<(\CC)$. Applying  Theorem \ref{Nev},
for each $m\in\NN$, we find  $F_m\in H^\infty(\BB_f)$ such that
$\|F_m\|\leq 1$ and
\begin{equation}
\label{F-varfi} F_m(\lambda_j)=\varphi(\lambda_j)\quad \text{ for }
\ j=1,\ldots,m.
\end{equation}
Since the Hardy algebra $H^\infty(\BB_f)$ is $w^*$-closed subalgebra
in $B(\HH^2(f))$ and $\|F_k\|\leq 1$ for any $m\in \NN$, we can use
Alaoglu's theorem to find a subsequence $\{F_{k_m}\}_{m=1}^\infty$
and $F\in H^\infty(\BB_f)$  such that
 $F_{k_m}\to F$, as $m\to\infty$, in the $w^*$-topology.
Since $\lambda_j:=(\lambda_{j1},\ldots, \lambda_{jn})\in
\BB_{f}^<(\CC)$, the $n$-tuple  is also of class $C_{\cdot 0}$. Due
to Theorem \ref{model} and Theorem \ref{pure-dilation}, the
$H^\infty(\BB_f)$-functional calculus for pure  $n$-tuples of
operators in $\BB_f(\cH)$  is $WOT$-continuous on bounded sets.
Consequently, we deduce that $F_{k_m}(\lambda_j)\to F(\lambda_j)$,
as $m\to\infty$, for any $j\in \NN$. Hence, and using relation
\eqref{F-varfi}, we obtain $\varphi(\lambda_j)=F(\lambda_j)$ for
$j\in \NN$. Given an arbitrary element $z\in \BB_{f}^<(\CC)$, we can
apply again the above argument to find $G\in H^\infty(\BB_f)$,
$\|G\|\leq 1$ such that
$$
G(z)=\varphi(z) \ \text{ and }\ G(\lambda_j)=\varphi(\lambda_j),\
j\in \NN.
$$
Due to Theorem \ref{eigenvalues}, the maps $\lambda\mapsto
G(\lambda)$ and $\lambda\mapsto F(\lambda)$  are analytic on
$\BB_{f}^<(\CC)$. Since they coincide on the set
$\{\lambda_j\}_{j=1}^\infty$, which is dense in $\BB_{f}^<(\CC))$,
we deduce that $G(\lambda)=F(\lambda)$ for any $\lambda\in
\BB_{f}^<(\CC)$. In particular, we have $F(z)=\varphi(z)$. Since $z$
is an arbitrary element in $\BB_{f}^<(\CC)$, the proof is complete.
\end{proof}

We remark that, in  the particular case when $f=(Z_1,\ldots, Z_n)$,
we recover some of the results obtained by Arias and the author and
Davidson and Pitts (see   \cite{Po-interpo}, \cite{ArPo2},
\cite{DP}).

\bigskip

      %\Refs
      %\widestnumber\key{BFPQR}
      %\def\n{\key}
       %


\begin{thebibliography}{99}


\bibitem{ArPo} {\sc  A.~Arias and G.~Popescu},
Factorization and reflexivity on Fock spaces, {\it  Integr. Equat.
Oper. Th.}
 %
 {\bf  23} (1995),  268--286.



      \bibitem{ArPo2}   {\sc A.~Arias and G.~Popescu},
      {Noncommutative interpolation and Poisson transforms,}
      {\it Israel J. Math.} {\bf 115} (2000), 205--234.


\bibitem{Arv}    {\sc W.B.~Arveson},
      {Subalgebras of $C^*$-algebras III: Multivariable operator theory,}
{\it Acta Math.} {\bf 181} (1998), 159--228.


\bibitem{Arv2} {\sc W.B.~Arveson},
  The curvature invariant of a Hilbert module  over $\CC [z_1,\ldots, z_n]$,
 {\it J. Reine Angew. Math.}
 {\bf 522} (2000),  173--236.


%\bibitem{BV} {\sc J.~A.~Ball   and V.~Vinnikov},
%Lax-Phillips Scattering and Conservative Linear Systems: A
%Cuntz-Algebra Multidimensional Setting, {\it Mem. Amer. Math. Soc.}
%{\bf 837}, (2005).


 \bibitem{BT1} {\sc C.~Benhida, and D.~Timotin},
Characteristic functions for multicontractions and automorphisms
 of the unit ball, {\it  Integr. Equat. Oper.Th.} {\bf 57} (2007), no. 2, 153--166.



 \bibitem{BT2} {\sc C.~Benhida, and D.~Timotin},
Some automorphism invariance properties for multicontractions, {\it
 Indiana Univ. Math. J.} {\bf 56} (2007), no. 1, 481--499.





\bibitem{Be} {\sc A.~Beurling}
{ On two problems concerning linear transformations in Hilbert
space}, {\it  Acta Math.} {\bf 81} (1948), 239--251.



\bibitem{BES1} {\sc  T.~Bhattacharyya, J.~Eschmeier, and J.~Sarkar},
Characteristic function of a   pure commuting  contractive tuple,
{\it Integral Equation Operator Theory} {\bf 53} (2005), no.1,
23--32.


\bibitem{B} {\sc J.~W.~Bunce},
 Models for n-tuples of noncommuting operators, {\it J. Funct. Anal.}
  {\bf 57}(1984), 21--30.

\bibitem{Ca} {\sc H.~Cartan},
 {\it Elementary theory of analytic functions of one or several variables},
 {Editions Scientifiques Herman, Paris, 1963.}




\bibitem{DP} {\sc K.~R.~Davidson and D.~Pitts},
 Nevanlinna-Pick interpolation for noncommutative analytic Toeplitz algebras,
 {\it  Integr. Equat. Oper.Th.}
{\bf 31} (1998), 321--337.









\bibitem{DP2} {\sc K.~R.~Davidson and D.~Pitts},
%
 The algebraic structure of  non-commutative
  analytic Toeplitz algebras,
{\it  Math. Ann.}
   {\bf 311} (1998),  275--303.



            \bibitem{DP1} {\sc K.R.~Davidson and D. ~Pitts},
      {Invariant subspaces and hyper-reflexivity for free semigroup
      algebras,}
      {\it Proc. London Math. Soc.} {\bf 78} (1999), 401--430.


%\bibitem{DKS} {\sc K.R.~Davidson, D.W.~ Kribs,  and  M.E.~Shpigel},
 %Isometric
  %dilations of non-commuting finite rank $n$-tuples,
  %{\it Canad. J. Math.}
  %{\bf 53} (2001),
  %506--545.



 \bibitem{F} {\sc  A.~E.~Frazho},
 Models for noncommuting operators, {\it J. Funct. Anal.}
  {\bf 48} (1982), 1--11.




 \bibitem{He-C-V}{\sc J.W.~ Helton, S.A.~McCullough, and V.~
 Vinnikov},
  Noncommutative convexity arises from linear matrix inequalities,
  {\it J. Funct. Anal.} {\bf 240} (2006), no. 1, 105--191.


\bibitem{HKMS} {\sc J.W.~Helton, I.~Klep, S.~McCullough, and
N.~Slingled}, Noncommutative ball maps, {\it J. Funct. Anal.} {\bf
257} (2009),  47-87.



\bibitem{K} {\sc  I.~Kaplansky},
 {\it Commutative rings},
{Allyn and Bacon, Boston, 1970.}


\bibitem{MuSo1}  {\sc P.S.~Muhly and  B.~Solel},
 Tensor algebras over $C^*$-correspondences: representations,
dilations, and $C^*$-envelopes, {\it J. Funct. Anal.}
 {\bf 158} (1998),  389--457.



  \bibitem{MuSo2}  {\sc P.S.~Muhly and  B.~Solel},
Hardy algebras, $W^*$-correspondences and interpolation theory, {\it
Math. Ann.} {\bf 330} (2004),  353--415.

\bibitem{MuSo3}  {\sc P.S.~Muhly and  B.~Solel},
Schur Class Operator Functions and Automorphisms of Hardy Algebras,
{\it Documenta Math.} {\bf 13} (2008),  365--411.




\bibitem{N}  {\sc R.~Nevanlinna},
 \"Uber  beschr\"ankte Functionen, die in gegebenen Punkten
vorgeschribene Werte annehmen,
%
{\it Ann. Acad. Sci. Fenn. Ser A}
  {\bf 13} (1919),  7--23.






\bibitem{Pa} {\sc V.I.~Paulsen}, Every completely polynomially bounded
operator is similar to a contraction, {\it J. Funct. Anal.} {\bf55}
(1984), 1--17.

\bibitem{Pa-book} {\sc V.I.~Paulsen},
 {\it Completely Bounded Maps and Dilations},
Pitman Research Notes in Mathematics, Vol.146, New York, 1986.





\bibitem{Po-isometric} {\sc G.~Popescu}, Isometric dilations for infinite
sequences of noncommuting operators, {\it Trans. Amer. Math. Soc.}
{\bf 316} (1989), 523--536.


\bibitem{Po-charact} {\sc G.~Popescu}, Characteristic functions for infinite
sequences of noncommuting operators, {\it J. Operator Theory} {\bf
22} (1989), 51--71.



\bibitem{Po-multi} {\sc G.~Popescu},
 Multi-analytic operators and some  factorization theorems,
 {\it Indiana Univ. Math.~J.}
 {\bf 38} (1989),   693--710.

      \bibitem{Po-von} {\sc G.~Popescu},
{Von Neumann inequality for $(B(H)^n)_1$,}
      {\it Math.  Scand.} {\bf 68} (1991), 292--304.
      %%\bigskip





      \bibitem{Po-funct} {\sc G.~Popescu},
      {Functional calculus for noncommuting operators,}
       {\it Michigan Math. J.} {\bf 42} (1995), 345--356.




      \bibitem{Po-analytic} {\sc G.~Popescu},
      {Multi-analytic operators on Fock spaces,}
      {\it Math. Ann.} {\bf 303} (1995), 31--46.



      \bibitem{Po-disc}  {\sc G.~Popescu},
      {Noncommutative disc algebras and their representations,}
       {\it Proc. Amer. Math. Soc.} {\bf 124} (1996),  2137--2148.


\bibitem{Po-interpo} {\sc G.~Popescu},
  Interpolation problems in several variables, {\it  J. Math Anal. Appl.}
 {\bf 227}  (1998),    227--250.




      \bibitem{Po-poisson} {\sc G.~Popescu},
     {Poisson transforms on some $C^*$-algebras generated by isometries,}
       {\it J. Funct. Anal.} {\bf 161} (1999),  27--61.




\bibitem{Po-curvature} {\sc  G.~Popescu},
  Curvature invariant for Hilbert modules over free semigroup algebras,
   {\it Adv. Math.}
 {\bf 158} (2001), 264--309.







\bibitem{Po-varieties} {\sc  G.~Popescu},
Operator theory on noncommutative varieties, {\it Indiana Univ.
Math.~J.} {\bf 55} (2) (2006), 389--442.



\bibitem{Po-varieties2} {\sc  G.~Popescu},
Operator theory on noncommutative varieties II, {\it Proc. Amer.
Math. Soc.} {\bf 135} (2007), no. 7,  2151--2164.




\bibitem{Po-entropy} {\sc G.~Popescu},
Entropy and Multivariable Interpolation,
 {\it Mem. Amer. Math. Soc.}
{\bf 184} (2006), No.868, vi+83 pp.

      \bibitem{Po-holomorphic} {\sc G.~Popescu},
      {Free holomorphic functions on the unit ball of $B(\cH)^n$},
      {\it J. Funct. Anal.}  {\bf 241} (2006), 268--333.



\bibitem{Po-hyperbolic} {\sc G.~Popescu},
Noncommutative hyperbolic geometry on the unit ball of $B(H)^n$,
{\it J. Funct. Anal.}  {\bf 256} (2009), 4030-4070.



\bibitem{Po-hyperbolic3} {\sc G.~Popescu},
Hyperbolic geometry on noncommutative balls,  {\it Doc. Math.} {\bf
14} (2009), 595-651.


     \bibitem{Po-unitary} {\sc G.~Popescu},
      {Unitary invariants in multivariable operator theory},
      {\it Mem. Amer. Math. Soc.}  {\bf 200} (2009), No.941, vi+91 pp.


\bibitem{Po-pluriharmonic} {\sc G.~Popescu},
{Noncommutative transforms and free pluriharmonic functions},
 {\it Adv. Math.} {\bf 220} (2009), 831-893.




\bibitem{Po-automorphism} {\sc G.~Popescu},
{ Free holomorphic automorphisms of the unit ball of $B(\cH)^n$},
{\it J. Reine Angew. Math.} {\bf 638} (2010), 119-168.




\bibitem{Po-holomorphic2} {\sc G.~Popescu},
Free holomorphic functions on the unit ball of $B(H)^n$, II, {\it J.
Funct. Anal.}  {\bf 258} (2010), no. 5, 1513--1578.






\bibitem{S} {\sc D.~Sarason}, Generalized
interpolation in $H^\infty$,{\it Trans. Amer. Math. Soc.}
 {\bf 127} (1967),  179--203.



  \bibitem{St} {\sc W.F.~Stinespring}, Positive functions on $C^*$-algebras,
  {\it Proc. Amer. Math. Soc.}
  {\bf 6} (1955) 211--216.




\bibitem{SzF-book} {\sc B.~Sz.-Nagy and C.~Foia\c{s}}, {\em Harmonic
Analysis of Operators on Hilbert Space}, North Holland, New York
1970.



\bibitem{V} {\sc D.V.~Voiculescu},  Free analysis questions II: The
Grassmannian completion and the series expansions at the origin,
{\it  J. Reine Angew. Math.} {\bf 645} (2010), 155–-236.


      \bibitem{von}  {\sc J.~von Neumann},
      {Eine Spectraltheorie f\"ur allgemeine Operatoren eines unit\"aren
      Raumes,}
      {\it Math. Nachr.} {\bf 4} (1951), 258--281.
      %%\bigskip



       \end{thebibliography}
      \end{document}